\documentclass[a4paper,11pt]{amsart}

\usepackage[top=3.3cm, bottom=3cm, left=2.5cm, right=2.5cm]{geometry}
\usepackage[english]{babel}
\usepackage{latexsym,amsmath,amsthm,verbatim,ifthen,amssymb,mathtools,booktabs,amscd}
\usepackage{enumerate}
\usepackage{mathpazo}
\usepackage{paralist}
\usepackage{url}
\usepackage{hyperref}
\usepackage[all]{xy}
\usepackage{xcolor}
\usepackage{graphicx}
\usepackage{lscape}

\usepackage{tikz}
\usetikzlibrary{positioning,arrows,cd}

\DeclareMathOperator{\id}{id}
\DeclareMathOperator{\im}{im}

\newcommand{\NN}{\mathbb{N}}
\newcommand{\ZZ}{\mathbb{Z}}
\newcommand{\Z}{\mathbb{Z}}

\newcommand{\RR}{\mathbb{R}}

\newcommand{\quot}[2]{\left.\raisebox{.2em}{$#1$}\middle/\raisebox{-.2em}{$#2$}\right.}

\newcommand{\Am}{\mathrm{Am}}

\newcommand{\FF}{\mathcal{F}}
\newcommand{\HH}{\mathcal{H}}
\newcommand{\EE}{\mathcal{E}}

\newcommand{\II}{\mathcal {I}}

\newcommand{\ZGF}{\mathbb{Z}[G/\mathcal{F}]}
\newcommand{\ZGH}{\mathbb{Z}[G/H]}
\newcommand{\ZGHH}{\mathbb{Z}[G/\HH]}

\newcommand{\RGF}{\mathbb{R}[G/\mathcal{F}]}

\newcommand{\Hom}{\mathrm{Hom}}
\newcommand{\Ext}{\mathrm{Ext}}

\DeclareMathOperator{\TC}{\mathsf{TC}}

\DeclareMathOperator{\cd}{\mathrm{cd}}

\DeclareMathOperator{\cat}{\mathsf{cat}}
\DeclareMathOperator{\catAm}{\mathsf{cat}_{\mathrm{Am}}}
\DeclareMathOperator{\secat}{\mathsf{secat}}

\newcommand{\IF}{I_{\mathcal{F}}}
\newcommand{\IH}{I_{\mathcal{H}}}

\newcommand{\pr}{\mathrm{pr}}

\newcommand{\GG}{\mathcal{G}}

\newcommand{\cell}{\mathrm{cell}}

\newcommand{\ZG}{\mathbb{Z}[G]}
\newcommand{\ZH}{\mathbb{Z}[H]}
\newcommand{\ZS}{\mathbb{Z}[S]}
\newcommand{\RG}{\mathbb{R}[G]}
\newcommand{\RS}{\mathbb{R}[S]}

\newcommand{\CH}{C^{\mathcal{H}}}
\newcommand{\comp}{\mathrm{comp}}

\newcommand{\op}{\mathrm{op}}

\newcommand{\uu}{\mathfrak{u}}
\newcommand{\cc}{\mathfrak{c}}

\theoremstyle{plain}
\newtheorem{theorem}{Theorem}[section]
\newtheorem{prop}[theorem]{Proposition}
\newtheorem{lemma}[theorem]{Lemma}
\newtheorem{corollary}[theorem]{Corollary}

\newtheorem*{theorem*}{Theorem}
\newtheorem*{cor*}{Corollary}

\newtheorem*{question}{Question}

\theoremstyle{definition}
\newtheorem{definition}[theorem]{Definition}

\theoremstyle{remark}
\newtheorem{remark}[theorem]{Remark}
\newtheorem{example}[theorem]{Example}

\numberwithin{equation}{section}

\begin{document}
	\setlength{\parindent}{0cm}
	

\title[On open covers satisfying $\pi_1$-constraints and bounded Adamson cohomology]{On open covers of aspherical spaces satisfying $\pi_1$-constraints and bounded Adamson cohomology}

	\author[A. Espinosa Baro]{Arturo Espinosa Baro}
	\address{Institute of Mathematics of the Polish Academy of Sciences, Jana i J\c edrzeja \'Sniadeckich 8, 00-656 Warsaw, Poland.}
	\email{arturo.espinosabaro@gmail.com, aespinosabaro@impan.pl}
	
	\author[S. Mescher]{Stephan Mescher}
	\address{Institut f\"ur Mathematik,
		Martin-Luther-Universit\"at Halle-Wittenberg, Theodor-Lieser-Strasse 5, 06120 Halle (Saale), Germany.}
	\email{stephan.mescher@mathematik.uni-halle.de}

	\subjclass{55M30 (55N35, 20J05, 20J06)}
	\keywords{$\FF$-category, amenable category, bounded cohomology, Adamson cohomology, subgroup families.}
	
	\begin{abstract}
		Given a topological space $X$ and a family of subgroups $\mathcal{F}$ of $\pi_1(X)$, the $\mathcal{F}$-category of $X$ is given as one less than the cardinality of the smallest cover of $X$ by open subsets whose fundamental groups lie in $\mathcal{F}$. In this article we study $\FF$-categories of aspherical spaces and obtain cohomological lower bounds, a maximality result and bounded cohomology classes whose vanishing properties are crucial for determining $\FF$-categories. For this purpose, we study the bounded Adamson cohomology of a family of subgroups and discuss universality properties of bounded cohomology classes. We apply our techniques to derive some applications to the study of the monotonicity of $\FF$-categories under degree-one maps between manifolds and to provide a counterexample to a question of Capovilla, L\"oh and Moraschini.	\end{abstract}
	
	\maketitle
	
	\setcounter{tocdepth}{1}

	\let\oldtocsection=\tocsection
	
	\let\oldtocsubsection=\tocsubsection
	
	\let\oldtocsubsubsection=\tocsubsubsection
	
	\renewcommand{\tocsection}[2]{\hspace{0em}\oldtocsection{#1}{#2}}
	\renewcommand{\tocsubsection}[2]{\hspace{1em}\oldtocsubsection{#1}{#2}}
	\renewcommand{\tocsubsubsection}[2]{\hspace{2em}\oldtocsubsubsection{#1}{#2}}

	\tableofcontents
	
	
		\section*{Introduction}
\subsection*{Counting elements of open covers}	Throughout the last decades, several numerical homotopy invariants of topological spaces have been defined via counting minimal cardinalities of open covers with special properties. All of these invariants are motivated by the Lusternik-Schnirelmann (LS) category of a space $X$, a classical invariant of this kind defined as the minimal cardinality of an open cover of $X$ each of whose elements is contractible in $X$. A detailed introduction to LS category is found in \cite{CLOT}. 

Among those numerical invariants are those defined via covers with fundamental group constraints. Given a space $X$ and a class of groups $\GG$, one defines the $\GG$-category $\cat_\GG(X)$ of $X$ as one less than the minimal cardinality of an open cover of $X$, for which the fundamental group of each element of the cover maps into an element of $\GG$ under the inclusion. Likewise, one defines the $\FF$-category $\cat_{\FF}(X)$, where $\FF$ is a family of subgroups of $\pi_1(X)$, i.e. a set of subgroups closed under taking subgroups and conjugates. The former was first introduced for the class of amenable groups under the name of amenable category by J. C. G\'omez-Larra\~naga, F. Gonz\'alez-Acu\~na and W. Heil in \cite{GLGAH13} and generalized by the same authors in \cite{GLGAH}, while the latter was first discussed by P. Capovilla, C. L\"oh and M. Moraschini in \cite{CLM}.

In another line of research motivated by LS category, M. Farber has defined the topological complexity $\TC(X)$ of a space $X$ in \cite{FarberTC} as a numerical invariant motivated by the motion planning problem from robotics. It has become an interesting and active topic of research and it has been studied, modified and generalized ever since. Topological complexity (TC) and most of its variants are special cases of the notion of sectional category (or Schwarz genus) of a fibration as introduced by A. Schwarz in \cite{SchwarzGenus}.

Connections between the two lines of research have been studied by Capovilla, L\"oh and Moraschini in \cite{CLM}, where several new results for amenable category have been established and its relation to topological complexity has been investigated. Another connection was provided by M. Farber, M. Grant, G. Lupton and J. Oprea in \cite{FGLO17}, who studied relations between $\TC(X)$ and the $\mathcal{D}$-category of $X \times X$, where $\mathcal{D}$ denotes the subgroup family generated by the diagonal subgroup of $\pi_1(X)\times \pi_1(X)$. \medskip

\emph{Convention.} \enskip Throughout this article, we will use the \emph{reduced} versions of the definitions of the abovementioned invariants, i.e. we will write $\cat_{\GG}(X)=n$ or $\cat_\FF(X)=n$, where $n \in \NN_0$, if $n+1$ is the minimal cardinality of an open cover of $X$ with the desired property, and $\cat_{\FF}(X)=\infty$ if no such finite cover exists. This convention is in line with many recent articles on TC and related topics and is more convenient for studying cohomological lower bounds, while compared to most articles on open covers with $\pi_1$-constraints, e.g. \cite{CLM} or \cite{GLGAH}, our definition differs by $-1$ from the definitions used therein.

\subsection*{Invariants of aspherical spaces} We recall that a topological space $X$ is \emph{aspherical} if it is path-connected and if its homotopy groups satisfy $\pi_i(X)=\{0\}$ for all $i \geq 2$. As a consequence of Whitehead's theorem, any homotopy invariant of an aspherical CW complex $X$ depends only on its fundamental group $G:= \pi_1(X)$. The Eilenberg--Ganea theorem from \cite{EG57} together with the Stallings--Swan theorem states that $\cat(X) = \cd(\pi_1(X))$ for any such $X$, where $\cd$ denotes the cohomological dimension. 

Motivated by this result, it is	 natural to look for algebraic expressions of numerical homotopy invariants of aspherical spaces. For TC and its variants many estimates and computations for special cases have been obtained, see e.g. \cite{FGLO17}, \cite{FarMes20}, \cite{Grant22} and \cite{EFMO} and the references given therein. However, general algebraic formulas remain to be found in most of these settings. Concerning $\GG$-categories and $\FF$-categories, much less has been explored. A lower bound on the $\FF$-category of an aspherical space in terms of Bredon cohomology has been provided by Capovilla, L\"oh and Moraschini in \cite[Corollary 7.7]{CLM}. They further ask in the same article if the amenable category of a closed aspherical manifold can be described in terms of the non-vanishing of comparison maps from bounded cohomology.

Reviewing the existing literature one further realizes that the common methods used to study $\FF$-categories on the one hand and TC-style invariants of aspherical spaces on the other hand differ significantly. In this article we aim to bridge this methodological gap and to transfer algebraic methods used to study the TC of aspherical spaces to the setting of $\FF$-categories and to the context of bounded cohomology. Our approach is motivated by methods used in \cite{FarMes20}, \cite{BCE22} and \cite{EFMO}, which are in part based on A. Dranishnikov and Y. Rudyak's proof of the Berstein--Schwarz theorem from \cite{DranishRudyak}.

\subsection*{Amenable category and bounded cohomology}  The main reason for the interest in amenable category is its connection to simplicial volume and bounded cohomology, which is a very active field of research in geometric topology. It was initially sparked by M. Gromov's article \cite{Gro82}, see also \cite{FrigerioBook} for an algebraically-minded exposition.  The importance of amenable covers, i.e. covers by open subsets with amenable fundamental groups, lies in the amenable nerve theorem, which was first shown by Gromov and later reproven by C. L\"oh and R. Sauer in \cite{LoehSauer} using classifying spaces of families of subgroups. An interesting open problem in this context is the following question of Gromov.

\begin{question}[Gromov, {\cite[p. 232]{Gro93}}]
Let $M$ be a closed oriented aspherical manifold whose simplicial volume vanishes. Does the Euler characteristic $\chi(M)$ vanish as well?
\end{question}

A very readable and thorough survey about this question and its context has been written by C. L\"oh, M. Moraschini and G. Raptis, see \cite{LMR}. Its relation to amenable category is given by a result of R. Sauer. In \cite[Theorem B]{Sauer}, Sauer has shown for a closed oriented aspherical manifold $M$ that if $\catAm(M)<\dim M$, then all $L^2$-Betti numbers of its universal cover vanish, which implies that $\chi(M)=0$ by a result from $L^2$-cohomology. 

Here, it is important to have in mind that by a general upper bound for LS categories the inequality $\cat_{\Am}(M) \leq \dim M$ always holds in this setting. Thus, Sauer's theorem can be interpreted as a result about the non-maximality of amenable category. In the light of Gromov's question, it is thus interesting to study the relation between (non-)maximality results of amenable category and simplicial volume.

\subsection*{Adamson cohomology and (Adamson)-Johnson classes } In \cite{Adamson}, I. Adamson introduced a relative version of group cohomology with respect to a given subgroup, henceforth called Adamson cohomology. It was used in \cite{BCE22} by Z. B\l{}aszczyk, J. Carrasquel-Vera and the first-named author of this article to provide lower bounds for sectional categories of subgroup inclusions. 

The construction of Adamson cohomology can be generalized to a relative cohomology of a group with respect to a family of subgroups. This notion of Adamson cohomology for families was introduced by M. Grant, K. Li, E. Meir and I. Patchkoria in \cite{GLMP}. It provides an algebraic model for the classifying space of a family of subgroups and can be identified with a special case of Bredon cohomology, see \cite[Proposition 2.12]{GLMP}.

In this article we will introduce a notion of \emph{bounded} Adamson cohomology of families of subgroups. While the definition of bounded cohomology of discrete groups in terms of resolutions is a standard construction, see e.g. \cite[Chapter 4]{FrigerioBook}, the definition of bounded Adamson cohomology requires a type of relatively injective resolution introduced by K. Li in \cite{LiBounded}. 

We will further introduce so-called Berstein cohomology classes and Adamson canonical classes relative to families of subgroups and their counterparts in bounded cohomology. The former have been introduced in \cite{BCE22} relative to a single subgroup and our definition is a straightforward generalization. 

In the bounded cohomology of a discrete group $G$ we will introduce \emph{Johnson classes}, which are bounded cohomology classes of the form 
$$\gamma_k \in H^k_b(G;\II^k_G),\quad k \in \NN,$$ 
where $\II^k_G$ denotes the bidual space of the $k$-fold tensor power of the real augmentation ideal of $G$, i.e. the kernel of the augmentation $\varepsilon_G\colon  \RG \to \RR$.

The class $\gamma_1$ corresponds to a class introduced by B. Johnson in \cite{Johnson}. As Johnson has shown, this class detects the amenability of a group, see \cite[Section 3.4]{FrigerioBook} for an exposition. Given a family of subgroups $\FF$ of $G$, we shall further introduce bounded Adamson cohomology classes
$$\Gamma_{\FF,k} \in H^k_b([G:\FF];\II^k_\FF),\quad k \in \NN,$$ 
where $\II^k_\FF$  is a certain Banach $G$-module associated with $\FF$. As we shall see below, these classes are closely related to $\FF$-categories.

\subsection*{Main results} Throughout the following let $G$ be a group and let $\FF$ be a family of subgroups of $G$. Using methods from obstruction theory we obtain a complete cohomological characterization of the maximality of $\FF$-categories for aspherical spaces, as well as an analogue of such a result for spaces that are not necessarily aspherical.  

\begin{theorem*}[Theorem \ref{thm:CostaFarberCatF} and Corollary \ref{cor:CostaFarberCatF}]
Let $G$ be a discrete group, $\FF$ a family of subgroups of $G$ and $\omega_\FF \in H^1(G;I_\FF)$ the Berstein class of $G$ relative to $\FF$. Let $X$ be a connected CW complex with $\pi_1(X)=G$ and $\dim(X) \geq 3$. Let $\mathfrak{c}_X\colon X \to BG$ be a classifying map of the universal cover of $X$. Then
$$\cat_\FF(X) = \dim(X) \qquad \Leftrightarrow \quad \mathfrak{c}_X^*(\omega_\FF^{\dim(X)}) \neq 0 \in H^{\dim(X)}(X;\IF^{\dim(X)})$$
where $\omega_\FF^{k}$ denotes the $k$-fold power of $\omega_\FF$ with respect to the cup product.
\medskip

Furthermore, if $G$ is torsion-free and $X$ is aspherical with $\dim X=\cd(G)$, then $$\cat_\FF(X) = \cd(G) \qquad \Leftrightarrow \quad \omega_\FF^{\cd(G)} \neq 0 \in H^{\cd(G)}(G;\IF^{\cd(G)}).$$ 
\end{theorem*}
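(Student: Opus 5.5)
The plan is to follow the Costa--Farber / Dranishnikov--Rudyak strategy, with $\cat_\FF(X)$ realised as a sectional category. Let $E_\FF G$ be the classifying space for the family $\FF$. The first step is to recognise $\cat_\FF(X)$ as the sectional category of a fibration $p_\FF\colon X\times_G E_\FF G \to X$. Equivalently, one pulls back along $\mathfrak{c}_X$ the map $B_\FF\colon EG\times_G E_\FF G \to BG$, whose fibre is $E_\FF G$. An open set $U$ whose fundamental group image lies in $\FF$ is exactly one over which the covering restricted to $U$ admits an equivariant map to $E_\FF G$, i.e.\ a local section of $p_\FF$. This is the standard argument via the universal property of $E_\FF G$, and I take it as the known identification.

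The second step is to use the Schwarz join construction: $\secat(p_\FF)\le n$ if and only if the $(n+1)$-fold fibrewise join $p_\FF^{*(n+1)}$ admits a section. Model $E_\FF G$ as the simplicial set or join built from $\ZGF=\bigoplus_{H\in\FF}\Z[G/H]$. Then $\Delta_\FF=\ker(\varepsilon\colon \ZGF\to\Z)$, and the fibre of the join has its first nonvanishing homology $\tilde H_{n}(\ast^{n+1}E_\FF G)\cong I_\FF^{\otimes(n+1)}$ in degree $n$. Here $I_\FF$ denotes the augmentation ideal with the appropriate $\Z$-freeness convention. By the join connectivity estimate the fibre is $(n-1)$-connected, so the primary and only relevant obstruction to a section over an $(n+1)$-dimensional complex lies in $H^{n+1}(X;I_\FF^{n+1})$.

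The third step is to identify this obstruction. For $n=0$ the primary obstruction to a section of $p_\FF$ is the class $\omega_\FF$ pulled back. Here we use the Berstein-class description: $\omega_\FF$ is represented by the extension $0\to I_\FF\to\ZGF\to\Z\to0$, which is the first obstruction since $E_\FF G$ is connected and $H_0$ gives $\ZGF$. The Dranishnikov--Rudyak/Schwarz multiplicativity lemma says that the primary obstruction of a fibrewise join is the cup (tensor) product of the primary obstructions. This gives the obstruction $\mathfrak{c}_X^*(\omega_\FF)^{n+1}$ for the $(n+1)$-fold join.

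The fourth step is to conclude. Set $n+1=\dim X=:d$. The general upper bound gives $\cat_\FF(X)\le d$. The fibre of $p_\FF^{*d}$ is $(d-2)$-connected, so the primary obstruction $\mathfrak{c}_X^*(\omega_\FF^d)\in H^d(X;I_\FF^d)$ is the complete obstruction on a $d$-dimensional complex. Hence a section exists, i.e.\ $\cat_\FF(X)\le d-1$, if and only if this class vanishes. The hypothesis $\dim X\ge3$ ensures the fibre is simply connected, which makes the obstruction theory with untwisted-by-fibre coefficients valid. Simple connectivity needs $d-2\ge1$.

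For the corollary, take $X$ aspherical, so $\mathfrak{c}_X$ is a homotopy equivalence. By Eilenberg--Ganea we may take $X$ to be a CW model of dimension $\cd(G)$, with the usual exception $\cd=2$; that case needs a separate treatment, or an appeal to the first part applied with care. Torsion-freeness is used to ensure the appropriate finite-dimensional models and the relation $\cd$ versus $\dim$.

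The main obstacle is the third step. It requires checking that the join's primary obstruction equals the cup power $\omega_\FF^{d}$ with coefficients in $I_\FF^{\otimes d}$, and that $I_\FF$ is $\Z$-free, so that the Künneth identification of the homology of the join fibre holds. I would first try to verify this directly on the bar-type resolution for $\ZGF$, mimicking \cite{BCE22}.
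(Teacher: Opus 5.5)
Your first step uses the wrong fibration, and the error carries through the rest of the argument. Since $\{1\}\in\FF$, the space $E_\FF G$ is non-equivariantly contractible. Hence $\widetilde X\times_G E_\FF G\to X$ (equivalently $EG\times_G E_\FF G\to BG$) has contractible fibre, admits a global section, and has sectional category $0$ for every $X$. Your description of local sections does not detect the $\pi_1$-condition at all: every $p^{-1}(U)$ admits a $G$-map to $E_\FF G$, namely the restriction of $\uu\colon\widetilde X\to E_\FF G$. For the same reason $\ast^{n+1}E_\FF G$ is contractible. So the claim $\tilde H_n(\ast^{n+1}E_\FF G)\cong I_\FF^{\otimes(n+1)}$ is false, and ``$E_\FF G$ is connected and $H_0$ gives $\ZGF$'' contradicts itself.

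The computations you actually write down are the ones for the discrete $G$-set $G/\FF$, the $0$-skeleton of $E_\FF G$, and that is the correct fibre. Take the covering $q_\FF\colon\widetilde X\times_G(G/\FF)\to X$.
\begin{itemize}
\item Over a connected open $U$ it has a section iff the image of $\pi_1(U)$ lies in a conjugate of some $H\in\FF$, so $\cat_\FF(X)=\secat(q_\FF)$.
\item The fibre of the $d$-fold join is $\ast^d(G/\FF)$. It is $(d-2)$-connected, with $\pi_{d-1}\cong\tilde H_{d-1}\cong\IF^{\otimes d}$ when $d\ge 3$.
\item The primary obstruction of $q_\FF$ in $H^1(X;\IF)$ is $\mathfrak{c}_X^*(\omega_\FF)$, represented by the cocycle $g_0\otimes(g_1-1)\mapsto g_1H-H$ (cf.\ $(j_H)_*(\omega_H)=\omega_\FF$).
\end{itemize}
With this repair your argument becomes the Costa--Farber/BCE22 join argument and is valid. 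It is a different route from the paper's. You still rely on Schwarz's product formula for primary obstructions, applied in the degenerate case of disconnected fibres, and you must check that the coefficient pairing is the tensor product used for cup products.

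The paper avoids joins altogether. It uses \cite[Proposition 7.5]{CLM}: compress $\uu\colon\widetilde X\to E_\FF G$ into the $(k-1)$-skeleton, $k=\dim X$, of the Arciniega--Cisneros model. The direction ``$\Leftarrow$'' comes from Proposition \ref{prop:lower-bound-catF}. For ``$\Rightarrow$'', the compression obstruction is identified directly as $\tau_*(\mathfrak{c}_X^*\omega_\FF^k)$, via the chain homotopy equivalence $\psi_\FF$ between the relative standard resolution and $\ZGF\otimes\IF^*$.

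Finally, the corollary needs no Eilenberg--Ganea or $\cd=2$ discussion. $X$ is given with $\dim X=\cd(G)\ge 3$, and asphericity makes $\mathfrak{c}_X^*$ an isomorphism.
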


We refrain from giving the definition of the coefficient module $I_\FF$ at this point. The theorems are a generalization of previous results on topological complexity and sectional categories of subgroup inclusions, see \cite[Theorem 7]{Costa} and \cite[Theorem 2.7]{BCE22}. The upshot is that we can describe $\cat_\FF(X)$ by the existence of equivariant maps between $G$-CW complexes, whose obstruction classes can be identified with the powers of $\omega_\FF$.\medskip

Bridging the gap between (Adamson-)Johnson classes and the $\FF$-categories of aspherical spaces, we further derive a lower bound for the latter in terms of the former, namely the following.	
	\begin{theorem*}[Theorem \ref{TheoremcatFFlowergamma}]
		Let $\FF$ be an $H^*_b$-admissible family of subgroups of $G$ and let $X$ be an aspherical space with $\pi_1(X)=G$. Then 
	$$\cat_\FF(X) \geq \sup \{k \in\NN \mid \comp^k_{\II_G^k}(\gamma_k) \neq 0\}.$$
	\end{theorem*}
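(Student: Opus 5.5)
We prove the contrapositive: if $\cat_\FF(X)=n<k$, then $\comp^k_{\II_G^k}(\gamma_k)=0$. The route is to show that $\gamma_k$ is the image of the Adamson--Johnson class $\Gamma_{\FF,k}$ under a change-of-coefficients and forgetful map. We then show that a cover of $X$ by $n+1$ open sets with fundamental groups in $\FF$ makes the $(n+1)$-st power of the relevant bounded class vanish after applying the comparison map.

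\emph{Step 1 (multiplicativity).} I will first check that the Johnson classes satisfy $\gamma_k=\gamma_1^{\cup k}$. Here the cup product is taken in bounded cohomology with the coefficient pairing $\II_G^1\otimes\cdots\otimes\II_G^1\to\II_G^k$, and the identity should come from the explicit cocycle $(g_0,g_1)\mapsto g_1-g_0$ and its tensor powers. Similarly, $\Gamma_{\FF,k}=\Gamma_{\FF,1}^{\cup k}$ in bounded Adamson cohomology. Under the canonical maps $H^*_b([G:\FF];-)\to H^*_b(G;-)$ and $\II_\FF\to\II_G$ (or its reverse, depending on the definition), $\Gamma_{\FF,1}$ should be compatible with $\gamma_1$.

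\emph{Step 2 (vanishing on cover elements).} Let $U_0,\dots,U_n$ be the open cover, and let $H_i\in\FF$ contain the image of $\pi_1(U_i)\to G$. By asphericity we may identify $H^*(X;-)=H^*(G;-)$ via the classifying map. The goal is to show that $\comp(\gamma_1)$ restricts to zero in $H^1(U_i;\II_G)$ for each $i$. The restriction to $U_i$ factors through $H^1(H_i;\II_G)$, or more precisely through Bredon/Adamson cohomology relative to $\FF$. The $H^*_b$-admissibility of $\FF$ is presumably exactly what ensures two things: that the bounded Adamson resolution computes bounded cohomology relative to $\FF$, and that the class $\gamma_1$ comes from a class which is trivial on each member of $\FF$. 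For example, restricting $\gamma_1$ to $H\in\FF$, its preimage $\Gamma_{\FF,1}$ becomes a coboundary because $\II_\FF$ kills $\RR[G/H]$-augmentations. So for each $i$ there is a lift $\alpha_i\in H^1(X,U_i;\II_G)$ of $\comp(\gamma_1)$.

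\emph{Step 3 (cup length argument).} The cup product of the relative classes $\alpha_0\cup\cdots\cup\alpha_n$ lies in $H^{n+1}(X,\bigcup_i U_i;\II_G^{n+1})=0$. Using the naturality of the comparison map with cup products, this gives $\comp^{n+1}(\gamma_{n+1})=\comp(\gamma_1)^{\cup(n+1)}=0$. For $k>n$ we then have $\gamma_k=\gamma_{n+1}\cup\gamma_{k-n-1}$, so $\comp^k(\gamma_k)=0$ as well. Here I use that the comparison map is multiplicative and that the coefficient pairing $\II^{n+1}_G\otimes\II^{k-n-1}_G\to\II^k_G$ exists on the bidual level. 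This proves the contrapositive.

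\emph{Main obstacle.} The hardest part is Step 2, namely the vanishing of $\comp(\gamma_1)$ on subsets whose fundamental group lies in $\FF$. This requires a careful interplay between bounded Adamson cohomology and ordinary cohomology of the cover pieces, and the admissibility hypothesis must be invoked precisely there. I would first try to realize $\comp(\gamma_1)$ as the image of the Berstein class $\omega_\FF$ under a coefficient map $I_\FF\to\II_G$ sending $gH\mapsto$ a suitable element. The analogue of the Costa--Farber argument, $\cat_\FF(X)\ge$ the nilpotency of $\omega_\FF$ (the easy direction of Theorem~\ref{thm:CostaFarberCatF}), would then transfer directly by naturality. A secondary technical issue is making the coefficient pairings on biduals well defined so that the cup products are compatible.
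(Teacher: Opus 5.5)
\textbf{Gap in Step 2.} Step 2 carries all the content of the theorem, and the mechanism you sketch for it runs in the wrong direction. The class $\Gamma_{\FF,1}$ is not a preimage of $\gamma_1$. By Proposition~\ref{PropClassRelations}.a), $\rho^1_{b,\FF}(\Gamma_{\FF,1})=\gamma_{\FF,1}=(\psi_{\FF,1})_*(\gamma_1)$. So knowing that $\gamma_{\FF,1}$ (or $\omega_\FF$) dies on members of $\FF$ says nothing about $\gamma_1$, unless you also have a $G$-map $\II_\FF\to\II_G$ carrying $\gamma_{\FF,1}$ back to $\gamma_1$.

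Nor can the vanishing be taken for granted. $H^*_b$-admissibility only says that $\rho^k_{b,\FF}$ is surjective for dual coefficients; it does not say that some resolution ``computes'' anything. Without it, Step 2 is false: $\comp^1_{\II_G}(\gamma_1)=(\vartheta_1)_*(b_\RR)$ is the class of the extension $0\to\II_G\to(\RG)''\to\RR\to 0$. Its restriction to a subgroup $K$ vanishes only if $(\RG)''$ contains a $K$-invariant element of augmentation one, which happens only for amenable $K$. Your closing suggestion, a coefficient map $I_\FF\to\II_G$ taking $\omega_\FF$ to $\comp(\gamma_1)$, is the right target. But ``sending $gH$ to a suitable element'' leaves open exactly how admissibility produces such a map.

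\textbf{Missing ingredient.} What you need is the universality of Adamson--Johnson classes. Since $\II^k_G=\big((J^k_G)'\big)'$ is a dual module, admissibility gives $u\in H^k_b([G:\FF];\II^k_G)$ with $\rho^k_{b,\FF}(u)=\gamma_k$. Theorem~\ref{TheoremBoundedUniv}.a) then gives $F\in B_G(\II^k_\FF,\II^k_G)$ with $u=F_*(\Gamma_{\FF,k})$, hence $\gamma_k=F_*(\gamma_{\FF,k})$. Combining \eqref{EqCompPushfor}, Proposition~\ref{PropClassRelations}.c) and Lemma~\ref{LemmaRealBerstein},
$$\comp^k_{\II^k_G}(\gamma_k)=(F\circ\vartheta_{\FF,k}\circ\alpha^k_\FF)_*(\omega^k_\FF).$$
This is the paper's argument. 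The paper then concludes with Proposition~\ref{prop:lower-bound-catF}, i.e.\ with the characterization of $\cat_\FF$ via $G$-maps $\widetilde X\to(E_\FF G)_{k-1}$ from \cite{CLM}.

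\textbf{How your Step 3 fits.} Once you have this identity, your cup-length Step 3 is a valid and more elementary substitute for that last step, provided you run it on $\omega_\FF$ rather than on $\gamma_1$:
\begin{enumerate}
\item $\omega_\FF$ restricts to zero on every $K\in\FF$, because $0\to I_\FF\to\ZGF\to\ZZ\to 0$ splits $K$-equivariantly via $1\mapsto K\in G/K\subset G/\FF$.
\item A degree-one class restricts to each path component of $U_i$ through the image of that component's fundamental group, which lies in $\FF$.
\item $\omega^k_\FF=\omega_\FF^{\cup k}$ by Corollary~\ref{Corphikrepr}, so relative cup products kill $\omega^k_\FF$ for $k>\cat_\FF(X)$.
\end{enumerate}
This also makes your Step 1 unnecessary: you need neither bounded cup products nor pairings on bidual spaces, because only the comparison images enter the argument.
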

We will recall the notion of $H^*_b$-admissibility in the course of the article. Here, we want to point out that the family of \emph{amenable} subgroups of $G$ is indeed $H^*_b$-admissible, so that the theorem provides a lower bound for amenable category. Note that the result is similar to Gromov's Vanishing Theorem which only deals with the amenable family, but extends beyond the aspherical case. An analogous lower bound for $\cat_\FF$ for $H^*_b$-admissible families by comparison maps with $\RR$-coefficients was obtained by C. L\"oh and R. Sauer in \cite[Theorem 5.3]{LoehSauer}. We will further provide in Example \ref{ExampleCircleBundle} a case of an aspherical space for which $\comp^2_{\II^2_G}(\gamma_2)\neq 0$, but $\comp^2_\RR =0$, namely the total space of a circle bundle over a hyperbolic surface with non-vanishing Euler class. This shows that our result indeed improves the one of L\"oh and Sauer and provides a counterexample to a question raised in \cite[Remark 3.16]{CLM}.

Besides connections between (Adamson-)Johnson classes, Berstein and Adamson canonical classes, the crucial observation for the proof of this lower bound is that (Adamson-)Johnson classes are \emph{universal} for dual coefficient modules in the sense of the following result. 

\begin{theorem*}[Theorem \ref{TheoremBoundedUniv}]
Let $V$ be a normed $G$-module and let $k \in \NN$.
	\begin{enumerate}[a)]
		\item For each $u \in H^k_b([G:\FF];V')$ there exists a bounded $\RG$-homomorphism $\psi\colon \II^k_{\FF}\to V'$ with 
		$u = \psi_*(\Gamma_{\FF,k}).$
		\item For each $v \in H^k_b(G;V')$ there exists a bounded $\RG$-homomorphism $\eta\colon \II^k_G \to V'$ with $v=\eta_*(\gamma_k)$.
	\end{enumerate}
\end{theorem*}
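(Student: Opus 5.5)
We prove both parts by one argument, and it suffices to treat a), since b) is the special case $\FF=\{1\}$. In that case $[G:\FF]$-cohomology is ordinary bounded cohomology, $\II^k_\FF=\II^k_G$ and $\Gamma_{\FF,k}=\gamma_k$. The model is Johnson's classical argument for $k=1$: the class $\gamma_1$ is represented by the cocycle $(g_0,g_1)\mapsto g_1-g_0$ with values in the augmentation ideal. We will build the higher classes as iterated cup products of this degree one class and then use a dimension-shifting induction.

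\textbf{Step 1 (degree one).} We use the short exact sequence of Banach $G$-modules
\[
0\to \II_\FF \to \ell^1\text{-type permutation module }\RR[G/\FF]\xrightarrow{\varepsilon}\RR\to 0 .
\]
Dualising gives a sequence whose middle term, the dual of the permutation module over $G/\FF$, is relatively injective in the sense of Li. For this we use that bounded Adamson cohomology is computed by Li's relatively injective resolutions, so $H^{\ge1}_b([G:\FF];-)$ vanishes on such modules. The long exact sequence then turns $\Gamma_{\FF,1}$ into the connecting image of a universal element. Concretely, $\Gamma_{\FF,1}$ is represented by the cocycle $(g_0,g_1)\mapsto g_1H-g_0H$ in suitable coordinates. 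It follows that every class $u\in H^1_b([G:\FF];V')$ equals $\psi_*(\Gamma_{\FF,1})$, where $\psi\colon \II_\FF\to V'$ is given by restricting a bounded primitive. More precisely, a bounded cocycle $f$ with $f(g_0,g_1)=$ ``$\phi(g_1)-\phi(g_0)$'' defines $\psi(\sum a_i g_iH)=\sum a_i\phi_0(g_iH)$. Here $\phi_0$ is obtained by fixing a base point, and boundedness follows from the sup bound on $f$. Since $V'$ is a dual space, $\psi$ extends from $\II_\FF$ to its bidual.

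\textbf{Step 2 (dimension shifting).} Tensoring the sequence above with $\II^{k-1}_\FF$ (completed projective tensor products, followed by biduals) yields a sequence
\[
0\to (\II_\FF\hat\otimes \II^{k-1}_\FF) \to \RR[G/\FF]\hat\otimes \II^{k-1}_\FF\to \II^{k-1}_\FF\to 0 .
\]
Dualising into $V'$ uses the adjunction $\Hom(A\hat\otimes B, V')\cong\Hom(A,\Hom(B,V'))$, where $\Hom(B,V')=(B\hat\otimes V)'$ is again a dual module. The middle term becomes relatively injective, so we get a surjective connecting map
\[
\delta\colon H^{k-1}_b([G:\FF];\Hom(\II_\FF,V'))\to H^k_b([G:\FF];V').
\]
We then define $\Gamma_{\FF,k}=\Gamma_{\FF,1}\cup\Gamma_{\FF,k-1}$, equivalently the iterated connecting image. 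Given $u$ in degree $k$, we lift it to $u'$ in degree $k-1$ with coefficients in the dual module $\Hom(\II_\FF,V')$. By induction, $u'=\psi'_*(\Gamma_{\FF,k-1})$ for some bounded $\psi'\colon\II^{k-1}_\FF\to\Hom(\II_\FF,V')$. The adjoint of $\psi'$ is a map $\psi\colon \II^k_\FF\to V'$, after extending to the bidual by weak-$*$ density and Goldstine. Naturality of connecting maps and of cup products under coefficient maps then gives $u=\psi_*(\Gamma_{\FF,k})$.

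\textbf{Main obstacle.} The delicate points are functional-analytic:
\begin{itemize}
\item verifying that the dualised middle terms are relatively injective in Li's sense for the family $\FF$, which is where $H^*_b$-admissibility-type arguments and the permutation-module structure of $\RR[G/\FF]$ enter;
\item checking that bounded homomorphisms defined on $\II_\FF^{\hat\otimes k}$ extend to the bidual $\II^k_\FF$ compatibly with $\Gamma_{\FF,k}$.
\end{itemize}
For the first, we would try the explicit contracting homotopy given by averaging over the fixed cosets. For the second, we would use that $V'$ is dual, so that every bounded map into $V'$ factors through the canonical weak-$*$ continuous extension to the bidual.
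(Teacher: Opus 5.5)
Your Step 1 is sound in substance. Two notational slips: the kernel of $\varepsilon_\FF$ is $J_\FF$ (or its $\ell^1$-completion), not the bidual $\II_\FF$; and $\psi$ is first defined on $J_\FF$ and only then extended to $\II_\FF=J_\FF''$, using that $V'$ is a dual space.

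The gap is in Step 2. There you replace the class in the statement, $\Gamma_{\FF,k}=[j_{\FF,k}]_b$ with coefficients in $\II^k_\FF=(J^k_\FF)''$, by ``$\Gamma_{\FF,1}\cup\Gamma_{\FF,k-1}$, equivalently the iterated connecting image''. That class has three problems:
\begin{itemize}
\item it a priori lives in a different coefficient module (a completed tensor product of biduals);
\item it uses a cup product on bounded Adamson cohomology that is never defined;
\item it is never shown to agree with $[j_{\FF,k}]_b$.
\end{itemize}
Your closing appeal to ``naturality of connecting maps and cup products'' silently needs exactly two facts. First, that $\delta$ for $0\to W\to B(\RGF,W)\to B(J_\FF,W)\to 0$ with $W=\II^k_\FF$ sends the image of $\Gamma_{\FF,k-1}$ under a suitable coefficient map $\II^{k-1}_\FF\to B(J_\FF,\II^k_\FF)$ to $\Gamma_{\FF,k}$ (up to sign). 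Second, that your bidual extension $\psi$ of the adjoint of $\psi'$ is compatible with $\psi'$ on the elements where $\Gamma_{\FF,k-1}$ takes values. Both are cochain-level computations you have not carried out, and they are the entire content of the induction step. You also rely on a long exact sequence in bounded Adamson cohomology, which the paper does not provide; it requires the coefficient sequence to admit norm-controlled $H$-equivariant splittings for every $H\in\FF$. The obstacle you flag instead, relative $\FF$-injectivity of $B(\RGF,W)$, is simply Proposition \ref{PropCFVinjective} with $k=0$, and $H^*_b$-admissibility plays no role.

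The missing observation is that no induction is needed. The resolution $C^*_b([G:\FF];V')=B_G(\RGF\otimes_\RR J^*_\FF,V')$ already has the dimension shift built in, so your degree-one argument runs verbatim in degree $k$:
\begin{itemize}
\item Since $d^\FF_{k+1}=f^\FF_k\circ g^\FF_{k+1}$ with $g^\FF_{k+1}$ surjective, a bounded cocycle $c$ vanishes on $\im f^\FF_k=\ker g^\FF_k$.
\item Hence $c=\hat c\circ g^\FF_k$ for a $G$-map $\hat c\colon J^k_\FF\to V'$.
\item Evaluating at $H\otimes y$ gives $\|\hat c(y)\|\le \|c\|_\op\,\|y\|$, so $\hat c$ is bounded.
\item Extending $\hat c$ to $\psi\in B_G(\II^k_\FF,V')$ by Lemma \ref{LemmaExtendBidualNormed} gives $c=\psi\circ\vartheta_{\FF,k}\circ g^\FF_k=\psi\circ j_{\FF,k}$ on the nose, so $u=\psi_*(\Gamma_{\FF,k})$.
\end{itemize}
This is the paper's proof. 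Part b) follows in the same way, or as the case $\FF=\{1\}$, as you say.
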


In addition to the above lower bound we also provide new upper bounds for $\FF$-categories of aspherical spaces in Theorem \ref{TheoremcatGupper} and Proposition \ref{PropcatGnormal} by relating them to sectional categories of subgroup inclusions and applying results from \cite{EFMO}. We make use of our techniques to establish as well a criterion to study the monotonicity of $\FF$-categories under maps between manifolds, in terms of pullbacks of powers of relative Berstein classes by classifying maps. This yields results on the monotonicity of $\GG$-categories with respect to degree-one maps between closed connected oriented $n$-manifolds.

\begin{theorem*}[Corollary \ref{cor:monotonicitydegoneF}, Corollary \ref{cor:monotonicitydegoneG}]
Let $\GG$ be an isq-class of groups, let $n \in \NN$ and let $f \colon M \to N$ be a map  between closed connected oriented smooth $n$-manifolds. Put $k := \cat_{\GG}(N)$ and let $\GG_N=\{H \leq \pi_1(N) \mid H \in \GG\}$. Then $\cat_{\GG}(M) \geq \cat_{\GG}(N) $ if one of the following conditions holds:
\begin{enumerate}[(a)]
	\item $\cc^*_N(\omega^k_{\GG_N}) \neq 0$ and $\deg f= \pm 1$,
	\item $\cc^*_N(\omega^k_{\GG_N, \RR}) \neq 0$ and $\deg f \neq 0$. 
\end{enumerate}   
 If $n \geq 3$ and $\deg f = \pm 1$, then $\cat_\GG(N) = n$ further implies $\cat_\GG(M) = n$. 
\end{theorem*}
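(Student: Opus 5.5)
The plan is to derive the monotonicity statement from the cohomological characterization of $\cat_\FF$ via powers of Berstein classes. The general principle is that $\cat_\FF(X)\ge k$ whenever $\cc_X^*(\omega_\FF^k)\neq 0$; this is the easy direction of the obstruction-theoretic argument behind Theorem \ref{thm:CostaFarberCatF}, and it holds both for $\omega_\FF$ with $\IF$-coefficients and for its real version $\omega_{\FF,\RR}$. So it suffices to show $\cc_M^*(\omega^k_{\GG_M})\neq 0$, or its real analogue.

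First, relate the families on $M$ and $N$. Put $\varphi=f_*\colon \pi_1(M)\to\pi_1(N)$. Because $\GG$ is an isq-class, closed under isomorphisms, subgroups and quotients, $\varphi$ maps every $H\in\GG_M$ to $\varphi(H)\in\GG_N$. Hence $\varphi$ induces a map of $G$-sets $G/\FF\to G'/\FF'$ and a $\varphi$-equivariant map $I_{\GG_M}\to\varphi^*I_{\GG_N}$ of augmentation modules. By naturality of the Berstein class, $\omega_{\GG_M}$ maps to $\varphi^*\omega_{\GG_N}$, and the same holds for cup powers. Since $\cc_N\circ f\simeq B\varphi\circ \cc_M$, this gives
\[ \mu_*\bigl(\cc_M^*(\omega^k_{\GG_M})\bigr)=f^*\cc_N^*(\omega^k_{\GG_N})\in H^k(M;f^*\IF^k), \]
where $\mu$ is the coefficient map. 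So it is enough to show that $f^*\cc_N^*(\omega^k_{\GG_N})\neq 0$.

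That nonvanishing is a degree argument with local coefficients. Poincaré duality with twisted coefficients, together with the umkehr (transfer) map $f_!$ satisfying $f_!f^*(x)=\deg(f)\,x$, shows that $f^*$ is injective on cohomology with arbitrary local coefficients when $\deg f=\pm1$. This gives (a). For (b), the coefficients are real vector spaces, so multiplication by $\deg f\neq 0$ is invertible and $f^*$ is again injective. This transfer construction for twisted coefficient systems that are pulled back along $f$ is the main technical step. I would build it as the composite of $f^*$-compatible cap products with the fundamental classes and duality isomorphisms on $M$ and on $N$, using $f_*[M]=\deg f\,[N]$. The real case also needs a real Berstein class whose vanishing is natural in the same way.

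For the final claim, suppose $n\ge3$ and $\cat_\GG(N)=n$. By Theorem \ref{thm:CostaFarberCatF}, this means $\cc_N^*(\omega^n_{\GG_N})\neq0$, so (a) gives $\cat_\GG(M)\ge n$. The general upper bound $\cat_\GG(M)\le\dim M=n$ then yields equality.
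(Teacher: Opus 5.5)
Your proposal is correct and follows essentially the same route as the paper. The isq-closure gives $(\GG(M),\GG(N))$-compatibility of $f$, and naturality of the powers of the relative Berstein classes gives $f^*\cc_N^*(\omega^k_{\GG_N})=(F_k)_*(\cc_M^*(\omega^k_{\GG_M}))$. The umkehr map $f^!=P_N^{-1}\circ f_*\circ P_M$ with $f^!f^*=\deg f$ makes $f^*$ injective, and the maximal case combines Theorem \ref{thm:CostaFarberCatF} with $\cat_\GG(M)\le\cat(M)\le n$.

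Two small differences in sourcing. The lower bound for general $k$ is Proposition \ref{prop:lower-bound-catF}, proved via the skeleton factorization of \cite[Proposition 7.5]{CLM} and applied using triangulability of smooth manifolds, rather than the obstruction theory of Theorem \ref{thm:CostaFarberCatF}. In (b), the paper does not prove a separate real lower bound; it passes from $\cc_M^*(\omega^k_{\GG_M,\RR})\neq0$ to $\cc_M^*(\omega^k_{\GG_M})\neq0$ via Lemma \ref{LemmaRealBerstein}.
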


In particular, this result gives a positive answer to the maximal case of \cite[Question 6.3]{CLM}.

 \medskip

In our cohomological considerations, we further obtain a new necessary and sufficient criterion for the non-vanishing of simplicial volume. The \emph{simplicial volume} of a closed oriented manifold $M$, denoted by $\|M\|$,  is a homotopy invariant taking values in $[0,+\infty)$ that was introduced by M. Gromov in \cite{Gro82}. It is roughly given by the infimum of the $\ell^1$-norms of $n$-cycles representing the real fundamental class of $M$, see \cite{LoehSimplicial} or \cite[Chapter 7]{FrigerioBook} for an overview of its definition and most important properties. Building on Gromov's duality principle about simplicial volume and bounded cohomology, we establish the following necessary and sufficient criterion on the non-vanishing of the simplicial volume.

\begin{theorem*}[Corollary \ref{CorSimpVolBerstein}]
Let $M$ be a closed connected orientable aspherical manifold with $G=\pi_1(M)$ and let $n=\dim M$. Then $$\|M\|>0 \qquad \Leftrightarrow \qquad \big\|b^n_{\mathbb R}\cap[M] \big\|_{(J^n_G)_G}>0 ,$$   
	where $\|M\|$ denotes the simplicial volume and $[M]\in H_n(M;\ZZ)$ denotes the fundamental class  of $M$.
\end{theorem*}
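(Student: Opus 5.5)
The plan is to compare the $\ell^1$-seminorm on $H_n(G;\RR)\cong H_n(M;\RR)$ with the quotient seminorm on $H_0(G;J^n_G)=(J^n_G)_G$ through the cap product with $b^n_\RR$. Here $b_\RR$ is the real Berstein class and $J^n_G$ carries the $\ell^1$-type norm, so that $b^n_\RR\cap[M]$ lives in the normed coinvariants. Gromov's duality principle then translates the statement into a statement about bounded cocycles, and the two implications are proved separately.

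\emph{Step 1 ($\Leftarrow$, i.e. $\|M\|=0 \Rightarrow \|b^n_\RR\cap[M]\|=0$).} I would work in the bar resolution and take the standard cocycle representative $\beta(g_0,g_1)=g_1-g_0$ of $b_\RR$. Its norm is at most $2$, so $b^n_\RR$ is represented by the cup-product cocycle $\beta^n(g_0,\dots,g_n)=(g_1-g_0)\otimes\cdots\otimes(g_n-g_{n-1})$, of norm at most $2^n$. Capping a real chain $c=\sum a_i\sigma_i$ with $\beta^n$ yields an element of $J^n_G$ whose class in the coinvariants has norm at most $2^n\|c\|_1$. Taking the infimum over all cycles representing $[M]$ gives
\[
\|b^n_\RR\cap[M]\|_{(J^n_G)_G}\leq 2^n\|M\|,
\]
which yields this direction.

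\emph{Step 2 ($\Rightarrow$).} Suppose $\|M\|>0$. By Gromov's duality principle there is a bounded cocycle $\varphi\in C^n_b(G;\RR)$ with $\langle\varphi,[M]\rangle=1$. The main task is then to produce a bounded $\RG$-linear functional $\eta\colon J^n_G\to\RR$ with $\eta_*(b^n_\RR)=[\varphi]$ in $H^n(G;\RR)$, with $\eta$ bounded. This is the Berstein--Schwarz universality, but with norm control, in the spirit of Theorem~\ref{TheoremBoundedUniv}. Such an $\eta$ factors through the coinvariants $(J^n_G)_G$ with the same norm. Compatibility of cap products with coefficient maps then gives
\[
1=\langle\varphi,[M]\rangle=\eta\big(b^n_\RR\cap[M]\big)\leq\|\eta\|\,\|b^n_\RR\cap[M]\|_{(J^n_G)_G},
\]
so the right-hand side is positive.

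\emph{Main obstacle.} The hard part is constructing $\eta$ from $\varphi$ while keeping it bounded. My first attempt would identify $J^n_G$ with the image of the $n$-th boundary in the (normalized) $\ell^1$ bar resolution, so that $J^n_G$ is spanned by the elements $(g_1-g_0)\otimes\cdots\otimes(g_n-g_{n-1})$. I would then set
\[
\eta\big((g_1-g_0)\otimes\cdots\otimes(g_n-g_{n-1})\big)=\varphi(g_0,\dots,g_n).
\]
Well-definedness needs checking: the relations among these generators are boundaries of $(n+1)$-chains, and $\varphi$ kills them precisely because $\delta\varphi=0$. Boundedness $\|\eta\|\leq\|\varphi\|_\infty$ would follow if the norm on $J^n_G$ is the quotient $\ell^1$-norm from this presentation; if it is not, I would fall back on the bounded universality of Theorem~\ref{TheoremBoundedUniv}(b) with $V'=\RR$. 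That gives $\eta\colon\II^n_G\to\RR$, which I would restrict along the natural norm-nonincreasing map $J^n_G\to\II^n_G$.
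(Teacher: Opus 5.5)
Your proposal is correct, but only half of it follows the paper. The paper deduces the corollary from Gromov's duality principle ($\|M\|>0$ iff $\comp^n_\RR\neq 0$) together with Theorem~\ref{TheoremCompBerstein}. Your Step~2, in its fallback form, is exactly the ``$\Rightarrow$'' half of that theorem: Theorem~\ref{TheoremBoundedUniv}.b) with $V'=\RR$, then restriction along $\vartheta_n$, then Proposition~\ref{PropClassRelations}.d) to identify $(\eta\circ\vartheta_n)_*(b^n_\RR)$ with $\comp^n_\RR([\varphi]_b)$.

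Your Step~1 is a genuinely different argument. The paper handles this direction as follows: it picks a functional on $(J^n_G)_G$ by Hahn--Banach, extends it to $\II^n_G$ via Lemma~\ref{LemmaExtendIk}, uses the Johnson class $\gamma_n$ to produce a bounded real class with non-zero comparison image, and then applies the easy half of Gromov duality. You instead use directly that $b^n_\RR$ has the explicit representative $\beta^n$ of sup-norm at most $2^n$ for the projective norm. This is the chain-level content of Proposition~\ref{PropClassRelations}.d). You then apply the elementary estimate $\|\alpha\cap c\|\le\|\alpha\|_\infty\|c\|_1$.

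Your route bypasses Hahn--Banach, biduals and Johnson classes. It also yields the quantitative bound $\|b^n_\RR\cap[M]\|_{(J^n_G)_G}\le 2^n\|M\|$ rather than a mere positivity statement. The paper's route, by contrast, stays inside its universality framework and is phrased for comparison maps of Poincar\'e duality groups.

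To make Step~1 airtight, state explicitly how the real fundamental cycle reaches the bar complex. A set-theoretic $G$-map $\widetilde M\to G$ induces a norm non-increasing $G$-chain map $C_*(\widetilde M;\RR)\to\RR[G^{*+1}]$. Pushing a real fundamental cycle $c$ forward along it gives a bar cycle of $\ell^1$-norm at most $\|c\|_1$, and the cap products agree under this map.

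Your hedge in Step~2 is also unnecessary: the first attempt works for the projective norm. Take $\varphi$ alternating, so that it vanishes on degenerate simplices. Write each $x\in J_G$ as a combination of differences $g-h$ with total weight $\|x\|_1/2$. Expanding a tensor of $n$ differences then gives $2^{n-1}$ chained tensors. Hence $\|\eta\|_{\op}\le\tfrac12\|\varphi\|_\infty$, and your two steps together give $2\|M\|\le\|b^n_\RR\cap[M]\|_{(J^n_G)_G}\le 2^n\|M\|$.
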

Here, $\|\cdot \|_{(J^n_G)_G}$ is the seminorm on the coinvariants of the coefficient module $J^n_G$ of the $n$-th power of the real Berstein class $b^n_\RR$ that is induced by a certain norm that we make precise in the course of the article. Note that the right-hand assertion does not invoke bounded cohomology.

\subsection*{Overview of the article} In the first section we recall Adamson cohomology for families of subgroups and construct a certain relatively projective resolution that we use in our computations throughout the article. Afterwards, we introduce Berstein classes and Adamson canonical classes relative to subgroup families and discuss their properties in Section 2. We further investigate how to pass from resolutions over $\ZG$ to those over $\RG$ for $\RG$-modules as coefficients. In Section 3, we transfer our constructions to bounded cohomology and introduce the abovementioned bounded Adamson cohomology. Then we introduce Johnson classes and Adamson--Johnson classes in Section 4 and establish their connections to Berstein and Adamson canonical classes via comparison maps. We introduce $\FF$-categories in Section 5 and show how the previous constructions yield new cohomological lower bounds for these invariants. Finally, in Section 6, we will recall the notion of isq-classes of groups, and show how the previous techniques apply in this more restrictive setting, including an application to the monotonicity problem for maps of non-zero degree between manifolds.

\subsection*{Notation and conventions} Throughout this article let $G$ be a group equipped with the discrete topology. Given a commutative ring with unit $R$ we will call a left $R[G]$-module simply an $R[G]$-module. We  adopt the convention that for any two $\ZG$-modules $A$ and $B$ their tensor product over $\ZZ$ will be denoted by $A \otimes B$, while tensor products over other rings will always carry the ring as an index. We further consider $A \otimes B$ and $\Hom_{\ZZ}(A,B)$ as $\ZG$-modules with respect to the diagonal $G$-actions. Given a CW complex $X$ we shall denote its $k$-skeleton by $X_k$ for every $k \in \NN_0$. 

\subsection*{Tool use} The authors used Claude Opus 5.5 for proofreading and checking grammar, for locating relevant references and for occasional general mathematical queries. The authors independently developed and verified all mathematical arguments, results, proofs and examples, and are responsible for their correctness. All of the writing and typesetting has been carried out by the authors.

\section*{Acknowledgements}
The authors thank John Oprea for valuable feedback on an earlier draft of the manuscript and Piotr Nowak for helpful comments. During the first parts of this work, A. E. B. was supported by National Science Center, Poland, research grant UMO-2022/45/N/ST1/02814 and by a doctoral scholarship of the Adam Mickiewicz University of Pozna\'{n}. St. M. further thanks the Adam Mickiewicz University for its kind hospitality on a research visit in which parts of the work on this article were carried out.

\section{Adamson cohomology with respect to subgroup families}

\subsection{Definition and basic notions}
We want to carry out the construction of Adamson cohomology for families of subgroups as first discussed in \cite{GLMP}, for which we need the following basic notions.   

\begin{definition}
	\begin{enumerate}[a)]
		\item We call a non-empty set $\FF$ of subgroups of $G$ a \emph{family of subgroups of $G$} if it is closed under taking subgroups and conjugates. 
		\item Given a family of subgroups $\FF$ of $G$, we call $\HH \subset \FF$ a \emph{generating set of $\FF$} and write $\FF=\langle\HH\rangle$ if $\FF$ is the smallest family of subgroups containing $\HH$, i.e. if 
		$$\FF= \{H \leq G \mid \exists K \in \HH, \ g \in G: \ H \leq gKg^{-1}\} .$$
	\end{enumerate}
\end{definition}

We continue by introducing notions from relative homological algebra that will be used in the definition of Adamson cohomology and that have been defined by K. Li in \cite{LiBounded}.

\begin{definition}
	Let $\FF$ be a family of subgroups of $G$ and let $R$ be a commutative ring with unit. 
	\begin{enumerate}[a)]
		\item Let $A$ and $B$ be $R[G]$-modules. A map $f\in \Hom_{R[G]}(A,B)$ is called \emph{$\FF$-strongly surjective} if for each $H \in \FF$ there exists $\alpha_H \in \Hom_{R[H]}(B,A)$ with $f\circ \alpha_H = \id_B$.  
		\item 	An $R[G]$-module $M$ is \emph{relatively $\FF$-projective over $R[G]$} if the following holds: for all $R[G]$-modules $A$ and $B$, all $\FF$-strongly surjective $f \in \Hom_{R[G]}(A,B)$ and all $\varphi \in \Hom_{R[G]}(M,B)$ there exists $\widetilde{\varphi} \in \Hom_{R[G]}(M,A)$ with $f \circ \widetilde{\varphi}=\varphi$.
		\item Let $A$ be an $R[G]$-module. A \emph{relatively $\FF$-projective resolution of $A$ over $R[G]$} is an exact chain complex of $R[G]$-modules
		$$ \dots \longrightarrow P_k \stackrel{p_k}{\longrightarrow} P_{k-1} \longrightarrow \dots \longrightarrow P_1 \stackrel{p_1}{\longrightarrow} P_0 \stackrel{p_0}{\longrightarrow}  A \longrightarrow 0,$$ for which $P_k$ is relatively $\FF$-projective for each $k \in \NN_0$.
		\item An exact chain complex $(C_k, \partial_k)_{k \in\ZZ}$ of $R[G]$-modules is called \emph{$\FF$-strong} if for each $H \in \FF$ there exists a contraction $\tau_{H,*}\colon C_* \to C_{*+1}$ of $C_*$ as $R[H]$-modules, i.e. for which $ \tau_{H,k} \in \Hom_{R[H]}(C_k,C_{k+1})$  and
		$$\partial_{k+1} \circ \tau_{H,k} + \tau_{H,k-1} \circ \partial_k=\id_{C_k}\qquad \forall k \in \ZZ.$$ 
	\end{enumerate}
\end{definition}

The following lemma shows that for the $\FF$-strongness of an exact chain complex it suffices to check the existence of contractions only with respect to a generating set of the family. 

\begin{lemma}
	\label{LemmaGFexactgener}
	Let $\FF$ be a family of subgroups of $G$, let $\HH$ be a generating set of $\FF$ and let $R$ be a commutative ring with unit. Let $(C_k,\partial_k)_{k \in \ZZ}$ be an exact chain complex of $R[G]$-modules. If for each $H \in \HH$ there exists a contraction $\tau_{H,*}\colon C_*\to C_{*+1}$ of $(C_*,\partial_*)$ as $R[H]$-modules, then $(C_*,\partial_*)$ is $\FF$-strong. 
\end{lemma}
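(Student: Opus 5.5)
Given $H\in\FF$, the description of a generated family gives $K\in\HH$ and $g\in G$ with $H\le gKg^{-1}$. Since a contraction as $R[gKg^{-1}]$-modules restricts to one as $R[H]$-modules for any subgroup $H\le gKg^{-1}$, it suffices to produce, for each $K\in\HH$ and $g\in G$, a contraction of $(C_*,\partial_*)$ as $R[gKg^{-1}]$-modules. The plan is to obtain this by conjugating the given contraction $\tau_{K,*}$ with the action of $g$.

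Concretely, I will set
$$\sigma_k\colon C_k\to C_{k+1},\qquad \sigma_k(x):= g\cdot \tau_{K,k}(g^{-1}\cdot x).$$
Each $\sigma_k$ is $R$-linear, because the $G$-action is by $R$-linear automorphisms.

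The $R[gKg^{-1}]$-linearity comes first. For $h=gkg^{-1}$ with $k\in K$ we have
$$\sigma_k(hx)=g\,\tau_{K,k}(k g^{-1}x)=gk\,\tau_{K,k}(g^{-1}x)=h\,g\,\tau_{K,k}(g^{-1}x)=h\,\sigma_k(x),$$
using that $\tau_{K,k}$ is $R[K]$-linear.

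The contraction identity comes second. The differentials $\partial_k$ are $R[G]$-linear, so they commute with multiplication by $g$ and $g^{-1}$. Therefore
$$\partial_{k+1}\sigma_k(x)+\sigma_{k-1}\partial_k(x)=g\bigl(\partial_{k+1}\tau_{K,k}+\tau_{K,k-1}\partial_k\bigr)(g^{-1}x)=g g^{-1}x=x.$$

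Restricting $\sigma_*$ to $R[H]$ then yields the required contraction $\tau_{H,*}$, so $(C_*,\partial_*)$ is $\FF$-strong. Exactness is assumed in the hypothesis.

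None of the steps is a real obstacle. The only point needing care is the order of conjugation, since $H\le gKg^{-1}$ forces conjugation by $g$ rather than $g^{-1}$. The other point is that $G$-equivariance of $\partial_*$ is exactly what lets the conjugated maps still satisfy the contraction identity.
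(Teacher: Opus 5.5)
Your proposal is correct and essentially the paper's proof: the paper also conjugates the given contraction, setting $\tau_{K,s}(x) := g\,\tau_{H,s}(g^{-1}x)$ for $K \in \FF$ with $K \leq gHg^{-1}$, $H \in \HH$ (the roles of the letters $H$ and $K$ are swapped relative to yours), and checks $R[K]$-linearity and the contraction identity using the $R[G]$-linearity of the differentials. The only difference is cosmetic: you verify linearity over all of $R[gKg^{-1}]$ and then restrict, while the paper checks it directly on the smaller subgroup. You also use $k$ both as a degree and as an element of $K$, which is worth renaming.
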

\begin{proof}
	We need to show that for $K \in \FF$ there exists a contraction $\tau_{K,*}\colon C_*\to C_{*+1}$ as $R[K]$-modules. Fix $K$ and let $g \in G$ and $H \in \HH$, such that $K \leq gHg^{-1}$. 
	For each $s \in \ZZ$ let	
	$$\tau_{K,s}\colon C_s \to C_{s+1}, \quad \tau_{K,s}(x) := g\tau_{H,s}(g^{-1}x).$$
	Then for all $k \in K$ and $x \in C_s$ we compute from the $R[H]$-linearity of $\tau_{H,s}$ that, since $g^{-1}kg \in H$, 
	\begin{align*}
		\tau_{K,s}(k\cdot x) &= g \tau_{H,s}(g^{-1}k\cdot x) = g\tau_{H,s}(g^{-1}kg\cdot g^{-1} x) 
		= gg^{-1}kg \tau_{H,s}(g^{-1}x)= k \tau_{K,s}(x),
	\end{align*}
	so $\tau_{K,s} \in \Hom_{R[K]}(C_s,C_{s+1})$ for each $s \in \ZZ$. Moreover, since the $\tau_{H,s}$ form a contraction of $C_*$ as $R[H]$-modules, we compute for all $x \in C_s$ that
	\begin{align*}
		(\partial_{s+1}\circ \tau_{K,s})(x)+(\tau_{K,s-1}\circ \partial_s)(x) 
		&=\partial_{s+1}(g\tau_{H,s}(g^{-1}x))+ g \tau_{H,s}(g^{-1} \partial_s(x)) \\
		&= g(\partial_{s+1}\circ \tau_{H,s} + \tau_{H,s-1}\circ \partial_s ) (g^{-1}x) = gg^{-1}x=x.
	\end{align*}
	Hence, $\tau_{K,*}\colon C_* \to C_{*+1}$ is a contraction of $C_*$ as $R[K]$-modules. 
\end{proof}

The following is a consequence of the immediate analogue of the fundamental lemma of homological algebra, carried out in a relative setting. We omit its proof, since it is given along the lines of the proof of the original statement.

\begin{lemma}\label{fundamentalrelhom}
	Let $\FF$ be a family of subgroups of $G$, let $R$ be a commutative ring with unit, let $A$ be an $R[G]$-module and let $P_*$ and $Q_*$ be relatively $\FF$-projective $\FF$-strong resolutions of $R$ over $R[G]$. Then 
	$$H^*(\Hom_{R[G]}(P_*,A)) \cong H^*(\Hom_{R[G]}(Q_*,A)).$$
\end{lemma}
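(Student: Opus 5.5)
The plan is to run the classical comparison argument for projective resolutions, with ordinary exactness replaced by $\FF$-strongness and ordinary projectivity replaced by relative $\FF$-projectivity. The argument has three steps: build chain maps $f_*\colon P_*\to Q_*$ and $g_*\colon Q_*\to P_*$ over $\id_R$; show that $g_*\circ f_*$ and $f_*\circ g_*$ are chain homotopic to the identities; then apply the contravariant functor $\Hom_{R[G]}(-,A)$. This functor carries chain maps and chain homotopies to cochain maps and cochain homotopies, so $f^*$ and $g^*$ become mutually inverse on cohomology.

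\textbf{Key lemma.} Let $(C_*,\partial_*)$ be $\FF$-strong, augmented by $R$ in degree $-1$. For every $k$, the corestriction $\partial_{k+1}\colon C_{k+1}\to \im\partial_{k+1}=\ker\partial_k$ is $\FF$-strongly surjective.

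To prove it, fix $H\in\FF$ and take the restriction of $\tau_{H,k}$ to $\ker\partial_k$. For $x\in\ker\partial_k$ we get
$$\partial_{k+1}\tau_{H,k}(x)=x-\tau_{H,k-1}\partial_k(x)=x.$$
So $\tau_{H,k}|_{\ker\partial_k}$ is an $R[H]$-linear section of $\partial_{k+1}$, as the definition requires. The same argument applies to the augmentation $C_0\to R$.

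\textbf{Constructing the chain map.} We build $f_*$ inductively. In degree $0$, the augmentation $q_0\colon Q_0\to R$ is $\FF$-strongly surjective by the key lemma, and $P_0$ is relatively $\FF$-projective. Hence $p_0$ lifts to $f_0\colon P_0\to Q_0$ with $q_0f_0=p_0$.

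Suppose $f_0,\dots,f_k$ have been built. The map $f_k\circ p_{k+1}$ lands in $\ker q_k=\im q_{k+1}$, because $q_kf_kp_{k+1}=f_{k-1}p_kp_{k+1}=0$. The corestriction $q_{k+1}\colon Q_{k+1}\to \im q_{k+1}$ is $\FF$-strongly surjective and $P_{k+1}$ is relatively $\FF$-projective, so we obtain $f_{k+1}$ with $q_{k+1}f_{k+1}=f_kp_{k+1}$.

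One point must be checked here: the target $\im q_{k+1}$ is an $R[G]$-submodule, and the sections are $R[H]$-maps into $Q_{k+1}$. This matches the definition exactly, since the definition only asks for $R[H]$-maps $B\to A$ with $B=\im q_{k+1}$. The map $g_*$ is built symmetrically.

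\textbf{Constructing the homotopy.} Let $h=g_*f_*-\id_{P_*}$; it is a chain map lifting $0$ on $R$. We construct $s_k\colon P_k\to P_{k+1}$ inductively with $p_{k+1}s_k+s_{k-1}p_k=h_k$.

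The inductive step is the standard one. The map $h_k-s_{k-1}p_k$ lands in $\ker p_k$, which one checks using the inductive identity for $s_{k-1}$. We then lift it through the $\FF$-strongly surjective corestriction $p_{k+1}\colon P_{k+1}\to\ker p_k$, using relative $\FF$-projectivity of $P_k$. The case of $f_*g_*$ is symmetric.

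\textbf{Main obstacle.} The only step that is not routine is the key lemma. We must make sure that the relative lifting property applies to corestrictions onto kernels, and that the $R[H]$-contraction restricts to an honest section there. Once that is in place, everything else is the verbatim classical diagram chase, and it yields an isomorphism that is even canonical (independent of the choice of lifts).
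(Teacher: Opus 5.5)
Your proposal is correct and follows the route the paper intends: the paper omits the proof, saying only that it runs along the lines of the classical fundamental lemma of homological algebra. Your key observation supplies the one non-formal ingredient. The $R[H]$-linear contractions restrict to sections of the corestrictions $\partial_{k+1}\colon C_{k+1}\to\ker\partial_k$, so these corestrictions are $\FF$-strongly surjective, and the classical construction of comparison maps and chain homotopies then goes through verbatim.
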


In the remainder of this section, we will only use the case $R=\ZZ$ of the preceding results. \medskip

Adamson cohomology was originally defined by I. Adamson \cite{Adamson} as a variant of group cohomology reflecting information of the structure relative to the inclusion of a subgroup. It was generalized by G. Hochschild in \cite{Hoch} to develop a theory of relative homological algebra. The following definition for a family of a subgroups of $G$ first appeared in \cite{GLMP}.

\begin{definition}
	Let $\FF$ be a family of subgroups of $G$, let $A$ be a $\ZG$-module and let $(P_*,d_*)$ be a relatively $\FF$-projective $\FF$-strong resolution of $\ZZ$ over $\ZG$. The \emph{Adamson cohomology relative to $\FF$ with coefficients in $A$} is given by 
	$$H^*([G:\FF];A) := H^*(\Hom_{\ZG}(P_*,A)).$$ 
\end{definition}

\begin{remark}
Similar to the projective setting, one can define a notion of relatively $\FF$-injective $\FF$-strong resolution of a $\ZG$-module and define the Adamson cohomology relative to $\FF$ with coefficients in $A$ by $$ H^*([G:\FF];A) = H^*(\Hom_{\ZG}(\ZZ,J^*)), $$ where $J^*$ is a relatively  $\FF$-injective $\FF$-strong resolution of $A$. The equivalence of the two approaches follows as in the classical case from the bi-balancedness of both functors $\Hom_{\ZG}(\cdot,\cdot)$ and $\Ext_{\ZG}(\cdot,\cdot)$.
\end{remark}

\subsection{A construction of relatively $\FF$-projective resolutions} 
We need to recall some basic terminology. Let $S$ be a $G$-set and consider
$$ \ZZ[S]= \bigoplus_{s \in S} \ZZ \cdot s $$
as a $\ZG$-module. The \emph{augmentation map with respect to $S$} is given by 
$$\varepsilon_S\in \Hom_{\ZG}(\ZS, \ZZ), \qquad  \varepsilon_S \Big(\sum_{s \in S} n_s s \Big) = \sum_{s \in S} n_s.$$ Furthermore, the \emph{augmentation ideal} associated to $S$ is given by $$I_S:= \ker \varepsilon_S,$$
and seen as a $\ZG$-submodule of $\ZZ[S]$.

\begin{definition} \label{defrelgroupring}
	Let $\HH$ be a non-empty set of subgroups of $G$.  We define $$G/\HH:= \bigsqcup_{H \in \HH} G/H$$ seen as a $G$-set by summandwise considering the $G$-action on the respective orbit space. We further put
	$$I_{\HH} := I_{G/\HH} \subset \ZZ[G/\HH], \qquad \varepsilon_\HH := \varepsilon_{G/\HH} \in \Hom_{\ZG}(\ZGHH,\ZZ).$$ 
\end{definition}

Since $G/\HH$  decomposes as a union of $G$-sets, we obtain the obvious decomposition
	\begin{equation}
		\label{EqZGFsum}
		\ZGHH \otimes M = \bigoplus_{H \in \HH} \ZZ[G/H]\otimes M,
	\end{equation}
as a sum of $\ZG$-modules for all $\ZG$-modules $M$. In the following, we will generalize constructions carried out in \cite[Section 3.2]{BCE22} in the case of a single subgroup of $G$. 

\begin{example}
\label{ExampleStandard}
The natural example of a relatively $\FF$-projective resolution of $\ZZ$ over $\ZZ[G]$, in full analogy to the non-relative case, is the \emph{$\FF$-relative standard resolution} 
\begin{equation}
	\label{EqFFrelstandard}
	\mathcal{E}_{*}^{\FF} = \left( \ZZ[(G/\FF)^{*+1}], b^{\FF}_{*} \right),
\end{equation} where $b^\FF_k\colon \ZZ[(G/\FF)^{k+1}] \to \ZZ[(G/\FF)^{k}]$ is for each $k \in \NN_0$ given by $\ZZ$-linearly extending $$ b^{\FF}_{k}(g_0H_0, \dots, g_kH_k) = \sum_{i=0}^{k} (-1)^i (g_0H_0, \dots, g_{i-1}H_{i-1},g_{i+1}H_{i+1}, \dots, g_kH_k).$$
The isotropy group of any $(g_0H_0,\cdots,g_kH_k)$ is given by $\bigcap_{i=0}^k g_iH_ig_i^{-1} \in \FF$, since $\FF$ is closed under taking conjugates and subgroups. Thus, $\ZZ[(G/\FF)^{k+1}]$ is isomorphic to a  sum of permutation modules of the form $\ZZ[G/K]$, where $K \in\FF$. Thus, $\ZZ[(G/\FF)^{k+1}]$ is relatively $\FF$-projective for every $k \in \NN_0$. Since for each $H \in \FF$  we can define the contraction $$\tau_{H,k}(g_0 H_0, g_1 H_1, \cdots, g_k H_k) = (H,g_0H_0, \cdots, g_kH_k)$$ for $k \geq 0$ and $\tau_{H,-1}(1) = (H)$, this chain complex is indeed a relatively $\FF$-projective $\FF$-strong resolution of $\ZZ$ over $\ZG$.
 
 In particular, for the trivial family $\FF = \{1\}$ we denote $$ \mathcal{E}_{*}^G = \left( \ZZ[G^{*+1}], b_{*} \right), $$ the standard resolution of $\ZZ$ as a $\ZG$-module.
 \end{example}

We want to introduce and work with another relatively $\FF$-projective $\FF$-strong resolution, which generalizes a resolution constructed in \cite[Section 3]{DranishRudyak}. Further variants of this construction have also been used in \cite{FarMes20}, \cite{BCE22} and \cite{EFMO} in the study of topological complexity and sectional categories of subgroup inclusions.  It will allow us simple descriptions of certain Adamson cohomology classes that will be used in the course of this article.  

\begin{lemma}
	\label{LemmaZGFproj}
	Let $\FF$ be a family of subgroups of $G$, let $\HH\subset \FF$ and let $M$ be a $\ZG$-module. Then $\ZGHH \otimes M$ is relatively $\FF$-projective over $\ZG$. 
\end{lemma}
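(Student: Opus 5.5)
By the decomposition \eqref{EqZGFsum} and the fact that a direct sum of relatively $\FF$-projective modules is relatively $\FF$-projective, it suffices to show that $\ZZ[G/H]\otimes M$ is relatively $\FF$-projective for a single $H \in \HH\subset\FF$. The direct sum statement follows directly from the definition. Given $\varphi$ on the sum, lift each restriction $\varphi|_{P_i}$ separately and assemble the lifts using the universal property of the direct sum.

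For a single summand, I would use the standard untwisting isomorphism of $\ZG$-modules
$$\ZZ[G/H]\otimes M \;\cong\; \ZG\otimes_{\ZH} \mathrm{Res}^G_H M,\qquad gH\otimes m \mapsto g\otimes g^{-1}m.$$
Here the left-hand side carries the diagonal action and the right-hand side the induced action. Its inverse is $g\otimes m\mapsto gH\otimes gm$. Well-definedness is checked by replacing $g$ with $gh$, for $h\in H$, on both sides. This reduces the problem to showing that an induced module $\ZG\otimes_{\ZH} N$ with $H\in\FF$ is relatively $\FF$-projective.

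For that step, I would lift directly. Let $f\colon A\to B$ be $\FF$-strongly surjective and let $\varphi\colon \ZG\otimes_{\ZH}N\to B$ be $\ZG$-linear. Since $H \in \FF$, there is a $\ZH$-linear section $\alpha_H\colon B\to A$ with $f\circ\alpha_H=\id_B$. Define
$$\widetilde\varphi(g\otimes n) := g\,\alpha_H\big(\varphi(1\otimes n)\big).$$
This is well defined because $n\mapsto \alpha_H(\varphi(1\otimes n))$ is $\ZH$-linear, so the assignment is $H$-balanced: $gh\otimes n$ and $g\otimes hn$ have the same image. It is $\ZG$-linear by construction. Finally,
$$f\widetilde\varphi(g\otimes n)=g\,f\alpha_H\varphi(1\otimes n)=g\,\varphi(1\otimes n)=\varphi(g\otimes n).$$

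The only step needing care is the untwisting isomorphism, in particular checking equivariance and well-definedness with respect to the correct actions. Everything else is formal. Alternatively, one can avoid the isomorphism altogether and define the lift on the summand $\ZZ[G/H]\otimes M$ directly by
$$\widetilde\varphi(gH\otimes m) := g\,\alpha_H\big(\varphi(H\otimes g^{-1}m)\big).$$
Well-definedness under $g\mapsto gh$ then again follows from the $H$-linearity of $\alpha_H$ together with the $G$-linearity of $\varphi$, using $H\otimes h^{-1}g^{-1}m = h^{-1}(H\otimes g^{-1}m)$.
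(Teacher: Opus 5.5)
Your proof is correct and follows the same route as the paper: split $\ZGHH\otimes M$ into summands via \eqref{EqZGFsum}, lift on each summand $\ZGH\otimes M$ using the $\ZH$-section $\alpha_H$, and assemble the lifts over the direct sum. The only difference is that the paper cites \cite[Section 3.2]{BCE22} for the $(G,H)$-projectivity of each summand, whereas you prove it directly via the untwisting isomorphism (or the explicit formula $gH\otimes m\mapsto g\,\alpha_H(\varphi(H\otimes g^{-1}m))$), which makes your argument self-contained.
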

\begin{proof}
	Let $A$ and $B$ be $\ZG$-modules and $p \in \Hom_{\ZG}(A,B)$, such that for each $H \in \FF$ there exists $q_H \in \Hom_{\ZZ[H]}(B,A)$ with $p\circ q_H=\id_B$. Let $\varphi \in \Hom_{\ZG}(\ZGHH\otimes M,B)$. For each $H \in \HH$ we put $\varphi_H:=\varphi|_{\ZGH \otimes M}$. As discussed in \cite[Section 3.2]{BCE22}, $\ZGH \otimes M$ is $(G,H)$-projective, which implies in the present situation that there exists $\overline{\varphi}_H \in \Hom_{\ZG}(\ZGH\otimes M,A)$ making the following diagram commutative:
	
	$$\begin{tikzcd}
		&  \ZGH\otimes M \ar[d,"\varphi_H"] \ar[dl,dotted,"\overline{\varphi}_H"'] \\
		A \ar[r, "p"] & B \ar[l, bend left=20, "q_H"] \ar[r]& 0 .
	\end{tikzcd}$$
	For each $H \in \HH$ we put $\widetilde\varphi_H\colon  \ZGHH \otimes M\to A$, $\widetilde\varphi_H:= \overline\varphi_H \circ \pr_H$,  where $\pr_H$ denotes the projection onto the corresponding summand in \eqref{EqZGFsum}. Since both $\pr_H$ and $\overline\varphi_H$ are $\ZG$-homomorphisms, $\widetilde\varphi_H$ is a $\ZG$-homomorphism as well. We define 
	$$\widetilde\varphi \in \Hom_{\ZG}(\ZGHH \otimes M, A), \qquad \widetilde\varphi := \sum_{H \in \HH} \widetilde\varphi_H.$$
	Then
	$$p \circ \widetilde\varphi = \sum_{H \in \HH} p \circ \widetilde{\varphi}_H= \sum_{H \in \HH}p \circ \overline\varphi_H \circ \pr_H= \sum_{H \in \HH} \varphi_H \circ \pr_H= \varphi,$$
	so $\widetilde\varphi$ lifts $\varphi$ along $p$. Since $A$, $B$ and $p$ were chosen arbitrarily, this shows that $\ZGHH \otimes M$ is relatively $\FF$-projective over $\ZG$.
\end{proof}

In the following discussion we transfer constructions of \cite{DranishRudyak} to our setting. Given a $G$-set $S$ we consider the short exact sequence
\begin{equation}
	\label{EqBasicSES}
	0 \longrightarrow I_S \stackrel{\iota_S}\longrightarrow \ZZ[S] \stackrel{\varepsilon_S}{\longrightarrow} \ZZ \longrightarrow 0,
\end{equation}
where $\iota_S\colon I_S \to \ZZ[S]$ denotes the inclusion. For each $k \in \NN$ we put 
$$I^k_S := I^{\otimes k}_S = \underbrace{I_S \otimes I_S \otimes \ldots \otimes I_S}_{k \text{ times}}.$$
Since $I^k_S$ is $\ZZ$-free, tensoring the sequence from  \eqref{EqBasicSES} with $I^k_S$ yields a short exact sequence 
\begin{equation}
	\label{EqBasicSESk}
	0 \longrightarrow I_S^{k+1} \stackrel{f_k^S}\longrightarrow \ZZ[S]\otimes I^k_S \stackrel{g_k^S}{\longrightarrow} I^k_S \longrightarrow 0,
\end{equation}
where $f_k^S := \iota_S \otimes \id_{I^k_S}$ and $g_k^S := \varepsilon_S \otimes \id_{I^k_S}$. Splicing the short exact sequences from \eqref{EqBasicSESk} for all values of $k$ we obtain an exact sequence of $\ZG$-modules
\begin{equation}
	\label{Eqspliced}
	\ldots \longrightarrow\ZZ[S] \otimes I^k_S \stackrel{d^S_{k}}{\longrightarrow} \ZZ[S] \otimes I^{k-1}_S \longrightarrow \ldots \longrightarrow \ZZ[S]\otimes I_S \stackrel{d_1^S}\longrightarrow \ZZ[S] \stackrel{\varepsilon_S}{\longrightarrow} \ZZ \to 0,
\end{equation}
where 
$$d_{k}^S \colon \ZZ[S] \otimes I^k_S \to \ZZ[S] \otimes I^{k-1}_S, \qquad d_{k}^S := f^S_{k-1} \circ g^S_k,$$
for each $k \in \NN$. \medskip

\begin{lemma} \label{Adamson:projective}
Let $\FF$ be a family of subgroups of $G$, let $\HH$ be a generating set for $\FF$ and put 
$$\CH_k := \ZGHH \otimes I^k_\HH \qquad \forall k \in \NN_0.$$
Then
	\begin{equation}
		\label{EqResZGF}
		\dots \longrightarrow \CH_k \stackrel{d^{\HH}_k}{\longrightarrow} \CH_{k-1} \longrightarrow \dots \longrightarrow \CH_0 \stackrel{\varepsilon_\HH}{\longrightarrow} \ZZ \to 0 
	\end{equation}
	is a relatively $\FF$-projective $\FF$-strong resolution of $\ZZ$ over $\ZG$. 
\end{lemma}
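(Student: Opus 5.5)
Three things need to be established: exactness, relative $\FF$-projectivity of each $\CH_k$, and $\FF$-strongness.

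\emph{Exactness.} The complex \eqref{EqResZGF} is the spliced sequence \eqref{Eqspliced} for the $G$-set $S=G/\HH$. Each piece \eqref{EqBasicSESk} is exact, because $I^k_S$ is $\ZZ$-free and hence tensoring \eqref{EqBasicSES} with it preserves exactness. Splicing short exact sequences gives a long exact sequence, since $\im d^S_{k+1}=\im f^S_k=\ker g^S_k=\ker d^S_k$; the last equality uses that $f^S_{k-1}$ is injective.

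\emph{Relative projectivity.} Since $\HH\subset\FF$, Lemma \ref{LemmaZGFproj} with $M=I^k_\HH$ (and $M=\ZZ$ for $k=0$) shows directly that each $\CH_k$ is relatively $\FF$-projective over $\ZG$.

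\emph{$\FF$-strongness.} By Lemma \ref{LemmaGFexactgener} it suffices to build, for each $H\in\HH$, a contraction of \eqref{EqResZGF} by $\ZZ[H]$-homomorphisms, extended by zero in negative degrees and including the augmentation term. Let $e_H:=1\cdot H\in G/H\subset G/\HH$, which is fixed by $H$. I will first construct $\ZZ[H]$-linear splittings of each short exact sequence \eqref{EqBasicSESk}. Define $s_k\colon I^k_S\to \ZZ[S]\otimes I^k_S$ by $s_k(x)=e_H\otimes x$; for $k=0$ this is $s_0(1)=e_H$. This map is $H$-equivariant because $h e_H=e_H$, and it satisfies $g_k\circ s_k=\id$ because $\varepsilon(e_H)=1$. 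The complementary retraction is
$$r_k\colon \ZZ[S]\otimes I^k_S\to I^{k+1}_S,\qquad r_k(y\otimes x)=(y-\varepsilon(y)e_H)\otimes x.$$
It is $\ZZ[H]$-linear, satisfies $r_k\circ f_k=\id$, and obeys $f_k r_k+s_k g_k=\id$. The contraction is then
$$\tau_{H,k}:=s_{k+1}\circ r_k\colon \CH_k\to\CH_{k+1},\qquad \tau_{H,-1}:=s_0.$$
Checking $d_{k+1}\tau_{H,k}+\tau_{H,k-1}d_k=f_kg_{k+1}s_{k+1}r_k+s_kr_{k-1}f_{k-1}g_k=f_kr_k+s_kg_k=\id$ is routine. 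The same identity gives $\varepsilon\,\tau_{H,-1}=\id_\ZZ$ and $d_1\tau_{H,0}+\tau_{H,-1}\varepsilon=\id$ at the bottom.

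I expect no serious obstacle. The only care needed is index bookkeeping at the augmentation end and confirming that the $H$-fixed point $e_H$ makes every map $H$-equivariant, but not $G$-equivariant. That is precisely why the contraction exists only relative to subgroups in $\HH$ and then, via Lemma \ref{LemmaGFexactgener}, relative to all of $\FF$.
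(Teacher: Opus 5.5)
Your proposal is correct and follows the paper's proof. It obtains relative $\FF$-projectivity from Lemma \ref{LemmaZGFproj}, exactness from splicing, and $\FF$-strongness from Lemma \ref{LemmaGFexactgener}, and your contraction $\tau_{H,k}=s_{k+1}\circ r_k$ is exactly the paper's $\tau_{H,k}(x\otimes y)=H\otimes(x-\varepsilon_\HH(x)\cdot H)\otimes y$ with $\tau_{H,-1}(s)=s\cdot H$; factoring it through $H$-linear splittings of the sequences \eqref{EqBasicSESk} just organizes the same check more cleanly.
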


\begin{proof}
	By Lemma \ref{LemmaZGFproj}, $\CH_k$ is relatively $\FF$-projective over $\ZG$ for each $k \in \NN_0$. It is evident that \eqref{EqResZGF} is exact as it is obtained via splicing the short exact sequences from \eqref{EqBasicSESk}, so that $(\CH_*,d^\HH_*)$ is a relatively $\FF$-projective resolution of $\ZZ$ over $\ZG$. By Lemma \ref{LemmaGFexactgener}, to show $\FF$-strongness it suffices to show that for each $H \in \HH$ there is a contraction of $\CH_*$ as $\ZH$-modules. 
	
	Let $H \in \HH$, $k \in \NN_0$ and define $\tau_{H,k}: \CH_k \to \CH_{k+1}$ by $\ZZ$-linearly extending 
	$$\tau_{H,k}(x \otimes y) = H \otimes (x- \varepsilon_\HH(x)\cdot H)\otimes y \qquad \forall x \in \ZGHH, \ y \in \IH^k. $$
	as well as $\tau_{H,-1}\colon \ZZ \to C_0$,  $\tau_{H,-1}(s) = s \cdot H$.
	We compute for all $h \in H$, $x \in \ZGHH$ and $y \in \IH^k$ that
	\begin{align*}
		\tau_{H,k}(h \cdot x \otimes y) &= \tau_{H,k}(hx\otimes hy) \\
		&= H \otimes (hx-\varepsilon_\HH(hx)\cdot H)\otimes hy \\
		&= h \cdot( H \otimes (x-\varepsilon_{\HH}(x)H)\otimes y) = h \cdot \tau_{H,k}(x \otimes y).
	\end{align*}
	Hence, $\tau_{H,k} \in \Hom_{\ZH}(C^\HH_k,C^\HH_{k+1})$. We further compute for all $x \in \ZGHH$ and $y \in \IH^k$ that
	\begin{align*}
		(d^{\HH}_{k+1}\circ \tau_{H,k})(x\otimes y) &= d^{\HH}_{k+1}(H \otimes (x-\varepsilon_\HH(x)\cdot H)\otimes y) \\
		&=x \otimes y - \varepsilon_\HH(x)\cdot H \otimes y.
	\end{align*}
	Note that by definition of $\tau_{H,k-1}$, it holds that $\tau_{H,k-1}(f^\HH_{k-1}(y)) = H \otimes y$, which yields
	\begin{align*}
		(d^{\HH}_{k+1}\circ \tau_{H,k})(x\otimes y)  &=x \otimes y - \varepsilon_{\HH}(x) \cdot \tau_{H,k-1}(f_{k-1}^{\HH}(y)) \\
		&= x \otimes y - (\tau_{H,k-1} \circ d^{\HH}_k)(x\otimes y).
	\end{align*}
	Thus, we have shown that $d^{\HH}_{k+1} \circ \tau_{H,k} + \tau_{H,k-1} \circ d^{\HH}_{k} = \id_{C^\HH_k}$
	for all $k\in \NN_0$, where $d^{\HH}_0:=\varepsilon_{\HH}$. We further compute for all $r \in \ZZ$ that $(\varepsilon_\HH \circ \tau_{H,-1})(r)= \varepsilon_\HH(r\cdot H) = r$.
Thus, the maps $(\tau_{H,k})_{k \geq -1}$ form a contraction of the resolution, which shows that $(\CH_*,d^{\HH}_*)$ is a relatively $\FF$-projective $\FF$-strong resolution. \end{proof}

We derive from Lemma \ref{Adamson:projective} that for a family of subgroups $\FF$ of $G$, a generating set $\HH$ of $\FF$ and a left $\ZG$-module $A$, it holds that
$$H^k([G:\FF];A) = H^k(\Hom_{\ZG}(\CH_*,A),(d^\HH_*)^*) \qquad \forall k \in \NN_0.$$

\begin{remark}
\label{RemarkReso}
	\begin{enumerate}
		\item In the case of $\FF=\{1\}$ we observe that $(C^{\{1\}}_*,d_*^{\{1\}})\cong (\ZG\otimes I_G^*,d_*)$ and recover from Lemma \ref{Adamson:projective} the result of \cite{DranishRudyak} that $(\ZG\otimes I_G^*, d_*)$ forms a projective resolution of $\ZZ$ over $\ZG$. 
		\item If $\FF=\left<H\right>$, i.e. if $\FF$ is generated by a single subgroup $H \leq G$, then taking $\HH=\{H\}$ yields the resolution $(\ZGH \otimes I_{G/H}^*,d^{\{H\}}_*)$, which has been used in \cite{BCE22} to compute the Adamson cohomology of $G$ relative to $H$. Thus, Lemma \ref{Adamson:projective} particularly implies that
		$$H^*([G:\langle H \rangle];A) \cong H^*([G:H];A)$$
		for each left $\ZG$-module $A$.
		\item  Observe that we can define chain maps comparing the constructed resolutions with the (relative) standard resolutions by taking  \begin{equation}\label{chainmapsresolutions}
			\psi_G^k \colon \ZZ[G^{k+1}] \rightarrow \ZG \otimes I^{k}_G \qquad \mbox{and} \qquad \psi^k_\FF \colon \ZZ[(G/\FF)^{k+1}] \rightarrow \ZGF \otimes I^k_\FF
		\end{equation} respectively by $$ \psi^k_G(g_0, g_1, \cdots, g_k) := g_0 \otimes (g_1-g_0) \otimes \cdots \otimes (g_k-g_{k-1})$$ and $$ \psi^k_\FF(g_0H_0, \cdots, g_kH_k) := g_0H_0 \otimes (g_1H_1 - g_0H_0) \otimes \cdots \otimes (g_kH_k - g_{k-1}H_{k-1}). $$ Using a relative version of the fundamental lemma from homological algebra, one can show that
		 $\psi^*_\FF$ defines a chain homotopy equivalence of $\ZG$-chain complexes.
	\end{enumerate}	
\end{remark}

\section{Berstein and Adamson canonical classes relative to families}

Throughout this section let $\FF$ be a fixed family of subgroups of $G$. 

\subsection{Constructions of cohomology classes} In the following we shall consider the relatively $\FF$-projective resolution $(C^\HH_*,d_*^\HH)$ of $\ZZ$ over $\ZG$ from the previous section in the case of $\HH=\FF$. We consider for each $\ZG$-module $A$ the Adamson cohomology $H^*([G:\FF];A)$ as the cohomology of the cochain complex $(C^*([G:\FF];A),d^*_{\FF})$, where
\begin{align*}
	&C^k([G:\FF];A) := \Hom_{\ZG}(C_k^{\FF},A) = \Hom_{\ZG}(\ZGF\otimes I_{\FF}^k,A), \\ 
	&d^k_\FF\colon  C^k([G:\FF];A) \to C^{k+1}([G:\FF];A), \qquad d^k_\FF := (d_{k+1}^\FF)^*,
\end{align*}
for each $k \in \NN_0$. We further consider the cohomology groups of $G$ as defined with respect to the projective resolution of Dranishnikov and Rudyak, i.e. as the cohomology of the cochain complex
$$C^k(G;A)= \Hom_{\ZG}(\ZG\otimes I^k_G,A), \qquad d^k\colon C^k(G;A) \to C^{k+1}(G;A), \quad d^k= (d_{k+1})^*.$$
Next we define a chain map between the two resolutions used so far. We will use this map below to define an important map from Adamson to regular group cohomology. 

\begin{definition} \label{def:sigmaH}
	For $H \in \FF$ and $k \in \NN_0$ we consider the map
	$$\sigma_{H,k}\colon \ZG \otimes I_G^k \to \ZGF \otimes \IF^k, $$
	given by $\ZZ$-linearly extending
	$$\sigma_{H,k}(g_0 \otimes (g_1-1) \otimes \dots \otimes (g_k-1))=g_0H\otimes (g_1H-H)\otimes \dots \otimes (g_kH-H) $$
	for all $g_0,g_1,\dots,g_k \in G$.
\end{definition}

We observe that $\sigma_{H,*}$ is independent of the choice of $H \in \FF$ up to homotopy, as the following lemma shows. 

\begin{lemma}
	\label{LemmasigmaH}
	For each $H \in \FF$, the map $\sigma_{H,*}\colon \ZG\otimes I_G^* \to\ZGF \otimes I_\FF^*	$ is a chain map and for all $H_1,H_2 \in \FF$ the maps $\sigma_{H_1,*}$ and $\sigma_{H_2,*}$ are chain-homotopic.
\end{lemma}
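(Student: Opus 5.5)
The plan is to realise $\sigma_{H,*}$ as the tensor power of a single $G$-equivariant map. Then the chain map property comes from naturality of the construction \eqref{Eqspliced}, and the homotopy statement comes from the fundamental lemma of homological algebra.

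\textbf{Step 1: $\sigma_{H,*}$ is a chain map.} Fix $H \in \FF$ and consider the $G$-map $G \to G/\FF$, $g \mapsto gH$, which lands in the summand $G/H$. It induces a $\ZG$-homomorphism $\varphi_H\colon \ZG \to \ZGF$ with $\varepsilon_\FF \circ \varphi_H = \varepsilon_G$. Hence $\varphi_H$ restricts to a $\ZG$-homomorphism $\varphi_H|\colon I_G \to \IF$ sending $g-1$ to $gH-H$, and $\varphi_H \circ \iota_G = \iota_\FF \circ \varphi_H|$. Since $\ZG\otimes I_G^k = \ZG \otimes I_G^{\otimes k}$ with diagonal action, I will check on generators that
$$\sigma_{H,k} = \varphi_H \otimes (\varphi_H|)^{\otimes k}.$$
This shows that $\sigma_{H,k}$ is well defined: the elements $g-1$, $g\neq 1$, form a $\ZZ$-basis of $I_G$, so the defining formula is unambiguous. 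It also shows that $\sigma_{H,k}$ is $\ZG$-linear, being a tensor product of $\ZG$-maps with diagonal actions.

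The two relations $\varepsilon_\FF \circ \varphi_H = \varepsilon_G$ and $\varphi_H \circ \iota_G = \iota_\FF \circ \varphi_H|$ show that the tensor powers commute with $f^S_k = \iota_S\otimes \id$ and $g^S_k = \varepsilon_S \otimes \id$. Therefore they commute with $d_k = f_{k-1}\circ g_k$, and also $\varepsilon_\FF \circ \sigma_{H,0} = \varepsilon_G$. So $\sigma_{H,*}$ is a chain map of resolutions lying over $\id_\ZZ$.

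\textbf{Step 2: any two $\sigma_{H_1,*}$, $\sigma_{H_2,*}$ are chain-homotopic.} First, $\ZG\otimes I_G^k$ is a free $\ZG$-module, by the untwisting isomorphism $\ZG \otimes M \cong \ZG \otimes M_{\mathrm{triv}}$ for the $\ZZ$-free module $M = I_G^k$. Hence the source $(\ZG\otimes I_G^*,d_*)$ is a projective resolution of $\ZZ$ by Remark \ref{RemarkReso}(1). Second, the target $(\ZGF\otimes \IF^*, d^\FF_*)$ is an exact augmented complex over $\ZZ$ by Lemma \ref{Adamson:projective}.

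The classical comparison theorem says that two chain maps from a projective complex to an acyclic one, both lying over the same map $\id_\ZZ$, are chain-homotopic. Applying it gives $\sigma_{H_1,*}\simeq \sigma_{H_2,*}$. Concretely, the difference $\sigma_{H_1,*}-\sigma_{H_2,*}$ lies over $0$, and one builds the homotopy degree by degree, using freeness of the source and exactness of the target.

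\textbf{Main obstacle.} The only delicate point is the well-definedness and equivariance in Step 1, which the tensor-power description settles. If an explicit homotopy were desired instead, I would first try $s_k(x) = H_1 \otimes \sigma_{H_1,k}(x) - H_2\otimes \sigma_{H_2,k}(x)$ and adjust it. The abstract argument, however, suffices.
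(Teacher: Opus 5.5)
Your proof is correct and follows the same route as the paper. You verify the chain-map property directly, where your decomposition $\sigma_{H,k}=\varphi_H\otimes(\varphi_H|_{I_G})^{\otimes k}$ spells out what the paper calls evident (it is the same decomposition $\pi_H\otimes q_H^{\otimes k}$ the paper uses later in the bounded setting), and then you apply the fundamental lemma to a projective resolution mapping into an exact complex over $\id_{\ZZ}$; the speculative explicit homotopy in your last paragraph is not $\ZG$-linear as written, but since you do not rely on it nothing is lost.
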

\begin{proof}
	It is evident from the definition of the differentials and the augmentations that $\sigma_{H,*}$ is a chain map and that the following diagram commutes for each $H \in \FF$:
	 $$\begin{tikzcd}[cramped]
			\cdots \ar[r] & \ZG \otimes I_G^k \ar[d,"\sigma_{H,k}"] \ar[r,"d_k"] & \ZG \otimes I_G^{k-1} \ar[d,"\sigma_{H,k-1}"] \ar[r] & \cdots   \ar[r,"d_1"] & \ZG \ar[d,"\sigma_{H,0}"] \ar[r,"\varepsilon_G"] & \ZZ \ar[d,equal] \ar[r] & 0 \\
			\cdots \ar[r] & \ZGF \otimes I_{\FF}^k \ar[r,"d_k^\FF"] & \ZGF \otimes I_{\FF}^{k-1} \ar[r] & \cdots  \ar[r,"d_1^\FF"] & \ZGF \ar[r,"\varepsilon_{\FF}"] & \ZZ \ar[r] & 0.
	\end{tikzcd}$$
	Since the top row is a projective resolution of $\ZZ$ over $\ZG$ and the bottom row is an exact chain complex of $\ZG$-modules, it follows from the fundamental lemma of homological algebra that $\sigma_{H,*}$ is unique up to chain homotopy.
\end{proof}

The following cohomology classes will be important tools throughout the remainder of this article. 

\begin{definition}
	\label{DefCanonicalBerstein}
	\begin{enumerate}[a)]
		\item Consider the cochain $\beta_\FF \in C^1([G:\FF];I_\FF)$, given by $\ZZ$-linearly extending
		$$\beta_\FF(x \otimes y) = \varepsilon_{\FF}(x) \cdot y$$
		for all $x \in \ZGF$, $y \in I_\FF$. One checks without difficulties that $\beta_\FF$ is a cocycle. We define the \emph{Adamson canonical class of $\FF$} as its cohomology class
		$$\phi_\FF \in H^1([G:\FF];I_\FF), \qquad \phi_\FF:= [\beta_\FF]. $$
		\item Given $H \in \FF$, $k \in \NN_0$ and a $\ZG$-module $A$, we consider the map 
		$$\rho^k_\FF\colon H^k([G:\FF];A) \longrightarrow H^k(G;A), \qquad \rho^k_\FF:= [\sigma_{H,k}^*], $$
		i.e. the map induced in cohomology by $\sigma_{H,k}^*$.
		Note that by Lemma \ref{LemmasigmaH}, $\rho^*_\FF$ is independent of the choice of $H \in \FF$.
		\item The \emph{Berstein class of $G$ relative to $\FF$} is defined as
		$$\omega_\FF \in H^1(G;\IF), \qquad \omega_\FF:= \rho^1_\FF(\phi_{\FF}). $$
	\end{enumerate}
\end{definition}

In the case of the trivial family $\FF = \{1\}$, one easily identifies $\omega_{\{1\}}$ with the usual Berstein class $b \in H^1(G;I_G)$, originally defined in \cite{Berstein}. Given $H \in \FF$, we put $I_H := I_{G/H}$ and consider the Berstein class of $G$ relative to $H$, denoted by $\omega_H \in H^1(G;I_H)$, as defined in \cite{BCE22}. 

\begin{prop}
	Let $H \in \FF$ and let $j_H\colon I_H \hookrightarrow \IF$ denote the inclusion. Then 
	$$(j_H)_{*}(\omega_H)= \omega_\FF.$$
\end{prop}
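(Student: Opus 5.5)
The plan is to compare explicit cocycle representatives at the level of the Dranishnikov--Rudyak resolution $(\ZG\otimes I_G^*,d_*)$. First we unwind $\omega_\FF=\rho^1_\FF(\phi_\FF)$ by choosing the representative $\sigma_{H,1}^*\beta_\FF$, with the same subgroup $H\in\FF$ as in the statement. This is legitimate because, by Definition \ref{DefCanonicalBerstein}, $\rho^*_\FF$ does not depend on the choice of $H$. On generators $g_0\otimes(g_1-1)$ we get
$$(\beta_\FF\circ\sigma_{H,1})(g_0\otimes(g_1-1)) = \beta_\FF\big(g_0H\otimes(g_1H-H)\big)=\varepsilon_\FF(g_0H)\cdot(g_1H-H) = g_1H-H \in \IF .$$
So $\omega_\FF$ is represented by the cocycle $g_0\otimes(g_1-1)\mapsto g_1H-H$.

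Next we recall how $\omega_H\in H^1(G;I_H)$ is represented, following \cite{BCE22}. There it is obtained in the same way: the cocycle $\beta_H(x\otimes y)=\varepsilon_{G/H}(x)\,y$ on the resolution $\ZGH\otimes I_H^*$ is pulled back along the single-subgroup analogue of $\sigma_{H,*}$, namely $g_0\otimes(g_1-1)\otimes\cdots\mapsto g_0H\otimes(g_1H-H)\otimes\cdots$. The same computation as above shows that $\omega_H$ is represented by the cocycle $g_0\otimes(g_1-1)\mapsto g_1H-H\in I_H$. If \cite{BCE22} instead defines $\omega_H$ as the class of the extension $0\to I_H\to\ZGH\to\ZZ\to 0$, or by a formula on the standard resolution, we would translate via the comparison map $\psi_G$ from Remark \ref{RemarkReso}. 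That reduces to the same formula $(g_0,g_1)\mapsto g_1H-g_0H$, or to an expression of the form $g\mapsto gH-H$ for a crossed homomorphism.

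Finally, $(j_H)_*$ is induced on cochains by post-composition with $j_H$. Since $j_H$ is the inclusion $I_H=I_{G/H}\subset I_\FF$ arising from the summand $\ZGH\subset\ZGF$, composing the representative of $\omega_H$ with $j_H$ gives exactly the cochain $g_0\otimes(g_1-1)\mapsto g_1H-H$, now viewed in $\IF$. This is the representative of $\omega_\FF$ found in the first step, so $(j_H)_*(\omega_H)=\omega_\FF$, in fact already at the cochain level.

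The main obstacle is purely a bookkeeping one: making sure the definition of $\omega_H$ in \cite{BCE22} matches the representative above. If it is phrased in a different resolution or through a different normalization, we need a chain map compatible with the identification, and the fundamental lemma of homological algebra guarantees that any such choice gives the same class.
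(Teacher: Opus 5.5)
Your proposal is correct and follows essentially the same route as the paper. The paper likewise takes $\omega_H=[\xi]$ with $\xi(g_0\otimes(g_1-1))=g_1H-H$ (exactly the formula from \cite{BCE22}, so your hedging there is unnecessary) and checks the cochain-level identity $j_H\circ\xi=\beta_\FF\circ\sigma_{H,1}$, from which $(j_H)_*(\omega_H)=\rho^1_\FF(\phi_\FF)=\omega_\FF$ follows.
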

\begin{proof}
	By its definition in \cite{BCE22}, $\omega_H=[\xi]$, where $\xi \in C^1(G;I_H)=\Hom_{\ZG}(\ZG\otimes I_G,I_H)$ is given by $\ZZ$-linearly extending 
	$$	\xi(g_0\otimes (g_1-1)) = g_1H-H $$
	for all $g_0,g_1 \in G$. In terms of the maps considered in this section, we compute that 
	\begin{align*}
		(j_H \circ \xi)(g_0 \otimes (g_1-1))&=\varepsilon_{\FF}(g_0H)\cdot (g_1H-H) \\
		&= \beta_\FF(g_0H\otimes (g_1H-H)) \\
		&= (\beta_\FF \circ \sigma_{H,1})(g_0 \otimes (g_1-1)) 	
	\end{align*}
	for all $g_0,g_1 \in G$, which yields that $j_H\circ \xi = \beta_\FF \circ \sigma_{H,1}$ and thus
	$$(j_H)_*(\omega_H) = [j_H \circ \xi] = [\beta_\FF \circ \sigma_{H,1}]= (\sigma_{H,1})^*([\beta_\FF])=\rho_\FF^1(\phi_\FF)= \omega_\FF.$$
\end{proof}

\subsection{Cup products and universality of Adamson canonical classes} In \cite{BCE22} an alternative description of cup products in group cohomology was given and further employed to define cup products in Adamson cohomology relative to a single subgroup. This construction generalizes to families of subgroups as we shall carry out next. \medskip

Let $A$ be a $\ZG$-module, let $p \in \NN_0$ and let $a \in C^p([G:\FF];A)= \Hom_{\ZG}(\ZGF \otimes I_F^p,A)$ be a cocycle, i.e. 
$$0 =a \circ d_{p+1}^\FF=a \circ f_p^\FF \circ g_{p+1}^\FF .$$
Since $g_{p+1}^\FF$ is surjective by exactness of \eqref{EqBasicSESk}, this implies $a \circ f_p^\FF=0$ and the same exactness yields the existence of $\widehat{a} \in \Hom_{\ZG}(\IF^p,A)$
making the following diagram commutative:
\begin{equation}
	\label{EqahatAdam}
	\begin{tikzcd}
		0 \ar[r] & I_\FF^{p+1} \ar[r, "f^\FF_p"] & \ZGF \otimes I_\FF^p \ar[d,"a"] \ar[r,"g^\FF_p"] & I^p_\FF \ar[dl,dotted,"\widehat{a}"] \ar[r]& 0.\\
		& & A
	\end{tikzcd}
\end{equation}

\begin{definition} \label{def:Adamson-cup-prod}
Let $A$ and $B$ be $\ZG$-modules, let $p,q \in \NN_0$, let $a \in C^p([G:\FF];A)$ and $b \in C^q([G:\FF];B)$ be cocycles and let $\widehat{a}\colon I_\FF^p \to A$ and $\widehat{b}\colon I_\FF^q \to B$ be given as in \eqref{EqahatAdam}. We define the relative \emph{cup product} $[a]\cup[b]\in H^{p+q}([G:\FF];A\otimes B)$ as the class represented by $$ [a] \cup [b] = \left[(\widehat{a} \otimes \widehat{b}) \circ g^\FF_{p+q} \right]. $$
\end{definition}

\begin{remark}\label{RemarkCup}
\begin{enumerate}[(1)]
	\item One checks without difficulties that $(\widehat{a} \otimes \widehat{b}) \circ g^\FF_{p+q}$ is indeed a cocycle in Definition \ref{def:Adamson-cup-prod}. Moreover, for any two cocycles $a$ and $b$ one easily checks that 
	$$ ((\widehat{a} \otimes \widehat{b}) \circ g^\FF_{p+q}) \circ \psi^{p+q}_\FF = (a \circ \psi^p_\FF) \smile (b \circ \psi^q_\FF) $$ where $\psi_\FF^*$ is given as in \eqref{chainmapsresolutions} and where
	 $$ (\alpha \smile \beta)(x_0, \cdots, x_{p+q}) = \alpha(x_0, \cdots, x_p) \otimes \beta(x_p, \cdots, x_{p+q}) $$ 
	for any $\alpha \in \Hom_\ZG(\EE^\FF_p,A)$ and $\beta \in \Hom_\ZG(\EE^\FF_q,A)$. Observe that $\smile$ is induced by the diagonal approximation 
	$$ \Delta_\FF \colon \EE^\FF_n \to \displaystyle{\bigoplus_{p+q = n} \EE^\FF_p \otimes \EE^\FF_q}, \qquad \Delta_\FF(x_0, \cdots, x_n) = \displaystyle{\sum_{p+q=n} (x_0, \cdots, x_p) \otimes (x_p, \cdots, x_n)}, $$ which is clearly a chain homomorphism of $\ZG$-modules. Hence, in an analogue way to the classic case, one sees that with respect to the differential of $\EE^\FF_*$ it satisfies 
	$$(\alpha \smile \beta) \circ b^\FF_{p+q} = (\alpha \circ b^\FF_p) \smile \beta + (-1)^p \alpha \smile (\beta \circ b^\FF_q).$$ 
	Since $\psi^*_\FF$ induces an isomorphism at cohomology level, this shows that $\left[(\widehat{a} \otimes \widehat{b}) \circ g^\FF_{p+q} \right]$ depends only on $[a]$ and $[b]$, and so the relative cup product is well-defined.
	\item It follows from \cite[Proposition 2.5]{BCE22} that in the case of $\FF=\{1\}$, the construction of Definition \ref{def:Adamson-cup-prod} indeed recovers the cup product in group cohomology as defined e.g. in \cite[Section V.3]{Brown}.
\end{enumerate}
\end{remark}

The cup product that we just defined is natural with respect to the previously established homomorphism between Adamson and classic group cohomology. 

\begin{lemma} \label{NaturalityRelCup}
Let $A$ and $B$ be $\ZG$-modules, $p,q \in \NN_0$, $u_1 \in H^p([G:\FF];A)$ and $u_2 \in H^q([G:\FF];B)$. Then 
$$\rho^{p+q}_\FF(u_1 \cup u_2)= \rho^p_\FF(u_1)\cup \rho^q_{\FF}(u_2).$$
\end{lemma}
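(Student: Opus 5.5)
Both sides can be computed at cochain level with the explicit formula of Definition \ref{def:Adamson-cup-prod}, since by Remark \ref{RemarkCup}(2) the case $\FF=\{1\}$ of this formula is the usual cup product on $\ZG\otimes I_G^*$. So I fix $H\in\FF$ and cocycles $a\in C^p([G:\FF];A)$, $b\in C^q([G:\FF];B)$ with $u_1=[a]$, $u_2=[b]$. Then $\rho^p_\FF(u_1)=[a\circ\sigma_{H,p}]$ and $\rho^q_\FF(u_2)=[b\circ\sigma_{H,q}]$. The plan is to show the cochain identity
$$\big((\widehat a\otimes\widehat b)\circ g^\FF_{p+q}\big)\circ\sigma_{H,p+q}=\big(\widehat{a\circ\sigma_{H,p}}\otimes\widehat{b\circ\sigma_{H,q}}\big)\circ g_{p+q},$$
where $g_k=\varepsilon_G\otimes\id_{I_G^k}$. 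Passing to classes then gives the statement.

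The key step is to identify the factorisations $\widehat{a\circ\sigma_{H,p}}$. Let $\bar\sigma_{H,k}\colon I_G^k\to I_\FF^k$ be the restriction of the $\ZZ$-linear map $\ZG\to\ZGF$, $g\mapsto gH$, applied tensorwise; this is $\ZG$-linear for the diagonal actions. From Definition \ref{def:sigmaH}, $\sigma_{H,k}=(\pi_H\otimes\bar\sigma_{H,k})$, where $\pi_H\colon\ZG\to\ZGF$, $g\mapsto gH$, and $\varepsilon_\FF\circ\pi_H=\varepsilon_G$. Hence
$$g^\FF_k\circ\sigma_{H,k}=\bar\sigma_{H,k}\circ g_k .$$
It follows that $a\circ\sigma_{H,p}=\widehat a\circ g^\FF_p\circ\sigma_{H,p}=(\widehat a\circ\bar\sigma_{H,p})\circ g_p$. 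Since $g_p$ is surjective, the factorisation in \eqref{EqahatAdam} is unique, so $\widehat{a\circ\sigma_{H,p}}=\widehat a\circ\bar\sigma_{H,p}$, and likewise $\widehat{b\circ\sigma_{H,q}}=\widehat b\circ\bar\sigma_{H,q}$.

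To finish, I note that $\bar\sigma_{H,p+q}=\bar\sigma_{H,p}\otimes\bar\sigma_{H,q}$ under $I^{p+q}=I^p\otimes I^q$. Then
$$(\widehat a\otimes\widehat b)\circ g^\FF_{p+q}\circ\sigma_{H,p+q}=(\widehat a\otimes\widehat b)\circ(\bar\sigma_{H,p}\otimes\bar\sigma_{H,q})\circ g_{p+q},$$
which is exactly the right-hand side of the desired identity.

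The only real care needed is in the bookkeeping. One must check that $\sigma_{H,k}$, which is defined on generators $g_0\otimes(g_1-1)\otimes\dots$, is indeed $\pi_H\otimes\bar\sigma_{H,k}$. This holds because $\ZG\otimes I_G^k$ is spanned by such elements and $\bar\sigma$ sends $g_i-1$ to $g_iH-H$. One must also check the equivariance of $\bar\sigma_{H,k}$. I expect neither to be a genuine obstacle. The main point is really the uniqueness of $\widehat{\cdot}$ coming from the surjectivity of $g_k$, which allows the lemma to be proved on the nose at cochain level, without chain homotopies.
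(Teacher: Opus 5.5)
Your proposal is correct and follows essentially the same route as the paper's proof. The paper likewise fixes representatives, writes both sides with the explicit cup-product formula (using Remark~\ref{RemarkCup}(2) for $\FF=\{1\}$), and compares the two cochains on generators $x_0\otimes(x_1-1)\otimes\cdots\otimes(x_{p+q}-1)$. Your factorisation $g^\FF_k\circ\sigma_{H,k}=\bar\sigma_{H,k}\circ g_k$, together with the uniqueness of $\widehat{\cdot}$ coming from the surjectivity of $g_k$, makes explicit the identity $\widehat{a\circ\sigma_{H,p}}=\widehat a\circ\bar\sigma_{H,p}$, which the paper uses tacitly in its second computation.
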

\begin{proof}
Let $a \in C^p([G:\FF];A)$ and $b \in C^q([G:\FF];B)$ be cocycles representing $u_1$ and $u_2$, respectively. Then 
\begin{align*}
\rho^{p+q}_\FF(u_1 \cup u_2)&=\rho^{p+q}_\FF([a] \cup [b]) = \left[ \left((\widehat{a} \otimes \widehat{b}) \circ g^\FF_{p+q} \right) \circ (\sigma_{H,p+q}) \right], \\
\rho^p_{\FF}(u_1) \cup \rho^q_{\FF}(u_2) &= \rho^p_\FF([a]) \cup \rho^q_\FF([b]) = \left[ \left( (\widehat{a \circ \sigma_{H,p}} \otimes \widehat{b \circ \sigma_{H,q}} \right) \circ (g^{\{1\}}_{p+q}) \right],
\end{align*} where we used Remark \ref{RemarkCup}.(2) in the second equality.  Let us compare the representing cocycles. For all $x = x_0 \otimes (x_1-1) \otimes \cdots \otimes (x_{p+q}-1)\in \ZG\otimes I_G^{p+q}$, where $x_0,\dots,x_{p+q} \in G$, we obtain 
{\small\begin{align*}
		&\left(\left((\widehat{a} \otimes \widehat{b}) \circ g^\FF_{p+q} \right) \circ \sigma_{H,p+q}\right)(x) = \left(\left( (\widehat{a} \otimes \widehat{b}) \circ g^\FF_{p+q} \right)\circ  \sigma_{H,p+q}\right) \left(x_0 \otimes (x_1-1) \otimes \cdots \otimes (x_{p+q}-1)\right)\\
		&= \widehat{a} \left((x_1H-H) \otimes \cdots \otimes (x_pH-H)\right) \otimes \widehat{b} \left((x_{p+1}H-H) \otimes \cdots \otimes (x_{p+q}H-H)\right),
		\end{align*}}
and
{\small\begin{align*}
		&\left(\left( \widehat{a \circ \sigma_{H,p}} \otimes \widehat{b \circ \sigma_{H,q}} \right) \circ g^{\{1\}}_{p+q} \right)(x) = \left(\left( \widehat{a \circ \sigma_{H,p}} \otimes \widehat{b \circ \sigma_{H,q}} \right) \circ g^{\{1\}}_{p+q}\right) \left(x_0 \otimes (x_1-1) \otimes \cdots \otimes (x_{p+q}-1)\right) \\
		&= \left( \widehat{a \circ \sigma_{H,p}} \right) \left( (x_1-1) \otimes \cdots \otimes (x_p-1)\right)  \otimes \left(\widehat{b \circ \sigma_{H,q}}\right) \left((x_{p+1}-1)\otimes \cdots \otimes (x_{p+q}-1)\right) \\
		&= \widehat{a}\left((x_1H-H) \otimes \cdots \otimes (x_pH-H)\right)  \otimes \widehat{b}\left((x_{p+1}H-H)\otimes \cdots \otimes (x_{p+q}H-H)\right).
\end{align*}} The claim then immediately follows from comparing both expressions. \end{proof}

The main benefit of our definition of the cup product is that it allows for a simple description of the powers of both the Adamson canonical class and the relative Berstein class with respect to $\FF$.

\begin{prop}
\label{Propphikrepr}
Let $k \in \NN$. Then 
$$\phi^k_\FF:=\phi_\FF \cup \phi_\FF \cup \dots \cup \phi_\FF \in H^k([G:\FF];\IF^k)$$
is represented by $g_k^\FF\colon \ZGF \otimes \IF^k \to \IF^k$, seen as an Adamson cocycle. 

\end{prop}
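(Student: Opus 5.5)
Induction on $k$, with the whole computation carried out through the factorization \eqref{EqahatAdam} that enters Definition \ref{def:Adamson-cup-prod}. Here $\beta_\FF$ is by definition $g_1^\FF=\varepsilon_\FF\otimes\id_{\IF}$, which settles $k=1$. It also remains to check that $g_k^\FF$ is an Adamson cocycle for every $k$: $g_k^\FF\circ d^\FF_{k+1}=g_k^\FF\circ f_k^\FF\circ g_{k+1}^\FF=0$, because $g_k^\FF\circ f_k^\FF=0$ by exactness of \eqref{EqBasicSESk}.

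For the inductive step, assume $\phi_\FF^{k}$ is represented by $a:=g_k^\FF\in C^k([G:\FF];\IF^k)$ and take $b:=\beta_\FF=g_1^\FF\in C^1([G:\FF];\IF)$. The lifts $\widehat a$ and $\widehat b$ from \eqref{EqahatAdam} are needed, and these are determined by $a=\widehat a\circ g_k^\FF$ together with the surjectivity of $g_k^\FF$. So I will read off $\widehat a=\id_{\IF^k}$ and likewise $\widehat b=\id_{\IF}$. Definition \ref{def:Adamson-cup-prod} then gives
$$\phi^{k+1}_\FF=\phi_\FF^k\cup\phi_\FF=\big[(\id_{\IF^k}\otimes\id_{\IF})\circ g^\FF_{k+1}\big]=[g^\FF_{k+1}],$$
using the identification $\IF^k\otimes\IF=\IF^{k+1}$. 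Since the product was shown in Remark \ref{RemarkCup}(1) to depend only on the classes, the choice of representatives does not matter.

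The main subtlety is the indexing in \eqref{EqahatAdam}. There the factorization is written through $g^\FF_p\colon\ZGF\otimes\IF^p\to\IF^p$, that is, $\varepsilon_\FF\otimes\id_{\IF^p}$, so a degree-$p$ cochain factors through the map with that index. I would state this convention explicitly so that $\widehat{g_k^\FF}=\id$ is literally correct. A further point is that the iterated cup product must be well defined without specifying brackets. I would define $\phi^k_\FF$ as the left-nested product $(\cdots(\phi_\FF\cup\phi_\FF)\cup\cdots)\cup\phi_\FF$. The same computation, with both lifts equal to identities, also shows $[g_p^\FF]\cup[g_q^\FF]=[g_{p+q}^\FF]$ for all $p$ and $q$, so every bracketing yields the same class.
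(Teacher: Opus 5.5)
Your proof is correct and follows the paper's own argument. It is an induction on $k$ with base case $\beta_\FF=g_1^\FF$, and in the inductive step you read off $\widehat{g_k^\FF}=\id_{\IF^k}$ and $\widehat{\beta}_\FF=\id_{\IF}$ from the factorization \eqref{EqahatAdam}, which gives $\phi^{k+1}_\FF=[g^\FF_{k+1}]$; your extra checks (that $g_k^\FF$ is a cocycle, and that $[g_p^\FF]\cup[g_q^\FF]=[g_{p+q}^\FF]$, so the bracketing is irrelevant) are valid and make explicit what the paper leaves implicit.
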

\begin{proof}
Let $\beta_\FF$ be the cocycle from  Definition \ref{DefCanonicalBerstein}.a). We will prove the claim by induction. The base case is trivial as, evidently, $\beta_F=g_1^\FF$. 
Proceeding inductively, we assume that $\phi^k_\FF$ is represented by $g_k^\FF$ and compute that
$$\phi^{k+1}_\FF= \phi_\FF^k\cup \phi_\FF= \left[\left(\widehat{g}_k^\FF \otimes \widehat{\beta}_\FF\right) \circ g_{k+1}^\FF\right], $$
Note that by definition, $g_k^\FF= \widehat{g}_k^\FF \circ g_{k}^\FF$ and $\beta_\FF= \widehat{\beta}_\FF \circ g_{1}^\FF$, which immediately implies that $\widehat{g}^\FF_k=\id_{\IF^k}$ and $\widehat{\beta}_\FF=\id_{\IF}$. Thus, we derive that
$\phi^{k+1}_\FF=\left[\id_{\IF^{k+1}} \circ g_{k+1}^\FF \right]=\left[g_{k+1}^\FF\right],$
which completes the induction step.
\end{proof}

\begin{corollary} \label{Corphikrepr}
Let $k \in \NN$. Then 
$$\omega_\FF^k := \omega_\FF \cup \omega_\FF \cup \dots \cup \omega_\FF \in H^k(G;\IF^k)$$
is represented by $g_k^\FF \circ \sigma_{H,k}\in \Hom_{\ZG}(\ZG \otimes I_G^k, \IF^k)$, where $H \in \FF$ is chosen arbitrarily.
\end{corollary}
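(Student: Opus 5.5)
The plan is to combine Proposition \ref{Propphikrepr} with the multiplicativity of $\rho^*_\FF$ from Lemma \ref{NaturalityRelCup}, and then read off the representing cocycle directly from the definition of $\rho^k_\FF$.

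First, I will establish the identity
$$\omega_\FF^k=\rho^k_\FF(\phi_\FF^k)$$
by induction on $k$. For $k=1$ this is the definition of $\omega_\FF$ in Definition \ref{DefCanonicalBerstein}.c). For the induction step I use Lemma \ref{NaturalityRelCup} with $A=\IF^k$, $B=\IF$, $u_1=\phi_\FF^k$ and $u_2=\phi_\FF$. This gives
$$\rho^{k+1}_\FF(\phi_\FF^{k}\cup\phi_\FF)=\rho^k_\FF(\phi_\FF^k)\cup\rho^1_\FF(\phi_\FF)=\omega_\FF^k\cup\omega_\FF=\omega_\FF^{k+1}.$$
Here the cup products on the group cohomology side are those of Definition \ref{def:Adamson-cup-prod} for the trivial family. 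By Remark \ref{RemarkCup}.(2) these agree with the usual cup product, so $\omega_\FF^k$ is the usual power. The bracketing is immaterial, since both sides are defined as iterated products in the same order.

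Next, I apply the definition $\rho^k_\FF=[\sigma_{H,k}^*]$. Proposition \ref{Propphikrepr} says that $\phi^k_\FF$ is represented by the Adamson cocycle $g^\FF_k$. Pulling back along the chain map $\sigma_{H,k}$ then shows that $\rho^k_\FF(\phi^k_\FF)$ is represented by $g_k^\FF\circ\sigma_{H,k}\in\Hom_{\ZG}(\ZG\otimes I_G^k,\IF^k)$. This is a cocycle because $\sigma_{H,*}$ is a chain map by Lemma \ref{LemmasigmaH}. Lemma \ref{LemmasigmaH} also shows that $\rho^k_\FF$, and hence the class, does not depend on the choice of $H\in\FF$. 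This justifies the phrase ``$H$ chosen arbitrarily'' in the statement.

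The only point needing care is to confirm that the conventions match: the cup product on $H^*(G;\cdot)$ used in Lemma \ref{NaturalityRelCup} must be the one with respect to which $\omega_\FF^k$ is defined. This is exactly the content of Remark \ref{RemarkCup}.(2), so I expect no genuine obstacle.
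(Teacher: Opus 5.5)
Your proposal is correct and follows essentially the same route as the paper: it obtains $\omega_\FF^k=\rho^k_\FF(\phi^k_\FF)$ from Lemma~\ref{NaturalityRelCup} and then pulls the representative $g^\FF_k$ from Proposition~\ref{Propphikrepr} back along $\sigma_{H,k}$. Your explicit induction and your appeals to Remark~\ref{RemarkCup}.(2) and Lemma~\ref{LemmasigmaH} only spell out details the paper leaves implicit.
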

\begin{proof}
By naturality of the cup product from Lemma \ref{NaturalityRelCup}, it holds that
$$\omega_{\FF}^k= (\rho_\FF(\phi_\FF))^k = \rho_{\FF}(\phi_\FF^k).$$
By Proposition \ref{Propphikrepr}, this class is represented by $(\sigma_{H,k})^*(g_k^\FF)= g_k^\FF \circ \sigma_{H,k}$. 
\end{proof} 

In particular, for the trivial family $\FF = \{1\}$ the above corollary recovers the description of the $k$-th power of the Berstein class $b^k \in H^k(G;I_G^k)$ that is obtained in \cite{DranishRudyak}. \medskip

Similar to the case of a single subgroup, $\phi_\FF$ is \emph{universal} in the sense that every Adamson cohomology class of degree $k \geq 1$ is given as a pushforward of a power of the Adamson canonical class, as the following result shows. 

\begin{prop} \label{PropUniv}
Let $k \in \NN$, let $A$ be a $\ZG$-module and let $u \in H^k([G:\FF];A)$. Then there exists a $\ZG$-homomorphism $\psi\colon I_{\FF}^k \to A$ with 
$$u= \psi_*(\phi_{\FF}^k).$$
\end{prop}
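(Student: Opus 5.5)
We work in the resolution $(C^\FF_*,d^\FF_*)$ with $\HH=\FF$, as in the definition of $C^*([G:\FF];A)$, and use the factorization of cocycles through the maps $g^\FF_k$ set up before Definition \ref{def:Adamson-cup-prod}. The idea is that every Adamson cocycle of degree $k$ factors through $g^\FF_k$, and Proposition \ref{Propphikrepr} identifies $g^\FF_k$ with a representative of $\phi_\FF^k$.

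Let $u\in H^k([G:\FF];A)$ with $k\geq 1$, and pick a cocycle $a\in C^k([G:\FF];A)=\Hom_{\ZG}(\ZGF\otimes \IF^k,A)$ with $u=[a]$. The cocycle condition reads $0=a\circ d^\FF_{k+1}=a\circ f^\FF_k\circ g^\FF_{k+1}$. Since $g^\FF_{k+1}$ is surjective by exactness of \eqref{EqBasicSESk}, it follows that $a\circ f^\FF_k=0$.

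By exactness of \eqref{EqBasicSESk} in degree $k$, the map $g^\FF_k$ induces an isomorphism $\operatorname{coker} f^\FF_k\cong \IF^k$. Hence $a$ descends to a $\ZG$-homomorphism $\psi:=\widehat a\colon \IF^k\to A$ with $a=\psi\circ g^\FF_k$. This is exactly the map $\widehat a$ from diagram \eqref{EqahatAdam}, and it is $\ZG$-linear because $g^\FF_k$ is a surjective $\ZG$-map.

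By Proposition \ref{Propphikrepr}, $\phi^k_\FF$ is represented by the cocycle $g^\FF_k$. The coefficient map $\psi_*$ acts on cochains by postcomposition. Therefore
$$\psi_*(\phi^k_\FF)=[\psi\circ g^\FF_k]=[a]=u.$$

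The argument is essentially formal. The only points needing care are the index bookkeeping, namely that $d^\FF_{k+1}=f^\FF_k\circ g^\FF_{k+1}$ and that $g^\FF_k$ has kernel $\operatorname{im} f^\FF_k$, and the observation that $\psi_*$ is induced by postcomposition, so it commutes with taking the cohomology class.
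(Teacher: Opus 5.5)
Your proof is correct and follows the paper's argument: you take a representing cocycle, factor it through $g^\FF_k$ via the map $\widehat{a}$ from \eqref{EqahatAdam}, and then use Proposition \ref{Propphikrepr} to get $\psi_*(\phi^k_\FF)=[\widehat{a}\circ g^\FF_k]=u$. Your explanation of why $a$ descends (surjectivity of $g^\FF_{k+1}$ forces $a\circ f^\FF_k=0$, and exactness then gives the factorization) is the same reasoning the paper gives just before Definition \ref{def:Adamson-cup-prod}.
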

\begin{proof}
Let $c\in \Hom_{\ZG}(\ZGF \otimes \IF^k,A)$ be a cocycle representing  $u$ and let $\widehat{c}\in \Hom_{\ZG}(\IF^k,A)$ be given as in \eqref{EqahatAdam}. Then 
$$u=[c]=[\widehat{c}\circ g_k^\FF] = \left(\widehat{c}\right)_*([g_k^\FF])= \left(\widehat{c}\right)_*(\phi_\FF^k),$$
by Proposition \ref{Propphikrepr}, so the claim follows by taking $\psi:= \widehat{c}$.
\end{proof}

\subsection{Real Adamson canonical and Berstein classes} \label{SubsectReal}

To connect the cohomology classes from this section to bounded cohomology, we will require ``realifications'' of the Adamson canonical and Berstein classes, which are relevant for the study of $\RG$-modules. We will devote this subsection to a proper treatment of these classes. \medskip  

We recall that any $\RR[G]$-module can be seen as a $\ZG$-module via restriction of scalars. Conversely, given any $\ZG$-module $A$, we can view $\RR \otimes A$ as an $\RG$-module via $\RR$-linearly extending 
$$\sum_{g \in G} r_g g \cdot r_0 \otimes a = \sum_{g \in G} r_gr_0 \otimes ga \qquad \forall r_0 \in \RR, \ a \in A, \ \sum_{g \in G} r_gg \in \RG.$$ 
Consider a projective resolution $(P_*,\partial_*)$ of $\ZZ$ over $\ZG$. Since $\RR$ is a flat $\ZZ$-module, it follows that $(\RR\otimes P_*, \id_{\RR}\otimes\partial_*)$ is a projective resolution of $\RR$ over $\RG$. The maps
\begin{equation}
\label{Eqpsis}
\psi_{k,A}\colon \Hom_{\ZG}(P_k,A) \to \Hom_{\RG}(\RR\otimes P_k,A), \quad \overline{\psi}_{k,A}(f)(r\otimes x) = r\cdot f(x),
\end{equation}
whose inverses are given by 
$$\overline{\psi}_{k,A}^{-1}\colon \Hom_{\RG}(\RR\otimes P_k,A) \to \Hom_{\ZG}(P_k,A), \qquad \overline{\psi}_{k,A}^{-1}(f)(x) = f(1\otimes x),$$
then form an isomorphism of cochain complexes. 
The following proposition gives an important property of such ``realified'' resolutions.

\begin{prop}
\label{PropRealFprojres}
	Let $(P_*,p_*)$ be a relatively $\FF$-projective $\FF$-strong resolution of $\ZZ$ over $\ZG$. Then $(\RR\otimes P_*,\id_{\RR} \otimes p_*)$ is a relatively $\FF$-projective $\FF$-strong resolution of $\RR$ over $\RG$.
\end{prop}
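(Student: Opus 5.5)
We verify the three required properties in turn: exactness, $\FF$-strongness, and relative $\FF$-projectivity of each $\RR\otimes P_k$ over $\RG$.

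\emph{Exactness and $\FF$-strongness.} Since $\RR$ is flat over $\ZZ$, the functor $\RR\otimes -$ preserves exactness. Hence $(\RR\otimes P_*,\id_\RR\otimes p_*)$ is exact, and $\RR\otimes\ZZ\cong\RR$ is the augmented term.

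For strongness, fix $H\in\FF$ and let $\tau_{H,*}$ be a $\ZH$-linear contraction of $P_*$ (including the augmentation). Put $\tau'_{H,k}:=\id_\RR\otimes\tau_{H,k}$. The contraction identity $p_{k+1}\tau_{H,k}+\tau_{H,k-1}p_k=\id$ is preserved by the functor $\RR\otimes-$. Each $\tau'_{H,k}$ is $\RR$-linear and commutes with $H$, because $h\cdot(r\otimes x)=r\otimes hx$. It is therefore $\RR[H]$-linear, which gives $\FF$-strongness.

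\emph{Relative $\FF$-projectivity.} This is the main step. Fix a relatively $\FF$-projective $\ZG$-module $P$, and write $P$ for $P_k$. Let $f\colon A\to B$ be an $\FF$-strongly surjective $\RG$-homomorphism, with $\RR[H]$-sections $\alpha_H$. Let $\varphi\in\Hom_{\RG}(\RR\otimes P,B)$. We use the adjunction already noted in \eqref{Eqpsis}:
\[
\Hom_{\RG}(\RR\otimes P,M)\cong\Hom_{\ZG}(P,M),\qquad \varphi\mapsto\bar\varphi:=\varphi(1\otimes\cdot),
\]
valid for every $\RG$-module $M$ regarded as a $\ZG$-module by restriction of scalars. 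Restricting scalars, $f$ is a $\ZG$-homomorphism and each $\alpha_H$ is $\ZH$-linear, so $f$ is $\FF$-strongly surjective over $\ZG$. Relative $\FF$-projectivity of $P$ then gives $\widetilde{\bar\varphi}\in\Hom_{\ZG}(P,A)$ with $f\circ\widetilde{\bar\varphi}=\bar\varphi$. Define $\widetilde\varphi(r\otimes x):=r\,\widetilde{\bar\varphi}(x)$; this is $\RG$-linear. Since $f$ is $\RR$-linear,
\[
f\circ\widetilde\varphi(r\otimes x)=r\,\bar\varphi(x)=\varphi(r\otimes x),
\]
so $\widetilde\varphi$ is the required lift. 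Naturality of the adjunction in $M$ is all this uses.

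I expect no real obstacle. The only points needing care are that the adjunction is natural in $M$, and that $\ZH$-linearity of $\alpha_H$ follows from its $\RR[H]$-linearity by restriction.
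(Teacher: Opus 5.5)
Your proof is correct and follows essentially the same route as the paper. Both arguments restrict scalars to see that $f$ is $\FF$-strongly surjective over $\ZG$, lift $\varphi(1\otimes\cdot)$ using relative $\FF$-projectivity of $P_k$ and extend $\RR$-linearly, and then use flatness of $\RR$ for exactness and $\id_\RR\otimes\tau_{H,*}$ (including degree $-1$) for the $\RR[H]$-contractions.
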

\begin{proof}
We first want to show that $\RR \otimes P_k$ is relatively $\FF$-projective over $\RG$ for each $k \in \NN_0$. Let $f \in \Hom_{\RG}(A,B)$ be $\FF$-strongly surjective and let $\varphi \in \Hom_{\RG}(\RR \otimes P_k,B)$. Regarding $A$ and $B$ as $\ZG$-modules, it is evident that $f$ is $\FF$-strongly surjective over $\ZG$ as well. Consider $\varphi' \in \Hom_{\ZG}(P_k,B)$, with $\varphi'(x) = \varphi(1 \otimes x)$. Since $P_k$ is relatively $\FF$-projective over $\ZG$, there exists $\widetilde{\varphi}' \in \Hom_{\ZG}(P_k,A)$ such that $f \circ \widetilde{\varphi}' = \varphi'$. Put  $\widetilde{\varphi} \in \Hom_{\RG}(\RR \otimes P_k,A)$, defined by $\widetilde{\varphi}(r \otimes x) = r \cdot \widetilde{\varphi}'(x)$. As $$ (f \circ \widetilde{\varphi}) (r \otimes x) = f(r \widetilde{\varphi'}(x)) = r \varphi'(x) = \varphi(r \otimes x) \qquad \forall r \in \RR, \ x \in P_k,$$ it follows that $f \circ \widetilde{\varphi} = \varphi$, thus $\RR \otimes P_k$ is relatively $\FF$-projective over $\RG$. Since $\RR$ is $\ZZ$-flat, it further holds that $(\RR \otimes P_*,\id_{\RR}\otimes p_*)$ is an exact chain complex. Thus, it is a relatively $\FF$-projective resolution of $\RR$ over $\RG$ and it only remains to show that it is $\FF$-strong. 

 Since $(P_*,p_*)$ is $\FF$-strong over $\ZG$, for each $H \in \FF$ there is a contraction $\tau_{H,k} \colon P_k \to P_{k+1}$, $k \geq -1$, of $P_*$ by $\ZH$-homomorphisms, where $P_{-1}:=\ZZ$. Then for each $k \in \NN_0$ we observe that $\id_{\RR} \otimes \tau_{H,k} \in \Hom_{\RR[H]}(\RR\otimes P_k,\RR\otimes P_{k+1})$ and $$(\id_{\RR} \otimes p_{k+1})\circ(\id_{\RR} \otimes \tau_{H,k}) + (\id_{\RR} \otimes \tau_{H,k-1})\circ (\id_{\RR} \otimes p_k) = \id_{\RR \otimes P_k}.$$ It further holds that 
 $$(\id_{\RR} \otimes p_0)\circ (\id_{\RR} \otimes \tau_{H,-1})= \id_{\RR}\otimes (p_0 \circ \tau_{H,-1}) = \id_{\RR}.$$
 Thus, $\id_{\RR}\otimes \tau_{H,*}\colon \RR\otimes P_* \to \RR \otimes P_{*+1}$ is a contraction of $\RR\otimes P_*$ over $\RR[H]$. Since $H \in \FF$ was chosen arbitrarily, this shows that $(\RR\otimes P_*,\id_{\RR} \otimes p_*)$ is $\FF$-strong over $\RG$.\end{proof}
 
We derive from the previous considerations and Proposition \ref{PropRealFprojres} that for any $\RG$-module $A$ and any relatively $\FF$-projective $\FF$-strong resolution $(P_*,p_*)$ of $\ZZ$ over $\ZG$, we obtain that
\begin{equation}
\label{EqRealAdamson}
H^k([G:\FF];A) \cong H^k(\Hom_{\RG}(\RR\otimes P_*,A), (\id_\RR \otimes p_*)^*) \qquad \forall k \in \NN_0.
\end{equation}
 We note that for any $G$-set $S$ there is an obvious isomorphism of $\RG$-modules 
$$\Psi_S\colon \RR \otimes \ZZ[S] \stackrel{\cong}{\longrightarrow} \RS, \quad \Psi_S(r \otimes x) = r \cdot x.$$
 For any such $S$ we let $$\varepsilon_{S,\RR}\colon\RR[S] \to \RR, \quad \varepsilon_{S,\RR}\Big(\sum_{s \in S} r_ss\Big)= \sum_{s \in S} r_s,$$ denote the augmentation and observe that $ \varepsilon_{S,\RR} \circ \Psi_S = \mu \circ (\id_\RR \otimes \varepsilon_S)$, where $\mu \colon \RR \otimes \ZZ \xrightarrow{\cong} \RR$. For each $k \in \NN$ we put 
 $$J_S^k := \RR\otimes I^k_S$$
 with the abovementioned $\RG$-module structure. We further put $J_S^0 := \RR$ and $J_S:= J_S^1$. In the special case of $S=G/\FF$, we further put  $J_\FF^k:=J_{G/\FF}^k= \RR\otimes I^k_\FF$ for each $k \in \NN$ and let $J_\FF:= J_\FF^1$. \medskip

One checks without difficulties that for each $k \in \NN_0$ the map  
\begin{equation}
\label{Eqalphak}
\begin{aligned}
&F_k\colon \RR \otimes (\ZGF\otimes I^k_\FF)\stackrel\cong{\longrightarrow} \RGF\otimes_{\RR} J^k_\FF,  \quad F_k(r \otimes (x \otimes y))=(r \otimes x) \otimes_\RR (1 \otimes y) ,
\end{aligned}
\end{equation}
is an isomorphism of $\RG$-modules whose inverse is given by
 $$F_k^{-1}((r_1 \otimes a) \otimes_\RR (r_2 \otimes b))= r_1 r_2 \otimes (a \otimes b) \qquad \forall r_1 \otimes a \in \RGF, \ r_2\otimes b \in J^k_\FF.$$ 
 In particular $\RR \otimes (\ZG \otimes I^k_G) \cong \RG \otimes_\RR J^k_G$. Since $\RR$ is a flat $\ZZ$-module, tensoring the sequence from \eqref{EqBasicSESk} with $\RR$ and using the isomorphisms from above yields a short exact sequence 
  \begin{equation}
  \label{EqSESreal} 
  0 \to J^{k+1}_\FF \xrightarrow{f^\FF_{k,\RR}} \RGF \otimes_\RR J^k_\FF \xrightarrow{g^\FF_{k,\RR}} J^k_\FF \to 0  
  \end{equation}
 of $\RG$-modules for each $k \in \NN_0$. In analogy with the sequences \eqref{EqBasicSESk}, we can splice these short exact sequences to obtain a resolution of $\RR$ over $\RG$. Combining the identifications made above with Lemma \ref{Adamson:projective} and Proposition \ref{PropRealFprojres} then yields the following statement whose detailed proof we omit.
  
  \begin{lemma}
  \label{LemmaRealProjDRres}
  Let $C^{\FF,\RR}_k := \RGF \otimes_\RR J_{\FF}^k$ for each $k \in \NN_0$ and let $$d_k^{\FF,\RR}\colon C^{\FF,\RR}_k \to C^{\FF,\RR}_{k-1},\qquad  d_k^{\FF,\RR}:=f_{k-1}^{\FF,\RR} \circ  g_k^{\FF,\RR},$$ for each $k \in \NN$. Then 
  $$\dots \longrightarrow C^{\FF,\RR}_k \stackrel{d_k^{\FF,\RR}}\longrightarrow C^{\FF,\RR}_{k-1} \longrightarrow \dots \stackrel{d_1^{\FF,\RR}}\longrightarrow C^{\FF,\RR}_0 \stackrel{\varepsilon_{\FF,\RR}}{\longrightarrow} \RR \to 0$$
 is a relatively $\FF$-projective $\FF$-strong resolution of $\RR$ over $\RG$.
  \end{lemma}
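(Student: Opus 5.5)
The plan is to deduce the statement from Lemma \ref{Adamson:projective} (with $\HH=\FF$) and Proposition \ref{PropRealFprojres}, transporting the structure along the isomorphisms $F_k$ from \eqref{Eqalphak}.

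First, Lemma \ref{Adamson:projective} shows that $(\CH_*,d^\FF_*)$ with $\CH_k=\ZGF\otimes \IF^k$ is a relatively $\FF$-projective $\FF$-strong resolution of $\ZZ$ over $\ZG$. Proposition \ref{PropRealFprojres} then shows that $(\RR\otimes \CH_*,\id_\RR\otimes d^\FF_*)$, augmented by $\id_\RR\otimes\varepsilon_\FF$ followed by $\mu\colon\RR\otimes\ZZ\to\RR$, is a relatively $\FF$-projective $\FF$-strong resolution of $\RR$ over $\RG$.

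Second, I check that the $\RG$-isomorphisms $F_k\colon \RR\otimes \CH_k\to C^{\FF,\RR}_k$ form an isomorphism of augmented chain complexes. By construction, $f^\FF_{k,\RR}$ and $g^\FF_{k,\RR}$ in \eqref{EqSESreal} are $\id_\RR\otimes f^\FF_k$ and $\id_\RR\otimes g^\FF_k$, conjugated by $F_{k}$, by the identification $\RR\otimes I^{k+1}_\FF=J^{k+1}_\FF$, and by $\Psi$. So it suffices to verify on elementary tensors $r\otimes(x\otimes y)$ that
$$F_{k-1}\circ(\id_\RR\otimes d^\FF_k)=d^{\FF,\RR}_k\circ F_k
\qquad\text{and}\qquad
\varepsilon_{\FF,\RR}\circ F_0=\mu\circ(\id_\RR\otimes\varepsilon_\FF).$$
The first identity is a direct computation. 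The second uses $\varepsilon_{S,\RR}\circ\Psi_S=\mu\circ(\id_\RR\otimes\varepsilon_S)$ for $S=G/\FF$.

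Third, I observe that both properties transfer along chain isomorphisms of $\RG$-modules, so no new homological argument is needed:
\begin{itemize}
\item \emph{Exactness} is preserved by any chain isomorphism.
\item \emph{Relative $\FF$-projectivity} is a property invariant under $\RG$-isomorphism, since a lifting for $\RR\otimes\CH_k$ composed with $F_k^{-1}$ gives a lifting for $C^{\FF,\RR}_k$.
\item \emph{$\FF$-strongness} transfers because $F_{k+1}\circ(\id_\RR\otimes\tau_{H,k})\circ F_k^{-1}$ is again $\RR[H]$-linear and satisfies the contraction identities. In degree $-1$, one composes with $\mu^{-1}$.
\end{itemize}

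The main obstacle is purely bookkeeping. One must make sure that the spliced maps $d^{\FF,\RR}_k$ really are the conjugates of $\id_\RR\otimes d^\FF_k$, including the inclusion $J^{k}_\FF\hookrightarrow \RGF\otimes_\RR J^{k-1}_\FF$, which is where the identification $\RR\otimes(I_\FF\otimes I^{k-1}_\FF)\cong J_\FF\otimes_\RR J^{k-1}_\FF$ enters. I would verify this on generators $1\otimes(x\otimes y)$ with $x\in G/\FF$ and $y\in\IF^k$. Alternatively, one can write the contraction directly as $\tau_{H,k}(x\otimes y)=H\otimes(x-\varepsilon_{\FF,\RR}(x)H)\otimes y$ and repeat the computation from Lemma \ref{Adamson:projective} verbatim over $\RR$.
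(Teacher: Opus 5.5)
Your proposal is correct and takes essentially the route the paper indicates for this lemma, whose detailed proof the paper omits. You apply Lemma \ref{Adamson:projective} with $\HH=\FF$ and Proposition \ref{PropRealFprojres}, then transport along the isomorphisms $F_k$, which intertwine the differentials essentially by the definition of $f^{\FF,\RR}_k$ and $g^{\FF,\RR}_k$ as conjugates of $\id_\RR\otimes f^\FF_k$ and $\id_\RR\otimes g^\FF_k$.
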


Note that in the case of $\FF=\{1\}$, we obtain a projective resolution of $\RR$ over $\RG$. Given an $\RG$-module $A$, we consider the cochain complexes 
\begin{align*}
	C_\RR^k(G;A)&:= \Hom_{\RG}(\RG\otimes_\RR J_G^{k},A),	 \\
	C_\RR^k([G:\FF];A) &:= \Hom_{\RG}(\RR[G/\FF]\otimes_\RR J_{\FF}^k,A), 
\end{align*}
for all $k \in \NN_0$, equipped with the differentials
\begin{align*}
	&d^k_\RR\colon C^k_\RR(G;A) \to C^{k+1}_\RR(G;A),  \quad d^k_\RR := (d^\RR_{k+1})^*, \\ 
	&d^k_{\FF,\RR} \colon	C^k_\RR([G:\FF];A) \to C^{k+1}_\RR([G:\FF];A), \qquad d^k_{\FF,\RR} := (d^{\FF,\RR}_{k+1})^*.
\end{align*} 
for each $k \in \NN_0$. By \eqref{EqRealAdamson} and the identifications from above, we obtain 
\begin{align*}
	H^k(G;A) &= H^k(C^*_\RR(G;A),d^*_\RR), \\ H^k([G:\FF];A) &= H^k(C^*_\RR([G:\FF];A),d_{\FF,\RR}^*) .
\end{align*}
 We want to use these cochain complexes to define analogues of the cohomology classes from Definition \ref{DefCanonicalBerstein}.

\begin{definition} \label{RealAdamsonBersteinclass}
	\begin{enumerate}[a)]
		\item Consider the cocycle $\beta_{\FF,\RR} \in C^1_\RR([G:\FF];J_\FF)$, given by $\RR$-linearly extending
		$$\beta_{\FF,\RR}(x \otimes y) = \varepsilon_{\FF,\RR}(x) \cdot y$$
		for all $x \in \RGF$, $y \in J_\FF$. The \emph{real Adamson canonical class of $\FF$} is its cohomology class
		$$\phi_{\FF,\RR} \in H^1([G:\FF];J_\FF), \qquad \phi_{\FF,\RR}:= [\beta_{\FF,\RR}]. $$
		\item The \emph{real Berstein class of $G$ relative to $\FF$} is defined as
		$$\omega_{\FF,\RR} \in H^1(G;J_\FF), \qquad \omega_{\FF,\RR}:= \rho^1_\FF(\phi_{\FF,\RR}). $$
	\end{enumerate}
\end{definition} 

In the following we will use the simple fact that the relative cup product of two cohomology classes with coefficients in $\RG$-modules $A$ and $B$, respectively, formed by the resolution used in Lemma \ref{LemmaRealProjDRres}, yields a cohomology class with coefficients in $A\otimes_\RR B$.

Arguing as in the proof of Proposition \ref{Propphikrepr}, one shows that $\phi^k_{\FF,\RR} = \phi_{\FF,\RR} \cup \dots \cup \phi_{\FF,\RR} $ is represented by $g^{\FF,\RR}_k \in C^k([G:\FF];J_\FF^k)$. To obtain an explicit description of $\omega_{\FF,\RR}^k$ for $k \in \NN$ we can follow a similar procedure as for the powers of $\omega_{\FF}$. For all $H \in \FF$ and $k \in \NN_0$ we let 
\begin{equation*}
	\sigma_{H,k}^{\RR}\colon \RG \otimes_\RR J^k_G \to \RGF \otimes_\RR  J_\FF^k
\end{equation*}
be given in analogy with Definition \ref{def:sigmaH} by $\RR$-linearly extending
\begin{equation}
\label{EqsigmaR}
\sigma^{\RR}_{H,k}(g_0\otimes (g_1-1)\otimes \dots \otimes (g_k-1)) = g_0H\otimes (g_1H-H)\otimes \dots \otimes (g_kH-H). 
\end{equation}
In complete analogy with Lemma \ref{LemmasigmaH}, one shows that each $\sigma^\RR_{H,*}$ is a chain map and that $\sigma_{H_1,*}^\RR$ and $\sigma_{H_2,*}^\RR$ are chain homotopic for any two $H_1,H_2 \in \FF$. A straightforward computation then shows that $\omega^k_{\FF,\RR} = \omega_{\FF,\RR} \cup \dots \cup \omega_{\FF,\RR}$ is represented by $g_k^{\FF,\RR} \circ \sigma_{H,k}^\RR$ for each $k \in \NN$.

The relation between the real Adamson canonical and Berstein classes and the original ones can be explored via the change of coefficients homomorphism 
$$\alpha_{\FF}^k\colon I^k_\FF \to J^k_\FF \qquad \alpha_\FF^k(x)=1 \otimes x.$$ 
The precise relation between the classes is as follows.

\begin{lemma}
\label{LemmaRealBerstein}
For all $k \in \NN$ it holds that   
$$ \phi^k_{\FF,\RR} = (\alpha_\FF^k)_*(\phi^k_\FF) \in H^k([G:\FF];J_\FF^k) \qquad \mbox{and} \qquad \omega^k_{\FF,\RR}= (\alpha_{\FF}^k)_*(\omega^k_\FF) \in H^k(G;J_\FF^k).  $$
\end{lemma}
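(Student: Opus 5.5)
The plan is to compare explicit cocycle representatives at the cochain level, using the identification \eqref{EqRealAdamson} of Adamson cohomology with coefficients in the $\RG$-module $J^k_\FF$ as the cohomology of $\Hom_{\RG}(\RR\otimes C^\FF_*,J^k_\FF)$. Concretely, we use the isomorphisms $\overline\psi_{k,A}$ from \eqref{Eqpsis} together with the isomorphisms $F_k$ from \eqref{Eqalphak}, which identify $\RR\otimes C^\FF_*$ with the realified resolution $C^{\FF,\RR}_*$ of Lemma \ref{LemmaRealProjDRres}. Under these identifications, a $\ZG$-cochain $c\in\Hom_{\ZG}(\ZGF\otimes\IF^k,A)$ with $A$ an $\RG$-module corresponds to the $\RG$-cochain
$$(r\otimes x)\otimes_\RR(s\otimes y)\longmapsto rs\cdot c(x\otimes y).$$

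For the first identity, we take representatives on both sides. By Proposition \ref{Propphikrepr}, $\phi^k_\FF$ is represented by $g^\FF_k$, so $(\alpha^k_\FF)_*(\phi^k_\FF)$ is represented by the $\ZG$-cochain $\alpha^k_\FF\circ g^\FF_k$, given by
$$x\otimes y\longmapsto \varepsilon_\FF(x)\otimes y = 1\otimes \varepsilon_\FF(x)y.$$
On the real side, $\phi^k_{\FF,\RR}$ is represented by $g^{\FF,\RR}_k$, as noted above by the argument of Proposition \ref{Propphikrepr}. It therefore suffices to check that $g^{\FF,\RR}_k\circ F_k$, transported back through $\overline\psi^{-1}$, equals $\alpha^k_\FF\circ g^\FF_k$. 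This amounts to evaluating on $1\otimes(x\otimes y)$:
$$g^{\FF,\RR}_k\bigl((1\otimes x)\otimes_\RR(1\otimes y)\bigr)=\varepsilon_{\FF,\RR}(1\otimes x)\cdot(1\otimes y)=1\otimes\varepsilon_\FF(x)y,$$
where the last equality uses $\varepsilon_{S,\RR}\circ\Psi_S=\mu\circ(\id_\RR\otimes\varepsilon_S)$. The two cochains agree, so the classes agree.

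For the second identity, we apply $\rho^k_\FF$. We use that $(\alpha^k_\FF)_*$ commutes with $\rho^k_\FF$, since postcomposition with a coefficient map commutes with precomposition by $\sigma_{H,k}$. We also need that $\rho^k_\FF$ computed over $\RR$ via $\sigma^\RR_{H,k}$ agrees with $\rho^k_\FF$ computed over $\ZZ$ via $\sigma_{H,k}$. This holds because $\sigma^\RR_{H,k}$ corresponds to $\id_\RR\otimes\sigma_{H,k}$ under the isomorphisms $F_k$ and $\Psi$, which is visible directly on the generators in \eqref{EqsigmaR}. Combining this with Corollary \ref{Corphikrepr} and the real representative $g^{\FF,\RR}_k\circ\sigma^\RR_{H,k}$, the same pointwise evaluation on elements $1\otimes(g_0\otimes(g_1-1)\otimes\dots\otimes(g_k-1))$ shows that the representatives agree. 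Alternatively, the second identity follows from the first, since
$$\omega^k_{\FF,\RR}=\rho^k_\FF(\phi^k_{\FF,\RR})=\rho^k_\FF\bigl((\alpha^k_\FF)_*\phi^k_\FF\bigr)=(\alpha^k_\FF)_*\rho^k_\FF(\phi^k_\FF)=(\alpha^k_\FF)_*(\omega^k_\FF).$$
Here the first equality uses the real version of Lemma \ref{NaturalityRelCup}.

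The main obstacle is bookkeeping rather than mathematics. We must make sure that the identification of $H^*([G:\FF];A)$ via the real resolution in \eqref{EqRealAdamson} is the same identification under which the real cup products and $\rho_\FF$ were defined. That is, the isomorphisms $\overline\psi$, $F_k$ and $\Psi$ must be compatible with the maps $g_k$, $f_k$ and $\sigma_{H,k}$. We check this on generators $r\otimes(x\otimes y)$; each verification is a one-line computation.
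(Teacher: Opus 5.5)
Your proposal is correct and follows essentially the same route as the paper. You transport the real representative $g^{\FF,\RR}_k$ (resp.\ $g^{\FF,\RR}_k\circ\sigma^\RR_{H,k}$) back through the cochain isomorphism $\overline\psi^{-1}\circ F_*^*$ and check on generators that it equals $\alpha^k_\FF\circ g^\FF_k$ (resp.\ $\alpha^k_\FF\circ g^\FF_k\circ\sigma_{H,k}$); your alternative derivation of the second identity from the first via naturality of $\rho_\FF$ is also valid.
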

\begin{proof}
Fix $k \in \NN$ and consider the isomorphisms of cochain complexes
\begin{align*}
 &\Psi^k_G \colon \Hom_{\RG}(\RG\otimes_\RR J_G^*,J_\FF^k) \to \Hom_{\ZG}(\ZG\otimes I_G^*,J_\FF^k), \\
 &\Psi^k_\FF \colon \Hom_{\RG}(\RGF\otimes_\RR J_\FF^*,J_\FF^k) \to \Hom_{\ZG}(\ZGF\otimes I_\FF^*,J_\FF^k), 
 \end{align*} 
 both given in terms of the maps from \eqref{Eqpsis} and \eqref{Eqalphak} by $\psi_{*,J_\FF^k}^{-1}\circ F_*^*$ applied to the respective resolutions. A straightforward calculation shows that 
$\Psi^k_\FF(g_k^{\FF,\RR})= \alpha^k_\FF\circ g_k^\FF, $
which yields on cohomology level that, up to the isomorphism induced by $\Psi_\FF^*$, 
$$\phi_{\FF,\RR}^k = [g_k^{\FF,\RR}] = [\alpha^k_{\FF} \circ g_k^\FF] = (\alpha^k_\FF)_*([g_k^\FF]) = (\alpha^k_\FF)_*(\phi^k_\FF)$$
by Proposition \ref{Propphikrepr}. 
Another straightforward calculation shows that 
$\Psi^k_G(g_{k}^{\FF,\RR} \circ \sigma_{H,k}^{\RR})= \alpha_\FF^k \circ g_k^\FF \circ \sigma_{H,k} $
and the second part of the claim follows analogously using Corollary \ref{Corphikrepr}. \end{proof}

In particular, in the case of $\FF=\{1\}$ we again identify $I_G^k \cong I^k_{\{1\}}$ and $J_G^k \cong J_{\{1\}}^k$ for each $k \in \NN$ and consider the change of coefficient homomorphism
$$\alpha_G^k:= \alpha_{\{1\}}^k\colon I_G^k \to J_G^k.$$
We further consider the class  $$ b_{\RR}^k := (\alpha_G^k)_*(b^k) \in H^k(G;J_G^k).$$
One checks without difficulties that,  indeed, $b^k_\RR = b_\RR \cup \dots \cup b_\RR$ as the $k$-fold cup product, so $b^k_\RR$ is the $k$-th power of the real Berstein class of $G$.

Finally, observe that the proof of Proposition \ref{PropUniv} can be straightforwardly translated into the realified case. Consequently, given any $\RG$-module $A$ and any $u \in H^k([G:\FF];A)$ there exists an $\RG$-homomorphism $\psi \colon J^k_\FF \to A$ with \begin{equation}\label{PropUnivReal}
u = \psi_*(\phi^k_{\FF,\RR}). 
\end{equation}

\section{Bounded Adamson cohomology with respect to subgroup families}
We proceed by introducing and studying a bounded version of Adamson cohomology and establish analogues of the cohomology classes defined in the previous section in the setting of bounded cohomology. \medskip

Throughout this section we let $\FF$ be a family of subgroups of $G$. Since we will only work with $\RG$-modules in this section, we will use the constructions from Subsection \ref{SubsectReal}, but drop the subscript and superscript $\RR$ from the notation of the cochain complexes. More precisely, we will use the notation
$$C^k([G:\FF];A)= \Hom_{\RG}(\RGF\otimes_\RR J_\FF^k,A), \quad C^k(G;A)= \Hom_{\RG}(\RG\otimes_\RR J_G^k,A),$$
and write $d_k^\FF$, $f_k^\FF$, $g_k^\FF$ and $\sigma_{H,k}$ instead of $d_k^{\FF,\RR}$, $f^{\FF,\RR}_k$, $g_k^{\FF,\RR}$ and $\sigma_{H,k}^{\RR}$, respectively. 

\subsection{The definition of bounded cohomology and bounded Adamson cohomology}
We recall that a \emph{normed $G$-module} is an $\RG$-module equipped with a $G$-invariant norm and that a \emph{Banach $G$-module} is a normed $G$-module whose underlying normed vector space is a Banach space. We will usually suppress the norm of a normed $G$-module $V$ from the notation and denote it by $\|\cdot \|_V$ if nothing else is specified. \medskip 

Before proceeding we fix some notation:
\begin{itemize}
	\item We shall always view $\RR$ as a normed $G$-module with respect to the trivial $G$-action and the absolute value.
	\item Given two normed $G$-modules $V$ and $W$, we let $B(V,W)$ denote the space of bounded linear maps $V \to W$ and view it as a normed $G$-module with respect to the operator norm $\|\cdot\|_\op$ and the diagonal $G$-action given by
	$$ (g\cdot f)(v) = g f(g^{-1}v) \qquad \forall v \in V, \ g \in G, \ f \in B(V,W).$$
	\item For each normed $G$-module $V$ we let $V':= B(V,\RR)$ denote its topological dual space. Note that the diagonal action of $G$ on $V'$ is then given by 
	$$(g\cdot f)(v) =f(g^{-1}v) \quad \forall g \in G, \ f \in V', \ v \in V. $$
	\item We let $B_G(V,W)$ denote the space of all bounded linear $G$-maps from $V$ to $W$, i.e.
	$$B_G(V,W) = B(V,W) \cap \Hom_{\RG}(V,W).$$
	\item Given normed vector spaces $X$, $Y$ and $Z$ and a bounded linear map $f\colon X \to Y$, we further denote 
	$$f'\colon B(Y,Z) \to B(X,Z), \quad f'(g) := g \circ f.$$
\end{itemize}

We briefly recall the definition of bounded cohomology of discrete groups by means of homological algebra as given in \cite[Chapter 4]{FrigerioBook} whose terminology we will follow, see also \cite[Section 2.7]{LoehLectureNotes}.

\begin{definition} \label{relinjstrong}
	\begin{enumerate}[a)]
		\item Let $M$ and $N$ be normed $G$-modules. A map $i \in B_G(M,N)$ is called \emph{strongly injective} if there exists $r\in B(N,M)$ with $\|r\|_\op \leq 1$ and $r \circ i = \id_M$. 
		\item A normed $G$-module $V$ is called \emph{relatively injective} if for all normed $G$-modules $M$ and $N$, all strongly injective $i \in B_G(M,N)$ and all $f \in B_G(M,V)$, there exists $F \in B_G(N,V)$ with $f = F\circ i$ and $\|F\|_\op \leq \|f\|_\op$.
		\item A \emph{relatively injective strong resolution} of a normed $G$-module $V$ is a cochain complex $(J^k,j^k)_{k \in \NN_0}$ of relatively injective Banach $G$-modules together with a map $\alpha \in B_G(V,J^0)$, for which the augmented cochain complex
		$$ 0 \longrightarrow V \stackrel{\alpha}{\longrightarrow} J^0 \stackrel{j^0}\longrightarrow J^1 \longrightarrow\dots \longrightarrow J^k \stackrel{j^{k}}\longrightarrow J^{k+1} \longrightarrow \dots$$
		is exact and admits a contraction $\tau_*\colon J^* \to J^{*-1}$ with $\|\tau_k\|_\op \leq 1$ for each $k \in \NN_0$, where $J^{-1}:=V$. 
		\item Let $V$ be a Banach $G$-module. The \emph{bounded cohomology of $G$ with coefficients in $V$} is given by 
		$$H^k_b(G;V):= H^k((J^*)^G,j^*)\qquad \forall k \in \NN_0,$$
		where $(J^*,j^*)$ is a relatively injective strong resolution of $V$ and where $(\cdot )^G$ denotes the invariants functor.
	\end{enumerate}
\end{definition}
By \cite[Corollary 4.15]{FrigerioBook}, the bounded cohomology of a group with coefficients in a Banach $G$-module is independent of the choice of relatively injective strong resolution up to isomorphism. 

In order to define a notion of bounded Adamson cohomology, we need the bounded equivalent of relative injectivity and strongness with respect to a family of subgroups $\FF$. The following notions were first introduced by K. Li in \cite{LiBounded}.

\begin{definition}
	\label{DefBoundedCoh}
	\begin{enumerate}[a)]
		\item Let $M$ and $N$ be normed $G$-modules. A map $i \in B_G(M,N)$ is called \emph{strongly $\FF$-injective} if for each $H \in \FF$ there exists $r_H \in B_H(N, M)$ with $\|r_H\|_\op \leq 1$ and $r_H \circ i = \id_M$. 
		\item A normed $G$-module $V$ is \emph{relatively $\FF$-injective} if for all normed $G$-modules $M$ and $N$, all strongly $\FF$-injective $i \in B_G(M,N)$ and all $f \in B_G(M,V)$, there exists $F \in B_G(N,V)$ with $f = F\circ i$ and $\|F\|_\op \leq \|f\|_\op$.
		\item Let $V$ be a Banach $G$-module. A \emph{relatively $\FF$-injective resolution} of a Banach $G$-module $V$ is a cochain complex $(J^k,j^k)_{k \in\NN_0}$ of relatively $\FF$-injective Banach $G$-modules together with a map $\alpha \in B_G(V,J^0)$, such that the augmented cochain complex
		$$ 0 \longrightarrow V \stackrel{\alpha}{\longrightarrow} J^0 \stackrel{j^0}\longrightarrow J^1 \longrightarrow\dots \longrightarrow J^k \stackrel{j^{k}}\longrightarrow J^{k+1} \longrightarrow \dots$$
		is exact.
			\item An exact cochain complex $(C^k,\partial^k)_{k \in \ZZ}$ of normed $\RG$-modules is called \emph{bounded $\FF$-strong} if  for each $H \in \FF$ there exists a contraction $\tau_{H,*}\colon C^* \to C^{*-1}$ with $\tau_{H,k} \in B_H(C^k,C^{k-1})$ and $\|\tau_{H,k}\|_\op \leq 1$ for each $k \in \ZZ$.
		\item Let $V$ be a Banach $G$-module. A \emph{bounded $\FF$-injective resolution of $V$} is a relatively $\FF$-injective resolution of $V$ which is bounded $\FF$-strong. 
	\end{enumerate}
\end{definition}

To define bounded Adamson cohomology in terms of resolutions, we need the following standard lemma, see also \cite[Proposition 3.9]{LiBounded}. It is proven by standard methods from homological algebra which can be transferred to the bounded setting without any difficulties.

\begin{lemma} 
	Let $V$ be a Banach $G$-module and let $(J^*_1,j^*_1)$ and $(J^*_2,j^*_2)$ be bounded  $\FF$-injective resolutions of $V$. Then
	$$H^k((J^*_1)^G,j^*_1) \cong H^k((J^*_2)^G,j^*_2) \quad \forall k \in \NN_0.$$
\end{lemma}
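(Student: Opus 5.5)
The plan is the usual fundamental-lemma argument from relative homological algebra, done in the bounded setting. I build cochain maps between the two resolutions that extend $\id_V$, show that both composites are chain homotopic to the identity through $G$-equivariant bounded homotopies, and then pass to $G$-invariants.

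\emph{Step 1 (existence of an extension).} I construct inductively $\varphi^k \in B_G(J_1^k,J_2^k)$ with $\varphi^0\circ\alpha_1=\alpha_2$ and $\varphi^{k+1}\circ j_1^k=j_2^k\circ\varphi^k$. Bounded $\FF$-strongness of $(J_1^*,j_1^*)$ is what makes the injectivity of $J_2^*$ usable. Write $Z^{k+1}:=\im j_1^k$ with the norm restricted from $J_1^{k+1}$. For each $H\in\FF$ the contraction $\tau_{H,k+1}$ restricts to an $H$-map $Z^{k+1}\to J_1^k$, and $j_1^k\circ\tau_{H,k+1}=\id$ on $Z^{k+1}$. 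Hence $\|j_1^k x\|\geq\|\tau_{H,k+1}(j_1^kx)\|$ and $x-\tau_{H,k+1}j_1^kx=j_1^{k-1}\tau_{H,k}x$, so $j_1^k$ induces an isomorphism $J_1^k/\im j_1^{k-1}\cong Z^{k+1}$ with bounded inverse. Moreover the inclusion $Z^{k+1}\hookrightarrow J_1^{k+1}$ is strongly $\FF$-injective, with left inverses $r_H:=j_1^k\circ\tau_{H,k+1}$. For these to have norm $\leq 1$ I use the standard trick of putting on $Z^{k+1}$ the quotient norm from $J_1^k$; this is where the $\|\tau\|\leq 1$ hypothesis enters. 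The map $j_2^k\circ\varphi^k$ vanishes on $\im j_1^{k-1}$, by the inductive identity and $j_2^k\circ j_2^{k-1}=0$, so it descends to a bounded $G$-map $Z^{k+1}\to J_2^{k+1}$. Relative $\FF$-injectivity of $J_2^{k+1}$ extends it to $\varphi^{k+1}$. The base case $k=0$ works the same way with $V$ in place of $J_1^{-1}$, using $\alpha_1$ and the strong $\FF$-injectivity of $\alpha_1$ given by $\tau_{H,0}$.

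\emph{Step 2 (uniqueness up to homotopy).} Let $\varphi,\psi$ be two such extensions. I construct $h^k\in B_G(J_1^k,J_2^{k-1})$ with $\varphi^k-\psi^k=j_2^{k-1}h^k+h^{k+1}j_1^k$. This is again an induction: at each stage the map $\varphi^k-\psi^k-j_2^{k-1}h^k$ vanishes on $\im j_1^{k-1}$, so it descends to $Z^{k+1}$ and extends by relative $\FF$-injectivity, exactly as in Step 1. Start with $h^0=0$, and use that $\varphi^0-\psi^0$ vanishes on $\im\alpha_1$.

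\emph{Step 3 (conclusion).} By symmetry I get $\varphi\colon J_1^*\to J_2^*$ and $\psi\colon J_2^*\to J_1^*$. Their composites both extend $\id_V$, so by Step 2 they are $G$-homotopic to the respective identities. All maps and homotopies are $G$-equivariant, so they restrict to the invariant subcomplexes $(J_i^*)^G$ and give mutually inverse isomorphisms on cohomology.

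The main obstacle is the norm bookkeeping in Step 1. The definition of relative $\FF$-injectivity demands left inverses of operator norm $\leq 1$, so I must make sure $Z^{k+1}$ carries a norm for which the $r_H$ are contractive while the descended map stays bounded. I expect this to be handled, as in \cite[Chapter 4]{FrigerioBook} and \cite[Proposition 3.9]{LiBounded}, by the quotient norm together with $\|\tau_{H,*}\|\leq 1$.
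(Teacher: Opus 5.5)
Your proposal is correct and is the standard fundamental-lemma argument that the paper invokes without details, deferring to \cite[Proposition 3.9]{LiBounded}. The key point, which you handle correctly, is that $J_1^k/\im j_1^{k-1}\to J_1^{k+1}$ with the quotient norm is strongly $\FF$-injective via the contractive $H$-maps $[\tau_{H,k+1}(\cdot)]$. Relative $\FF$-injectivity of the target then yields both the extension of $\id_V$ and the bounded $G$-homotopies, which restrict to $G$-invariants; throughout you use the standing convention that the differentials are bounded.
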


This lemma leads us to the following definition. 

\begin{definition}
	Let $V$ be a Banach $G$-module. We define the \emph{bounded Adamson cohomology of $(G,\FF)$ with coefficients in $V$} by 
	$$H^k_b([G:\FF];V) := H^k((J^*)^G,j^*) \quad \forall k \in \NN_0, $$
	where $(J^*,j^*)$ is a bounded $\FF$-injective resolution of $V$.
\end{definition}

\begin{remark}
	\label{RemarkBounded}
	If $\FF=\{1\}$, then by definition a bounded $\FF$-injective resolution of a Banach $G$-module $V$ is nothing but a relatively injective strong resolution as recalled in Definition \ref{relinjstrong}.c).  Thus, by \cite[Corollary 4.15]{FrigerioBook} we obtain that
	$$H_b^k([G:\{1\}];V) \cong H^k_b(G;V) \qquad \forall k \in \NN_0.$$
\end{remark}

\begin{remark}
In \cite{LiBounded}, K. Li introduced and studied a notion of bounded Bredon cohomology with respect to a family of subgroups $\FF$, as a variation of the cohomology relative to the $G$-set $G/\FF$ of B. Nucinkis, see \cite{Nuc99}. It is natural to expect, though we shall not discuss this aspect any further, that  Li's construction is related to our notion of bounded Adamson cohomology in the same way as their classical analogues.
\end{remark}

\subsection{Construction of a bounded $\FF$-injective resolution} 
To use cochain complexes similar to those used in Section 2, we are going to equip the $G$-modules occurring in these cochain complexes with $G$-invariant norms for which we outline some general notions. \medskip

Given a $G$-set $S$, we consider $\RR[S]$ as equipped with the $\ell^1$-norm, given by
$$\|x\|_1 := \sum_{s \in S} |r_s| \qquad \forall x=\sum_{s\in S}r_s \cdot s \in \RR[S].$$
Evidently, $\| g\cdot x\|_1 = \|x\|_1$ for all $g \in G$ and $x \in \RR[S]$, hence $(\RR[S],\|\cdot \|_1)$ is a normed $G$-module. Likewise, we consider $J_S\subset \RR[S]$ with the $\ell^1$-norm as a normed $G$-submodule of $\RR[S]$.  

We further note that its augmentation $\varepsilon_S\colon \RR[S] \to \RR$ is bounded $G$-linear with $\|\varepsilon_S\|_\op =1$. Again, we put  $\varepsilon_\FF:= \varepsilon_{G/\FF} \in B_G(\RGF,\RR)$.

Given $n \in \NN$ and normed vector spaces $V_1,\dots,V_n$, we shall always consider their tensor product $V_1\otimes_{\RR} V_2\otimes_{\RR} \dots \otimes_{\RR} V_n$ as equipped with the \emph{projective norm}, given by 
\begin{align*}
	\|u\|_{\pi} := \inf \Big\{\sum_{i=1}^k \|v_{i1}\|_{V_1} \cdot \|v_{i2}\|_{V_2}\dots \|v_{in}\|_{V_n} \ \Big| \ &k \in \NN, v_{ij} \in V_j  \ \forall i \in \{1,2,\dots,k\}, \ j \in \{1,2,\dots,n\}, \\ 
	&\text{s.t. } \ u = \sum_{i=1}^k v_{i1}\otimes v_{i2} \otimes \dots \otimes v_{in} \Big\}
\end{align*}	
for all $u \in V_1\otimes_{\RR} \dots \otimes_{\RR} V_n$. The projective norm is discussed in this general form in \cite[Appendix B.2]{RundeAmenab} and in detail in the case of $n=2$ in \cite[Chapter 2]{Ryan} and  \cite[Section I.3]{DefantFloret}. It is a norm on $V_1\otimes_{\RR} \dots \otimes_{\RR} V_n$ and it satisfies 
\begin{equation*}
	\|v_1 \otimes v_2 \otimes \dots \otimes v_n\|_{\pi} = \|v_1\|_{V_1} \cdot \|v_2\|_{V_2} \cdot \ldots \cdot \|v_n\|_{V_n} \qquad \forall v_j \in V_j, \ j \in \{1,2,\dots,n\}.			
\end{equation*}
Moreover, it is easy to see that a tensor product of normed $G$-modules is again a normed $G$-module when equipped with the projective norm. \medskip 

\emph{Throughout the rest of the article, we consider for any $k \in \NN$ the $\RG$-modules $J_G^k$, $J^k_\FF$, $\RG \otimes_\RR J^k_G$ and $\RGF \otimes_\RR J^k_\FF$, as equipped with the projective norm with respect to the $\ell^1$-norm on each of the individual factors.} \medskip

Let $d_k\colon \RG \otimes_\RR J^k_G \to \RG \otimes_\RR J^{k-1}_G$ and $d^{\FF}_k\colon \RGF \otimes_\RR J^k_\FF \to \RGF \otimes_\RR J^{k-1}_\FF$  
be given as in Subsection \ref{SubsectReal} for every $k \in \NN$. We compute for all $x \in \RGF$, $y \in J_\FF$ and $z \in J^{k-1}_\FF$ that
\begin{align*}
	\|d_k^\FF(x \otimes y \otimes z)\|_{\pi} &= \|\varepsilon_\FF(x)\otimes \iota_\FF(y)\otimes z\|_{\pi} = |\varepsilon_\FF(x)| \cdot \|\iota_\FF(y)\|_1 \cdot \|z\|_{ J^{k-1}_\FF} \\
	&\leq \|x\|_1 \cdot \|y\|_1 \cdot \|z\|_{J^{k-1}_\FF} = \|x \otimes y \otimes z\|_{\pi}, 
\end{align*}
By definition of the projective norm, this yields that $\|d_k^\FF\|_\op \leq 1$ for each $k \in \NN$.   An analogous computation shows the boundedness of $d_k$. Thus, for all normed $G$-modules $V$ and $k \in \NN_0$, there are well-defined induced maps
\begin{align*}
	&\partial^k_\FF:= (d^{\FF}_{k+1})'\colon B(\RGF \otimes_\RR J^k_\FF,V) \to B(\RGF \otimes_\RR J^{k+1}_\FF,V), \\
	&\partial^k:= (d_{k+1})'\colon B(\RG \otimes_\RR J^k_G,V) \to B(\RG \otimes_\RR J^{k+1}_G,V).
\end{align*}
For each $k \in \NN_0$ we consider both $B(\RGF \otimes_\RR J^k_\FF,V)$ and $B(\RG \otimes_\RR J^k_G,V)$ as normed $G$-modules with respect to the norm  $$\|f\|_{C^k_b}:= 2^k \cdot \|f\|_\op.$$
The reason for this rescaling of the operator norm by a factor depending on $k$ will become apparent in the proof of Theorem \ref{TheoremSpecialInjResol}.

\begin{prop}
	\label{PropCFVinjective} Let $V$ be a Banach $G$-module and let $k \in \NN_0$. Then $B(\RR[G/\FF]\otimes_\RR J_{\FF}^k,V)$ is a relatively $\FF$-injective Banach $G$-module.
\end{prop}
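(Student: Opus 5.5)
The plan is the standard argument that a module of bounded maps out of an $\ell^1$-type space with $\FF$-isotropy is relatively $\FF$-injective, adapted to the decomposition $\RGF\otimes_\RR J^k_\FF\cong\bigoplus_{H\in\FF}\RR[G/H]\otimes_\RR J^k_\FF$. Write $W:=B(\RGF\otimes_\RR J^k_\FF,V)$.

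\emph{Preliminary reductions.} First, $W$ is a Banach space because $V$ is complete. The rescaled norm $\|\cdot\|_{C^k_b}=2^k\|\cdot\|_\op$ is still $G$-invariant. Moreover, the extension condition $\|F\|_\op\le\|f\|_\op$ in the definition of relative $\FF$-injectivity is invariant under rescaling the norm on the target $W$ by a constant. So it suffices to work with the operator norm throughout.

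\emph{Key norm identity.} Next I would record a fact about the projective norm. Every $u\in\RGF\otimes_\RR J^k_\FF$ can be written uniquely as a finite sum $u=\sum_{c}c\otimes y_c$, where $c$ runs over cosets $gH$ in $G/\FF$ and $y_c\in J^k_\FF$. For such a sum,
\[
\|u\|_\pi=\sum_c\|y_c\|_{J^k_\FF}.
\]
The inequality $\le$ is clear. For $\ge$, start from any representation $u=\sum_i x_i\otimes z_i$, expand each $x_i$ in the coset basis, and apply the triangle inequality coset by coset. Consequently, to define a bounded linear map on $\RGF\otimes_\RR J^k_\FF$, it suffices to give, for each coset $c$, a bounded linear map $J^k_\FF\to V$ on the slot $c\otimes J^k_\FF$, with operator norms bounded uniformly in $c$. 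The resulting map then has operator norm at most the supremum of these slot norms.

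\emph{Construction of the extension.} Let $i\in B_G(M,N)$ be strongly $\FF$-injective with retractions $r_H\in B_H(N,M)$, $\|r_H\|_\op\le1$, and let $f\in B_G(M,W)$. For $n\in N$, a coset $gH$ and $y\in J^k_\FF$, I define
\[
F(n)(gH\otimes y):=f\big(g\,r_H(g^{-1}n)\big)(gH\otimes y),
\]
extended linearly. This requires four checks.
\begin{enumerate}[(i)]
\item Well-definedness. If $gH=g'H$, then $g'=gh$ with $h\in H$, and $H$-equivariance of $r_H$ gives $gh\,r_H(h^{-1}g^{-1}n)=g\,r_H(g^{-1}n)$.
\item Boundedness. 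Each slot has norm at most $\|f\|_\op\|n\|$, since the $G$-action is isometric and $\|r_H\|_\op\le1$. By the norm identity, $F(n)\in W$ with $\|F(n)\|_\op\le\|f\|_\op\|n\|$, so $F$ is linear and bounded with $\|F\|_\op\le\|f\|_\op$.
\item $G$-equivariance. For $k'\in G$, one checks $F(k'n)(c\otimes y)=k'\cdot\big(F(n)(k'^{-1}c\otimes k'^{-1}y)\big)$. This uses the $G$-equivariance of $f$ together with the substitution $g\mapsto k'^{-1}g$ for the coset representative.
\item Extension. Since $r_H\circ i=\id_M$ and $i$, $f$ are $G$-maps, $g\,r_H(g^{-1}i(m))=m$, and therefore $F\circ i=f$.
\end{enumerate}

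\emph{Main obstacle.} I expect the main care to be needed in the norm identity for the projective norm and in the bookkeeping of the $G$-action in step (iii). Both are routine once the coset basis description is in place.
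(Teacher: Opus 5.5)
Your proposal is correct and follows the paper's proof essentially step for step. You use the same extension $F(n)(gH\otimes y)=f\big(g\,r_H(g^{-1}n)\big)(gH\otimes y)$, the same checks (well-definedness, equivariance, $F\circ i=f$), and the same coset-by-coset triangle-inequality estimate against an arbitrary representation of $u$, which you package as the projective-norm identity $\|u\|_\pi=\sum_c\|y_c\|_{J^k_\FF}$. The only cosmetic difference is the rescaling: you dispose of the $2^k$ factor up front by noting that the extension condition is scale-invariant, whereas the paper carries the factor $2^{-k}$ through the estimate.
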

\begin{proof}
	Fix $k \in \NN_0$ and put $C^k := B(\RR[G/\FF]\otimes_\RR J_{\FF}^k,V)$. It is an elementary result of functional analysis that since $V$ is a Banach $G$-module, $C^k$ is a Banach $G$-module. Let $M$ and $N$ be normed $G$-modules, let $i \in B_G(M,N)$ be strongly $\FF$-injective and for each $H \in \FF$ let $r_H \in B_H(N,M)$ with $r_H \circ i = \id_M$. 
	
	Given $f \in B_G(M,C^k)$ we consider the map $F\colon N \to C^k,$ given by $\RR$-linearly extending
	$$F(q)(gH \otimes y) = 	f((g\cdot r_H)(q))(gH\otimes y) \qquad \forall q \in N, \ g \in G, \ H \in \FF, \ y \in J_\FF^k.$$
	Since each $r_H$ is an $\RR[H]$-homomorphism, $F$ is independent of the chosen representatives of $gH \in G/\FF$, hence well-defined. Since $i$ and $f$ are $\RG$-homomorphisms, we compute for all $p \in M$, $H \in\FF$, $g \in G$ and $y \in J_\FF^k$ that
	\begin{align*}	
		(F \circ i)(p)(gH \otimes y) 
		&= f(g r_H(g^{-1}i(p)))(gH\otimes y)\\
		&= (g \cdot f(r_H(i(g^{-1}p))))(gH \otimes y) \\
		&= (g \cdot f(g^{-1}p))(gH \otimes y) = f(p)(gH\otimes y),
	\end{align*}
	so it holds that $F \circ i = f$. We further compute for all $H \in \FF$, $q \in N$, $y \in J_\FF^k$ and $g_0,g \in G$ that
	\begin{align*}
		(g_0 \cdot F(q))(gH\otimes y) &= g_0 (F(q)(g_0^{-1}gH \otimes g_0^{-1}y))\\
		&= g_0 (f(g_0^{-1}g  r_H(g^{-1}g_0 q))(g_0^{-1}gH \otimes g_0^{-1}y)) \\
		&=  f(g  r_H(g^{-1}g_0 q))(gH \otimes y) \\
		&= F(g_0\cdot q)(gH \otimes y).
	\end{align*}
	Extending $\RR$-linearly, this shows that $F \in \Hom_{\RG}(N,C^k)$. To show its boundedness, we further compute for all $H$, $q$, $y$ and $g$ as above that
	\begin{align*}
		\|F(q)(gH \otimes y)\|_V 
				&\leq  \|f(g r_H(g^{-1}q))\|_{\op}\cdot \|gH\otimes y\|_{\pi}  \\
		&\leq	 2^{-k} \cdot \|f(g r_H(g^{-1}q))\|_{C^k_b}\cdot \|gH\otimes y\|_{\pi} \\
		&\leq 2^{-k} \cdot\|f\|_{\op} \cdot \|g r_H(g^{-1}q)\|_M \cdot \|g H\|_1 \cdot \|y\|_{J^k_{\FF}} \\
		&\leq 2^{-k} \cdot\|f\|_{\op} \cdot \|r_H\|_{\op} \cdot \|q\|_N  \cdot  \|y\|_{J^k_{\FF}}  .
	\end{align*}
	Since $\|r_H\|_\op \leq 1$, this yields
		\begin{equation}
		\label{EqFgenerators}
		\left\|F(q)(gH \otimes y)\right\|_V	\leq 2^{-k} \cdot\|f\|_{\op} \cdot \|q\|_N \cdot \|y\|_{J^k_\FF} \quad \forall q \in N, \ gH \in G/\FF, \ y \in J^k_\FF.
	\end{equation}
	Let $u \in \RR[G/\FF]\otimes_\RR J^k_\FF$ be arbitrary. Let $I$ be a finite set and let $\sigma_i= \sum_{gH \in G/\FF} r_{gH,i}\cdot gH \in \RR[G/\FF]$, $y_i \in J^k_\FF$ for each $i \in I$, such that $u = \sum_{i \in I} \sigma_i \otimes y_i$. Using the triangle inequality and \eqref{EqFgenerators}, we obtain for each $q \in N$ that
	\begin{align*}
		\|F(q)(u)\|_V 
		&\leq \sum_{i \in I} \sum_{gH\in G/\FF} |r_{gH,i}| \cdot 2^{-k} \cdot\|f\|_\op \cdot \|q\|_N \cdot \|y_i\|_{J^k_\FF} \\
		&=2^{-k} \cdot\|f\|_\op \cdot \|q\|_N \cdot  \sum_{i \in I} \|\sigma_i\|_1 \cdot \|y_i\|_{J^k_\FF}  .
	\end{align*}
	Since this holds for every representation of $u$ as a finite sum of homogeneous tensors, we derive
	$$\|F(q)(u)\|_V \leq 2^{-k} \cdot\|f\|_\op \cdot \|q\|_N \cdot \|u\|_\pi \qquad \forall q \in N, \ u \in \RR[G/\FF]\otimes_\RR J_\FF^k,$$
	hence $\|F(q)\|_{C^k_b} = 2^{k}\cdot  \|F(q)\|_\op \leq \|f\|_\op \cdot \|q\|_N$ for all $q \in N$. 
	Thus, $F$ is bounded linear with $\|F\|_\op \leq \|f\|_\op $, which eventually shows that $C^k$ is relatively $\FF$-injective.
\end{proof}

Using the previous results, we can construct an explicit bounded $\FF$-injective resolution 
which allows us to describe bounded Adamson cohomology similar to the approach of Section 2.

\begin{theorem}
	\label{TheoremSpecialInjResol}
	Let $V$ be a Banach $G$-module. Then 
	{\small \begin{equation*}
			0 \longrightarrow V \stackrel{\varepsilon_{\FF}'}{\longrightarrow} B(\RGF,V)  \stackrel{\partial^0_\FF}\longrightarrow \ldots \longrightarrow  B(\RGF\otimes_\RR J_\FF^k,V) \stackrel{\partial^k_\FF}\longrightarrow B(\RGF\otimes_\RR J^{k+1}_\FF,V)\longrightarrow \ldots  
	\end{equation*}}
	is a bounded $\FF$-injective resolution of $V$.
\end{theorem}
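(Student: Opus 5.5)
Three things need to be checked for the displayed complex: each term is a relatively $\FF$-injective Banach $G$-module, the augmented complex is exact, and it admits, for each $H \in \FF$, a contraction by bounded $\RR[H]$-maps of operator norm at most one with respect to the rescaled norms $\|\cdot\|_{C^k_b}$. The first point is exactly Proposition \ref{PropCFVinjective}. The maps $\varepsilon_\FF'$ and $\partial^k_\FF=(d^\FF_{k+1})'$ are bounded $G$-maps, since $\|\varepsilon_\FF\|_\op=1$ and $\|d^\FF_{k+1}\|_\op\le 1$. With the rescaling their operator norms are at most $2$, which is harmless. For $V$ one uses its own norm; note that $\|\varepsilon_\FF'(v)\|_{C^0_b}=\|v\|_V$.

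For exactness and strongness we dualize the $\RR[H]$-contractions of the chain resolution from Lemma \ref{LemmaRealProjDRres}. For $H \in \FF$, take the real analogue of the contraction from the proof of Lemma \ref{Adamson:projective} with $\HH=\FF$:
$$\tau_{H,k}(x\otimes y)=H\otimes (x-\varepsilon_\FF(x)H)\otimes y, \qquad \tau_{H,-1}(r)=rH.$$
It satisfies $d^\FF_{k+1}\tau_{H,k}+\tau_{H,k-1}d^\FF_k=\id$ and $\varepsilon_\FF\tau_{H,-1}=\id_\RR$. Set $\kappa_{H,k}:=(\tau_{H,k-1})'\colon B(\RGF\otimes_\RR J^k_\FF,V)\to B(\RGF\otimes_\RR J^{k-1}_\FF,V)$ for $k\ge 1$, and $\kappa_{H,0}:=(\tau_{H,-1})'\colon B(\RGF,V)\to V$. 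Dualizing the chain homotopy identity gives
$$\partial^{k-1}_\FF\kappa_{H,k}+\kappa_{H,k+1}\partial^k_\FF=\id \qquad \text{and} \qquad \kappa_{H,0}\varepsilon_\FF'=\id_V.$$
So the augmented complex is contractible, hence exact. Each $\kappa_{H,k}$ is $H$-equivariant for the diagonal action, because $\tau_{H,k-1}$ is an $\RR[H]$-map.

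The main point, and the reason for the factor $2^k$, is the norm estimate. On elementary tensors,
$$\|\tau_{H,k}(x\otimes y)\|_\pi\le \|H\|_1\cdot\|x-\varepsilon_\FF(x)H\|_1\cdot\|y\|_\pi\le 2\|x\|_1\|y\|_\pi,$$
since $|\varepsilon_\FF(x)|\le\|x\|_1$. Passing to infima over representations gives $\|\tau_{H,k}\|_\op\le 2$. Also $\|\tau_{H,-1}\|_\op=1$.

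It follows that for $f\in B(\RGF\otimes_\RR J^k_\FF,V)$ with $k\ge 1$,
$$\|\kappa_{H,k}f\|_{C^{k-1}_b}=2^{k-1}\|f\circ\tau_{H,k-1}\|_\op\le 2^{k-1}\cdot 2\|f\|_\op=\|f\|_{C^k_b}.$$
For $k=0$ we get $\|\kappa_{H,0}f\|_V=\|f(H)\|_V\le\|f\|_\op=\|f\|_{C^0_b}$. Hence all $\kappa_{H,k}$ have norm at most one, and the complex is bounded $\FF$-strong. Together with Proposition \ref{PropCFVinjective}, it is a bounded $\FF$-injective resolution of $V$.

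The remaining care is verifying the bound $\|\tau_{H,k}\|_\op\le 2$ in the projective norm, which reduces to elementary tensors by the definition of $\|\cdot\|_\pi$. For strongness it should suffice to produce contractions for each $H\in\FF$ directly, which we do, so no bounded analogue of Lemma \ref{LemmaGFexactgener} is needed.
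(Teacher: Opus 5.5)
Your proposal is correct and follows essentially the same route as the paper. Relative $\FF$-injectivity comes from Proposition~\ref{PropCFVinjective}. The strongness comes from dualizing the contraction $t_{H,k}(gK\otimes y)=H\otimes(gK-H)\otimes y$, which has operator norm at most $2$, and the $2^k$-rescaling makes the dual maps norm non-increasing. The only difference is cosmetic: you get the identities $\partial^{k-1}_\FF\kappa_{H,k}+\kappa_{H,k+1}\partial^k_\FF=\id$ by dualizing the chain-level contraction identity, whereas the paper re-verifies them directly on generators.
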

\begin{proof}
	Let again $C^k := B(\RGF\otimes_\RR J_\FF^k,V)$ for each $k \in \NN_0$. By Proposition \ref{PropCFVinjective}, $C^k$ is relatively $\FF$-injective for each $k \in \NN_0$, so it suffices to show that it is exact and bounded $\FF$-strong. Let $H \in \FF$ be fixed. For all $k \in \NN_0$ we define $t_{H,k}\colon \RGF \otimes_\RR J^k_\FF \to \RGF\otimes_\RR J^{k+1}_\FF$ by $\RR$-linearly extending
	$$t_{H,k} (gK \otimes y) = H\otimes (gK-H)\otimes y \qquad \forall  K \in \FF, \ g\in G, \ y \in J^k_\FF.$$
	For each $k \in \NN$ we then put
	$$\tau_{H,k}\colon C^k \to C^{k-1}, \quad \tau_{H,k} := (t_{H,k-1})'.$$
	In complete analogy with the proof of Lemma \ref{Adamson:projective}, one shows that $t_{H,k-1}$ and thus $\tau_{H,k}$ is an $\RR[H]$-homomorphism for each $k \in \NN$. We further put $\tau_{H,0}\colon C^0 \to V$, $\tau_{H,0}(\xi)=\xi(H)$, which is evidently an $\RR[H]$-homomorphism and which satisfies
	\begin{align*}
		\|\tau_{H,0}(\xi)\|_V = \|\xi(H)\|_V \leq \|\xi\|_\op \cdot \|H\|_1 = \|\xi\|_{C^0_b}
	\end{align*}
	for all $\xi \in C^0$, such that $\|\tau_{H,0}\|_\op \leq 1$. \medskip
	
	Fix $k \in \NN$. For all $K \in \FF$, $g \in G$ and $y \in J^{k-1}_\FF$, we compute that
	\begin{align*}
		\|t_{H,k-1}(gK\otimes y)\|_\pi 
		&= \|H\|_1 \cdot \|gK-H\|_1 \cdot  \|y\|_{J^{k-1}_\FF} 
		\leq 2\cdot  \|y\|_{J^{k-1}_\FF} = 2\cdot  \|gK\otimes y\|_\pi.
	\end{align*}
In analogy with the proof of Proposition \ref{PropCFVinjective}, we derive that $\|t_{H,k-1}\|_\op \leq 2$. At this point, our rescaling of the operator norms by $2^k$ comes into play. We derive for each $\xi \in C^k$ that 
	$$\|\tau_{H,k}(\xi)\|_{C^{k-1}_b} = 2^{k-1}\cdot \|\tau_{H,k}(\xi)\|_\op = 2^{k-1} \cdot \|\xi\circ t_{H,k-1}\|_{\op} \leq 2^k\cdot \|\xi\|_\op = \|\xi\|_{C^k_b}.$$
	Hence, $\|\tau_{H,k}\|_\op \leq 1$. It remains to show that the $\tau_{H,k}$ form a contracting  homotopy of $(C^*,\partial_\FF^*)$. Let $\xi \in C^k$. For all $g_0, g_1 \in G$, $H_0,H_1,H_2 \in \FF$ and $y_1\in J^{k-1}_{\FF}$ we compute with $y:= (g_1H_1-H_2) \otimes y_1$ that
	\begin{align*}
		(\tau_{H,k+1}\circ \partial_\FF^k)(\xi)(g_0H_0 \otimes y) &= (\xi \circ d^{\FF}_{k+1}\circ t_{H,k})(g_0H_0\otimes y) \\
		&=  \xi(\varepsilon_{\FF}(H) (g_0H_0\otimes y - H \otimes y))\\
		&= \xi(g_0H_0 \otimes y) - \xi(H \otimes y)
	\end{align*}
	and
	\begin{align*}
		(\partial^{k-1}_\FF\circ \tau_{H,k})(\xi)(g_0H_0 \otimes y) 
		&= (\xi \circ t_{H,k-1})(g_1H_1\otimes y_1) - (\xi \circ t_{H,k-1})(H_2\otimes y_1) \\
		&= \xi(H\otimes (g_1H_1-H)\otimes y_1) - \xi(H \otimes (H_2-H)\otimes y_1)\\
		&=\xi(H \otimes (g_1H_1-H_2)\otimes y_1) = \xi(H \otimes y).
	\end{align*}
	Extending these computations $\RR$-linearly shows that $\tau_{H,k+1}\circ \partial_\FF^k+\partial^{k-1}_\FF\circ \tau_{H,k}=\id_{C^k}$ for all $k \in \NN$.
	In addition, we compute for all $\xi \in C^0=B_G(\RGF,V)$, $H_0 \in \FF$  and $g_0 \in G$ that
	\begin{align*}
		(\tau_{H,1}\circ \partial_\FF^0)(\xi)(g_0H_0)
		&=(\xi \circ d_1^\FF) (H \otimes (g_0H_0-H)) = \xi(g_0H_0) - \xi(H), \\
		(\varepsilon_\FF'\circ \tau_{H,0})(\xi)(g_0H_0) &=  \varepsilon_\FF(g_0H_0)\cdot \tau_{H,0}(\xi) = \xi(H),
	\end{align*}
	so extending $\RR$-linearly shows that $\tau_{H,1}\circ \partial_\FF^0+\varepsilon_\FF'\circ \tau_{H,0}=\id_{C^0}$. 
	Moreover, 
	$$	(\tau_{H,0} \circ \varepsilon_\FF')(v) = (\varepsilon'_\FF(v))(H) = \varepsilon_\FF(H) \cdot v = v \qquad \forall v \in V.$$
	Putting our computations together, we have shown that the maps $(\tau_{H,k})_{k \in \NN_0}$ form a contraction of $(C^*,\partial_\FF^*)$. The existence of a contracting homotopy immediately implies the exactness of the complex and since $H \in \FF$ was chosen arbitrarily, we have shown that $(C^*,\partial_\FF^*)$ is bounded $\FF$-strong.  
\end{proof}

\begin{corollary}
	\label{CorBoundedRes}
	Let $V$ be a Banach $G$-module. Then $(B(\RR[G]\otimes_\RR J_{G}^*,V),\partial^*)$ is a relatively injective strong resolution of $V$. 
\end{corollary}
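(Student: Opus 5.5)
This is the special case $\FF=\{1\}$ of Theorem \ref{TheoremSpecialInjResol}, combined with Remark \ref{RemarkBounded}. The plan is to set $\FF=\{1\}$ in that theorem and then identify the resulting complex with $(B(\RR[G]\otimes_\RR J_G^*,V),\partial^*)$.

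First, for $\FF=\{1\}$ we have $G/\FF=G/\{1\}\cong G$ as $G$-sets via $g\{1\}\mapsto g$. This identification is an isometric $\RG$-isomorphism $\RGF\cong\RG$ for the $\ell^1$-norms. It restricts to an isometric isomorphism $J_{\{1\}}\cong J_G$ and commutes with the augmentations. Taking projective tensor powers gives isometric $\RG$-isomorphisms
\[
\RGF\otimes_\RR J_\FF^k\cong \RG\otimes_\RR J_G^k \qquad \forall k\in\NN_0,
\]
because the projective norm is functorial for isometries of the factors. Under these isomorphisms the maps $f_k^\FF$ and $g_k^\FF$, and hence $d^\FF_k$, correspond to $f_k$, $g_k$ and $d_k$, since all of them are built from $\iota$ and $\varepsilon$.

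Dualizing, the cochain complex $(B(\RGF\otimes_\RR J_\FF^*,V),\partial_\FF^*)$ together with the augmentation $\varepsilon_\FF'$ is isometrically isomorphic to $(B(\RG\otimes_\RR J_G^*,V),\partial^*)$ with augmentation $\varepsilon_G'$. Both are given the rescaled norms $2^k\|\cdot\|_\op$. Theorem \ref{TheoremSpecialInjResol} then shows that the latter is a bounded $\{1\}$-injective resolution of $V$.

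It remains to compare this with Definition \ref{relinjstrong}.c), as noted in Remark \ref{RemarkBounded}. Strong $\{1\}$-injectivity is exactly strong injectivity, since $B_{\{1\}}(N,M)=B(N,M)$. Hence relative $\{1\}$-injectivity coincides with relative injectivity. Likewise, a bounded $\{1\}$-strong complex is an exact augmented complex with a contraction of norm at most $1$ by bounded linear maps, which is the strongness condition required there. So we obtain a relatively injective strong resolution.

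There is no real obstacle here. The only point to check is that the identification $G/\{1\}\cong G$ is isometric and compatible with the differentials, which is immediate from the definitions.
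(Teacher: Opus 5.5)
Your proposal is correct and follows essentially the same route as the paper. The paper also specializes Theorem \ref{TheoremSpecialInjResol} to $\FF=\{1\}$ and uses the evident isomorphism of cochain complexes coming from $\RR[G/\{1\}]\otimes_\RR J^*_{\{1\}}\cong \RG\otimes_\RR J^*_G$, implicitly relying on Remark \ref{RemarkBounded} as you do explicitly.
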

\begin{proof}
	This follows from Theorem \ref{TheoremSpecialInjResol} since for $\FF_0=\{1\}$ there is an obvious isomorphism of cochain complexes $(B(\RR[G]\otimes_\RR J_{G}^*,V),\partial^*) \cong (B(\RR[G/\FF_0]\otimes_\RR J_{\FF_0}^*,V),\partial^*_{\FF_0})$.
\end{proof}

\begin{definition}
	Given a Banach $G$-module $V$ and $k \in \NN_0$ we consider the normed real vector spaces	\begin{align*}
		C^k_b([G:\FF];V) &:= B_G(\RR[G/\FF]\otimes_\RR J_{\FF}^k,V)= (B(\RR[G/\FF]\otimes_\RR J_{\FF}^k,V))^G, \\
		C^k_b(G;V) &:= B_G(\RG \otimes_\RR J_G^k,V)=(B(\RG \otimes_\RR J_G^k,V))^G,
	\end{align*}
	equipped with  the norm $ \|f\|_{C^k_b}= 2^{k} \cdot \|f\|_\op$.
\end{definition}
By Theorem \ref{TheoremSpecialInjResol} and Corollary \ref{CorBoundedRes}, we can compute the bounded Adamson cohomology of $(G,\FF)$ and the bounded cohomology of $G$, respectively, with coefficients in a Banach $G$-module $V$ as
\begin{align*}
H^k_b([G:\FF];V) &= H^k(C^*_b([G:\FF];V),\partial^*_\FF), \\
H^k_b(G;V) &= H^k(C^*_b(G;V),\partial^*_G), \end{align*}
for every $k \in \NN_0$.

\subsection{Relations between bounded and unbounded cohomology groups} We are going to establish a map from bounded Adamson cohomology to bounded cohomology, study the construction of comparison maps in bounded cohomology and establish their analogues in bounded Adamson cohomology. \medskip

Let $\sigma_{H,k}:= \sigma_{H,k}^\RR\colon \RG \otimes_\RR J^k_G \to \RGF \otimes_\RR  J_\FF^k$ be given as in \eqref{EqsigmaR}. To describe bounded analogues of the maps $\rho_{\FF}^k$ from Definition \ref{DefCanonicalBerstein}, we need to establish additional properties. 

\begin{lemma}
	\label{LemmasigmaHB}
	For all $H_1,H_2 \in \FF$ the maps $\sigma_{H_1,*}', \sigma_{H_2,*}'\colon C^*_b([G:\FF];V) \to C^*_b(G;V)$ are bounded linear and chain homotopic.
\end{lemma}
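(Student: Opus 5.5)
Both maps are dual maps of the chain maps $\sigma_{H_i,*}$ from \eqref{EqsigmaR}, so the plan has two parts. First, show that each $\sigma_{H,k}$ is bounded with $\|\sigma_{H,k}\|_\op \leq 1$ between the projective norms. Second, build an explicit chain homotopy between $\sigma_{H_1,*}$ and $\sigma_{H_2,*}$ consisting of bounded $\RG$-maps. Dualizing it then gives a bounded homotopy on the $G$-invariant cochains, and the rescaled norms $\|\cdot\|_{C^k_b}=2^k\|\cdot\|_\op$ only change constants.

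\emph{Boundedness.} Fix $H\in\FF$. I first check that $\sigma_{H,k}$ is well defined and $G$-linear: $\RG\otimes_\RR J^k_G$ is spanned by the elementary tensors $g_0\otimes(g_1-1)\otimes\dots\otimes(g_k-1)$, and $\sigma_{H,k}=\sigma_{H,0}\otimes\bar\sigma^{\otimes k}$, where $\bar\sigma\colon J_G\to J_\FF$ is the restriction of the $\ell^1$-contractive map $\RG\to\RGF$, $g\mapsto gH$. Each factor has operator norm at most $1$ in $\ell^1$. A tensor product of contractions is a contraction for the projective norm: apply it to an arbitrary decomposition $u=\sum_i v_{i0}\otimes\dots\otimes v_{ik}$ and take the infimum. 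Hence $\|\sigma_{H,k}\|_\op\le 1$, and $\sigma_{H,k}'(f)=f\circ\sigma_{H,k}$ satisfies $\|\sigma'_{H,k}(f)\|_{C^k_b}\le\|f\|_{C^k_b}$. It preserves $G$-invariants because $\sigma_{H,k}$ is $G$-equivariant. It is a cochain map because $\sigma_{H,*}$ is a chain map, in analogy with Lemma \ref{LemmasigmaH}.

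\emph{Homotopy.} The abstract fundamental lemma gives no norm control, so I will write the homotopy down explicitly, mimicking a prism operator. I would transport the question to the standard resolutions via the chain maps $\psi_G,\psi_\FF$ of \eqref{chainmapsresolutions}. There $\sigma_{H}$ corresponds to $(g_0,\dots,g_k)\mapsto(g_0H,\dots,g_kH)$, and a homotopy between the versions for $H_1$ and $H_2$ is
$$h_k(g_0,\dots,g_k)=\sum_{i=0}^k(-1)^i(g_0H_1,\dots,g_iH_1,g_iH_2,\dots,g_kH_2).$$
This is well defined since $\FF$-cosets of different subgroups are distinct points of $G/\FF$, and it is $G$-equivariant.

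Rather than passing through $\psi$, whose inverses may be unbounded, I would rather define $s_k\colon \RG\otimes_\RR J_G^k\to\RGF\otimes_\RR J_\FF^{k+1}$ directly on elementary tensors $g_0\otimes(g_1-g_0)\otimes\dots\otimes(g_k-g_{k-1})$. The formula is the image of $h_k$ under $\psi_\FF$, namely a sum of $k+1$ terms each built from factors $g_jH_1-g_{j-1}H_1$, $g_iH_2-g_iH_1$ and $g_jH_2-g_{j-1}H_2$. The relation $d_{k+1}^\FF s_k+s_{k-1}d_k=\sigma_{H_1,k}-\sigma_{H_2,k}$ then follows from the simplicial identity for $h$ after applying $\psi$.

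The main obstacle is boundedness of $s_k$. The elementary tensors $g_0\otimes(g_1-1)\otimes\cdots$ do not obviously compute the projective norm on $\RG\otimes J_G^k$, since $J_G$ is not $\ell^1$ with these as a basis. My fix is to define $s_k$ as a sum of composites of maps already known to be bounded: tensor products of the contractions $\bar\sigma$ for $H_1$ and $H_2$, together with the map $\RG\to J_\FF$, $g\mapsto gH_2-gH_1$, of norm $\le2$. Writing $s_k$ in this way by splitting at position $i$ (using $g_i=g_0\prod$ rewritten through the differences) gives $\|s_k\|_\op\le 2(k+1)$. After dualizing, $\|s_k'\|$ measured in the $C^*_b$-norms is at most $2(k+1)\cdot 2^{k+1}/2^{k}$, which is finite and suffices for a bounded chain homotopy.
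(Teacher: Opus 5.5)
The step that does not go through is the boundedness of $s_k$. On $\psi_G(g_0,\dots,g_k)$ the $i$-th summand of $s_k$ contains the factor $g_iH_2-g_iH_1$. For $i\geq 1$ the vertex $g_i$ is not a linear function of any single tensor factor of $g_0\otimes(g_1-g_0)\otimes\dots\otimes(g_k-g_{k-1})$. Substituting $g_i=g_0+\sum_{j\leq i}(g_j-g_{j-1})$ uses each difference twice and destroys multilinearity. Already the assignment $a-b\mapsto(aH_1-bH_1)\otimes(aH_2-aH_1)$ does not extend linearly to $J_G$. So $s_k$ is not a sum of composites of factorwise tensor products of $q_{H_1}$, $q_{H_2}$ and $g\mapsto gH_2-gH_1$. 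You would additionally need an Alexander--Whitney-type map $\psi_G(g_0,\dots,g_k)\mapsto\psi_G(g_0,\dots,g_i)\otimes\psi_G(g_i,\dots,g_k)$, whose boundedness is exactly the estimate you were trying to avoid. Your bound $2(k+1)$ is therefore unsupported.

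The worry you dismissed, that chained generators might not control the projective norm, is precisely what resolves this. Every $y\in J_G$ with $\|y\|_1\leq 1$ is a convex combination of elements $\pm\frac12(a-b)$ (pair off positive and negative mass). The operator norm of a linear map on a projective tensor product is the supremum over elementary tensors of unit vectors. Telescoping gives $g_0\otimes(a_1-b_1)\otimes\dots\otimes(a_k-b_k)=\sum_c\pm\,\psi_G(g_0,c_1,\dots,c_k)$ over the $2^k$ choices $c_i\in\{a_i,b_i\}$. Hence $\|s_k\|_{\op}\leq\sup\|s_k(\psi_G(g_0,\dots,g_k))\|_\pi\leq(k+1)2^{k+1}$, which suffices.

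Two further points. First, the well-definedness that actually needs checking is that of $s_k$ on $\RG\otimes_\RR J^k_G$, not of $h_k$. Since $\psi_G$ identifies the normalized standard complex with $\RG\otimes_\RR J^k_G$, $\ker\psi_G$ is spanned by degenerate tuples. If $g_j=g_{j+1}$, then every prism term repeats an adjacent entry and is killed by $\psi_\FF$.

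Second, your premise that the abstract fundamental lemma gives no norm control is mistaken. The paper's proof, whose treatment of $\sigma_{H,k}$ coincides with yours, notes that $\sigma'_{H_1,*}$ and $\sigma'_{H_2,*}$ both extend $\id_V$ from the bounded $\FF$-strong resolution $B(\RGF\otimes_\RR J^*_\FF,V)$ to the relatively injective one $B(\RG\otimes_\RR J^*_G,V)$. The bounded fundamental lemma \cite[Proposition 3.9]{LiBounded} then produces a bounded homotopy. With the repairs above, your explicit prism route works and yields concrete constants, but it costs combinatorics that this one-line argument avoids.
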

\begin{proof}
	Given $H \in \FF$ we consider $\pi_H\in \Hom_{\RG}(\RG, \RGF)$ given by $\pi_H(g)=gH$ for all $g \in G$. A straightforward calculation shows that $\|\pi_H\|_{\op}=1$, hence $q_H:=\pi_H|_{J_G}\colon J_G \to J_\FF$ satisfies $\|q_H\|_{\op}\leq 1$ as well. It is evident that $\sigma_{H,k}= \pi_H \otimes q_H^{\otimes k}$, so it follows from \cite[Proposition 2.2.3]{Ryan} or \cite[p. 27]{DefantFloret}, which generalizes straightforwardly to tensor products of more than two factors, that $\|\sigma_{H,k}\|_\op = \|\pi_H\|_\op \cdot \|q_H\|_\op^k\leq 1$. Thus, the induced map $\sigma_{H,k}'\colon  C^*_b([G:\FF];V) \to C^*_b(G;V)$ is bounded as well.	Since $ C^*_b([G:\FF];V)$ is a bounded $\FF$-injective resolution of $V$, it follows from an analogue of the fundamental lemma of homological algebra in the bounded setting, see \cite[Proposition 3.9]{LiBounded}, that $\sigma_H'$ is unique up to a bounded chain homotopy. Thus, for all $H_1,H_2 \in \FF$, the maps $\sigma_{H_1,*}'$ and $\sigma'_{H_2,*}$ are bounded chain homotopic.
\end{proof}

\begin{definition}
	Given $H \in \FF$ we consider the cochain map $\sigma_{H,*}'\colon C_b^*([G:\FF];V) \to C_b^*(G;V)$ and let
	$$ \rho^k_{b,\FF}\colon H^k_b([G:\FF];V) \to H^k_b(G;V)$$
denote the map induced by $\sigma_{H,*}'$ in cohomology for each $k \in \NN_0$. By Lemma \ref{LemmasigmaHB}, $\rho^k_{b,\FF}$ is independent of the choice of $H$. 
\end{definition}

Important constructions from bounded cohomology are the so-called \emph{comparison maps} between bounded and ordinary cohomology, which are induced by the inclusion of bounded cochains, see \cite{FrigerioBook} or \cite[Section 9.2]{Monod}. In our setting their definition reads as follows.

\begin{definition}
	\label{DefComp}
	Let $V$ be a Banach $G$-module and let $k \in \NN_0$. 
	\begin{enumerate}[a)]
		\item The \emph{comparison map of $G$ with coefficients in $V$} is the map
		$$ \comp^k_{V}\colon H^k_b(G;V)\to H^k(G;V),$$
		which is induced by the inclusion of subcomplexes $C^k_b(G;V) \hookrightarrow C^k(G;V)$. \item The \emph{comparison map of $(G,\FF)$ with coefficients in $V$} is the map
		$$ \comp^k_{\FF,V}\colon H^k_b([G:\FF];V)\to H^k([G:\FF];V),$$
		which is induced by the inclusion of subcomplexes $C^k_b([G:\FF];V) \hookrightarrow C^k([G:\FF];V)$.
	\end{enumerate}
\end{definition}

The following diagram evidently commutes for all $k \in \NN_0$ and all Banach $G$-modules $V$, since the underlying cochain maps are easily seen to commute:
\begin{equation}
	\label{EqDiagComp}
	\begin{tikzcd}
		H^k_b([G:\FF];V)\ar[r, "\comp_{\FF,V}^k"] \ar[d, "\rho_{b,\FF}^k", swap]&  H^k([G:\FF];V) \ar[d, "\rho_{\FF}^k"]\\
	H^k_b(G;V) \ar[r, "\comp_V^k", swap] & H^k(G;V).
	\end{tikzcd}	
\end{equation}
Moreover, it is evident from their definitions that comparison maps commute with pushforwards, i.e. that the following diagrams commute for all Banach $G$-modules $V$ and $W$, all $F \in B_G(V,W)$ and all $k \in \NN_0$:
\begin{equation}
	\label{EqCompPushfor}
	\begin{tikzcd}
		H^k_b(G;V) \ar[r, "\comp_V^k"] \ar[d, "F_*", swap] & H^k(G;V) \ar[d, "F_*"]\\
		H^k_b(G;W) \ar[r, "\comp_W^k", swap] & H^k(G;W),
	\end{tikzcd} \qquad  \qquad 
	\begin{tikzcd}
		H^k_b([G:\FF];V)\ar[r, "\comp_{\FF,V}^k"] \ar[d, "F_*", swap]& H^k([G:\FF];V)\ar[d, "F_*"] \\
		H^k_b([G:\FF];W)\ar[r, "\comp_{\FF,W}^k",swap] & H^k([G:\FF];W) .
	\end{tikzcd}
\end{equation}
\section{Johnson classes and Adamson--Johnson classes} We want to introduce bounded cohomology classes which generalize a class introduced by B. Johnson in \cite{Johnson} whose vanishing is equivalent to the amenability of $G$, see \cite[Section 3.4]{FrigerioBook} or  \cite[Theorem B]{BNN}. We note that Johnson's class is the class denoted by $\gamma_1$ below. \medskip

Throughout this section let $\FF$ be a family of subgroups of $G$. In the following, given a bounded cocycle $\sigma \in C^*_b(G;V)$ or $\sigma \in C^*_b([G:\FF];V)$, we shall always denote its \emph{bounded} cohomology class by $[\sigma]_b \in H_b^*(G;V)$ or $[\sigma]_b \in H_b^*([G:\FF];V)$, respectively.

\subsection{Construction of bounded cohomology classes} For any normed vector space $V$, we let
$$\vartheta_V\colon V \to V'', \qquad \vartheta_V(v) = \delta_v,$$
denote the canonical embedding of $V$ into its bidual space. Here, $\delta_v\colon V' \to \RR$, $\delta_v(f)=f(v)$, for each $v \in V$. It is easy to check that if $V$ is a normed $G$-module, then $\vartheta_V \in B_G(V,V'')$.\medskip

Given $k \in \NN$, we put 
$$\II^k_\FF := (J^k_\FF)'', \qquad \vartheta_{\FF,k}\colon J^k_\FF \to \II^k_\FF, \ \ \vartheta_{\FF,k} := \vartheta_{J^k_\FF},$$
and
$$\II^k_G := (J^k_G)'', \qquad \vartheta_{k}\colon J^k_G \to \II^k_G, \ \ \vartheta_{k} := \vartheta_{J^k_G}.$$

\begin{definition} 
	\label{DefJohnson}
	Let $k \in \NN$.
	\begin{enumerate}[a)]
		\item  Consider the cochain $j_{\FF,k} \in C^k_b([G:\FF];\II_\FF^k)$,
		that is given by $\RR$-linearly extending 
		$$j_{\FF,k}(\sigma \otimes y) =   \varepsilon_\FF(\sigma) \cdot \vartheta_{\FF,k}(y) \qquad \forall \sigma \in \RR[G/\FF], \ \ y \in J^k_{\FF}.$$ 
		One checks without difficulties that $j_{\FF,k}$ is a bounded cocycle and we call the class
		$$\Gamma_{\FF,k} := [j_{\FF,k}]_b \in H^k_b([G:\FF];\II^k_\FF)$$
		the \emph{$k$-th Adamson--Johnson class of $G$ relative to $\FF$}. 
		\item We call the bounded cohomology class
		$$\gamma_{\FF,k} := \rho^k_{b,\FF}(\Gamma_{\FF,k}) \in H^k_b(G;\II_\FF^k) $$
		the \emph{$k$-th Johnson class of $G$ relative to $\FF$.}
		\item We consider the cochain $j_k \in C^k_b(G;\II^k_G)$,
		given by $\RR$-linearly extending
		$$j_k(x \otimes y) = \varepsilon_G(x) \cdot \vartheta_{k}(y).$$
		Again one checks without difficulties that $j_k$ is a bounded cocycle and we call 
		$$\gamma_k := [j_k]_b \in H^k_b(G;\II_G^k)$$
		the \emph{$k$-th Johnson class of $G$}.
	\end{enumerate}
\end{definition}
\begin{remark}
\begin{enumerate}
	\item 
	In \cite[Chapter 3]{FrigerioBook} the class $\gamma_1$ is actually defined as a bounded cohomology class with coefficients in $((\RG)'/\RR)'$. In fact, $((\RR[S])'/\RR)'$ is isomorphic as a Banach $G$-module to $J_S''$ for each $G$-set $S$, so our definition encompasses the original one. To see this, fix $s_0 \in S$ and consider  $F_S\in B(\RR[S],J_S)$, $F_S(x) = x-\varepsilon_S(x)\cdot s_0$.	With $\Psi_S\colon (\RR[S])' \to J_S'$, $\Psi_S(\varphi)=\varphi|_{J_S}$, a simple calculation shows that  $\Psi_S \circ F_S' = \id_{J_S'}$, so that $F_S'$ is injective. Let $V_0:= \{\varphi \in (\RR[S])' \mid \varphi(s_0)=0\}$. Evidently, $\im F_S' \subset V_0$. Conversely, one checks that $F_S'(\varphi|_{J_S})=\varphi$ for all $\varphi \in V_0$, so $F_S'$ restricts to an isomorphism $F_{0}\colon J_S' \stackrel{\cong}{\longrightarrow} V_0$. The augmentation $\varepsilon_S \colon \RR[S] \to \RR$ induces $\varepsilon'_S\colon \RR \to (\RR[S])'$ and we equip $$(\RR[S])'/\RR := (\RR[S])'/\im \varepsilon_S'$$
with the quotient norm.  The restriction $q_0\colon V_0 \to (\RR[S])'/\RR$ of the projection is injective, since one easily checks that $V_0 \cap \im \varepsilon_S' = \{0\}$. Thus, 	$$\Phi_0\colon J_S' \to (\RR[S])'/\RR, \qquad \Phi_0:= q_0 \circ  F_0 ,$$
	is a bounded isomorphism. 	It remains to show that $\Phi_0$ is $\RR[G]$-linear. A straightforward calculation shows that 
	$$F_S'(g \cdot f)(x)=(g\cdot F_S'(f))(x) + \varepsilon'_S(f(s_0-g^{-1}s_0))(x) \in (g \cdot F_S'(f))(x) + \im \varepsilon'_S$$
	for all $x \in \RR[S]$, $g \in G$ and $f \in J_S'$. Hence, $\Phi_0$ is an isomorphism of Banach $G$-modules and so is $\Phi_0'\colon ((\RR[S])'/\RR)' \to J_S''$.
\item Likewise, the class $\Gamma_{\FF,1}\in H^1_b([G:\FF];\II_\FF)$ coincides with the class $[J_\FF]$ introduced by K. Li in \cite[Theorem 4.5]{LiBounded} under the name of relative Johnson class under the identification of coefficient modules from part (1).
\end{enumerate}
\end{remark}

For each $H \in \FF$ and $k \in \NN$, the map  $\sigma_{H,k-1}$ from \eqref{EqsigmaR} restricts to  $\sigma_{H,{k-1}}|_{J_G^k}\in B_G(J_G^k,J_\FF^k)$. 
Passing to bidual spaces, this map induces  $(\sigma_{H,k-1}|_{J_G^k})''\in B_G(\II_G^k,\II_\FF^k)$. In the following we consider the case of $H=\{1\}$, the trivial subgroup, and put 
$$\sigma_{\FF,k}:= \sigma_{\{1\},k-1}|_{J_G^k}\in B_G(J^k_G, J^k_\FF), \qquad \psi_{\FF,k} := (\sigma_{\{1\},k-1}|_{J_G^k})''\in B_G(\II^k_G, \II^k_\FF).$$
The following diagram then commutes by definition of the maps involved:
\begin{equation}
	\label{EqDiagpsibidual}
	\begin{tikzcd}
		J^k_G \ar[rr, "\sigma_{\FF,k}"] \ar[d, "\vartheta_{k}", swap] & &J^k_\FF \ar[d, "\vartheta_{\FF,k}"]\\
		\II^k_G\ar[rr, "\psi_{\FF,k}", swap]  & &\II^k_\FF.
	\end{tikzcd}
\end{equation}

The next result summarizes relations between (Adamson-)Johnson classes and the Berstein classes relative to $\FF$ and Adamson canonical classes from Section 2. 

\begin{prop}
	\label{PropClassRelations}
	For all $k \in \NN$ the following equations hold:
	\begin{align*}
		&\text{a)} \ \ \gamma_{\FF,k} = (\psi_{\FF,k})_*(\gamma_k)\in H^k_b(G;\II_\FF^k), \\
		&\text{b)} \ \ \comp^k_{\FF,\II^k_\FF}(\Gamma_{\FF,k}) = (\vartheta_{\FF,k})_*(\phi^{k}_{\FF,\RR}) \in H^k([G:\FF];\II_\FF^k),\\
		&\text{c)} \ \ \comp^k_{\II^k_\FF}(\gamma_{\FF,k})= (\vartheta_{\FF,k})_*(\omega^k_{\FF,\RR}) \in H^k(G;\II_\FF^k), \\
		&\text{d)} \ \ \comp^k_{\II^k_G}(\gamma_k) = (\vartheta_k)_*(b^k_\RR) \in H^k(G;\II_G^k).
	\end{align*}
\end{prop}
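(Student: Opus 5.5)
All four identities are checked at cochain level on the explicit resolutions of Sections 2 and 3. Everything reduces to comparing concrete cocycles on generators of the form $x \otimes y$, with $x \in \RR[G]$ or $\RGF$ and $y$ in $J^k_G$ or $J^k_\FF$.

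\emph{Part b)} The comparison map is induced by the inclusion $C^k_b([G:\FF];\II^k_\FF) \hookrightarrow C^k([G:\FF];\II^k_\FF)$. So $\comp^k_{\FF,\II^k_\FF}(\Gamma_{\FF,k})$ is represented by $j_{\FF,k}$, now regarded as an ordinary cocycle. By the real analogue of Proposition \ref{Propphikrepr} noted after Definition \ref{RealAdamsonBersteinclass}, $\phi^k_{\FF,\RR}$ is represented by $g^{\FF}_k$, where $g^\FF_k(x\otimes y)=\varepsilon_\FF(x)\, y$. Hence
$$(\vartheta_{\FF,k})_*(\phi^k_{\FF,\RR})=[\vartheta_{\FF,k}\circ g^\FF_k],$$
and $\vartheta_{\FF,k}\circ g^\FF_k = j_{\FF,k}$ holds literally on generators. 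Part d) is the special case $\FF=\{1\}$. There $\phi^k_{\{1\},\RR}$ identifies with $b^k_\RR$, because $\rho_{\{1\}}$ is the identity, so $b^k_\RR$ is represented by $g_k$ and $j_k=\vartheta_k\circ g_k$.

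\emph{Part c)} By definition, $\gamma_{\FF,k}=\rho^k_{b,\FF}(\Gamma_{\FF,k})$. The commutative square \eqref{EqDiagComp} gives
$$\comp^k(\gamma_{\FF,k})=\rho^k_\FF\bigl(\comp^k_{\FF}(\Gamma_{\FF,k})\bigr).$$
By b), this equals $\rho^k_\FF\bigl((\vartheta_{\FF,k})_*\phi^k_{\FF,\RR}\bigr)$. Since $\rho_\FF$ is induced by precomposition with $\sigma_{H,k}$, it commutes with pushforwards, so this is $(\vartheta_{\FF,k})_*\rho^k_\FF(\phi^k_{\FF,\RR})$. Naturality of cup products (Lemma \ref{NaturalityRelCup}, in its real form) turns this into $(\vartheta_{\FF,k})_*(\omega^k_{\FF,\RR})$. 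Alternatively, one can cite directly the stated fact that $\omega^k_{\FF,\RR}$ is represented by $g^\FF_k\circ\sigma_{H,k}$.

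\emph{Part a)} This is the one needing care. I choose $H=\{1\}$ to compute $\rho_{b,\FF}$; this is allowed by Lemma \ref{LemmasigmaHB}. Then $\gamma_{\FF,k}$ is represented by $j_{\FF,k}\circ\sigma_{\{1\},k}$. On a generator $x\otimes y$ with $x\in\RG$ and $y\in J^k_G$, we have $\sigma_{\{1\},k}(x\otimes y)=\pi_{\{1\}}(x)\otimes \sigma_{\FF,k}(y)$, because $\sigma_{\{1\},k}=\pi\otimes q^{\otimes k}$ and $\sigma_{\FF,k}=q^{\otimes k}$ on $J_G^k$. Since $\varepsilon_\FF\circ\pi_{\{1\}}=\varepsilon_G$, this gives
$$(j_{\FF,k}\circ\sigma_{\{1\},k})(x\otimes y)=\varepsilon_G(x)\,\vartheta_{\FF,k}(\sigma_{\FF,k}(y)).$$
By diagram \eqref{EqDiagpsibidual}, the right-hand side equals $\psi_{\FF,k}(\varepsilon_G(x)\vartheta_k(y))=(\psi_{\FF,k}\circ j_k)(x\otimes y)$. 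The two bounded cocycles therefore coincide, which proves a).

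The main obstacle I expect is bookkeeping in a) rather than any conceptual issue. One has to confirm that $\sigma_{\FF,k}$, defined there via the index $k-1$, really agrees with the $J^k$-part of $\sigma_{\{1\},k}$. One also has to confirm that the identifications of the real resolutions with the integral ones, via $\Psi$ and $F_k$, do not introduce discrepancies in b) and c).
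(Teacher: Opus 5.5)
Your proposal is correct and follows essentially the same approach as the paper. You identify $j_{\FF,k}\circ\sigma_{\{1\},k}$ with $\psi_{\FF,k}\circ j_k$ on generators for a), and $\vartheta_{\FF,k}\circ g^\FF_k$ with $j_{\FF,k}$ for b); you derive c) from b) via the square \eqref{EqDiagComp}, and treat d) as the trivial-family case of b), while the paper verifies d) directly from the representative of $b^k_\RR$. The bookkeeping points you flag are harmless: $\sigma_{\FF,k}=q^{\otimes k}$ does hold, and no integral/real identification enters b) or c), since $\phi_{\FF,\RR}$ is defined directly on the real resolution.
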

\begin{proof}
	Let $j_k \in C^k_b(G;\II^k_G)$ and $j_{\FF,k} \in C^k_b([G:\FF];\II^k_\FF)$ be given as in Definition \ref{DefJohnson}.
	\begin{enumerate}[a)]
		\item  By definition of the maps involved, $\gamma_{\FF,k}=\rho_{b,\FF}^k(\Gamma_{\FF,k})$ is represented by the cocycle $j_{\FF,k} \circ \sigma_{\{1\},k} \colon \RG \otimes_\RR J^k_G \to \II_\FF^k$. One further checks from the definitions of the maps involved that the diagram $$	\begin{tikzcd}
			\RG\otimes_\RR J^k_G \ar[d, "\varepsilon_G \otimes \id", swap] \ar[r, "\sigma_{\{1\},k}"] & \RGF \otimes_\RR J^k_\FF  \ar[d, "\varepsilon_\FF \otimes \id" ]\\
		J^k_G \ar[r, "\sigma_{\FF,k}",swap] & J^k_\FF
		\end{tikzcd}$$ commutes. Using this commutativity and the one of \eqref{EqDiagpsibidual}, we derive that
		\begin{align*}
			j_{\FF,k}\circ \sigma_{\{1\},k} &= \vartheta_{\FF,k} \circ (\varepsilon_\FF \otimes \id_{J^k_\FF}) \circ \sigma_{\{1\},k} 
			= \psi_{\FF,k} \circ \vartheta_{k} \circ (\varepsilon_G \otimes \id_{J^k_G}) = \psi_{\FF,k} \circ j_k.
		\end{align*}
		Since $\psi_{\FF,k} \circ j_k$ is a representative of $(\psi_{\FF,k})_*(\gamma_k)$, the claim follows directly when passing to bounded cohomology.
		\item As discussed after Definition \ref{RealAdamsonBersteinclass}, the class $(\vartheta_{\FF,k})_*(\phi^{k}_{\FF,\RR})$ is represented by the cocycle
		$$\vartheta_{\FF,k} \circ (\varepsilon_\FF \otimes \id_{J^k_\FF}) = j_{\FF,k}.$$
		Since $\Gamma_{\FF,k}$ is given as the \emph{bounded} cohomology class of $j_{\FF,k}$ and since the comparison map is induced by the inclusion of bounded cocycles, the claim follows immediately.
		\item This follows immediately from part b) and the commutativity of \eqref{EqDiagComp}.
		\item By Definition \ref{RealAdamsonBersteinclass}, $(\vartheta_k)_*(b^k_\RR)$ is represented by $\vartheta_k \circ(\varepsilon_{G} \otimes \id_{J^k_G})=j_k,$
		while $\gamma_k$ is given by the \emph{bounded} cohomology class of this cocycle. The claim thus follows as in b).
	\end{enumerate}
\end{proof}

\subsection{Universality properties of Adamson--Johnson classes} A crucial property of the cohomology classes introduced in the previous section is a universality property, i.e. an analogue of Proposition \ref{PropUniv} in the bounded setting. Such a result requires some technical preparations.

\begin{lemma}
	\label{LemmaExtendBidualNormed}
	Let $X$ and $Y$ be normed $G$-modules and $f \in B_G(X,Y')$. Then there exists $F \in B_G(X'',Y')$ with $F \circ \vartheta_X = f$. 
\end{lemma}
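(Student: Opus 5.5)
The idea is to use the adjunction between $Y'=B(Y,\RR)$ and bounded bilinear forms on $X\times Y$, and then extend in the $X$-variable to the bidual. Concretely, $f$ corresponds to the bounded bilinear form $\beta(x,y):=f(x)(y)$. For fixed $y\in Y$, the map $x\mapsto f(x)(y)$ is a bounded linear functional $f_y\in X'$ with $\|f_y\|_{X'}\le \|f\|_\op\|y\|_Y$. In fact $f_y=f'(\vartheta_Y(y))$, where $f'\colon Y''\to X'$ is the transpose. I would then define
$$F\colon X''\to Y',\qquad F(\xi)(y):=\xi(f_y)=\xi\big(f'(\vartheta_Y(y))\big),$$
that is, $F:=(\vartheta_Y)'\circ f''$, using the transpose notation fixed in Section 3.

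The verification proceeds in three steps.

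First, $F$ is well defined and bounded. For fixed $\xi$, the map $y\mapsto\xi(f_y)$ is linear, and $|\xi(f_y)|\le\|\xi\|\,\|f\|_\op\|y\|$. Hence $F(\xi)\in Y'$ and $\|F\|_\op\le\|f\|_\op$. Alternatively, this follows at once from $F=(\vartheta_Y)'\circ f''$, since $\|\vartheta_Y\|_\op\le 1$.

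Second, $F$ extends $f$. For $x\in X$ we have $F(\vartheta_X(x))(y)=\delta_x(f_y)=f(x)(y)$, so $F\circ\vartheta_X=f$.

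Third, $F$ is $G$-equivariant. Here I would not check this by hand but use that transposes of $G$-maps are $G$-maps for the diagonal actions. Since $f$ and $\vartheta_Y$ are $G$-linear, so are $f'$, $f''$ and $(\vartheta_Y)'$, and hence so is their composite $F$. For a direct check, I would first show that $f_{g^{-1}y}=g^{-1}\cdot f_y$ in $X'$. This uses $f(g x)=g\cdot f(x)$, so $f(gx)(y)=f(x)(g^{-1}y)$. It then gives $(g\cdot F(\xi))(y)=F(\xi)(g^{-1}y)=\xi(g^{-1}f_y)=(g\xi)(f_y)=F(g\xi)(y)$.

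The main obstacle is only bookkeeping: keeping the signs of the dual actions consistent, i.e. $(g\varphi)(v)=\varphi(g^{-1}v)$ at each dual level. Once the formula $F=(\vartheta_Y)'\circ f''$ is in place, all properties follow formally, and the norm estimate $\|F\|_\op\le\|f\|_\op$ comes along for free.
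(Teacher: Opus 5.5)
Your proposal is correct and is essentially the paper's proof. The paper's auxiliary map $S\colon Y\to X'$, $S(y)(x)=f(x)(y)$, is exactly your $y\mapsto f_y=f'(\vartheta_Y(y))$, and it defines $F(x'')(y)=x''(S(y))$ just as you do, checking equivariance and boundedness by hand, while your repackaging as $F=(\vartheta_Y)'\circ f''$ makes equivariance and the bound $\|F\|_\op\le\|f\|_\op$ automatic.
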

\begin{proof}
	Consider the map $\widetilde{f}\colon X \times Y \to \RR$, $\widetilde{f}(x,y):= (f(x))(y)$. Evidently, $\widetilde{f}$ is bilinear and we compute for all  $g \in G$, $x\in X$ and $y \in Y$ that
	$$\widetilde{f}(gx,gy)= (f(gx))(gy)= (g\cdot f(x))(gy) =(f(x))(g^{-1}gy) = (f(x))(y) = \widetilde{f}(x,y), $$
	i.e. $\widetilde{f}$ is $G$-invariant with respect to the diagonal $G$-action. Moreover, 
	$$\left|\widetilde{f}(x,y) \right|=|(f(x))(y)|\leq \|f\|_\op \cdot \|x\|_X \cdot \|y\|_Y \qquad \forall x \in X, \ y \in Y, $$
	so $\widetilde{f}$ is bounded. Let $S\colon Y \to X'$, $(S(y))(x):= \widetilde{f}(x,y)$, for all $x \in X$, $y \in Y$.  We compute from the $G$-invariance of $\widetilde{f}$ for all $g \in G$, $x \in X$ and $y \in Y$ that 
	\begin{equation*}
		(S(g\cdot y))(x)= \widetilde{f}(x,gy)= \widetilde{f}(g^{-1}x,y) = S(y)(g^{-1}x) = (g \cdot S(y))(x),
	\end{equation*}
	hence $S$ is $G$-equivariant. It further follows easily that $S \in B_G(Y,X')$.

	Consider $F\colon X'' \to Y'$, $(F(x''))(y):= x''(S(y))$. Then $F$ is linear and for all $g \in G$, $x'' \in X''$ and $y \in Y$ we compute that
	$$F(g \cdot x'')(y) = (g\cdot x'')(S(y)) = x''(g^{-1} S(y)) = x''(S(g^{-1}y))= F(x'')(g^{-1}y) = (g\cdot F(x''))(y).$$
	One further derives from the boundedness of $S$ that $F \in B_G(X'',Y')$. We compute for all $x \in X$ and $y \in Y$ that
	$$(F \circ \vartheta_X(x))(y)= (\vartheta_X(x))(S(y))= (S(y))(x)= (f(x))(y), $$
	which shows that $F \circ \vartheta_X = f$. 
\end{proof}

The following is a direct consequence of Lemma \ref{LemmaExtendBidualNormed}.

\begin{lemma}
	\label{LemmaExtendIk} Let $k \in \NN$ and let $V$ be a normed $G$-module.
	\begin{enumerate}[a)]
		\item For each $f \in B_G(J_\FF^k,V')$ there exists $F \in B_G(\II_\FF^k,V')$ with $F \circ \vartheta_{\FF,k}=f$.
		\item  For each $f \in B_G(J_G^k,V')$ there exists $F \in B_G(\II_G^k,V')$ with $F \circ \vartheta_{k} = f$. \end{enumerate}
\end{lemma}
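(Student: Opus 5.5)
Both parts follow from Lemma \ref{LemmaExtendBidualNormed} by substituting $X := J^k_\FF$ (respectively $X := J^k_G$) and $Y := V$. By the definitions $\II^k_\FF = (J^k_\FF)''$ and $\vartheta_{\FF,k} = \vartheta_{J^k_\FF}$, and likewise $\II^k_G = (J^k_G)''$ and $\vartheta_k = \vartheta_{J^k_G}$, so the conclusions of the lemma are literally the two claims. Thus the plan is to check that the hypotheses of Lemma \ref{LemmaExtendBidualNormed} hold in this situation.

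The hypotheses require $X$ and $Y$ to be normed $G$-modules. For $Y = V$ this is an assumption. For $X$, I will use the conventions fixed in Section 3. There $J^k_\FF = \RR \otimes I^k_\FF$ is identified with the $k$-fold tensor product $J_\FF \otimes_\RR \dots \otimes_\RR J_\FF$, where $J_\FF \subset \RGF$ carries the $\ell^1$-norm, and the tensor product carries the projective norm. The paper has already noted two facts: the $\ell^1$-norm on $\RR[S]$ is $G$-invariant (so it is on the submodule $J_S$), and a projective tensor product of normed $G$-modules with the diagonal action is again a normed $G$-module. Hence $J^k_\FF$ and $J^k_G$ are normed $G$-modules. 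Note that $J^k_G$ is the special case $\FF=\{1\}$, so it needs no separate check.

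Given $f \in B_G(J^k_\FF, V')$, Lemma \ref{LemmaExtendBidualNormed} then yields $F \in B_G((J^k_\FF)'', V') = B_G(\II^k_\FF, V')$ with $F \circ \vartheta_{J^k_\FF} = f$, which is a). Part b) is obtained identically, or as a) with $\FF = \{1\}$ via the identification $J^k_G \cong J^k_{\{1\}}$. I expect no real obstacle; the only point needing care is to confirm that the norm used on $J^k_\FF$ is the $G$-invariant projective norm. No completeness is needed, since Lemma \ref{LemmaExtendBidualNormed} works for normed modules and duals are automatically Banach.
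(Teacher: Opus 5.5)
Your proposal is correct and matches the paper's proof, which simply applies Lemma \ref{LemmaExtendBidualNormed} with $X=J^k_\FF$ (respectively $X=J^k_G$) and $Y=V$. Your additional check that these modules carry $G$-invariant projective norms is a harmless verification that the paper leaves implicit.
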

\begin{proof}
	We obtain part a) immediately from taking $X=J_\FF^k$ and part b) from taking $X=J_G^k$ in Lemma \ref{LemmaExtendBidualNormed}.
\end{proof}

The next theorem is the universality result for Johnson classes and Adamson--Johnson classes mentioned in the introduction. Note that the theorem is only stated for \emph{dual} coefficient modules, i.e. Banach $G$-modules whose underlying Banach space is a dual space.
\begin{theorem}
	\label{TheoremBoundedUniv} Let $V$ be a normed $G$-module and let $k \in \NN$.
	\begin{enumerate}[a)]
		\item For each $u \in H^k_b([G:\FF];V')$ there exists $\psi\in B_G(\II^k_{\FF},V')$ with 
		$u = \psi_*(\Gamma_{\FF,k}).$
		\item For each $v \in H^k_b(G;V')$ there exists $\eta \in B_G(\II^k_G,V')$ with $v=\eta_*(\gamma_k)$.
	\end{enumerate}
	
\end{theorem}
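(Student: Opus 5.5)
The plan is to imitate the proof of Proposition \ref{PropUniv} in the bounded setting, using the explicit bounded $\FF$-injective resolution from Theorem \ref{TheoremSpecialInjResol}, and then to pass to the bidual with Lemma \ref{LemmaExtendIk}. Part b) is the special case $\FF=\{1\}$ of part a), via Corollary \ref{CorBoundedRes} and the identification of $C^*_b([G:\{1\}];V')$ with $C^*_b(G;V')$. So we concentrate on a).

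First step: represent $u$ by a bounded cocycle $c \in C^k_b([G:\FF];V') = B_G(\RGF\otimes_\RR J^k_\FF, V')$. The cocycle condition gives $0=c\circ d^\FF_{k+1} = c \circ f^\FF_k \circ g^\FF_{k+1}$. Since $g^\FF_{k+1}$ is surjective, this yields $c\circ f^\FF_k = 0$. By exactness of \eqref{EqSESreal} there is then a $G$-linear $\widehat{c}\colon J^k_\FF \to V'$ with $c = \widehat{c}\circ g^\FF_k$, exactly as in \eqref{EqahatAdam}.

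The key point, and the step I expect to need care, is that $\widehat c$ is bounded. Rather than using an abstract open-mapping argument (the spaces are not complete), I would use the explicit section $s\colon J^k_\FF \to \RGF\otimes_\RR J^k_\FF$, $s(y)= H\otimes y$, for a fixed $H\in\FF$. It satisfies $g^\FF_k\circ s = \id$ because $\varepsilon_\FF(H)=1$, and $\|s(y)\|_\pi = \|y\|$. It is only $\RR$-linear, not $G$-linear, but that suffices here: $\widehat c = \widehat c\circ g^\FF_k\circ s = c\circ s$. Hence $\|\widehat c\|_\op \le \|c\|_\op$ and $\widehat c\in B_G(J^k_\FF,V')$.

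Next, apply Lemma \ref{LemmaExtendIk} a) to $f=\widehat c$. This gives $\psi\in B_G(\II^k_\FF,V')$ with $\psi\circ\vartheta_{\FF,k} = \widehat c$. Then
$$\psi\circ j_{\FF,k} = \psi\circ\vartheta_{\FF,k}\circ(\varepsilon_\FF\otimes\id_{J^k_\FF}) = \widehat c\circ g^\FF_k = c,$$
using that $g^\FF_k$ is $\varepsilon_\FF\otimes \id$ under the identifications of Subsection \ref{SubsectReal}. Since pushforward by the bounded $G$-map $\psi$ acts on bounded cochains by postcomposition, passing to bounded cohomology yields $\psi_*(\Gamma_{\FF,k}) = [\psi\circ j_{\FF,k}]_b = [c]_b = u$.

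For b), run the same argument with $\RG\otimes_\RR J^k_G$, the section $y\mapsto 1\otimes y$, the cocycle $j_k$, and Lemma \ref{LemmaExtendIk} b). This gives $\eta$ with $\eta\circ j_k = c$ and hence $v=\eta_*(\gamma_k)$. Two things remain to check. One is that the rescaled norms $2^k\|\cdot\|_\op$ play no role, since only boundedness matters. The other, which is the only genuine obstacle, is that the factorization through $g^\FF_k$ holds at the level of bounded maps. The section argument above settles this.
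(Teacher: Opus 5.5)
Your proposal is correct and matches the paper's proof essentially step for step. Like the paper, you factor the bounded cocycle as $c=\widehat c\circ g^\FF_k$, obtain boundedness of $\widehat c$ from the estimate $\|\widehat c(y)\|=\|c(H\otimes y)\|\le\|c\|_\op\|y\|$, extend $\widehat c$ to the bidual via Lemma \ref{LemmaExtendIk}, and conclude $c=\psi\circ j_{\FF,k}$; part b) is handled the same way.
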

\begin{proof}
	\begin{enumerate}[a)]
		\item 	Let $c \in C^k_{b}([G:\FF];V')$ be a cocycle representing $u$. By the cocycle condition, $c \circ f_k^{\FF} \circ g_{k+1}^{\FF}=0$ and since $g_{k+1}^\FF$ is surjective by the exactness of \eqref{EqSESreal}, $c$ vanishes on $\im f_k^\FF$. Hence, $c$ induces a linear $G$-map $\hat{c}\colon J^k_{\FF} \to V'$ with $\hat{c} \circ g^{\FF}_k=c$. Since $c$ is bounded, we compute for any fixed $H \in \FF$ and for all $y \in J_{\FF}^k$ that
		$$\|\hat{c}(y)\|_{V'} = \|\hat{c}(g_k^\FF(  H \otimes y))\|_{V'} = \|c(H \otimes y) \|_{V'} \leq \|c\|_{\op} \cdot \|H \otimes y\|_{\pi} =  \|c\|_\op \cdot \|y\|_{J_\FF^k}. $$
		Hence, $\hat{c} \in B_G(J_{\FF}^k,V')$. By Lemma \ref{LemmaExtendIk}.a), there exists  $\psi \in B_G(\II_\FF^k,V')$, such that $\psi \circ \vartheta_{\FF,k} = \hat{c}$. 		Thus, 
		\begin{align*}
			c = \hat{c} \circ g_k^\FF = \psi \circ \vartheta_{\FF,k} \circ g_k^\FF = \psi \circ j_{\FF,k}.
		\end{align*}
		Passing to bounded cohomology shows that
		$$u= [c]_b = [\psi \circ j_{\FF,k}]_b = \psi_*([j_{\FF,k}]_b) = \psi_*(\Gamma_{\FF,k}).$$
		\item This is shown along the same lines as part a) by employing Lemma \ref{LemmaExtendIk}.b).
	\end{enumerate}
\end{proof}

The universality theorem has an interesting consequence for comparison maps.

\begin{theorem}
	\label{TheoUnivComp}
	Let $k \in \NN$ and let $V$ be a normed $G$-module.
	\begin{enumerate}[a)]
		\item If $\comp_{\FF,\II^k_\FF}^k(\Gamma_{\FF,k})= 0,$ then $\comp^k_{\FF,V'}\colon H^k_b([G:\FF];V')\to H^{k}([G:\FF];V')$ vanishes.
		\item If $\comp^k_{\II^k_G}(\gamma_k)=0$, then $\comp^k_{V'}\colon H^k_b(G;V') \to H^k(G;V')$ vanishes. 
	\end{enumerate}
\end{theorem}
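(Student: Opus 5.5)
The argument is a direct combination of the universality theorem (Theorem \ref{TheoremBoundedUniv}) with the naturality of comparison maps under pushforwards, i.e. the commutative squares \eqref{EqCompPushfor}. I treat part a) in detail; part b) is identical with $\gamma_k$, $\II^k_G$ and $\comp^k_{V'}$ in place of $\Gamma_{\FF,k}$, $\II^k_\FF$ and $\comp^k_{\FF,V'}$.

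For part a), assume $\comp^k_{\FF,\II^k_\FF}(\Gamma_{\FF,k})=0$ and fix an arbitrary $u \in H^k_b([G:\FF];V')$. By Theorem \ref{TheoremBoundedUniv}.a) there is $\psi \in B_G(\II^k_\FF,V')$ with $u=\psi_*(\Gamma_{\FF,k})$. Both $\II^k_\FF$ and $V'$ are Banach $G$-modules, being dual spaces, so the right-hand square of \eqref{EqCompPushfor} applies with $F=\psi$. This gives
\begin{align*}
\comp^k_{\FF,V'}(u) &= \comp^k_{\FF,V'}\big(\psi_*(\Gamma_{\FF,k})\big) \\
&= \psi_*\big(\comp^k_{\FF,\II^k_\FF}(\Gamma_{\FF,k})\big) \\
&= \psi_*(0) = 0.
\end{align*}
Since $u$ was arbitrary, $\comp^k_{\FF,V'}$ vanishes.

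For part b), take $v\in H^k_b(G;V')$ and write $v=\eta_*(\gamma_k)$ with $\eta \in B_G(\II^k_G,V')$, using Theorem \ref{TheoremBoundedUniv}.b). The left-hand square of \eqref{EqCompPushfor} then gives $\comp^k_{V'}(v)=\eta_*(\comp^k_{\II^k_G}(\gamma_k))=0$.

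The only point needing care is that the pushforwards in bounded and in ordinary cohomology are induced by the same post-composition with $\psi$ (respectively $\eta$) on cochains. This holds because the bounded cochain complexes $C^*_b$ are subcomplexes of $C^*$ and post-composition preserves boundedness. There is no real obstacle here; all the substance sits in the universality theorem.
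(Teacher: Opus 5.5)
Your proposal is correct and matches the paper's proof: both write an arbitrary class as $\psi_*(\Gamma_{\FF,k})$ (respectively $\eta_*(\gamma_k)$) using Theorem \ref{TheoremBoundedUniv} and then use the commutative squares \eqref{EqCompPushfor} to push the vanishing of the comparison map forward. Your remarks that $V'$ and $\II^k_\FF$ are Banach modules, and that the pushforward is post-composition on both cochain complexes, are harmless extra details the paper leaves implicit.
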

\begin{proof}
	\begin{enumerate}[a)]
		\item Let $u \in H^k_b([G:\FF];V')$. By Theorem \ref{TheoremBoundedUniv}.a), there exists $\psi \in B_G(\II^k_{\FF},V')$, such that $u =\psi_*(\Gamma_{\FF,k})$. By the commutativity of the right-hand diagram in \eqref{EqCompPushfor} in the case of $F=\psi$, we derive from $\comp^k_{\FF,\II^k_{\FF}}(\Gamma_{\FF,k})=0$ that 
		$$\comp^k_{\FF,V'}(u) = \comp^k_{\FF,V'}(\psi_*(\Gamma_{\FF,k}))= \psi_*(\comp_{\FF,\II_{\FF}^k}^k(\Gamma_{\FF,k}))=0.$$
		\item This is shown along the lines of part a), using Theorem \ref{TheoremBoundedUniv}.b).
	\end{enumerate}
\end{proof}

\subsection{Non-vanishing of comparison maps and simplicial volume} We want to apply the universality results for Johnson classes to derive a criterion for the non-vanishing of top-degree comparison maps of oriented Poincar\'{e} duality groups. 

A first auxiliary result is that for those groups the $\cd(G)$-th power of the \emph{real} Berstein class does not vanish. To establish this, we make use of a standard fact in group cohomology. We provide an explicit proof, since we are not aware of such a proof occurring in the literature.

\begin{lemma}
	\label{LemmaExtScal}
	Let $k \in \NN$, let $G$ be a group of type $FP_k$ and let $M$ be a $\ZZ[G]$-module. Consider $\alpha\in \Hom_{\ZG}(M,\RR\otimes M)$, $\alpha(m)=1\otimes m$. If $u \in H^k(G;M)$ is not an additive torsion class, then $\alpha_*(u)\neq 0$.
\end{lemma}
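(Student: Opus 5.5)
We use a finite-type projective resolution so that cohomology commutes with the flat change of scalars $\RR\otimes_\ZZ(-)$. Since $G$ is of type $FP_k$, choose a projective resolution $(P_*,\partial_*)$ of $\ZZ$ over $\ZG$ with $P_0,\dots,P_{k+1}$ finitely generated. We may take $P_{k+1}$ finitely generated, since the kernel of $P_k\to P_{k-1}$ is finitely generated up to the $FP_k$ condition; if not, one works with $P_k$ and the image of $\partial_{k+1}$ as below. For a finitely generated projective $\ZG$-module $P$, the natural map
$$\Phi_P\colon \RR\otimes \Hom_{\ZG}(P,M)\to \Hom_{\ZG}(P,\RR\otimes M),\qquad r\otimes f\mapsto \big(x\mapsto r\otimes f(x)\big),$$
is an isomorphism. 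This holds for $P=\ZG$, where both sides are $\RR\otimes M$, then for finite free modules by additivity, and then for direct summands. These maps are natural in $P$, so they commute with the coboundaries in degrees up to $k$.

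The key steps, in order, are as follows.

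\textbf{Step 1.} Since $\RR$ is $\ZZ$-flat, $\RR\otimes(-)$ commutes with cohomology of cochain complexes of abelian groups:
$$H^j\big(\RR\otimes \Hom_{\ZG}(P_*,M)\big)\cong \RR\otimes H^j(G;M).$$

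\textbf{Step 2.} Apply $\Phi$ in degrees $k-1$, $k$ and $k+1$. Cocycles and coboundaries in degree $k$ only involve degrees $k-1,k,k+1$. This gives an isomorphism $\RR\otimes H^k(G;M)\cong H^k(G;\RR\otimes M)$, namely $r\otimes[c]\mapsto [r\cdot(\alpha\circ c)]$. The potential issue in degree $k+1$ is handled as follows. If $P_{k+1}$ is not finitely generated, note that $c$ is a cocycle if and only if $c$ vanishes on $\im\partial_{k+1}=\ker\partial_k$. Since $\ker\partial_k$ is a quotient of a finitely generated module only under $FP_{k+1}$, we instead observe that $\Phi_{P_{k+1}}$ is always injective. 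Injectivity suffices to identify the cocycles, because a cocycle in $\Hom(P_k,\RR\otimes M)$ lies in the image of $\Phi_{P_k}$, and its coboundary vanishes if and only if the corresponding tensor's coboundary vanishes. Injectivity of $\Phi_P$ for arbitrary free $P=\bigoplus_I\ZG$ reduces to injectivity of $\RR\otimes\prod_I M\to\prod_I(\RR\otimes M)$. This holds because $\RR$ is a directed union of finitely generated free $\ZZ$-modules (flat, so a filtered colimit of finite free modules), and for finite free modules the map is an isomorphism.

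\textbf{Step 3.} Under this identification, $\alpha_*(u)$ corresponds to $1\otimes u\in\RR\otimes H^k(G;M)$. For an abelian group $A$, the kernel of $A\to\RR\otimes A$ is exactly the torsion subgroup, because $\RR$ is flat and $\QQ\to\RR$ is split injective over $\QQ$, and $\ker(A\to\QQ\otimes A)$ is the torsion. Hence, if $u$ is non-torsion, then $\alpha_*(u)\neq0$.

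The main obstacle is Step 2: making sure the comparison map is an isomorphism on $H^k$ using only the $FP_k$ hypothesis. The delicate point is the injectivity of $\Phi$ in degree $k+1$, which I handle via the directed-union argument.
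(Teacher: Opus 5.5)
Your strategy is the paper's. You compare $\RR\otimes\Hom_{\ZG}(P_*,M)$ with $\Hom_{\ZG}(P_*,\RR\otimes M)$ via the natural map, which is bijective in degrees $\leq k$ by finite generation. You use flatness of $\RR$ to identify the cohomology of the first complex with $\RR\otimes H^*(G;M)$. Finally, you use that a non-torsion class survives in $\RR\otimes H^k(G;M)$; your justification of this via the splitting $\RR=\mathbb{Q}\oplus W$ is a welcome addition, since the paper only asserts it.

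However, what you say about degree $k+1$ is wrong. Type $FP_k$ only gives $P_0,\dots,P_k$ finitely generated, and finite generation of $\ker(P_k\to P_{k-1})$ is exactly the $FP_{k+1}$ condition. Moreover, the map $\RR\otimes\prod_I M\to\prod_I(\RR\otimes M)$ is not injective in general. Take $M=\mathbb{Q}/\ZZ$ with trivial $G$-action and $I=\NN$. The target is $0$ because $\RR\otimes\mathbb{Q}/\ZZ=0$. But $(1/n+\ZZ)_{n\geq 1}\in\prod_{\NN}\mathbb{Q}/\ZZ$ has infinite order, so by your own Step 3 it has non-zero image in $\RR\otimes\prod_{\NN}\mathbb{Q}/\ZZ$. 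Your directed-union argument breaks down because $F_\lambda\otimes M\to\RR\otimes M$ need not be injective. You would need a single stage $\lambda$ that kills infinitely many coordinates at once, and such a stage need not exist. Consequently the surjectivity half of your claimed isomorphism $\RR\otimes H^k(G;M)\cong H^k(G;\RR\otimes M)$ is not proved, and with only $FP_k$ available there is no reason to expect it.

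Fortunately you never use surjectivity. Step 3 only needs injectivity of $\RR\otimes H^k(G;M)\to H^k(G;\RR\otimes M)$. That only requires $\Phi_{P_{k-1}}$ to be surjective and $\Phi_{P_k}$ to be injective. Indeed, suppose $t\in\RR\otimes\Hom_{\ZG}(P_k,M)$ is a cocycle with $\Phi_{P_k}(t)=\delta\big(\Phi_{P_{k-1}}(s)\big)=\Phi_{P_k}(\delta s)$; then $t=\delta s$. This is exactly the paper's argument, which works only in degrees $k-1$ and $k$. If you delete the discussion of $P_{k+1}$ and replace ``isomorphism'' by ``injection'' in Step 2, your proof is correct and coincides with the paper's.
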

\begin{proof}

Let $(P_*, p_*)$ be a projective resolution over $\ZZ$ over $\ZZ[G]$ for which $P_i$ is finitely generated for every $i \leq k$. Let $\delta_M$ and $\delta_{\RR\otimes M}$ denote the differentials of the cochain complexes $\Hom_{\ZG}(P_*,M)$ and $\Hom_{\ZG}(P_*,\RR \otimes M)$, respectively, which are induced by $p_*$. Let $u \in H^k(G;M)$ be a non-torsion class, and choose a representing cocycle $\xi \in \Hom_{\ZG}(P_k,M)$. Suppose that $\alpha_*(u) = 0$. Since $P_i$ is finitely generated and projective, the map 
$$ \gamma_i \colon \RR \otimes \Hom_{\ZG}(P_i,M) \longrightarrow \Hom_{\ZG}(P_i, \RR \otimes M), \qquad \gamma_i(r \otimes f)(x) = r \otimes f(x), $$ is an isomorphism for each $i \leq k$. Since $\RR$ is a flat $\ZZ$-module, the sequence $$ 0 \to \RR \otimes \im \delta^{k-1}_M \to \RR \otimes \ker \delta^k_M \to \RR \otimes H^k(G;M) \to 0. $$ is short exact. By assumption, $u$ is not a torsion class, so that $1 \otimes u \neq 0 \in \RR \otimes H^k(G;M)$ and consequently $1 \otimes \xi \notin \im(\id \otimes \delta^{k-1}_M) $. However, $\alpha \circ \xi$ is by assumption a coboundary, and since $\alpha \circ \xi = \gamma_k(1 \otimes \xi)$, there exists $\eta \in \Hom_{\ZG}(P_{k-1},\RR \otimes M)$ with $\delta^{k-1}_{\RR \otimes M}(\eta) = \gamma_k(1 \otimes \xi)$. But $\gamma_{k-1}$ is an isomorphism, so there exists a unique $\mu \in \RR \otimes \Hom_{\ZG}(P_{k-1},M)$ with $\eta = \gamma_{k-1}(\mu)$. Hence, 
$$ (\gamma_k\circ (\id_{\RR} \otimes \delta^{k-1}_M))(\mu) = (\delta^{k-1}_{\RR \otimes M}\circ\gamma_{k-1})(\mu) = \gamma_k(1 \otimes \xi) = \alpha \circ \xi. $$ Since $\gamma_k$ is an isomorphism, we get that $(\id_{\RR} \otimes \delta^{k-1}_M)(\mu) = 1 \otimes \xi$, resulting in a contradiction. Therefore, $\alpha_*(u) \neq 0$.   \end{proof}

\begin{prop} 
	\label{PropRealBerstein}
	Assume that $G$ is a duality group and let $n= \cd(G)$. Then $b^n_\RR \neq 0$. 	
\end{prop}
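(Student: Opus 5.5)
The plan is to derive the statement from the integral version together with Lemma \ref{LemmaExtScal}. By Lemma \ref{LemmaRealBerstein} in the case $\FF=\{1\}$, we have $b^n_\RR=(\alpha_G^n)_*(b^n)$, where $\alpha^n_G\colon I_G^n\to J_G^n=\RR\otimes I_G^n$ is exactly the map $m\mapsto 1\otimes m$ considered in Lemma \ref{LemmaExtScal} with $M=I_G^n$. A duality group is of type $FP$, so in particular of type $FP_n$. By Lemma \ref{LemmaExtScal} it therefore suffices to show that $b^n\in H^n(G;I_G^n)$ is not an additive torsion class.

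For this we use the universality of the Berstein class, i.e.\ Proposition \ref{PropUniv} for $\FF=\{1\}$ (equivalently the Dranishnikov--Rudyak description). Every class $u\in H^n(G;A)$, for every $\ZG$-module $A$, is of the form $\psi_*(b^n)$ for some $\psi\in\Hom_{\ZG}(I_G^n,A)$. If $m\cdot b^n=0$ for some $m\in\NN$, then $m\cdot u=\psi_*(m\, b^n)=0$ for all such $u$. Hence $H^n(G;A)$ would be annihilated by $m$ for every coefficient module $A$.

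The remaining step is to exhibit a module $A$ with $H^n(G;A)$ not of bounded exponent. Since $G$ is a duality group of dimension $n$, we have $H^n(G;A)\cong H_0(G;D\otimes A)=(D\otimes A)_G$, where $D=H^n(G;\ZG)$ is the dualizing module, which is $\ZZ$-free and nonzero. Take $A=\ZG$, so that $(D\otimes\ZG)_G\cong D$ as abelian groups. This group is nonzero and torsion-free, so multiplication by $m$ is not zero on it. Alternatively, $H^n(G;\ZG)=D\neq 0$ is torsion-free, which gives the same conclusion directly and uses only that $\cd(G)=n$ with $D$ torsion-free. This contradiction shows that $b^n$ is non-torsion, so $b^n_\RR\neq 0$.

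The main obstacle is the final step, namely justifying that $H^n(G;\ZG)$ is nonzero and torsion-free. This is where the duality hypothesis is essential. I expect to cite the standard facts (Bieri--Eckmann, \cite{Brown}) that for a duality group of dimension $n$ one has $H^i(G;\ZG)=0$ for $i\neq n$, and that $H^n(G;\ZG)$ is $\ZZ$-free and nonzero because $\cd(G)=n$. The other steps are formal consequences of the results stated above.
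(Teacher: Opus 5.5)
Your proposal is correct and follows essentially the same route as the paper. Both write $b^n_\RR=(\alpha^n_G)_*(b^n)$, use the universality of $b^n$ with coefficients in $\ZG$ together with the facts that $H^n(G;\ZG)$ is nonzero and $\ZZ$-torsion-free for a duality group (\cite[Theorem VIII.10.1]{Brown}) to conclude that $b^n$ is not a torsion class, and then apply Lemma \ref{LemmaExtScal}; your detour through $(D\otimes\ZG)_G\cong D$ is unnecessary since $H^n(G;\ZG)=D$ directly, as you yourself note.
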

\begin{proof}
	By \cite[Theorem 3.4]{DranishRudyak}, it holds that $b^n \neq 0 \in H^n(G;I^n_G)$. Since $G$ is a duality group,  $H^n(G;\ZG)$ has no $\ZZ$-torsion, see \cite[Theorem VIII.10.1]{Brown}. It further holds that $G$ is of type $FP_n$ and that $H^n(G;\ZG)\neq \{0\}$, see \cite[Theorem VIII.10.1]{Brown}. By the universality of $b^n$, see \cite[Corollary 3.5]{DranishRudyak}, there exists $\psi \in \Hom_{\ZG}(I_G^n,\ZG)$, such that $\psi_*(b^n)\neq 0\in H^n(G;\ZG)$.
Hence, $b^n$ is not a $\ZZ$-torsion class in $H^n(G;I^n_G)$. By Lemma \ref{LemmaExtScal}, this implies that $b^n_\RR \neq 0$. 
\end{proof}	

In the remainder of this section, given a normed $G$-module $A$ we will denote the quotient seminorm that its norm induces on the coinvariant module $A_G$ by $\|\cdot \|_{A_G}$. The following is a necessary and sufficient condition for the non-vanishing of comparison maps in top degree which does not involve bounded cohomology.

\begin{theorem}
\label{TheoremCompBerstein}
Let $G$ be an oriented Poincar\'{e} duality group, let $n=\cd(G)$ and let $z$ be a generator of $H_n(G;\ZZ)$. Then:
$$\comp^n_\RR \neq 0 \qquad \Leftrightarrow \qquad \|b^n_{\RR} \cap z\|_{(J^n_G)_G} >0. $$
\end{theorem}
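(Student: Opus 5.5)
Both conditions reduce, via duality, to statements about the single class $\gamma_n$. In one direction I use the universality of Theorem \ref{TheoUnivComp}; in the other I use Poincar\'e duality together with $\ell^1$/$\ell^\infty$ duality between $(J^n_G)_G$ and bounded invariant functionals.

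\emph{Step 1: $\comp^n_\RR\neq0$ is equivalent to $\comp^n_{\II^n_G}(\gamma_n)\neq 0$.} Since $\RR=\RR'$ is a dual module, Theorem \ref{TheoUnivComp}.b) shows that $\comp^n_{\II^n_G}(\gamma_n)=0$ forces $\comp^n_\RR=0$. Conversely, suppose $\comp^n_\RR\neq 0$, witnessed by a class $\comp^n_\RR(u)\neq 0$. By Theorem \ref{TheoremBoundedUniv}.b) we can write $u=\eta_*(\gamma_n)$ with $\eta\in B_G(\II^n_G,\RR)$. The diagram \eqref{EqCompPushfor} then gives $\eta_*(\comp^n_{\II^n_G}(\gamma_n))\neq0$. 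By Proposition \ref{PropClassRelations}.d) this says that $\comp^n_{\II^n_G}(\gamma_n)=(\vartheta_n)_*(b^n_\RR)$.

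\emph{Step 2: reformulate via cap product.} For an oriented $PD^n$ group, capping with $z$ gives an isomorphism $H^n(G;A)\cong H_0(G;A)=A_G$, natural in $A$. Hence a class $\eta_*(\vartheta_n)_*(b^n_\RR)\in H^n(G;\RR)$ is nonzero if and only if its cap with $z$ is nonzero. That cap equals $(\eta\circ\vartheta_n)_G(b^n_\RR\cap z)\in\RR$, where $\eta\circ\vartheta_n$ ranges over exactly the bounded $G$-invariant functionals $\lambda\in B_G(J^n_G,\RR)$. Indeed, every such $\lambda$ extends to $\II^n_G$ by Lemma \ref{LemmaExtendIk}, so the condition "some $\eta$ gives nonzero" is independent of that extension. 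Combining with Step 1, $\comp^n_\RR\neq0$ holds if and only if there is $\lambda\in B_G(J^n_G,\RR)$ with $\lambda_G(b^n_\RR\cap z)\neq0$. For the forward direction of Step 1, any $u$ with nonzero comparison produces such a $\eta$. For the backward direction, given $\lambda$, I take $\eta$ extending $\lambda$; then $\eta_*(\gamma_n)$ is a bounded class whose comparison image is nonzero.

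\emph{Step 3: Hahn--Banach on coinvariants.} Bounded $G$-invariant functionals on $J^n_G$ correspond exactly to bounded functionals on $(J^n_G)_G$ with respect to the quotient seminorm. A bounded functional on a seminormed space vanishes on elements of seminorm zero. Conversely, by Hahn--Banach, any element of positive seminorm admits a bounded functional that is nonzero on it. Therefore a nonvanishing $\lambda$ exists if and only if $\|b^n_\RR\cap z\|_{(J^n_G)_G}>0$.

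The main obstacle is the careful identification in Step 2. I need that the cap product with $z$ realizes duality with arbitrary (non-finitely generated) real coefficients, and that it commutes with pushforward along $\eta$. In the Dranishnikov--Rudyak resolution the cap of $[g_n\circ\cdots]$ with $z$ should be computed explicitly, so that $b^n_\RR\cap z$ becomes the image of a cycle representative of $z$ under $\varepsilon\otimes\id$ into $(J^n_G)_G$. Naturality then follows at the chain level.
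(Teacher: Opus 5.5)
Your argument is correct and is essentially the paper's proof. For ``$\Rightarrow$'' you write a class with nonzero comparison image as $\eta_*(\gamma_n)$ via Theorem \ref{TheoremBoundedUniv}.b), then use Proposition \ref{PropClassRelations}.d) and naturality of the duality isomorphism $H^n(G;A)\cong A_G$ to obtain a bounded invariant functional $\eta\circ\vartheta_n$ that does not vanish on $b^n_\RR\cap z$. For ``$\Leftarrow$'' you apply Hahn--Banach on $(J^n_G)_G$ and extend the resulting functional to $\II^n_G$ by Lemma \ref{LemmaExtendIk}, exactly as the paper does.

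One correction: the heading of your Step 1 claims an equivalence with $\comp^n_{\II^n_G}(\gamma_n)\neq 0$, but both of its paragraphs prove only $\comp^n_\RR\neq 0\Rightarrow\comp^n_{\II^n_G}(\gamma_n)\neq 0$. The converse is never established, and such a converse fails in general (cf.\ degree $2$ in Example \ref{ExampleCircleBundle}). Since your Step 2 argues both directions directly, simply drop that claim.
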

\begin{proof}

By Proposition \ref{PropRealBerstein} it holds that $b^n_{\RR} \neq 0 \in H^n(G;J_G^n)$. Since $G$ is an oriented Poincar\'{e} duality group, for each $\ZG$-module $A$ the cap product yields an isomorphism 
	$$P_A\colon  H^n(G;A) \to H_0(G;A)=A_G, \qquad P_A(u)=u \cap z.$$
	``$\Rightarrow$'': \enskip Let $u \in H^n_b(G;\RR)$ with $\comp^n_{\RR}(u)\neq 0$. By Theorem \ref{TheoremBoundedUniv}.b) there exists $\psi \in B_G(\II^n_G;\RR)$ with $u= \psi_*(\gamma_n)$. We derive from the commutativity of the left-hand diagram in \eqref{EqCompPushfor} and from Proposition \ref{PropClassRelations}.d) that
	$$ 0 \neq \comp^n_\RR(\psi_*(\gamma_n)) = \psi_*(\comp^n_{\II^n_G}(\gamma_n)) = \psi_* \circ (\vartheta_n)_*(b^n_{\RR})= f_*(b^n_{\RR}), $$
	where $f:= \psi \circ \vartheta_n \in B_G(J_G^n,\RR)$. Since $f$ is $G$-equivariant, it induces a map $\overline{f}\colon K_n \to \RR$, where $K_n:= (J^n_G)_G$, which is bounded with respect to the quotient seminorm on $K_n$.  By naturality of the Poincar\'{e} duality isomorphisms, we obtain that
	$$P_\RR(f_*(b^n_\RR)) = \overline{f}(P_{J^n_G}(b^n_\RR))= \overline{f}(b^n_\RR \cap z). $$
	Since $P_\RR(f_*(b^n_\RR)) \neq 0$, it follows that 
	$$0< \left|P_\RR(f_*(b^n_\RR))\right|=|\overline{f}(b^n_\RR \cap z)| \leq \|\overline{f}\|_\op \cdot \|b^n_\RR \cap z \|_{K_n}$$
	where $\|\overline{f}\|_{\op}$ is the operator seminorm of $\overline{f}$. Thus, we derive that $\|b^n_\RR\cap z\|_{K_n}>0$.  
	
``$\Leftarrow$'': \enskip Put $\widetilde{b}:= b^n_\RR \cap z\in K_n$, and let $q_n \colon J^n_G \to K_n$ be the projection. Since $\|\widetilde{b}\|_{K_n} > 0$, we can use the Hahn--Banach theorem with respect to the seminorm $\|\cdot \|_{K_n}$ to find $\varphi \in K_n'$ with $\varphi(\widetilde{b}) = \|\widetilde{b}\|_{K_n} > 0$. The commutativity of 	
\begin{equation*}
		\begin{tikzcd}
			H^n(G;J^n_G) \otimes H_n(G;\ZZ) \ar[d,"(q_n)_*\otimes \id_{H_n(G;\ZZ)}"] \ar[r,"\cap"] & K_n\ar[d,"\id_{K_n}"] \\
		H^n(G;K_n) \otimes H_n(G;\ZZ) \ar[r,"\cap"] & K_n.
		\end{tikzcd}
	\end{equation*}
yields $\widetilde{b} = (q_n)_*(b^n_\RR)\cap z = P_{K_n}((q_n)_*(b^n_\RR))$. We thus derive from the naturality of the Poincar\'e duality isomorphisms that 
$$0 \neq \varphi(\widetilde{b}) = \varphi(P_{K_n}((q_n)_*(b^n_\RR)) )= P_{\RR}((\varphi \circ q_n)_*(b^n_\RR)).$$
With $\psi := \varphi \circ q_n \in B_G(J^n_G,\RR)$, this shows that $\psi_*(b^n_\RR)\neq 0$. By Lemma \ref{LemmaExtendIk}.b) there exists $\Psi \in B_G(\II^n_G,\RR)$ with $\Psi\circ \vartheta_n = \psi$ and thus 
$$\Psi_*(\vartheta_n)_*(b^n_\RR) \neq 0.$$
By Proposition \ref{PropClassRelations}.d), this is equivalent to $\Psi_*(\comp_{\II^n_G}^n(\gamma_n)) \neq 0$ and finally from the commutativity of the left-hand square of \eqref{EqCompPushfor} that
$$\comp^n_\RR(\Psi_*(\gamma_n))= \Psi_*(\comp^n_{\II^n_G}(\gamma_n))\neq 0.$$
\end{proof}

The vanishing of the comparison map occurring in Theorem \ref{TheoremCompBerstein} is related to the vanishing of the simplicial volume of an aspherical manifold. 

\begin{corollary}
\label{CorSimpVolBerstein}
		Let $M$ be a closed connected orientable aspherical manifold with $G=\pi_1(M)$ and let $n=\dim M$. Then $$\|M\|>0 \qquad \Leftrightarrow \qquad \big\|b^n_{\mathbb R}\cap[M] \big\|_{(J^n_G)_G}>0 ,$$   
	where $\|M\|$ denotes the simplicial volume of $M$.
\end{corollary}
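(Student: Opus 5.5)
The plan is to reduce the corollary to Theorem \ref{TheoremCompBerstein} by way of Gromov's duality principle. Since $M$ is a closed connected orientable aspherical manifold, it is a model of $BG$. Hence $G$ is an oriented Poincar\'e duality group of dimension $n=\cd(G)$ (after fixing an orientation), and the fundamental class $[M]\in H_n(M;\ZZ)\cong H_n(G;\ZZ)$ is a generator. So Theorem \ref{TheoremCompBerstein} applies with $z=[M]$ and gives
$$\comp^n_\RR\neq 0 \quad\Leftrightarrow\quad \big\|b^n_\RR\cap[M]\big\|_{(J^n_G)_G}>0 .$$
It therefore remains to show that $\|M\|>0$ if and only if $\comp^n_\RR\colon H^n_b(G;\RR)\to H^n(G;\RR)$ is nonzero.

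For this second equivalence we first pass from $G$ to $M$. By Gromov's mapping theorem, or more elementarily because $M$ is aspherical with $\pi_1(M)=G$, there are identifications $H^n_b(M;\RR)\cong H^n_b(G;\RR)$ and $H^n(M;\RR)\cong H^n(G;\RR)$ that are compatible with the comparison maps. Here one has to check that the comparison map of Definition \ref{DefComp}, which is computed via the resolutions of Corollary \ref{CorBoundedRes}, agrees with the standard one. This follows from the fundamental lemma in the bounded setting: the chain maps $\psi_G^*$ of Remark \ref{RemarkReso} are norm-controlled and identify the two resolutions compatibly with the inclusion of bounded cochains.

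Next we invoke the duality principle \cite[Chapter 7]{FrigerioBook}:
$$\|M\| = \sup\big\{ 1/\|\varphi\|_\infty \ \big|\ \varphi\in H^n_b(M;\RR),\ \langle \comp(\varphi),[M]\rangle = 1\big\},$$
with $\sup\emptyset := 0$. Since $H^n(M;\RR)\cong\RR$ is detected by pairing with $[M]$, we have $\|M\|>0$ exactly when some bounded class has nonzero image under $\comp^n_\RR$, that is, exactly when $\comp^n_\RR\neq 0$. Combining this with the displayed equivalence proves the corollary.

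The main obstacle is the compatibility step: the comparison map of the paper, defined with the rescaled norms $2^k\|\cdot\|_\op$ on the Dranishnikov--Rudyak-type resolution, must agree with the classical comparison map that appears in the duality principle. Only its vanishing or non-vanishing matters here, not norms, so it suffices to show that the two bounded resolutions of $\RR$ are related by bounded chain maps that commute with the inclusions into the unbounded complexes. I would deduce this from uniqueness up to bounded homotopy via the fundamental lemma, \cite[Corollary 4.15]{FrigerioBook}, since that is precisely what identifies the bounded cohomologies.
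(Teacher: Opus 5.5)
Your proposal is correct and follows the paper's own route: apply Theorem \ref{TheoremCompBerstein} with $z=[M]$. Then combine it with Gromov's duality principle ($\|M\|>0$ iff $\comp^n_\RR\neq 0$), using the standard identification of the (bounded) cohomology of $M$ with that of $G$. Your extra check that the paper's comparison map agrees with the classical one is a detail the paper leaves under ``standard identification''. You do this via the bounded chain map $\psi_G^*$ lifting $\id_\RR$ between relatively injective strong resolutions.
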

\begin{proof}
This follows from the standard identification of the singular cohomology of $M$ with the group cohomology of $G$, Theorem \ref{TheoremCompBerstein} and Gromov's duality principle, which says that $\|M\|>0$ if and only if $\comp^n_\RR \neq 0$, see e.g. \cite[Corollary 7.11]{FrigerioBook}.
\end{proof}

\section{$\FF$-categories and (bounded) cohomology}

\subsection{Relative Berstein classes and lower bounds for $\FF$-categories.} We begin by reviewing the notion of $\FF$-category. This concept was first introduced by  P. Capovilla, C. L\"oh and M. Moraschini in \cite{CLM} and is based on a construction of J. C. G\'omez-Larra\~naga, F. Gonz\'alez-Acu\~na and W. Heil from \cite{GLGAH} that we will discuss in the next section. 

\begin{definition}
\label{DefFFcategory}
	Let $X$ be a path-connected topological space, let $x_0$ be a basepoint with $\pi_1(X,x_0)=G$ and let $\FF$ be a family of subgroups of $G$. The \emph{$\FF$-category of $X$}, denoted by $\cat_{\FF}(X)$,  is given as the minimal value of $n\in \NN_0$ for which there exists an open cover $U_0,U_1,\dots,U_n$ of $X$, such that $$\mathrm{im} \left[\pi_1(i_{U_j})\colon \pi_1(U_j,u_j)\to \pi_1(X,u_j)\right] \in \FF$$ for all $u_j \in U_j$ and all $j \in \{0,1,\dots,n\}$. Here, for each $A \subset X$ we let $i_A\colon A \hookrightarrow X$ denote the inclusion. 
\end{definition}

Note that $\im( \pi_1(i_{U_j}))$ is not given as a subgroup of $G=\pi_1(X,x_0)$, but, since $X$ is path-connected, as a subgroup of a conjugate of $G$. Since $\FF$ is closed under conjugation, the above notion is well-defined.  It follows immediately from the definition, see also \cite[Remark 2.8]{CLM}, that
\begin{equation}
\label{EqcatFFupper}
\cat_\FF(X) \leq  \cat(X),
\end{equation}
where $\cat(X)$ denotes the Lusternik-Schnirelmann category of $X$. If $X$ is aspherical with $G=\pi_1(X)$, this yields that, together with the Eilenberg--Ganea theorem from \cite{EG57}, 
$$\cat_\FF(X) \leq \cd(G).$$
We want to apply the cohomological techniques developed so far in order to obtain lower bounds about $\FF$-categories of aspherical spaces via bounded cohomology. In order to proceed further, let us first recall a topological notion. 

\begin{definition}
	Let $G$ be a group and $\FF$ a family of subgroups of $G$. A \textit{classifying space of $G$ with respect to $\FF$} is a $G$-CW complex $E_\FF G$ satisfying the following conditions:
	\begin{itemize}
		\item every isotropy group of $E_{\mathcal{F}}G$ belongs to $\mathcal{F}$,
		\item for any $G$-CW complex $X$ with all isotropy groups in $\mathcal{F}$ there exists a $G$-equivariant continuous map $X \to E_\FF G$,  which is unique up to $G$-equivariant homotopy.
	\end{itemize}
\end{definition} 
It is immediate from the definition that any two models of $E_{\mathcal{F}}G$ are $G$-homotopy equivalent and that $EG := E_{\{1\}}G$ recovers the usual total space of the universal bundle of $G$. Furthermore, there is a $G$-equivariant continuous map from $EG \to E_\FF G$ which is unique up to $G$-homotopy. \medskip 

Many different models of $E_\FF G$ can be constructed, see \cite[Sections 1-3]{Luck}. For our purposes  we are particularly interested in the Arciniega--Cisneros model, used in \cite{ArNev} as a simplified version of the better known Farrell--Jones model from \cite{FaJo93}. 

In what follows, we denote the cellular chain complex of a CW complex $Y$ with $\ZZ$-coefficients by $$C^{\cell}_k(Y):= H_k(Y_k,Y_{k-1}), \quad d^\cell_k\colon C^\cell_k(Y)\to C^\cell_{k-1}(Y)\quad \forall k \in \NN_0,$$
where $H_*(\cdot, \cdot)$ denotes relative singular homology groups with $\ZZ$-coefficients. Given a family of subgroups $\FF$ of $G$, denote by $\EE \FF$ the semi-simplicial set whose $n$-simplices are the ordered $(n+1)$-tuples of elements of $G/\FF$ and whose face operators are given by 
\begin{align*}
d_i(x_0, \cdots, x_n) &= (x_0, \dots, x_{i-1}, x_{i+1}, \dots, x_n).
\end{align*}
The action of $g \in G$ on an $n$-simplex $(x_0, \cdots, x_n)$ gives the simplex $(gx_0, \cdots, gx_n)$. The \emph{Arciniega--Cisneros model for $E_\FF G$} is the geometric realization $\vert \EE \FF \vert$ of the semi-simplicial set $\EE \FF$. It is shown in \cite[Proposition 4.16]{ArNev} that this is indeed a model for $E_\FF G$.  Viewing it as a $\Delta$-complex, since $\vert \EE \FF \vert$ has exactly one $n$-cell for each $(n+1)$-tuple of $G/\FF$, it is immediate to see that the cellular chain complex $C^\cell_*(E_\FF G)$ corresponds to the $\FF$-relative standard resolution from \eqref{EqFFrelstandard}. By Lemma \ref{fundamentalrelhom}, passing to cellular cohomology immediately shows for each $\ZG$-module $A$ that 
\begin{equation}
\label{EqIsoAdamsonEFG}
H^k(\Hom_{\ZG}(C^\cell_*(E_\FF G),A)) \cong H^k([G\colon \FF];A) \qquad \forall k \in \NN_0.
\end{equation} 
Let $\varrho_\FF \colon EG \rightarrow E_\FF G$ be a continuous $G$-map, which, as noted above, is unique up to $G$-homotopy. As a consequence of the definition, for any $H \in \FF$ such a map factors up to $G$-homotopy as 
\begin{equation} \label{FactorThetaMap} \begin{tikzcd}
	EG \ar[r, "\varrho_{\langle H \rangle}"] \ar[rr, bend right=20, "\varrho_\FF"'] & E_{\langle H \rangle} G \ar[r] & E_\FF G.
\end{tikzcd}  \end{equation} 
where both $EG$ and $E_{\langle H \rangle}G$ are also considered under the Arciniega-Cisneros model for the subgroup families $\{1\}$ and $\langle H \rangle$, respectively. Observe that $\varrho_\FF$ is then the realization of a semi-simplicial map between the corresponding geometric realizations of the models, and is thus a cellular map. 

Let $X$ be a connected CW complex with $\pi_1(X) = G$, and consider the classifying map $$\cc_X \colon X \to BG\simeq \quot{EG}{G}$$ associated to its universal cover $\widetilde{X}$. Since $\widetilde{X}$ is a free $G$-space, we have a $G$-equivariant map $\mathfrak{u} \colon \widetilde{X} \to E_\FF G$, unique up to $G$-homotopy. Considering an equivariant lift of the classifying map $\widetilde{\mathfrak{c}}_X \colon \widetilde{X} \to E G$, we observe that the composition 
$$ \widetilde{X} \xrightarrow{\widetilde{\cc}_X} EG \xrightarrow{\varrho_\FF} E_\FF G $$ 
indicates, by the universal property of $E_\FF G$, that $\mathfrak{u}$ and $\varrho_\FF\circ \widetilde{\cc}_X $ are $G$-homotopic maps. We have induced chain homomorphisms 
$$ C^\cell_*(\widetilde{X}) \xrightarrow{(\cc_X)_*} \mathcal{E}^G_* \xrightarrow{\overline{\varrho^*_\FF}} \mathcal{E}^{\FF}_* $$ 
where $(\cc_X)_*\colon C_*^\cell(\widetilde{X}) \to C_*^\cell(E G)$ is the map induced by $\widetilde{\cc}_X$.
\medskip 

The following proposition provides a lower bound on $\FF$-category in terms of Berstein classes relative to $\FF$, which applies to spaces that are not necessarily aspherical. One main ingredient of its proof is the following characterization of $\FF$-categories given in \cite[Proposition 7.5]{CLM}: Let $X$ be a connected CW complex with $G=\pi_1(X)$ and let $\widetilde{X}$ be its universal cover, seen as a $G$-CW complex. Then $\cat_\FF(X)$ coincides with the smallest $k \in \NN$, for which a continuous $G$-map $\widetilde{X} \to E_\FF G $ is $G$-homotopic to a map whose image lies in $(E_\FF G)_k$.

\begin{prop}\label{prop:lower-bound-catF}
	Let $\FF$ be a family of subgroups of $G$ and let $X$ be a connected CW complex  with $\pi_1(X)=G$. 
	\begin{enumerate}[a)]
	\item  Then $$\cat_\FF(X) \geq  \sup \{k \in \NN_0 \mid \cc_X^*(\omega_\FF^k) \neq 0\}.$$
	\item If $X$ is aspherical, then $$\cat_\FF(X) \geq  \sup \{k \in \NN_0 \mid \omega_\FF^k \neq 0\}.$$
	\end{enumerate}
\end{prop}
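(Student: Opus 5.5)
We argue by contraposition: assume $n:=\cat_\FF(X)<\infty$ and show that $\cc_X^*(\omega_\FF^k)=0$ for all $k>n$. By the characterization from \cite[Proposition 7.5]{CLM}, the $G$-map $\mathfrak{u}\simeq\varrho_\FF\circ\widetilde{\cc}_X\colon \widetilde{X}\to E_\FF G$ is $G$-homotopic to a map $\mathfrak{u}'$ with image in the $n$-skeleton $(E_\FF G)_n$. Using the Arciniega--Cisneros model and cellular approximation, we may also take $\mathfrak{u}'$ to be cellular. The plan is to show that $\cc_X^*(\omega_\FF^k)$ is obtained by pulling back $\phi_\FF^k$ along the cellular chain map induced by $\mathfrak{u}'$, and then to observe that this pullback vanishes for $k>n$ for dimensional reasons.

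\textbf{Identifying the pullback.} First, $\omega_\FF^k=\rho_\FF^k(\phi_\FF^k)$ by Lemma \ref{NaturalityRelCup}. On cellular cochains, $\rho_\FF$ is induced by any $\ZG$-chain map $\mathcal{E}^G_*\to \mathcal{E}^\FF_*$ over $\id_\ZZ$. Composing with the comparison equivalences $\psi_G^*,\psi_\FF^*$ from Remark \ref{RemarkReso}, we check that $\sigma_{H,*}$ corresponds, up to homotopy, to the cellular chain map of $\varrho_\FF$. This follows from a relative fundamental lemma: the source is a projective resolution and the target is exact, so any two chain maps over $\id_\ZZ$ are chain homotopic. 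Via \eqref{EqIsoAdamsonEFG}, $\phi_\FF^k$ is a class in $H^k(\Hom_{\ZG}(C^\cell_*(E_\FF G),\IF^k))$. We then get
$$\cc_X^*(\omega_\FF^k)=(\widetilde{\cc}_X)^*\varrho_\FF^*(\phi_\FF^k)=\mathfrak{u}^*(\phi_\FF^k)\in H^k(X;\IF^k),$$
where $H^*(X;A)$ is computed as $H^*(\Hom_{\ZG}(C^\cell_*(\widetilde X),A))$. Equivariant homotopy invariance of cellular cohomology then gives $\mathfrak{u}^*=\mathfrak{u}'^*$.

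\textbf{Vanishing.} The map $\mathfrak{u}'$ factors through the inclusion $(E_\FF G)_n\hookrightarrow E_\FF G$. The cellular cochain group $\Hom_{\ZG}(C^\cell_k((E_\FF G)_n),\IF^k)$ is zero for $k>n$. Hence the restriction of $\phi_\FF^k$ to the $n$-skeleton vanishes, and so $\mathfrak{u}'^*(\phi_\FF^k)=0$. This proves part a).

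\textbf{Part b).} If $X$ is aspherical, then $\cc_X$ is a homotopy equivalence. Therefore $\cc_X^*$ is an isomorphism $H^*(G;\IF^k)\cong H^*(X;\IF^k)$ with local coefficients, and part b) follows from part a).

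\textbf{Main obstacle.} The key step is the compatibility between the algebraic map $\rho_\FF$ (defined through $\sigma_{H,*}$ on the Dranishnikov--Rudyak type resolutions) and the topological map $\varrho_\FF$ on cellular chains of the Arciniega--Cisneros models. I would handle it by noting that $\varrho_\FF$ can be taken to be the realization of $(g_0,\dots,g_k)\mapsto(g_0H,\dots,g_kH)$. Under $\psi_G$ and $\psi_\FF$ this map corresponds exactly to $\sigma_{H,*}$, since $\psi_\FF^k(g_0H,\dots,g_kH)$ has the same form as $\sigma_{H,k}\psi_G^k(g_0,\dots,g_k)$ after the change of variables. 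The uniqueness up to homotopy then follows from the fundamental lemma, as above.
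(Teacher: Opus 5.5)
Your proposal is correct and follows essentially the same route as the paper. Both arguments take $\varrho_\FF$ to be the realization of $g\mapsto gH$ and use the on-the-nose identity $\psi_\FF^*\circ\overline{\varrho}^*_\FF=\sigma_{H,*}\circ\psi^*_G$ to identify $\rho_\FF$ with the map induced by $\varrho_\FF$; they then compress $\mathfrak{u}$ into a skeleton of $E_\FF G$ via \cite[Proposition 7.5]{CLM}, conclude from the vanishing of cellular cochains of that skeleton above its dimension, and deduce part b) from part a) in the same way.
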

\begin{proof}
\begin{enumerate}[a)]
\item Let $k \in \NN$ and assume  $\cat_{\FF}(X)<k$. We want to show that this implies $\cc_X^*(\omega_\FF^k) = 0$. Consider the Arciniega--Cisneros model of $E_\FF G$. By the discussion above the map $\varrho_\FF$ induces a chain homomorphism between the corresponding standard resolutions \begin{equation}
	\label{Eqvarrho}	
	\overline{\varrho}^n_\FF \colon \mathcal{E}^G_{n} \longrightarrow \mathcal{E}^\FF_{n}
\end{equation} following the notation from Example \ref{ExampleStandard}. For any $H \in \FF$ we can choose $\varrho_\FF$ as the realization of the $G$-map $G \to G/\FF$, $g \mapsto gH$. The induced chain homomorphism $\overline{\varrho^{*}_{\FF}}$ is then given by $\ZZ$-linearly extending 
$$ \overline{\varrho}^{s}_{\FF} \colon \ZZ[G^{s+1}] \rightarrow \ZZ[(G/\FF)^{s+1}], \qquad \overline{\varrho^{s}_{\FF}}(g_0, g_1, \cdots, g_s) = (g_0 H, g_1H, \cdots, g_s H) \qquad \forall s \in \NN_0.$$ 
For any $\ZG$-module $A$, the chain map $\overline{\varrho}^*_{\FF}$ induces a map 
\begin{equation}
	\label{Eqvarrho2}
	\varrho_\FF^s \colon H^s([G:\FF];A) \rightarrow H^s(G;A) \qquad \forall s \in \NN_0. 
\end{equation}
By composing with the chain maps \eqref{chainmapsresolutions} and since those maps determine chain homotopy equivalences between the respective resolutions, and thus they induce isomorphisms at cohomological level, it is straightforward to check that 
\begin{equation}
\label{EqProp53comm}
 \psi_{\mathcal{F}}^{*} \circ \overline{\varrho^{*}_{\FF}} =  \sigma_{H,*} \circ \psi^{*}_G 
 \end{equation}
  and thus $(\varrho_\FF)^* = \rho^*_\FF. $ Consequently, for any $\ZZ[G]$-module $A$ we have a homomorphism induced at the cohomological level 
\begin{equation} \label{umap}
	\uu^* = \cc_X^* \rho^*_\FF \colon H^*([G : \FF];A) \to H^*(X;A).
\end{equation} 
	By \cite[Proposition 7.5]{CLM} the map $\uu$ factors up to $G$-homotopy through the $(k-1)$-skeleton $(E_{\FF} G)_{k-1}$, i.e. we have a commutative diagram up to $G$-homotopy of the form $$\begin{tikzcd} 
	\widetilde{X} \arrow[r, "\widetilde{\cc}_X"'] \arrow[rd] \arrow[rr, bend left=20, "\uu"] & EG \arrow[r, "\varrho_\FF"'] & E_\FF G \\ 
	& (E_{\FF}G)_{k-1}. \arrow[ru]
\end{tikzcd}$$ which in turn induces a commutative diagram of chain complexes  \begin{equation} \label{lower:diagramchainmaps}
\begin{tikzcd}
	C^\cell_*(\widetilde{X}) \arrow[r, "\widetilde{\cc}_X"'] \arrow[rd] \arrow[rr, bend left=20, "\overline{\uu}_*"] & \mathcal{E}^G_{*} \arrow[r, "\overline{\varrho}^{*}_{\FF}"'] & \mathcal{E}^{\FF}_{*} \\ 
	& C^\cell_*((E_{\FF}G)_{k-1}). \arrow[ru]
\end{tikzcd}
\end{equation} Since the chain complex $C^\cell_*((E_{\FF}G)_{k-1})$ coincides up to degree $k-1$ with $\EE^{\FF}_{*}$, and it is trivial for any $q \geq k$, we have that  $ \uu^* = \cc_X^* \rho^*_\FF \colon H^q([G:\FF];I^q_{\FF}) \to H^q(X;I^q_{\FF}) $ is trivial for $q \geq k$. Hence, $\cc^*_X(\omega^q_{\FF}) = 0$ for all $q \geq k$, so that $$\cat_\FF(X) \geq \sup \{q \in \NN_0 \mid \cc^*_X(\omega^q_{\FF}) \neq 0\}.$$ 

\item This follows immediately from a) since in the aspherical case, $\cc_X$ is a homotopy equivalence, hence induces isomorphisms in cohomology.
\end{enumerate} \end{proof}

\subsection{Maximality criteria for $\FF$-categories}

We will show that the obstruction class from the proof of \cite[Proposition 7.5]{CLM} can be described explicitly, generalizing \cite[Theorem 2.7]{BCE22} and its proof, which deals with a family generated by a single subgroup. The obstruction class is, indeed, an image of a power of a relative Berstein class under a suitable change of coefficient modules. Such a description leads to a maximality criterion for $\cat_\FF$, given in terms of images of powers of relative Berstein classes under pullbacks of classifying maps.   

\begin{theorem}\label{thm:CostaFarberCatF}
Let $X$ be a connected finite-dimensional CW complex with $\pi_1(X)=G$ and $\dim(X) \geq 3$. Let $\mathfrak{c}_X\colon X \to BG$ be a classifying map of the universal cover of $X$. Then
	$$\cat_\FF(X) = \dim(X) \qquad \Leftrightarrow \quad \mathfrak{c}_X^*(\omega_\FF^{\dim(X)}) \neq 0 \in H^{\dim(X)}(X;\IF^{\dim(X)}).$$
\end{theorem}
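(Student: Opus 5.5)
The direction ``$\Leftarrow$'' is immediate from what has been established. Set $n:=\dim X$. Since $\cat_\FF(X)\le\cat(X)\le n$ by \eqref{EqcatFFupper} and the dimension bound for LS category, the assumption $\cc_X^*(\omega_\FF^n)\neq 0$ together with Proposition \ref{prop:lower-bound-catF}.a) gives $\cat_\FF(X)\ge n$, hence $\cat_\FF(X)=n$.

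For ``$\Rightarrow$'' I would argue by equivariant obstruction theory, following \cite[Theorem 2.7]{BCE22} and Costa--Farber. By \cite[Proposition 7.5]{CLM}, $\cat_\FF(X)\le n-1$ holds if and only if the $G$-map $\uu\colon\widetilde X\to E_\FF G$ is $G$-homotopic to a map into the skeleton $(E_\FF G)_{n-1}$. I take the Arciniega--Cisneros model. Its $(n-1)$-skeleton is $(n-2)$-connected, because every simplicial $(n-1)$-skeleton of a contractible simplex-like complex is. I would check this directly: every finite subset of vertices spans a full simplex. Since $\dim\widetilde X=n$, the only obstruction to compressing $\uu$ into $(E_\FF G)_{n-1}$ is then a primary obstruction class $\theta\in H^n_G(\widetilde X;\pi_{n-1}((E_\FF G)_{n-1}))=H^n(X;\pi_{n-1})$. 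Here one uses $\dim X\ge 3$, so that $n-1\ge 2$: the skeleton is simply connected, $\pi_{n-1}$ is abelian, and Hurewicz gives $\pi_{n-1}\cong H_{n-1}((E_\FF G)_{n-1})=\ker(d^\cell_{n-1})$ as $\ZG$-modules. Because $\widetilde X$ is $n$-dimensional, there are no higher obstructions, so $\cat_\FF(X)<n$ if and only if $\theta=0$.

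The key step is identifying $\theta$. First, $\ker d^\cell_{n-1}$ of the relative standard resolution $\EE^\FF_*$ is the image of $b^\FF_n$, that is, the $(n-1)$-st syzygy. The chain homotopy equivalence $\psi_\FF$ of Remark \ref{RemarkReso}(3) identifies this syzygy, up to a map inducing an isomorphism on the relevant cohomology, with $\im d^\FF_n=f^\FF_{n-1}(I^n_\FF)\cong I_\FF^n$. Second, the universal obstruction for compressing $\mathrm{id}\colon E_\FF G\to E_\FF G$ into its $(n-1)$-skeleton is represented by the cellular cocycle $\EE^\FF_n\to\ker b^\FF_{n-1}$, $c\mapsto b^\FF_n(c)$. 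Transported along $\psi_\FF$, this becomes the map $\ZGF\otimes I^n_\FF\to I^n_\FF$ obtained from $d^\FF_n$ by corestriction, namely $g^\FF_n$. By Proposition \ref{Propphikrepr}, $g^\FF_n$ represents $\phi^n_\FF$. By naturality of obstructions, $\theta=\uu^*(\text{universal class})$, and by \eqref{umap} this equals $\cc_X^*\rho^n_\FF(\phi_\FF^n)=\cc_X^*(\omega^n_\FF)$, using Lemma \ref{NaturalityRelCup} and the equality $(\varrho_\FF)^*=\rho^*_\FF$ from the proof of Proposition \ref{prop:lower-bound-catF}. Thus $\cat_\FF(X)=n$ if and only if $\cc_X^*(\omega^n_\FF)\ne0$, which also reproves ``$\Leftarrow$''.

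I expect the main obstacle to be this identification of the obstruction cocycle with $g^\FF_n$ up to the coefficient isomorphism $\pi_{n-1}\cong I^n_\FF$. Compatibility of $\psi_\FF$ with the coefficient identification must be checked by hand, including signs and the fact that $\psi_\FF$ restricted to syzygies induces an isomorphism. I would also need to verify the needed connectivity of $(E_\FF G)_{n-1}$ for the Arciniega--Cisneros model, and that the obstruction theory applies to $G$-CW complexes whose isotropy lies in $\FF$ on the target side. The latter works because $\widetilde X$ is free, so the theory reduces to ordinary obstruction theory on $X$ with local coefficients.
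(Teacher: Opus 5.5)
Your ``$\Leftarrow$'' argument matches the paper. Your architecture for ``$\Rightarrow$'' also matches it: compress $\uu$ into $(E_\FF G)_{n-1}$, use Hurewicz to get $\pi_{n-1}((E_\FF G)_{n-1})\cong\ker b^\FF_{n-1}$, and write the obstruction as $\theta=\uu^*[b^\FF_n]$ for the universal cocycle $b^\FF_n\colon\EE^\FF_n\to\ker b^\FF_{n-1}$. The gap is in the identification step, which is exactly where the nontrivial direction lives.

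There is no coefficient isomorphism $\ker b^\FF_{n-1}\cong I^n_\FF$. These are $(n-1)$-st syzygies of two different relatively $\FF$-projective resolutions, so they are only stably equivalent. Moreover, the restriction of $\psi^{n-1}_\FF$ to $\ker b^\FF_{n-1}$ is not even injective. For $x\in G/\FF$ and distinct $y_2,\dots,y_n$, the element $b^\FF_n(x,x,y_2,\dots,y_n)$ is a nonzero syzygy all of whose terms have equal first two entries, so $\psi^{n-1}_\FF$ kills it. Nor does the restriction induce an isomorphism on $H^n(X;-)$, because $H^n(X;-)$ does not vanish on relatively projective modules. For example, with $\FF=\{1\}$, $G=\ZZ$ and $X=S^1\vee S^2\vee S^3$ one has $H^3(X;M)\cong M$ naturally in $M$.

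What transporting along $\psi_\FF$ actually gives, from $\psi^{n-1}_\FF\circ b^\FF_n=f^\FF_{n-1}\circ g^\FF_n\circ\psi^n_\FF$, is $(\psi_\FF|)_*\theta=\cc_X^*(\omega^n_\FF)$. This only yields $\theta=0\Rightarrow\cc_X^*(\omega^n_\FF)=0$, i.e.\ it reproves ``$\Leftarrow$''. For ``$\Rightarrow$'' you need a coefficient homomorphism in the opposite direction, $I^n_\FF\to\pi_{n-1}((E_\FF G)_{n-1})$, that carries $\cc_X^*(\omega^n_\FF)$ to $\theta$.

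The paper supplies this map as $\tau=(h^\FF_{n-1})^{-1}\circ j_\FF^{-1}\circ\varphi^{n-1}_\FF\circ f^\FF_{n-1}$. Here $\varphi_\FF$ is a chain homotopy inverse of $\psi_\FF$, $j_\FF$ identifies $H_{n-1}((E_\FF G)_{n-1})$ with $\ker b^\FF_{n-1}$, and $h^\FF_{n-1}$ is the Hurewicz isomorphism. A chain homotopy $P$ with $\id-\varphi_\FF\psi_\FF=b^\FF P+Pb^\FF$ then shows that $\tau$ applied to the cocycle representing $\cc_X^*(\omega^n_\FF)$ differs from the obstruction cocycle by the coboundary $\alpha\circ d^\cell_n$.

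Within your universal-class framework there is a shorter repair. By Proposition \ref{PropUniv}, transported to the $\EE^\FF_*$-model naturally in the coefficients, there is $\mu\colon I^n_\FF\to\ker b^\FF_{n-1}$ with $\mu_*(\phi^n_\FF)=[b^\FF_n]$. One can take $\mu=\varphi^{n-1}_\FF\circ f^\FF_{n-1}$, which recovers the paper's $\tau$. Naturality of $\uu^*=\cc_X^*\circ\rho^*_\FF$ in the coefficients then gives $\theta=\mu_*(\cc_X^*(\omega^n_\FF))$, so $\cc_X^*(\omega^n_\FF)=0$ forces $\theta=0$. Only this one-sided relation is needed; no isomorphism of coefficients is.

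A minor point: the $(n-2)$-connectivity of $(E_\FF G)_{n-1}$ follows from contractibility of $E_\FF G$ and cellular approximation. The Arciniega--Cisneros complex has ordered tuples with repetitions as simplices, so ``every finite subset of vertices spans a simplex'' is not the right justification.
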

\begin{proof}
 The implication ``$\Leftarrow$'' is an immediate consequence of Proposition \ref{prop:lower-bound-catF} a) and of the elementary inequalities $\cat_\FF(X)\leq \cat(X) \leq \dim X$.   

Let $k:=\dim(X)$. In order to show the reverse implication, suppose $\cc^*_X(\omega^k_\FF) = 0$. 
Let again $p\colon \widetilde{X} \to X$ be the universal cover of $X$, given as a $G$-CW complex, let $E_\FF G$ be the Arciniega--Cisneros model for a classifying space of $G$ with respect to $\FF$ and let $\varrho_\FF\colon EG \to E_\FF G$ be a continuous $G$-map. Let $\widetilde{\cc}_X\colon \widetilde{X} \to EG$ be a $G$-equivariant lift of $\cc_X \circ p$. 

The map $\mathfrak{u}:= \varrho_\FF \circ \widetilde{\cc}_X \colon \widetilde{X} \to E_\FF G$ is a continuous $G$-map. We further assume w.l.o.g. that $\cc_X$ is cellular, since this can be achieved by equivariant cellular approximation. Consider the $G$-equivariant extension problem
\begin{equation*}\begin{tikzcd}
		\widetilde{X}_{k-1} \ar[d, hook'] \ar[rrd,"\uu_{k-1}"] \\
		\widetilde{X}_k \ar[rr,dashed] & & (E_{\FF}G)_{k-1}
	\end{tikzcd}
\end{equation*}  where $\uu_{k-1}:= \uu|_{\widetilde{X}_{k-1}}$. By obstruction-theoretic arguments it has a solution provided the obstruction cocycle of this extension problem is cohomologous to zero in $H^k(X;\pi_{k-1}((E_\FF G)_{k-1}))$. We want to find an explicit description of this cocycle. 	We consider the Hurewicz homomorphisms 
\begin{align*}
	\overline{h}_k \colon \pi_k(\widetilde{X}_k, \widetilde{X}_{k-1}) &\stackrel{\cong}{\longrightarrow} H_k(\widetilde{X}_k, \widetilde{X}_{k-1}),\\ 
	h_{k-1} \colon \pi_{k-1}(\widetilde{X}_{k-1}) &\longrightarrow H_{k-1}(\widetilde{X}_{k-1}),\\
	h_{k-1}^{\FF} \colon \pi_{k-1}\big((E_\FF G)_{k-1}\big) &\stackrel{\cong}{\longrightarrow} H_{k-1}\big((E_\FF G)_{k-1}\big).
\end{align*}
By relative cellular approximation the pair $(\widetilde{X}_k, \widetilde{X}_{k-1})$ is $(k-1)$-connected, and since $k\geq 3$ and the skeleton $(E_{\FF}G)_{k-1}$ is $(k-2)$-connected, both $\overline{h}_k$ and $h_{k-1}^\FF$ are isomorphisms. 
Let 
$$\partial_k\colon \pi_k(\widetilde{X}_k,\widetilde{X}_{k-1}) \to \pi_{k-1}(\widetilde{X}_{k-1})$$ 
be the boundary operator of the long exact sequence of homotopy groups of $(\widetilde{X}_k, \widetilde{X}_{k-1})$. The obstruction cocycle representing the extension is then given by the cellular cocycle 
$$c^k(\uu_{k-1})\in \Hom_{\ZG}(C_k^\cell(\widetilde{X}), \pi_{k-1}((E_\FF G)_{k-1})),\qquad  c^k(\uu_{k-1}) = \pi_{k-1}(\uu_{k-1}) \circ \partial_k \circ \overline{h}_k^{-1}, $$ 
where $$\pi_{k-1}(\uu_{k-1}) \colon \pi_{k-1}(\widetilde{X}_{k-1}) \rightarrow \pi_{k-1}((E_\FF G)_{k-1})$$  is the map induced by $\uu_{k-1}$.  Observe that the cocycle can be described at cellular level as follows: if $\phi = \{\phi_j\}_{j \in J}$ is the collection of characteristic maps of the cell attachment in $(\widetilde{X}_k, \widetilde{X}_{k-1})$ and $e_j$ is the basis element representing $\phi_j$ in the cellular chain complex, then $c^k(\uu_{k-1})(e_j) = [\uu_{k-1} \circ \phi_{j}|_{S^{k-1}}]$. 

We claim that this obstruction cocycle is the image of a cocycle representing $\cc_X^*(\omega^k_\FF)$ under a change-of-coefficients homomorphism. To show this, let $\psi^*_{\mathcal{F}}$ be the chain map from \eqref{chainmapsresolutions}, let $\varphi^*_{\mathcal{F}}$ be a chain homotopy inverse to $\psi^*_{\FF}$ and let $ P_* \colon \EE^{\FF}_* \longrightarrow \EE^{\FF}_{*+1}$ be a chain homotopy with 
\begin{equation}
	\label{EqChainhom}
	\id_{\EE^{\FF}_n} - \varphi^n_{\FF} \circ \psi^n_{\FF} =  b^\FF_{n+1} \circ P_n + P_{n-1} \circ b^\FF_n \qquad \forall n \in \NN_0, 
\end{equation} 
where we adapt the notation from Example \ref{ExampleStandard}. Let $$ j_\FF \colon H_{k-1}((E_\FF G)_{k-1}) \hookrightarrow C^\cell_{k-1}(E_\FF G) \cong \mathcal{E}^\FF_{k-1}, $$ be the inclusion into the cellular chains, with the latter equivalence being the identification provided by the Arciniega--Cisneros model. Since $(E_\FF G)_{k-1}$ has no cells of dimension greater than or equal to $k$, it follows that 
\begin{equation} \label{imagejF}
H_{k-1}((E_\FF G)_{k-1}) \cong \im j_\FF = \ker \left( b^\FF_{k-1} \colon \EE^\FF_{k-1} \to \EE^\FF_{k-2} \right).
\end{equation} Denote by $j^{-1}_\FF$ the inverse over the image of $j_\FF$. If we further identify the standard resolution of $G$ with the cellular chain complex of $EG$, i.e. we let $C^\cell_k(EG) \cong \EE^G_k$ for each $k \in \NN_0$, then $\cc_X^*(\omega^k_{\FF})$ is by definition given as the cohomology class of the cocycle 
$$ \overline{c}(\cc_X^*(\omega^k_\FF)) = g_k^{\FF} \circ \sigma_{H,k} \circ \psi^k_{G} \circ (\cc_X)_k, $$ 
where $(\cc_X)_*\colon C_*^\cell(\widetilde{X}) \to C_*^\cell(E G)  $ is the map induced by $\widetilde{\cc}_X$ and where $H \in \FF$ is chosen arbitrarily. We consider the $\ZG$-map
$$ \tau \colon I^k_\FF \to \pi_{k-1}((E_\FF G)_{k-1}), \qquad \tau := (h^{\FF}_{k-1})^{-1} \circ j^{-1}_\FF \circ \varphi^{k-1}_{\FF} \circ f^{\FF}_{k-1}, $$ 
where $f^{\FF}_{k-1}$ is given as in \eqref{EqBasicSESk}. This homomorphism is well defined, since $$ b^\FF_{k-1} \circ \varphi^{k-1}_\FF \circ f^\FF_{k-1} = \varphi^{k-2}_\FF \circ d^\FF_{k-1} \circ f^\FF_{k-1} = \varphi^{k-2}_\FF \circ f^\FF_{k-2} \circ g^\FF_{k-1} \circ f^\FF_{k-1} = 0. $$ The coefficient homomorphism $\tau$ induces a cochain homomorphism $$ \overline{\tau} \colon \Hom_{\ZG}(C_k^\cell(\widetilde{X});I^k_\FF) \to \Hom_{\ZG}(C_k^\cell(\widetilde{X});\pi_{k-1}((E_\FF G)_{k-1})). $$ Let $\overline{\varrho}^k_\FF$ be given as in \eqref{Eqvarrho}. Using \eqref{EqProp53comm}, we compute that $\tau_*(\cc_X^*(\omega_\FF^k))$ is represented by 
\begin{align*}
\overline{\tau}(\overline{c}(\cc_X^*(\omega^k_\FF))) &= \left((h^{\FF}_{k-1})^{-1} \circ j^{-1}_\FF \circ \varphi^{k-1}_\FF \circ f^{\FF}_{k-1}\right) \circ \left( g^{\FF}_k \circ \sigma_{H,k} \circ \psi^k_G \circ (\cc_X)_k\right) \\
& = (h^{\FF}_{k-1})^{-1} \circ j^{-1}_\FF \circ \varphi^{k-1}_\FF \circ d_k^{\FF} \circ \psi^k_\FF \circ \overline{\varrho}^k_\FF\circ (\cc_X)_k \\
& = (h^\FF_{k-1})^{-1} \circ j^{-1}_\FF \circ b^\FF_k \circ \varphi^k_\FF \circ \psi^k_\FF \circ \overline{\varrho}^k_\FF \circ (\cc_X)_k .
\end{align*} 
Applying \eqref{EqChainhom} we observe 
\begin{align*}
b^\FF_k \circ \varphi^k_\FF \circ \psi^k_\FF \circ \overline{\varrho}^k_\FF \circ (\cc_X)_k & = b^\FF_k \circ \overline{\varrho}^k_\FF \circ (\cc_X)_k - b^\FF_k \circ b^\FF_{k+1} \circ P_k \circ \overline{\varrho}^k_\FF \circ (\cc_X)_k \\ & - b^\FF_k \circ P_{k-1} \circ b^\FF_k \circ \overline{\varrho}^k_\FF \circ (\cc_X)_k \\ 
& = (\overline{\varrho}^{k-1}_\FF \circ (\cc_X)_{k-1}) \circ  d^\cell_k - b^\FF_k \circ P_{k-1} \circ (\overline{\varrho}^{k-1}_\FF \circ (\cc_X)_{k-1}) \circ d^\cell_k
\end{align*} 
Defining 
$$ \alpha := (h^\FF_{k-1})^{-1} \circ j^{-1}_\FF \circ b^\FF_k \circ P_{k-1} \circ (\overline{\varrho}^{k-1}_\FF \circ (\cc_X)_{k-1}) $$ 
we have that $$ \overline{\tau}(\overline{c}(\cc_X^*(\omega^k_\FF))) = (h^\FF_{k-1})^{-1} \circ j^{-1}_\FF \circ (\overline{\uu}_{k-1}\circ d^\cell_k) - \alpha \circ d^\cell_k $$ with the second term being a coboundary. Observe that the first term is not in general a coboundary. Indeed, by \eqref{imagejF}, the map $j_\FF^{-1}$ is defined on $(k-1)$-cycles, while $\overline{\uu}_{k-1}$ is a chain map that can take chains to arbitrary chains, so its composition with $j_\FF^{-1}$ is only well defined after precomposition with $d^\cell_k$. Thus,  $$ \tau_*(\cc_X^*(\omega^k_\FF)) =  \left[ \overline{\tau}(\overline{c}(\cc_X^*(\omega^k_\FF))) \right] = \left[ (h^\FF_{k-1})^{-1} \circ j^{-1}_\FF \circ (\overline{\uu}_{k-1}\circ d^\cell_k) \right]. $$ 
We claim that this class coincides with the obstruction class $[c^k(\uu_{k-1})]$. In order to see that, first let 
$$ j_X \colon H_{k-1}(\widetilde{X}_{k-1}) \to C^\cell_{k-1}(\widetilde{X}) $$ denote the inclusion into the cellular chains. Since $\widetilde{X}_{k-1}$ has no cells of dimension greater than or equal to $k$, we see that $j_X$ is injective with $\im j_X = \ker d^\cell_{k-1}$. Naturality of the Hurewicz homomorphisms with respect to the boundary operator in the long exact sequence of homotopy groups gives us the equality 
$$ j_X \circ h_{k-1} \circ \partial_k \circ \overline{h}^{-1}_k = d^\cell_k .$$  
Denote by 
$$ (\uu_{k-1})_* \colon H_{k-1}(\widetilde{X}_{k-1}) \to H_{k-1}((E_\FF G)_{k-1}) $$ the map induced at the homology level of the $(k-1)$-skeleta. Once again, by naturality of the Hurewicz homomorphisms, we obtain 
$$ j_\FF \circ (\uu_{k-1})_* = \overline{\uu}_{k-1} \circ j_X  \qquad \mbox{and} \qquad h^\FF_{k-1} \circ \pi_{k-1}(\uu_{k-1}) = (\uu_{k-1})_* \circ h_{k-1},$$ 
and so it follows that
\begin{align*}
j_\FF \circ h^\FF_{k-1} \circ c^k(\uu_{k-1}) & = j_\FF \circ h^\FF_{k-1} \circ \pi_{k-1}(\uu_{k-1}) \circ \partial_k \circ \overline{h}_k^{-1} \\
&= j_\FF \circ (\uu_{k-1})_* \circ h_{k-1} \circ \partial_k \circ \overline{h}_k^{-1} \\
& = \overline{\uu}_{k-1} \circ j_X \circ h_{k-1} \circ \partial_k \circ \overline{h}_k^{-1}\\
& = \overline{\uu}_{k-1} \circ d^\cell_k. 
\end{align*} 
Since $j_\FF$ is a monomorphism and $h^\FF_{k-1}$ is an isomorphism, this shows  that 
$$ \left[c^k(\uu_{k-1}) \right] = \tau^*(\cc_X^*(\omega^k_\FF)) . $$ 
Since by assumption $\cc^*_X(\omega^k_\FF) = 0$, the above obstruction class vanishes, and so by standard obstruction-theoretic techniques the cellular map $\uu_{k-1}$ can be extended after modification on the $(k-1)$-skeleton relative to the $(k-2)$-skeleton to a $G$-equivariant cellular map $\uu_k \colon \widetilde{X}_k \to (E_\FF G)_{k-1}$, see for example \cite{Bredon67}. Composing $\uu_k$ with the inclusion $\iota\colon (E_\FF G)_{k-1} \to E_\FF G$ yields a $G$-equivariant map $\iota \circ \uu_k\colon \widetilde{X} \to E_\FF G$, which is $G$-homotopic to $\uu$ by the universal property of $E_\FF G$. Since $\dim X = k$, we derive from \cite[Proposition 7.5]{CLM} that $\cat_\FF(X) \leq k-1$, which finishes the proof. \end{proof}		

\begin{corollary} \label{cor:CostaFarberCatF}
Assume $G$ is torsion-free with $\cd(G) \geq 3$ and let $X$ be an aspherical CW complex with $\pi_1(X) = G$ and $\dim X=\cd(G)$. Then $$\cat_\FF(X) = \cd(G) \qquad \Leftrightarrow \quad \omega_\FF^{\cd(G)} \neq 0 \in H^{\cd(G)}(G;\IF^{\cd(G)}).$$ 
\end{corollary}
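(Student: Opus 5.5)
The plan is to apply Theorem \ref{thm:CostaFarberCatF} directly to $X$ and then remove the classifying map $\cc_X$ using asphericity. Two things need checking: that $X$ satisfies the hypotheses of the theorem, and that the condition $\cc_X^*(\omega_\FF^{\dim X})\neq 0$ is equivalent to $\omega_\FF^{\cd(G)}\neq 0$ in group cohomology.

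First the hypotheses. By assumption $X$ is a connected CW complex with $\pi_1(X)=G$ and $\dim X=\cd(G)\geq 3$. The dimension is therefore finite and at least $3$, which is what Theorem \ref{thm:CostaFarberCatF} requires. Torsion-freeness of $G$ is consistent with $\cd(G)<\infty$; it plays no further role beyond making such a finite-dimensional model plausible. Applying the theorem with $k=\dim X=\cd(G)$ gives
$$\cat_\FF(X)=\cd(G) \quad\Leftrightarrow\quad \cc_X^*(\omega_\FF^{\cd(G)})\neq 0\in H^{\cd(G)}(X;\IF^{\cd(G)}).$$

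Next, since $X$ is aspherical, the classifying map $\cc_X\colon X\to BG$ of its universal cover is a homotopy equivalence. Concretely, $\widetilde{X}$ is contractible and free, so the lift $\widetilde{\cc}_X\colon \widetilde{X}\to EG$ is a $G$-homotopy equivalence. The induced map $(\cc_X)_*\colon C^\cell_*(\widetilde{X})\to \EE^G_*$ is then a chain homotopy equivalence of free $\ZG$-resolutions of $\ZZ$. Hence $\cc_X^*$ is an isomorphism on cohomology with arbitrary local coefficients, in particular with coefficients in $\IF^{\cd(G)}$, where $H^*(X;\cdot)$ is identified with $H^*(\Hom_{\ZG}(C^\cell_*(\widetilde X),\cdot))$. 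This is the same step as in Proposition \ref{prop:lower-bound-catF} b). It follows that $\cc_X^*(\omega_\FF^{\cd(G)})\neq 0$ if and only if $\omega_\FF^{\cd(G)}\neq 0$, and combining this with the displayed equivalence proves the corollary.

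The only point that needs care is the coefficient issue: the isomorphism must hold for the nontrivial $\ZG$-module $\IF^{\cd(G)}$, not only for trivial coefficients. The fundamental lemma of homological algebra, applied to the two projective resolutions of $\ZZ$, settles this directly, so I expect no serious obstacle.
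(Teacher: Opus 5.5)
Your proposal is correct and follows the paper's proof: apply Theorem \ref{thm:CostaFarberCatF} with $\dim X=\cd(G)\geq 3$, then use asphericity of $X$ to see that $\cc_X^*$ is an isomorphism in cohomology with the coefficients $\IF^{\cd(G)}$. Your chain-homotopy-equivalence argument for free resolutions is a valid expansion of the step the paper states in one line.
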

\begin{proof}
 Since $X$ is aspherical, the classifying map $\cc_X \colon X \to BG$ induces an isomorphism at cohomology level, hence $\cc^*_X(\omega^k_\FF) = \omega^k_\FF$. The claim then follows from Theorem \ref{thm:CostaFarberCatF}. 
\end{proof}

\subsection{Bounded cohomology and lower bounds for $\FF$-categories.}
	
 We are going to use Johnson classes and the bound from Proposition \ref{prop:lower-bound-catF} to establish lower bounds on $\cat_\FF(X)$ for aspherical $X$ given in terms of bounded cohomology. 

\begin{theorem}
	\label{TheoremBoundedLB}
	Let $\FF$ be a family of subgroups of $G$ and let $X$ be an aspherical CW complex with $\pi_1(X)=G$. Then 
	$$\cat_\FF(X) \geq \sup \Big\{k \in \NN \ \Big|\ \comp_{\II_\FF^k}^k(\gamma_{\FF,k})\neq 0\Big\}.$$
\end{theorem}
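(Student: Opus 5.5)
We reduce the statement to Proposition \ref{prop:lower-bound-catF}.b), which gives $\cat_\FF(X)\geq \sup\{k \mid \omega_\FF^k\neq 0\}$ with $\omega_\FF^k\in H^k(G;\IF^k)$ the integral relative Berstein class. It therefore suffices to show that for each $k\in\NN$ the implication
$$\comp^k_{\II^k_\FF}(\gamma_{\FF,k})\neq 0 \quad\Longrightarrow\quad \omega^k_\FF\neq 0$$
holds. We prove the contrapositive: assume $\omega_\FF^k=0$ and deduce that $\comp^k_{\II^k_\FF}(\gamma_{\FF,k})=0$.

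\emph{First step.} By Proposition \ref{PropClassRelations}.c) we have $\comp^k_{\II^k_\FF}(\gamma_{\FF,k})=(\vartheta_{\FF,k})_*(\omega^k_{\FF,\RR})$. By Lemma \ref{LemmaRealBerstein} we have $\omega^k_{\FF,\RR}=(\alpha^k_\FF)_*(\omega^k_\FF)$. Combining the two,
$$\comp^k_{\II^k_\FF}(\gamma_{\FF,k})=(\vartheta_{\FF,k}\circ\alpha^k_\FF)_*(\omega^k_\FF),$$
viewing $\vartheta_{\FF,k}\circ\alpha^k_\FF\colon \IF^k\to \II^k_\FF$ as a $\ZG$-homomorphism. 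If $\omega^k_\FF=0$, the right-hand side vanishes. So whenever the comparison image of $\gamma_{\FF,k}$ is nonzero, $\omega_\FF^k\neq0$. Taking suprema and applying Proposition \ref{prop:lower-bound-catF}.b) to the aspherical space $X$ then gives the claimed inequality.

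\emph{Main obstacle.} The only delicate point is that the classes live in different cohomology theories. $\omega^k_\FF$ is computed via the integral resolution $\ZG\otimes I_G^*$, whereas $\comp^k$ lands in $H^k(G;\II^k_\FF)$ computed via the realified resolution $\RG\otimes_\RR J^*_G$. We must check that the identification \eqref{EqRealAdamson} of these two cohomologies for the $\RG$-module $\II^k_\FF$ is natural in the coefficients, so that pushing forward along $\alpha^k_\FF$ and then $\vartheta_{\FF,k}$ is compatible with the isomorphism $\Psi^k_G$ from the proof of Lemma \ref{LemmaRealBerstein}.

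This follows because the isomorphisms $\psi_{k,A}$ and $F_k$ are visibly natural in $A$. Concretely, one checks on cocycles that $\Psi_G(j_k$-type representative$)$ equals $\vartheta_{\FF,k}\circ\alpha^k_\FF\circ g^\FF_k\circ\sigma_{H,k}$, which by Corollary \ref{Corphikrepr} represents $(\vartheta_{\FF,k}\circ\alpha^k_\FF)_*(\omega^k_\FF)$.
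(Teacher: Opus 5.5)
Your proposal is correct and follows the paper's proof: Proposition~\ref{PropClassRelations}.c) and Lemma~\ref{LemmaRealBerstein} write $\comp^k_{\II^k_\FF}(\gamma_{\FF,k})$ as a pushforward of $\omega^k_\FF$, so its non-vanishing forces $\omega^k_\FF\neq 0$, and Proposition~\ref{prop:lower-bound-catF}.b) then gives the bound. Your added check that the integral and realified identifications are natural in the coefficients is a detail the paper leaves implicit, and it does not change the route.
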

\begin{proof}
By Proposition \ref{PropClassRelations}.c), it holds that $\comp_{\II_\FF^k}^k(\gamma_{\FF,k})=(\vartheta_{\FF,k})_*(\omega_{\FF,\RR}^k)$.
	Thus, if $\comp_{\II_\FF^k}^k(\gamma_{\FF,k})\neq 0$, then it follows that $\omega^k_{\FF,\RR}\neq 0$, which yields  $\omega^k_\FF \neq 0$ by Lemma \ref{LemmaRealBerstein}. The claim then follows directly from Proposition \ref{prop:lower-bound-catF}.
\end{proof}

The following terminology was introduced in a topological context in \cite[Definition 5.1]{LoehSauer}. We give below an algebraically defined stronger version, which allows one to consider arbitrary dual Banach $G$-module coefficients instead of just trivial ones.

\begin{definition} \label{DefHbadmiss}
	A family $\FF$ of subgroups of $G$ is \emph{$H^*_b$-admissible} if the map $$\rho^k_{b,\FF}\colon H^k_b([G:\FF];V') \to H^k_b(G;V')$$ is surjective for each $k \in \NN$ and all normed $G$-modules $V$. 
\end{definition}

For all $H^*_b$-admissible families we can derive a lower bound on $\cat_\FF$ of an aspherical space in terms of Johnson classes from Theorem \ref{TheoremBoundedLB}, which is independent of the actual family. Note that the universality property of Adamson--Johnson classes from Theorem \ref{TheoremBoundedUniv}.a) plays a crucial role in its proof.

\begin{theorem}
	\label{TheoremcatFFlowergamma}
	Let $\FF$ be an $H^*_b$-admissible family of subgroups of $G$ and let $X$ be an aspherical CW complex with $\pi_1(X)=G$. Then 
	$$\cat_\FF(X) \geq \sup \{k \in\NN \mid \comp^k_{\II_G^k}(\gamma_k) \neq 0\}.$$
\end{theorem}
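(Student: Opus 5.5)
The plan is to reduce the statement to Theorem \ref{TheoremBoundedLB}. That result bounds $\cat_\FF(X)$ from below by the largest $k$ with $\comp^k_{\II^k_\FF}(\gamma_{\FF,k})\neq 0$. So it suffices to fix $k\in\NN$ with $\comp^k_{\II_G^k}(\gamma_k)\neq 0$ and show that then $\comp^k_{\II^k_\FF}(\gamma_{\FF,k})\neq 0$. Equivalently, by Proposition \ref{PropClassRelations}.c), we must show $\omega^k_{\FF,\RR}\neq 0$, which already yields $\cat_\FF(X)\geq k$ via Lemma \ref{LemmaRealBerstein} and Proposition \ref{prop:lower-bound-catF}.b).

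The key step uses admissibility together with universality. The coefficient module $\II^k_G=(J^k_G)''$ is a dual module, namely $V'$ with $V=(J^k_G)'$. By $H^*_b$-admissibility there is a class $u\in H^k_b([G:\FF];\II^k_G)$ with $\rho^k_{b,\FF}(u)=\gamma_k$. By Theorem \ref{TheoremBoundedUniv}.a) we can write $u=\psi_*(\Gamma_{\FF,k})$ for some $\psi\in B_G(\II^k_\FF,\II^k_G)$. Since $\rho^k_{b,\FF}$ is induced by precomposition with $\sigma_{H,*}$, it commutes with pushforwards along coefficient maps. Hence
$$\gamma_k=\rho^k_{b,\FF}(\psi_*(\Gamma_{\FF,k}))=\psi_*(\rho^k_{b,\FF}(\Gamma_{\FF,k}))=\psi_*(\gamma_{\FF,k}).$$
Applying the comparison map and using the left diagram of \eqref{EqCompPushfor}, we get
$$0\neq \comp^k_{\II^k_G}(\gamma_k)=\psi_*\big(\comp^k_{\II^k_\FF}(\gamma_{\FF,k})\big),$$
so $\comp^k_{\II^k_\FF}(\gamma_{\FF,k})\neq 0$. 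Theorem \ref{TheoremBoundedLB} then gives $\cat_\FF(X)\geq k$, and taking the supremum over all such $k$ finishes the proof.

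The only point needing care is the commutation $\rho^k_{b,\FF}\circ\psi_*=\psi_*\circ\rho^k_{b,\FF}$. At cochain level $\rho^k_{b,\FF}$ is $c\mapsto c\circ\sigma_{H,k}$ and $\psi_*$ is $c\mapsto\psi\circ c$, so the two operations visibly commute; this should be immediate. One should also check that $\psi$ is bounded and $G$-equivariant, so that $\psi_*$ is defined on bounded cohomology. Theorem \ref{TheoremBoundedUniv} provides exactly that. I do not expect any real obstacle beyond these checks.
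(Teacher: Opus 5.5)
Your proposal is correct and matches the paper's proof. Like the paper, you reduce to Theorem \ref{TheoremBoundedLB}, then use $H^*_b$-admissibility together with Theorem \ref{TheoremBoundedUniv}.a) to write $\gamma_k=\psi_*(\gamma_{\FF,k})$, and finish with naturality of comparison maps. The paper also proves the converse implication, which the inequality does not need, so you are right to omit it. One small wording fix: before $\omega^k_{\FF,\RR}\neq 0$, ``equivalently'' should read ``it suffices''.
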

\begin{proof}
	In the light of Theorem \ref{TheoremBoundedLB}, it suffices to show that if $\FF$ is $H^*_b$-admissible, then it holds for all $k \in \NN$ that
	$$\comp^k_{\II_\FF^k}(\gamma_{\FF,k}) \neq 0 \quad \Leftrightarrow \quad \comp_{\II_G^k}^k(\gamma_k) \neq 0.$$
	
	Fix $k \in \NN$ and assume first that $\comp^k_{\II_\FF^k}(\gamma_{\FF,k}) \neq 0$.  Since  $\gamma_{\FF,k} = (\psi_{\FF,k})_*(\gamma_k)$ by Proposition \ref{PropClassRelations}.a), it follows from the commutativity of the left-hand diagram of \eqref{EqCompPushfor} that
	$$0 \neq \comp^k_{\II_\FF^k}(\gamma_{\FF,k}) = \comp^k_{\II_\FF^k}((\psi_{\FF,k})_*(\gamma_k)) = (\psi_{\FF,k})_*(\comp^k_{\II^k_G}(\gamma_k)), $$
	which yields $\comp^k_{\II^k_G}(\gamma_k) \neq 0$. Conversely, assume that $\comp^k_{\II^k_G}(\gamma_k) \neq 0$. Since $\FF$ is $H^*_b$-admissible, there exists $u \in H^k_b([G:\FF];\II_G^k)$ with $\rho^k_{b,\FF}(u)=\gamma_k$. 
	By Theorem \ref{TheoremBoundedUniv}.a), there exists $F \in B_G(\II_\FF^k,\II_G^k)$ with $u= F_*(\Gamma_{\FF,k})$. Since $\rho^k_{b,\FF}\circ F_*=F_*\circ \rho^k_{b,\FF}$ , we compute that
	$$ \gamma_k=\rho^k_{b,\FF}(u) =\rho^k_{b,\FF}(F_*(\Gamma_{\FF,k})) = F_*(\rho^k_{b,\FF}(\Gamma_{\FF,k})) = F_*(\gamma_{\FF,k})$$
	and thus, by the commutativity of the left-hand diagram of \eqref{EqCompPushfor},
	$$0 \neq \comp_{\II^k_G}^k(\gamma_k) = \comp^k_{\II^k_G}(F_*(\gamma_{\FF,k}))= F_*(\comp_{\II^k_\FF}^k(\gamma_{\FF,k})), $$
	which yields $\comp_{\II^k_\FF}^k(\gamma_{\FF,k})\neq 0$ and thereby concludes the proof.
\end{proof}

Together with the $H^*_b$-admissibility condition, the rigidity imposed by the universality of both the Johnson  and the Adamson canonical classes yields that the non-vanishing of a comparison map for a dual coefficient module has direct consequences for the powers of the relative Berstein classes. 

\begin{corollary} \label{cor:dualmodulescontrolJohnsonadmissibility}
Let $\FF$ be an $H^*_b$-admissible family of subgroups of $G$, let $V$ be a normed $G$-module and let $u \in H^k_b(G;V')$ for some $k \in \NN$. If $\comp^k_{V'}(u) \neq 0$ then \begin{enumerate}[a)]
\item $ \comp^k_{\II^k_G}(\gamma_k) \neq 0 $.
\item $\omega^k_{\FF,\RR} \neq 0$ and $\omega^k_\FF \neq 0.$
\end{enumerate} 
\end{corollary}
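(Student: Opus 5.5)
Part a) follows from the universality of the Johnson class. By Theorem \ref{TheoremBoundedUniv}.b), applied to $u \in H^k_b(G;V')$, there exists $\eta \in B_G(\II^k_G,V')$ with $u=\eta_*(\gamma_k)$. The left-hand square of \eqref{EqCompPushfor} then gives
$$0\neq \comp^k_{V'}(u)=\comp^k_{V'}(\eta_*(\gamma_k))=\eta_*(\comp^k_{\II^k_G}(\gamma_k)).$$
Hence $\comp^k_{\II^k_G}(\gamma_k)\neq 0$. This step does not use admissibility at all.

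For b), we pass from $\gamma_k$ to the relative class. The proof of Theorem \ref{TheoremcatFFlowergamma} already shows, using $H^*_b$-admissibility and Theorem \ref{TheoremBoundedUniv}.a), that for an $H^*_b$-admissible family the two conditions $\comp^k_{\II^k_G}(\gamma_k)\neq 0$ and $\comp^k_{\II^k_\FF}(\gamma_{\FF,k})\neq 0$ are equivalent. I would quote this equivalence; if one prefers to be self-contained, it can be recalled briefly. Namely, lift $\gamma_k$ to some $w\in H^k_b([G:\FF];\II^k_G)$ via surjectivity of $\rho^k_{b,\FF}$, write $w=F_*(\Gamma_{\FF,k})$, and obtain $\gamma_k=F_*(\gamma_{\FF,k})$. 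Naturality of comparison maps then yields $\comp^k_{\II^k_\FF}(\gamma_{\FF,k})\neq 0$.

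Next, Proposition \ref{PropClassRelations}.c) gives $\comp^k_{\II^k_\FF}(\gamma_{\FF,k})=(\vartheta_{\FF,k})_*(\omega^k_{\FF,\RR})$, so $\omega^k_{\FF,\RR}\neq 0$. Finally, Lemma \ref{LemmaRealBerstein} gives $\omega^k_{\FF,\RR}=(\alpha^k_\FF)_*(\omega^k_\FF)$. Since a pushforward of the zero class is zero, $\omega^k_\FF\neq 0$ follows.

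The only mildly delicate point is the use of admissibility for the dual coefficient module $\II^k_G=(J^k_G)''$. One has to note that $\II^k_G$ is the dual of the normed $G$-module $(J^k_G)'$, so Definition \ref{DefHbadmiss} applies with $V=(J^k_G)'$. This is exactly the setting already handled in Theorem \ref{TheoremcatFFlowergamma}, so I expect no real obstacle.
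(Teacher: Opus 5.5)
Your proof is correct. Part a) is exactly the paper's argument. For b) you take a different route from the paper.

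\emph{Your route.} You deduce b) from a). You rerun the equivalence from the proof of Theorem \ref{TheoremcatFFlowergamma}, using admissibility at the dual module $\II^k_G=((J^k_G)')'$ and the bounded universality of $\Gamma_{\FF,k}$. You then apply Proposition \ref{PropClassRelations}.c) and Lemma \ref{LemmaRealBerstein}, which in effect reproduces the proof of Theorem \ref{TheoremBoundedLB}.

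\emph{The paper's route.} The paper applies admissibility directly to the given class $u$ at the coefficients $V'$. This gives $v\in H^k_b([G:\FF];V')$ with $\rho^k_{b,\FF}(v)=u$. It then writes $\comp^k_{\FF,V'}(v)=\psi_*(\phi^k_{\FF,\RR})$ using the \emph{unbounded} universality \eqref{PropUnivReal} of the real Adamson canonical class. Finally the square \eqref{EqDiagComp} gives $\comp^k_{V'}(u)=\psi_*(\omega^k_{\FF,\RR})$ for some $\RR[G]$-map $\psi\colon J^k_\FF\to V'$.

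\emph{What each buys.} The paper's argument for b) avoids bidual modules, Adamson--Johnson classes and Proposition \ref{PropClassRelations}, and does not depend on a). It also yields an explicit factorization of $\comp^k_{V'}(u)$ through $\omega^k_{\FF,\RR}$. Your argument shows that b) depends only on the single family-independent condition $\comp^k_{\II^k_G}(\gamma_k)\neq 0$, the same condition as in Theorem \ref{TheoremcatFFlowergamma}. The cost is a detour through $\II^k_G$, $\Gamma_{\FF,k}$ and the bidual extension Lemma \ref{LemmaExtendBidualNormed}.
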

\begin{proof}
\begin{enumerate}[(a)]
\item By Theorem \ref{TheoremBoundedUniv} there exists $T \in B_G(\II^k_{V'})$ satisfying $u = T_*(\gamma_k)$. If $\comp^k_{V'}(u) \neq 0$, the commutativity of the left-hand square of \eqref{EqCompPushfor} implies then that $\comp^k_{\II^k_G}(\gamma_k) \neq 0$.
\item  By the $H^*_b$-admissibility of the family $\FF$, there exists a class $v \in H^k_b([G:\FF];V')$ such that $\rho^k_{b,\FF}(v) = u$. The universality of the real Adamson canonical class from \eqref{PropUnivReal} gives a $\RG$-homomorphism $\psi \in \Hom_{\RG}(J^k_\FF,V')$ such that $$ \comp^k_{\FF,V'}(v) = \psi_*(\phi^k_{\FF,\RR}). $$ The commutativity of \eqref{EqDiagComp} and naturality with respect to change of coefficients give the equalities $$ 0 \neq \comp^k_{V'}(u) = \rho^k_\FF \circ \psi_*(\phi^k_{\FF,\RR}) = \psi_*(\omega^k_{\FF,\RR}) $$ hence $\omega^k_{\FF,\RR} \neq 0$. The non-vanishing of $\omega^k_\FF$ then follows from Lemma \ref{LemmaRealBerstein}.
\end{enumerate}	
\end{proof}

\begin{remark}
While the proofs of Theorems \ref{TheoremBoundedLB} and \ref{TheoremcatFFlowergamma} show that the lower bounds obtained therein cannot improve the lower bound from Proposition \ref{prop:lower-bound-catF}, the virtue of Theorem \ref{TheoremcatFFlowergamma} is that the lower bound by non-vanishing of $\comp_{\II^k_G}(\gamma_k)$ is a bound which is independent of the actual family $\FF$, as long as $\FF$ is $H^*_b$-admissible.
\end{remark}

We can use Proposition \ref{prop:lower-bound-catF} and Gromov's Mapping Theorem from \cite{Gro82} to recover a generalization of Gromov's Vanishing Theorem from amenable categories to $\FF$-categories for $H^*_b$-admissible families of subgroups $\FF$, which was first shown by C. L\"oh and R. Sauer in \cite[Theorem 5.3]{LoehSauer}. While L\"oh and Sauer use topological arguments and the amenable nerve theorem, we give a proof using the universality of Johnson classes and our previous results. 

Before we proceed, we recall the definition of bounded cohomology of topological spaces, see e.g. \cite[Chapter 5]{FrigerioBook} for a detailed exposition. Let $X$ be a path-connected topological space with universal cover $\widetilde{X}$, and let $C_k(\widetilde{X};\RR)$ denote the singular $k$-chains in $\widetilde{X}$ with real coefficients for each $k \in \NN_0$. Then $C_k(\widetilde{X};\RR)$ is a chain complex of normed $\pi_1(X)$-modules with respect to the $\ell^1$-norm. For any Banach $\pi_1(X)$-module $V$ and $k \in \NN_0$ we define $C^k_b(X;V) := B_{\pi_1(X)}(C_k(\widetilde{X},\RR),V)$. The bounded cohomology of $X$ with coefficients in $V$ is then defined as $$H^*_b(X;V) := H^*(C^*_b(X;V))$$ with respect to the singular codifferential. In analogy with the setting for discrete groups, the map induced by the inclusion $C^*_b(X;V) \hookrightarrow C^*(X;V)$ in cohomology is called \emph{comparison map} and denoted by 
$$ \comp^k_V \colon H^k_b(X;V) \longrightarrow H^k(X;V), \quad k \in \NN_0. $$
 Indeed, the bounded cohomology of a group can be reformulated as the bounded cohomology of any CW-complex model of its classifying space, see \cite[Chapter 5]{FrigerioBook}.

\begin{corollary} \label{TheoremGromovVanishing}
	Let $X$ be a connected CW complex with $G=\pi_1(X)$, let $\comp_{X}^k\colon H^k_b(X;\RR) \to H^k(X;\RR)$ be the (topological) comparison map for each $k \in \NN_0$ and let $\FF$ be an $H^*_b$-admissible family of subgroups of $G$. Then
	$$\cat_\FF(X) \geq \sup \{k \in \NN \mid \comp_X^k \neq 0\}.$$
\end{corollary}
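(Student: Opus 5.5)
Fix $k\in\NN$ with $\comp^k_X\neq 0$; we show $\cat_\FF(X)\geq k$ by proving $\cc_X^*(\omega_\FF^k)\neq 0$ and then invoking Proposition \ref{prop:lower-bound-catF}.a). Pick $u_X\in H^k_b(X;\RR)$ with $\comp^k_X(u_X)\neq 0$.

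\textbf{Step 1: transfer $u_X$ to the group.} The classifying map $\cc_X\colon X\to BG$ induces, by Gromov's Mapping Theorem, an isometric isomorphism $\cc_X^*\colon H^k_b(BG;\RR)\to H^k_b(X;\RR)$. With the identification $H^*_b(BG;\RR)\cong H^*_b(G;\RR)$, there is $u\in H^k_b(G;\RR)$ with $\cc_X^*(u)=u_X$. The comparison maps commute with $\cc_X^*$, since both are induced by inclusions of bounded cochains. Hence
$$\cc_X^*(\comp^k_\RR(u))=\comp^k_X(u_X)\neq 0,$$
where $\comp^k_\RR(u)\in H^k(G;\RR)\cong H^k(BG;\RR)$.

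\textbf{Step 2: realize $\comp^k_\RR(u)$ via $\omega_\FF^k$.} Since $\RR=\RR'$ is a dual module, we argue as in the proof of Corollary \ref{cor:dualmodulescontrolJohnsonadmissibility}.b). $H^*_b$-admissibility gives $v\in H^k_b([G:\FF];\RR)$ with $\rho^k_{b,\FF}(v)=u$. By the realified universality \eqref{PropUnivReal}, $\comp^k_{\FF,\RR}(v)=\psi_*(\phi^k_{\FF,\RR})$ for some $\psi\in\Hom_{\RG}(J^k_\FF,\RR)$. The commutativity of \eqref{EqDiagComp}, naturality of $\rho_\FF$ under coefficient changes, and $\omega^k_{\FF,\RR}=\rho^k_\FF(\phi^k_{\FF,\RR})$ (via Lemma \ref{NaturalityRelCup} in the real setting) then yield
$$\comp^k_\RR(u)=\psi_*(\omega^k_{\FF,\RR})=(\psi\circ\alpha^k_\FF)_*(\omega^k_\FF),$$
where the last equality uses Lemma \ref{LemmaRealBerstein}.

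\textbf{Step 3: conclude.} Since $\cc_X^*$ commutes with the coefficient pushforward $(\psi\circ\alpha_\FF^k)_*$,
$$0\neq \cc_X^*(\comp^k_\RR(u))=(\psi\circ\alpha^k_\FF)_*\big(\cc_X^*(\omega^k_\FF)\big),$$
so $\cc_X^*(\omega^k_\FF)\neq 0$. Proposition \ref{prop:lower-bound-catF}.a) now gives $\cat_\FF(X)\geq k$. Taking the supremum over all such $k$ proves the claim.

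\textbf{Main obstacle.} The delicate point is Step 1. One must apply the Mapping Theorem to the possibly non-aspherical $X$, which works because $\cc_X$ induces an isomorphism on $\pi_1$. One must also check that the topological and algebraic comparison maps agree under $H^*_b(BG)\cong H^*_b(G)$ and $H^*(BG)\cong H^*(G)$, using the resolutions of Corollary \ref{CorBoundedRes} and Lemma \ref{LemmaRealProjDRres}. We would verify this compatibility by a standard comparison of the relatively injective strong resolution $B(\RG\otimes_\RR J^*_G,\RR)$ with the bounded singular cochains of $EG$, which is a strong resolution, following \cite[Chapter 5]{FrigerioBook}.
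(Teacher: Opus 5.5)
Your proof is correct, but from Step 2 on it takes a different route from the paper's; Step 1 is the same. The paper does not lift $u$ along $\rho^k_{b,\FF}$ directly. Its steps are:
\begin{enumerate}
\item It writes $u=\psi_*(\gamma_k)$ using the bounded universality Theorem \ref{TheoremBoundedUniv}.b).
\item As in the proof of Theorem \ref{TheoremcatFFlowergamma}, it applies admissibility with coefficients in $\II^k_G$ together with Theorem \ref{TheoremBoundedUniv}.a). This produces $F\in B_G(\II^k_\FF,\II^k_G)$ with $F_*(\gamma_{\FF,k})=\gamma_k$.
\item Proposition \ref{PropClassRelations}.c) then turns $\comp^k_{\II^k_\FF}(\gamma_{\FF,k})$ into $(\vartheta_{\FF,k})_*(\omega^k_{\FF,\RR})$.
\end{enumerate}
You instead lift $u$ itself to $v\in H^k_b([G:\FF];\RR)$. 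You then apply the unbounded universality \eqref{PropUnivReal} of $\phi^k_{\FF,\RR}$ to $\comp^k_{\FF,\RR}(v)$, and push forward through \eqref{EqDiagComp}. This is exactly the argument of Corollary \ref{cor:dualmodulescontrolJohnsonadmissibility}.b) with $V'=\RR$, composed with $\cc_X^*$.

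What your route buys:
\begin{itemize}
\item It is shorter.
\item It never uses Johnson classes, bidual modules or the bounded universality theorem.
\item It needs only the trivial-coefficient case $V'=\RR$ of Definition \ref{DefHbadmiss}. The paper's route needs surjectivity of $\rho^k_{b,\FF}$ for the non-trivial dual module $\II^k_G$, so your argument works under a weaker admissibility hypothesis.
\end{itemize}
The paper's route is chosen deliberately, as announced before the statement, to show how the universality of the Johnson classes $\gamma_k$ and $\Gamma_{\FF,k}$ recovers the L\"oh--Sauer vanishing theorem.
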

\begin{proof}
	Let $\cc_X\colon X \to BG$ be a classifying map for the universal cover of $X$ and let 
	$$\cc'_X\colon H^*_b(G;\RR)\to H^*_b(X;\RR), \qquad \cc^*_X\colon H^*(G;\RR) \to H^*(X;\RR), $$
	denote the maps induced by $c_X$ in (bounded) cohomology and the usual identifications of the (bounded) cohomology of $BG$ with the (bounded) group cohomology of $G$. By Gromov's Mapping Theorem, $\cc'_X$ is an isomorphism of graded vector spaces.
	
	Let $k \in \NN$ with $\comp_X^k \neq 0$. Then $\comp_X^k \circ \cc_X' \neq 0$ and we derive from the evident commutativity of 
	$$\begin{CD}
		H^k_b(G;\RR) @>{\comp^k_\RR}>> H^k(G;\RR) \\
		@V{\cc_X'}V{\cong}V @V{\cc_X^*}VV \\
		H^k_b(X;\RR) @>{\comp^k_X}>> H^k(X;\RR)
	\end{CD} $$
	that $\cc_X^*\circ \comp^k_\RR\neq 0$. Let $u \in H^k_b(G;\RR)$ with $\cc_X^*(\comp^k_\RR(u))\neq 0$. By Theorem \ref{TheoremBoundedUniv}.b) there exists $\psi \in B_G(\II^k_G,\RR)$ with $u=\psi_*(\gamma_k)$. Since $\FF$ is $H^*_b$-admissible, we can argue as in the proof of Theorem \ref{TheoremcatFFlowergamma} to obtain $F \in B_G(\II^k_\FF,\II^k_G)$ with $F_*(\gamma_{\FF,k})=\gamma_k$ and with $\varphi := \psi \circ F$, we obtain that $u=\varphi_*(\gamma_{\FF,k})$. Hence, 
	\begin{align*}
		0 \neq \cc_X^*(\comp^k_\RR(u)) &= \cc_X^*(\comp^k_\RR(\varphi_*(\gamma_{\FF,k}))) \\
		&\stackrel{\eqref{EqCompPushfor}}{=} \cc_X^*(\varphi_*(\comp^k_{\II^k_\FF}(\gamma_{\FF,k}))) \\
		&\stackrel{\text{Prop. \ref{PropClassRelations}.c)}}{=} \cc_X^*(\varphi_*((\vartheta_{\FF,k})_*(\omega^k_{\FF,\RR})))\\
		&\stackrel{\text{Lemma } \ref{LemmaRealBerstein}}{=} \cc_X^*((\varphi \circ \vartheta_{\FF,k}\circ \alpha_k^{\FF})_*(\omega_\FF^k)) \\
		&= (\varphi \circ \vartheta_{\FF,k}\circ \alpha_k^{\FF})_*(\cc_X^*(\omega_\FF^k)).
	\end{align*}
	Hence, $\cc_X^*(\omega_\FF^k) \neq 0$ and we derive from Proposition \ref{prop:lower-bound-catF} that $\cat_\FF(X) \geq k$.
\end{proof}

\section{Consequences for families induced by isq-classes}

\subsection{Subgroup families induced by isq-classes} In this section we will focus on a more restricted version of $\FF$-categories, which includes as a distinguished case that of amenable category. First we recall the following construction, which builds on the notion of $\FF$-category from Definition \ref{DefFFcategory} and was originally given in this generality in \cite{GLGAH}.

\begin{definition} \label{DefcatGG}
	\begin{enumerate}[a)]
		\item A class of groups $\GG$ is called an \emph{isq-class} if it is closed under taking isomorphisms and under passing to subgroups and quotients of elements of $\GG$. 
		\item Let $X$ be a path-connected topological space and let $\GG$ be an isq-class of groups. Then $\GG(X):= \{ H \leq \pi_1(X) \mid H \in \GG\}$ is a family of subgroups of $\pi_1(X)$ and we put 
		$$ \cat_\GG(X) := \cat_{\GG(X)}(X)$$
		and call it the \emph{$\GG$-category of $X$}.
		\item Given an isq-class of groups $\GG$ we further put 
		$$\cat_{\GG}(G) := \cat_{\GG}(X_G),$$
		where $X_G$ is an aspherical CW complex with $\pi_1(X_G)=G$.
	\end{enumerate}	
\end{definition}

By \cite[Lemma 2.11]{CLM}, if  $f\colon X \to Y$ and $g\colon Y \to X$ are continuous maps between path-connected topological spaces for which $\pi_1(f)\colon \pi_1(X) \to \pi_1(Y)$ and $\pi_1(g)\colon \pi_1(Y) \to \pi_1(X)$ are isomorphisms, then $\cat_{\GG}(X) = \cat_{\GG}(Y)$ for any isq-class of groups $\GG$. This yields that the value of $\cat_\GG(G)$ in Definition \ref{DefcatGG}.c) is independent of the choice of $X_G$, hence well-defined. \medskip

Examples of such isq-classes are the class of abelian groups, the class of nilpotent groups or, most important for our purposes, the class of amenable groups. \medskip

Our first aim in this section is to derive an upper bound for $\cat_\GG(G)$ by connecting it to sectional categories of subgroup inclusions as studied in \cite{BCE22} and \cite{EFMO}. We refrain from recalling the notion of sectional category of a subgroup inclusion and refer the interested reader to \cite[Section 2]{EFMO} instead.

\begin{theorem}
\label{TheoremcatGupper}
	Let $\GG$ be an isq-class and let $H \leq G$ be a subgroup with $H \in \GG$. Then
	$$\cat_\GG(G) \leq \secat(\iota\colon H \hookrightarrow G).$$
\end{theorem}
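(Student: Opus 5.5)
The plan is to compare covers directly: a sectional-category cover for the subgroup inclusion will itself be a $\GG$-cover of the aspherical space. Take $X_G = BG$ to be a CW model. Choose a connected covering $p\colon \widehat{X} \to BG$ with $p_*(\pi_1(\widehat{X})) = H$ up to conjugacy, so that $\widehat{X} \simeq BH$. Following \cite[Section 2]{EFMO}, I will use that $\secat(\iota\colon H\hookrightarrow G)$ is the sectional category of $p$. So if $\secat(\iota) = n$, there is an open cover $U_0,\dots,U_n$ of $BG$ such that for each $j$ there is a continuous map $s_j\colon U_j \to \widehat{X}$ with $p\circ s_j \simeq i_{U_j}$. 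If one prefers the equivalent lifting description (an open $U$ admits a homotopy section if and only if its inclusion lifts to $\widehat{X}$), this also follows from covering space theory, since open subsets of CW complexes are locally path-connected.

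Fix $j$ and a point $u \in U_j$, and let $U$ be the path component of $U_j$ containing $u$. The key step is to show that $\im(\pi_1(i_{U_j})\colon \pi_1(U_j,u) \to \pi_1(BG,u))$ lies in a conjugate of $H$. Since $i_{U_j}\simeq p\circ s_j$, let $\gamma$ be the path in $BG$ traced by $u$ under the homotopy. Then
\[
\pi_1(i_{U_j}) = c_\gamma \circ \pi_1(p)\circ \pi_1(s_j),
\]
where $c_\gamma$ is the change-of-basepoint isomorphism. The image of $\pi_1(p)$ at any basepoint of $\widehat{X}$ is a conjugate of $H$, so the image of $\pi_1(i_{U_j})$ lies in a conjugate $gHg^{-1}$. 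This is the step needing the most care, because of the basepoint bookkeeping and because $U_j$ may be disconnected. Both issues are harmless here: Definition \ref{DefFFcategory} is pointwise in $u_j$, and it only asks for membership in a family that is closed under conjugation.

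Finally, the image is a subgroup of $gHg^{-1}\cong H\in\GG$. Since $\GG$ is an isq-class, it is closed under isomorphisms and subgroups, so the image lies in $\GG$ and hence in the family $\GG(BG)$. Therefore $U_0,\dots,U_n$ is an open cover of the kind required in Definition \ref{DefFFcategory} for $\FF=\GG(BG)$. This gives $\cat_\GG(G)=\cat_{\GG(BG)}(BG)\le n=\secat(\iota)$. If $\secat(\iota)=\infty$, there is nothing to prove.
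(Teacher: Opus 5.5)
Your proposal is correct and follows essentially the same argument as the paper. Both take an open cover realizing the sectional category of a map $BH \to BG$ inducing $\iota$, factor $\pi_1(i_{U_j})$ up to change of basepoint through $\pi_1(p)\circ\pi_1(s_j)$, and conclude using closure of $\GG$ under subgroups and isomorphisms and closure of the family under conjugation; the only cosmetic difference is that you model $BH\to BG$ by the covering map associated with $H$, whereas the paper works with an arbitrary based map $f\colon Y\to X$ of aspherical CW complexes with $\pi_1(f)=\iota$.
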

\begin{proof}
	 Let $X$ and $Y$ be aspherical CW complexes and let $x_0 \in X$ and $y_0 \in Y$ be fixed basepoints with $\pi_1(X,x_0)=G$ and $\pi_1(Y,y_0)=H$. Let $f \colon Y \to X$ be continuous with  $f(y_0)=x_0$ and $\pi_1(f,y_0)=\iota$, such that $\secat(\iota) = \secat(f)$. 
	
	Put $k := \secat(f)$ and let $\{U_0,U_1,\dots,U_k\}$ be an open cover of $X$, such that for each $j \in \{0,1,\dots,k\}$ there exists a continuous $s_j\colon U_j \to Y$ with $f \circ s_j \simeq i_{U_j}$, where $i_{U_j}\colon U_j \hookrightarrow X$ denotes the inclusion.  For each $j \in \{0,1,\dots,k\}$ we further let $u_j \in U_j$ be arbitrary. Let $\varphi_j\colon \pi_1(X,(f\circ s_j)(u_j)) \to \pi_1(X,u_j)$ be a change-of-basepoint isomorphism induced by an arbitrary path. Then $$\pi_1(i_{U_j},u_j) = \varphi_j \circ \pi_1(f,s_j(u_j))\circ  \pi_1(s_j,u_j),$$ such that $\im \pi_1(i_{U_j},u_j)$ is isomorphic to a subgroup of $\im \pi_1(f,s_j(u_j))=:H'.$
By basic algebraic topology, $H'$ and $\im \pi_1(f,y_0)=H$ are conjugate, such that $H' \in \GG(X)$. Since $\GG(X)$ is closed under taking subgroups and isomorphisms, this yields $\im \pi_1(i_{U_j},u_j) \in \GG(X)$. Hence, $\{U_0,U_1,\dots,U_k\}$ is an open cover with the properties occurring in the definition of $\cat_\GG(X)$, which shows that $\cat_\GG(X) \leq k$. 
\end{proof}

\begin{prop}
\label{PropcatGnormal}
Let $\GG$ be an isq-class and let $N \triangleleft G$ be a normal subgroup with $N \in \GG$. 
\begin{enumerate}[a)]
	\item Then
$$\cat_\GG(G) \leq \cd(G/N). $$
\item If $\cd(G/N)<\infty$, if $N$ is of type $FP$ and if $H^{\cd(N)}(N;\ZZ[N])$ is $\ZZ$-free, then
$$\cat_\GG(G) \leq \cd(G)-\cd(N).$$
\end{enumerate}
\end{prop}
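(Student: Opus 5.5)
Part a) goes through Theorem \ref{TheoremcatGupper} applied to the subgroup $N$. This gives $\cat_\GG(G)\le \secat(N\hookrightarrow G)$, so it remains to bound $\secat(N\hookrightarrow G)$ by $\cd(G/N)$.

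\begin{itemize}
\item Model the inclusion $N\hookrightarrow G$ by the covering $p\colon Y\to X$ of an aspherical $X=X_G$ corresponding to $N$. Then $Y$ is a $K(N,1)$.
\item Since $N$ is normal, $p$ is a regular covering with deck group $G/N$. Hence $p$ is a principal $G/N$-bundle, classified by a map $X\to B(G/N)$.
\item The sectional category of $p$ is the Schwarz genus of this principal bundle. It is bounded above by $\cat(B(G/N))$, by pulling back an LS-cover of $B(G/N)$: the universal bundle is trivial over categorical open sets.
\item By Eilenberg--Ganea and Stallings--Swan, $\cat(B(G/N))=\cd(G/N)$.
\end{itemize}

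This chain of inequalities yields a). If $\cd(G/N)=\infty$ there is nothing to prove. I would either cite this bound $\secat\le \cd(G/N)$ from \cite{EFMO}, or give the short argument above.

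For b), the plan is to combine a) with a subadditivity result for cohomological dimension under extensions. For an extension $1\to N\to G\to G/N\to 1$ in which $N$ is of type $FP$ and $H^{\cd(N)}(N;\ZZ[N])$ is $\ZZ$-free, the known result (Bieri; see also the results on $\secat$ of normal subgroup inclusions in \cite{EFMO}) gives
\[
\cd(G)=\cd(N)+\cd(G/N),
\]
provided $\cd(G/N)<\infty$. This is proved via the Lyndon--Hochschild--Serre spectral sequence: the freeness and $FP$ hypotheses make $H^{\cd(N)}(N;\ZZ G)$ behave like an induced module, so the corner term survives.

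Substituting $\cd(G/N)=\cd(G)-\cd(N)$ into a) then gives b). The main point to check is the precise form of the cited equality. If only the inequality $\cd(G)\ge \cd(N)+\cd(G/N)$ is available, that suffices, since we only need $\cd(G/N)\le \cd(G)-\cd(N)$. So I would invoke that direction from the literature, or, as a fallback, run the spectral sequence argument.
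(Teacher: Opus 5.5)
Your proposal is correct and follows the paper's proof. For a), the paper applies Theorem~\ref{TheoremcatGupper} to $N$ and cites the bound $\secat(N\hookrightarrow G)\le \cd(G/N)$ from \cite[Theorem 3.9]{EFMO}, which your argument (pulling back an LS cover of $B(G/N)$ to the principal $G/N$-covering) proves directly; for b), the paper combines a) with Bieri's equality $\cd(G)=\cd(N)+\cd(G/N)$ from \cite[Theorem 5.5]{BieriHomol}, exactly as you do. One terminological point: the inequality you actually need, $\cd(G)\ge \cd(N)+\cd(G/N)$, is the nontrivial direction supplied by Bieri's hypotheses, not a ``subadditivity'' result. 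The general Lyndon--Hochschild--Serre bound $\cd(G)\le\cd(N)+\cd(G/N)$ points the wrong way here.
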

\begin{proof}
\begin{enumerate}[a)]
	\item This follows immediately from combining Theorem \ref{TheoremcatGupper} with \cite[Theorem 3.9]{EFMO}.
	\item By \cite[Theorem 5.5]{BieriHomol}, it holds under the given assumption that $\cd(G/N)=\cd(G)-\cd(N)$, so the claim is a direct consequence of a).
 \end{enumerate}
\end{proof}

Note that Proposition \ref{PropcatGnormal} shows that, under good conditions on $N$, the presence of a normal subgroup of $G$ lying in $\GG$ implies that $\cat_\GG(G)$ lies below its general upper bound $\cd(G)$.

\subsection{The case of amenable category} We recall that, in our notation, a group $\Gamma$ is \emph{amenable} if it admits a \emph{left-invariant mean}, i.e. a functional $\mu_\Gamma \in (\RR[\Gamma])''$, such that
\begin{itemize}
	\item $\mu_\Gamma(f)\geq 0$ if $f(g) \geq 0$ for every $g \in \Gamma$, 
	\item $\mu_\Gamma(\varepsilon_\Gamma)=1$, 
	\item $\mu_\Gamma( g \cdot f)=\mu_\Gamma(f)$ for all $g \in \Gamma$ and $f \in (\RR[\Gamma])'$.
\end{itemize}

In the following, we will denote the class of amenable groups by $\Am$ and consider the family of subgroups $\Am(G):= \{H \leq G \mid H \text{ is amenable}\}$. For any path-connected topological space $X$, we call $\cat_{\Am}(X)$ the \emph{amenable category} of $X$, which was first introduced in \cite{GLGAH13}.

To obtain a lower bound on amenable category from our previous considerations, we want to show that $\Am(G)$ is $H^*_b$-admissible, which will follow from the next observation.

\begin{prop}
	\label{PropAmRes}
	Let $G$ be a group and let $V$ be a normed $G$-module. Then $(B(\RG\otimes_\RR J_G^*,V'),\partial^*)$ is a bounded $\Am(G)$-injective resolution of $V'$. 
\end{prop}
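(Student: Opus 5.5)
By Corollary \ref{CorBoundedRes}, $(B(\RG\otimes_\RR J_G^*,V'),\partial^*)$ is already a relatively injective strong resolution of $V'$: it is exact, each term is a Banach $G$-module, and it has a contraction by maps of norm at most one. We therefore only have to check two things with respect to $\FF=\Am(G)$. First, each term $C^k:=B(\RG\otimes_\RR J_G^k,V')$ has to be relatively $\Am(G)$-injective. Second, the complex has to be bounded $\Am(G)$-strong.

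Strongness is the easy part. Corollary \ref{CorBoundedRes} gives a contraction of norm $\le 1$ consisting of merely $\RR$-linear maps $\tau_k$. For each amenable $H\le G$ we want $H$-equivariant ones, and a cleaner route is to repeat the proof of Theorem \ref{TheoremSpecialInjResol} with the trivial family. Let $\mu_H$ be a left-invariant mean on $H$, viewed as an element of $(\RR[H])''$. We average the contraction over $H$ by setting
$$\tau_{H,k}(\xi)(x)(v):=\mu_H\big(h\mapsto (h\cdot\tau_k(h^{-1}\xi))(x)(v)\big)\qquad \forall x\in \RG\otimes_\RR J_G^{k-1},\ v\in V.$$
This formula makes sense because the coefficients are dual: $V'$ is a dual space, so it suffices to define $\tau_{H,k}(\xi)(x)$ by evaluating against each $v\in V$. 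The function of $h$ being averaged is bounded, since $\|\tau_k\|\le 1$ and the action is isometric. The averaged map has the following properties.
\begin{itemize}
\item It is linear and bounded, with norm at most $1$, because $\mu_H$ is positive and normalized.
\item It is $H$-equivariant, by left invariance of $\mu_H$.
\item It is still a contraction. The identity $\partial\tau+\tau\partial=\id$ holds pointwise for every conjugate $h\tau_k h^{-1}$, because $\partial$ is $G$-equivariant. Since $\mu_H$ is linear and normalized, it preserves this identity.
\end{itemize}
The same averaging applies in degree $0$ and in the augmentation. The care needed is in the rescaled norms $2^k\|\cdot\|_{\op}$, but averaging does not increase norms, so the bound $\le 1$ survives.

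The second step is relative $\Am(G)$-injectivity of $C^k$, and this is the main obstacle. Let $i\in B_G(M,N)$ be strongly $\Am(G)$-injective and let $f\in B_G(M,C^k)$. In the proof of Proposition \ref{PropCFVinjective} the extension used $r_{gH}$ on each coset $gH$. Here we only have a left inverse for $H=\{1\}$, and a $G$-equivariant extension of norm $\le\|f\|$ must be produced from it. We use the identification $C^k\cong B(\RG,B(J_G^k,V'))$, which holds by the tensor–hom adjunction for the projective norm, and then define
$$F(q)(g\otimes y):=f(g\,r_{\{1\}}(g^{-1}q))(g\otimes y).$$
This is exactly the formula of Proposition \ref{PropCFVinjective} with the trivial subgroup. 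The estimates there go through verbatim, since they only use $\|r\|\le 1$ and the $\ell^1$-structure of $\RG$. Hence $C^k$ is in fact relatively injective in the absolute sense. Because the trivial subgroup lies in $\Am(G)$, every strongly $\Am(G)$-injective map is strongly injective, so absolute relative injectivity implies relative $\Am(G)$-injectivity.

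Putting the two steps together shows that $(B(\RG\otimes_\RR J_G^*,V'),\partial^*)$ is a bounded $\Am(G)$-injective resolution of $V'$. The point to double-check is the equivariance and contraction identity under averaging. In particular, $h\mapsto \tau_k(h^{-1}\xi)(\cdot)(v)$ must be regarded as an element of $\ell^\infty(H)=(\RR[H])'$, so that $\mu_H$ can be applied to it. Dualness of the coefficients is exactly what makes this possible.
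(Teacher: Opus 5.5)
Your proposal is correct and follows essentially the same route as the paper. Relative $\Am(G)$-injectivity of the terms is inherited from their absolute relative injectivity (Corollary \ref{CorBoundedRes}) because $\{1\}\in\Am(G)$, and bounded $\Am(G)$-strongness comes from averaging the conjugated norm-one contractions $h\tau_k(h^{-1}\,\cdot\,)$ against a left-invariant mean on each amenable $H$, evaluated pointwise on $V$, which works because the coefficients are dual. The only difference is emphasis: your re-derivation of injectivity via the formula of Proposition \ref{PropCFVinjective} is already contained in Corollary \ref{CorBoundedRes}, so the real work lies in the averaging step rather than in injectivity.
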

\begin{proof}
	Put $C^k:=B(\RG\otimes_\RR J_G^k,V')$ for each $k \in \NN_0$. It follows immediately from Corollary \ref{CorBoundedRes} that $C^k$ is relatively $\Am(G)$-injective for each $k \in \NN_0$. Thus, it only remains to show that for each $H \in \Am(G)$ there exists a contraction $T^H_* \in B_H(C^*,C^{*-1})$ of $(C^*,\partial^*)$. One checks from the constructions of the proof of Theorem \ref{TheoremSpecialInjResol} that the maps
	\begin{align*}
		&\tau_k\colon C^k \to C^{k-1}, \quad  \tau_k(f)(x \otimes y)= f(1 \otimes (x-\varepsilon_G(x)\cdot 1)\otimes y)\ \  \forall   x \in \RG, \ y \in J^{k-1}_G \ k \in \NN, \\
		&\tau_0\colon C^0 \to V', \quad \tau_0(f)= f(1),
	\end{align*}
	form a contraction of $(B(\RG\otimes_\RR J_G^*,V'),\partial^*)$ with $\|\tau_k\|_\op \leq 1$ for all $k \in \NN_0$. \medskip
	
	Let $H \in \Am(G)$ and let $\mu_H \in (\RR[H])''$ be a left-invariant mean. For each $k \in \NN$ we define
	\begin{align*}
		&T^H_k\colon C^k \to C^{k-1}, \quad ((T^H_k(f))(u))(v) = \mu_H(\beta_{f,u,v}) \qquad \forall u \in \RG \otimes_{\RR} J_G^{k-1}, \ 	v \in V,\\
		&\text{where  } \quad \beta_{f,u,v}\in (\RR[H])', \quad \beta_{f,u,v}(h_0):= (((h_0 \cdot \tau_k)(f))(u))(v).
	\end{align*}
	A careful computation shows for all $h_0 \in H$, $x \in \RG$, $y \in J^{k-1}_G$ and $v \in V$ that 
	\begin{align}
		\beta_{f,x\otimes y,v}(h_0)&= (h_0 \cdot \tau_k(h_0^{-1}\cdot f))(x\otimes y)(v) \notag \\
		&= \tau_k(h_0^{-1} \cdot f)(h_0^{-1}x \otimes h_0^{-1}y)(h_0^{-1}v) \notag \\
		&= (h_0^{-1} \cdot f)(1 \otimes (h_0^{-1}x - \varepsilon(h_0^{-1}x)\cdot 1) \otimes h_0^{-1}y)(h_0^{-1}v) \notag \\
		\Rightarrow \quad 
		\beta_{f,x\otimes y,v}(h_0) &= f(h_0 \otimes (x- \varepsilon(x)\cdot h_0) \otimes y)(v) \label{EqBetaExpl}
	\end{align}
	and thus 
	\begin{align*}
		\beta_{h \cdot f,x\otimes y,v} (h_0)
		&= f(h^{-1}h_0 \otimes (h^{-1}x-\varepsilon(x)\cdot h^{-1}h_0)\otimes h^{-1}y)(h^{-1}v) = \beta_{f,h^{-1}x\otimes h^{-1}y,h^{-1}v}(h^{-1}h_0), 
	\end{align*}
	which yields $\beta_{h\cdot f,x \otimes y,v}= h \cdot\beta_{f,h^{-1}x\otimes h^{-1}y,h^{-1}v}$. We use this identity to derive from the $H$-invariance of $\mu_H$ that 
	\begin{align*}	
		T^H_k(h \cdot f)(x \otimes y)(v) = \mu_H(\beta_{h \cdot f,x\otimes y,v})= \mu_H(\beta_{f,h^{-1}x\otimes h^{-1}y,h^{-1}v}) 
		= (h\cdot T^H_k(f))(x\otimes y)(v)
	\end{align*}
	for all $h \in H$, $x \in \RG$, $y \in J^{k-1}_G$, $v\in V$. Thus, $T^H_k$ is an $\RR[H]$-homomorphism. Moreover, one checks from the boundedness of $\tau_k$ and $\mu_H$  that $T^H_k \in B_H(C^k,C^{k-1})$ for each $k \in \NN$. We further put $T_0^H\colon C^0 \to V'$, $(T_0^H(f))(v)= \mu_H(\alpha_{f,v})$ where $\alpha_{f,v} \in (\RR[H])'$, $\alpha_{f,v}(h_0)= f(h_0)(v)$.	A computation similar to the above shows that 	$\alpha_{h \cdot f,v}= h \cdot \alpha_{f,h^{-1}v}$ for all $f \in C^0$, $h \in H$ and $v \in V$ and one derives from this observation that $T_0^H \in B_H(C^0,V')$. Moreover, it follows from $\|\tau_k\|_\op \leq 1$ and $\|\mu_H\|_\op =1$ that $\|T^H_k\|_{\op} \leq 1$ as well. Here, $\|\mu_H\|_\op =1$ is an elementary property of invariant means on amenable groups.
	
	We want to show that the maps $T^H_*\colon C^* \to C^{*-1}$ indeed form a contraction. For all $k \in \NN$, $f \in C^k$, $x \in \RG$, $y_1 \in J_G$, $y_2 \in J_G^{k-1}$ and $v \in V$ we compute that	\begin{align*}
		(\partial^{k-1} \circ T^H_k)(f)(x \otimes y_1 \otimes y_2)(v)  
		&= T^H_k(f)(d_k(x \otimes y_1\otimes y_2))(v) 
		= \varepsilon_G(x) \cdot \mu_H(\beta_{f,\iota(y_1)\otimes y_2,v}).
	\end{align*}
	Using \eqref{EqBetaExpl}, we obtain for each $h_0 \in H$ that
	\begin{align*}
		\beta_{f,\iota(y_1)\otimes y_2,v}(h_0) &= f(h_0 \otimes (\iota_G(y_1)-\varepsilon_G(\iota_G(y_1))\cdot h_0) \otimes y_2)(v)= f(h_0\otimes y_1\otimes y_2)(v) =: \varphi_0(h_0),
	\end{align*}
	so that
	\begin{equation}
		\label{EqTHkvarphi}
		(\partial^{k-1} \circ T^H_k)(f)(x \otimes y_1 \otimes y_2)(v)= \varepsilon_G(x) \cdot \mu_H(\varphi_0).
	\end{equation}
	We further compute that
	\begin{align*}
		(T^H_{k+1}\circ \partial^k)(f)(x \otimes y_1\otimes y_2)(v) 
		&= T^H_{k+1}(f \circ d_{k+1})(x \otimes y_1 \otimes y_2)(v) =\mu_H(\beta_{f \circ d_{k+1},x\otimes y_1\otimes y_2,v})
	\end{align*}
	and derive from \eqref{EqBetaExpl} for each $h_0 \in H$ that 
	\begin{align*}
		\beta_{f \circ d_{k+1},x\otimes y_1\otimes y_2,v}(h_0)
		&= \varepsilon_G(h_0) \cdot f((x-\varepsilon_G(x)\cdot h_0)\otimes y_1 \otimes y_2)(v) \\
		&=f(x \otimes y_1 \otimes y_2)(v) - \varepsilon_G(x) \cdot \varphi_0(h_0) .
	\end{align*}
	Since the first summand is independent of $h_0$, we derive that
	$$\beta_{f \circ d_{k+1},x\otimes y_1\otimes y_2,v} = f(x \otimes y_1 \otimes y_2)(v) \cdot \varepsilon_H - \varepsilon_G(x) \cdot \varphi_0$$
	and thus 
	\begin{align*}
		(T^H_{k+1}\circ \partial^k)(f)(x \otimes y_1\otimes y_2)(v)  &= \mu_H\left(f(x \otimes y_1 \otimes y_2)(v) \cdot \varepsilon_H - \varepsilon_G(x) \cdot \varphi_0\right) \\
		&\stackrel{\eqref{EqTHkvarphi}}{=} f(x\otimes y_1 \otimes y_2)(v) - (\partial^{k-1} \circ T^H_k)(f)(x \otimes y_1 \otimes y_2)(v).
	\end{align*}
	Since $x$, $y_1$, $y_2$ and $v$ were chosen arbitrarily, this shows that $T^H_{k+1}\circ \partial^k+\partial^{k-1}\circ T^H_k=\id_{C^k}$ for each $k \in \NN$.
	In the case of $k=0$, we compute for all $x \in \RG$ and $v \in V$  that
	\begin{align*}
		((\varepsilon'_G \circ T^H_0)(f))(x)(v) &=\varepsilon_G(x)\cdot T^H_0(f)(v)= \varepsilon_G(x)\cdot \mu_H(\alpha_{f,v}), \\
		(T_1^H\circ \partial^0)(f)(x)(v)&= T_1^H(f \circ d_1)(x)(v)= \mu_H(\beta_{f\circ d_1,x,v}).
	\end{align*}
	In analogy with the above computation, we obtain  that $\beta_{f \circ d_1,x,v}=f(x)(v)-\varepsilon_G(x) \cdot \alpha_{f,v}$, hence
	\begin{align*}
		(T_1^H\circ \partial^0)(f)(x)(v) &= \mu_H(f(x)(v) \cdot \varepsilon_H) - \varepsilon_G(x) \cdot \mu_H(\alpha_{f,v}) \\
		&= f(x)(v)-	((\varepsilon'_G \circ T^H_0)(f))(x)(v).
	\end{align*}
	This yields $T_1^H\circ \partial^0+ \varepsilon'_G \circ T^H_0=\id_{C^0}$. Finally, we compute for all $f \in V'$ and $v \in V$ that 
	$(T^H_0 \circ \varepsilon'_G)(f)(v) = \mu_H(\alpha_{\varepsilon'_G(f),v}).$
	One checks that $\alpha_{\varepsilon'_G(f),v}=f(v)\cdot \varepsilon_H$,	from which we derive that $T^H_0 \circ \varepsilon'_G=\id_{V'}$.
	Thus, $T^H_*$ is the desired contraction and since $H \in \Am(G)$ was arbitrary, this shows the claim.
\end{proof}

The following statement is the counterpart of one implication of \cite[Theorem 1.5]{LiBounded} for bounded Adamson cohomology. The proof given below is independent of the arguments of \cite{LiBounded}.  

\begin{prop}
	\label{PropAmAdmiss}
	For all normed $G$-modules $V$ and all $k \in \NN$, the map
	$$\rho^k_{b,\Am}\colon H^k_b([G:\Am];V') \to H^k_b(G;V')$$
	is an isomorphism. 
\end{prop}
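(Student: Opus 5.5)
The approach is to realize $\rho^k_{b,\Am}$ as a comparison map between two bounded $\Am(G)$-injective resolutions of $V'$ and then invoke the uniqueness lemma for such resolutions. By Proposition \ref{PropAmRes}, $(B(\RG\otimes_\RR J_G^*,V'),\partial^*)$ is a bounded $\Am(G)$-injective resolution of $V'$. By Theorem \ref{TheoremSpecialInjResol} with $\FF=\Am(G)$, the complex $(B(\RR[G/\Am]\otimes_\RR J_{\Am}^*,V'),\partial^*_{\Am})$ is also a bounded $\Am(G)$-injective resolution of $V'$. Taking $G$-invariants, the first computes $H^*_b(G;V')$ (via Corollary \ref{CorBoundedRes}) and the second computes $H^*_b([G:\Am];V')$.

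The main step is to show that the cochain map $\sigma'_{H,*}$ defining $\rho^*_{b,\Am}$, for any fixed $H\in\Am(G)$ (say $H=\{1\}$), is a morphism of augmented resolutions over $\id_{V'}$. First I will check that $\sigma'_{H,*}$ is a $G$-equivariant bounded cochain map between the non-invariant complexes $B(\cdots,V')$. This holds because $\sigma_{H,*}$ is a bounded $\RG$-chain map with $\|\sigma_{H,k}\|_\op\le 1$ by Lemma \ref{LemmasigmaHB}. The rescaled norms $2^k\|\cdot\|_\op$ are the same on both sides, so boundedness is preserved. Next I will check compatibility with the augmentations: $\varepsilon_\FF\circ\sigma_{H,0}=\varepsilon_G$, hence $\sigma'_{H,0}\circ\varepsilon'_{\Am}=\varepsilon'_G$. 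Thus $\sigma'_{H,*}$ extends $\id_{V'}$.

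Then I apply the bounded fundamental lemma (\cite[Proposition 3.9]{LiBounded}, as in the standard lemma preceding the definition of bounded Adamson cohomology). Between two bounded $\Am(G)$-injective resolutions of $V'$, any two $G$-cochain maps extending $\id_{V'}$ are $G$-chain homotopic. Moreover, such maps exist in both directions, since both targets are relatively $\Am(G)$-injective and both sources are bounded $\Am(G)$-strong. Hence there is a $G$-cochain map $\kappa_*$ in the reverse direction extending $\id_{V'}$. The composites $\kappa_*\circ\sigma'_{H,*}$ and $\sigma'_{H,*}\circ\kappa_*$ extend $\id_{V'}$ on a single resolution, so they are $G$-homotopic to the identity. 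Restricting to $G$-invariants preserves $G$-homotopies, so $\sigma'_{H,*}$ induces an isomorphism on cohomology of invariants, which is precisely $\rho^k_{b,\Am}$.

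The main obstacle is making sure the fundamental lemma applies in the needed form. In particular, the uniqueness of extensions up to homotopy requires the source to be bounded $\Am(G)$-strong. For the source $B(\RG\otimes_\RR J^*_G,V')$ this is exactly what Proposition \ref{PropAmRes} supplies using invariant means, and it is the only place where amenability enters. The other resolution is strong by Theorem \ref{TheoremSpecialInjResol}. I will write out the inductive construction of the homotopies, using relative $\Am(G)$-injectivity of the targets together with the strongly $\Am(G)$-injective inclusions furnished by the contractions $T^H_*$ and $\tau_{H,*}$. This mirrors the unbounded argument, with norm control as in \cite{LiBounded}.
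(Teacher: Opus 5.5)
Your proposal is correct and is the paper's argument: the paper likewise observes that $\rho^k_{b,\Am}$ is induced by $\sigma'_{\{1\},*}$. By Proposition \ref{PropAmRes} and Theorem \ref{TheoremSpecialInjResol}, this is a map between two bounded $\Am(G)$-injective resolutions of $V'$ lifting $\id_{V'}$, and the paper concludes via \cite[Proposition 3.9]{LiBounded}. You make explicit the augmentation check and the homotopy-inverse argument that the paper calls ``standard'', and you correctly locate amenability solely in the bounded $\Am(G)$-strongness of $B(\RG\otimes_\RR J^*_G,V')$.
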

\begin{proof}
	By definition, the chain map underlying $\rho^k_{b,\Am}$ is formed by the maps 
	$$\sigma_{\{1\},k}'\colon B(\RR[G/\Am]\otimes_\RR J^k_{\Am},V') \to B(\RG\otimes_\RR J^k_G,V'), $$
	where $\sigma_{\{1\},k}\colon \RG \otimes_{\RR} J_G^k \hookrightarrow \RR[G/\Am]\otimes_{\RR} J^k_{\Am}$ is given as in \eqref{EqsigmaR}. By Proposition \ref{PropAmRes}, $\sigma_{\{1\},k}'$ is a map between two bounded $\Am$-injective resolutions and one easily checks that it is a lift of $\id_{V'}\colon V' \to V'$. Thus, by standard arguments from homological algebra, making use of \cite[Proposition 3.9]{LiBounded}, it induces an isomorphism in cohomology. 
\end{proof}

An immediate consequence of Proposition \ref{PropAmAdmiss} is that $\Am$ is an $H^*_b$-admissible family of subgroups of $G$.

\begin{corollary} \label{cor:johnsonlowerbound}
Let $V$ be a normed $G$-module. Then 
$$\catAm(G) \geq \sup \left\{k \in\NN \ \middle|\ \comp^k_{\II_G^k}(\gamma_k) \neq 0\right\}\geq \sup \left\{k \in \NN\  \middle| \  \comp^k_{V'} \neq 0\right\}.$$
\end{corollary}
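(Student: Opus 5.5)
The statement is Corollary \ref{cor:johnsonlowerbound}, and the plan is to prove the two inequalities separately. Here $\catAm(G)$ means $\cat_{\Am}(X_G)$ for an aspherical CW complex $X_G$ with $\pi_1(X_G)=G$, and the relevant family is $\Am(G)$.

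For the first inequality, the family $\Am(G)$ is $H^*_b$-admissible. This holds by Definition \ref{DefHbadmiss}, because Proposition \ref{PropAmAdmiss} shows that $\rho^k_{b,\Am}\colon H^k_b([G:\Am];V')\to H^k_b(G;V')$ is an isomorphism, hence surjective, for every normed $G$-module $V$ and every $k\in\NN$. Applying Theorem \ref{TheoremcatFFlowergamma} to $\FF=\Am(G)$ and $X=X_G$ then gives
$$\catAm(G)=\cat_{\Am(G)}(X_G)\geq \sup\{k\in\NN \mid \comp^k_{\II^k_G}(\gamma_k)\neq 0\}.$$

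For the second inequality, it suffices to show that if $\comp^k_{V'}\neq 0$, then $\comp^k_{\II^k_G}(\gamma_k)\neq 0$. Take any $u\in H^k_b(G;V')$ with $\comp^k_{V'}(u)\neq 0$. By Theorem \ref{TheoremBoundedUniv}.b) there is some $\eta\in B_G(\II^k_G,V')$ with $u=\eta_*(\gamma_k)$. The left-hand square of \eqref{EqCompPushfor} then gives
$$0\neq\comp^k_{V'}(u)=\eta_*\big(\comp^k_{\II^k_G}(\gamma_k)\big),$$
so $\comp^k_{\II^k_G}(\gamma_k)\neq 0$. Equivalently, this is the contrapositive of Theorem \ref{TheoUnivComp}.b), and it also follows from Corollary \ref{cor:dualmodulescontrolJohnsonadmissibility}.a). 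Since every $k$ in the right-hand set then lies in the middle set, the suprema compare as claimed.

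No step here is a real obstacle; all the work sits in the earlier results. The only point to check is that the coefficients are dual modules, so that Theorem \ref{TheoremBoundedUniv} and Proposition \ref{PropAmAdmiss} apply. This holds for $V'$ by assumption, and for $\II^k_G=(J^k_G)''$ because it is the dual of $(J^k_G)'$.
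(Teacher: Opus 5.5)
Your proposal is correct and matches the paper's own (one-line) proof. You obtain the first inequality from the $H^*_b$-admissibility of $\Am(G)$ (Proposition \ref{PropAmAdmiss}) combined with Theorem \ref{TheoremcatFFlowergamma}, and the second as the contrapositive of Theorem \ref{TheoUnivComp}.b), which you correctly unpack via the universality of $\gamma_k$ and the naturality square \eqref{EqCompPushfor}.
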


\begin{proof}
	This follows from Theorem \ref{TheoUnivComp}, Theorem \ref{TheoremcatFFlowergamma} and Proposition \ref{PropAmAdmiss}.
\end{proof}


The second inequality of Corollary \ref{cor:johnsonlowerbound} can indeed be strict, as the following example shows for $V=\RR$.

\begin{example}
\label{ExampleCircleBundle}
	Let $p \colon E \to \Sigma_g$ be an oriented circle bundle over a closed connected oriented surface of genus $g \geq 2$ with non-vanishing Euler class. Put $G := \pi_1(E)$ and $Q := \pi_1(\Sigma_g)$, and let $q:=\pi_1(p) \colon G \to Q$. Then $q$ gives a central group extension of the form 
	\begin{equation} \label{Eqcentralgroup}
	1 \to \ZZ \to G \xrightarrow{q} Q \to 1. 
	\end{equation}
	Given a normed $Q$-module $V$, the map $q^*_b\colon H^*_b(Q;V') \to H^*_b(G;q^*V')$ induced by $q$ is an isomorphism by \cite[Theorem 4.24]{FrigerioBook}. The naturality of comparison maps further yields that 
\begin{equation}
\label{naturalitycomp}
	\begin{tikzcd}
H^k_b(Q;V') \arrow[r, "\comp^k_{V'}"] \arrow[d, "q^*_b"', "\cong"]
  & H^k(Q;V') \arrow[d, "q^*"'] \\
H^k_b(G;q^*V') \arrow[r, "\comp^k_{q^*V'}"]
  & H^k(G;q^*V')
\end{tikzcd}
\end{equation}
	commutes for each $k \in \NN$.
	Let $e(p)\in H^2(\Sigma_g;\RR)$ be the real Euler class of $p$, which is non-zero as well.  
	For every $\RR[Q]$-module $A$, the cohomological five-term exact sequence associated with \eqref{Eqcentralgroup} contains the exact sequence 
	\begin{equation}
	\label{fiveterm} 
	H^1(\ZZ;A)^Q \cong A^Q \xrightarrow{e(p) \cup -} H^2(Q;A) \xrightarrow{q^*} H^2(G;q^*A), 
	\end{equation} 
	(see for example \cite[Theorem 4]{HochSerre}). For $A = \RR$ the first homomorphism in \eqref{fiveterm} is an epimorphism by Poincar\'e duality, hence $q^* \colon H^2(Q;\RR) \to H^2(G;\RR)$ is trivial. Since \eqref{naturalitycomp} yields  $\comp^k_\RR  = q^* \circ \comp^k_\RR\circ (q^*_b)^{-1}$ for each $k \in \NN$, this shows that $\comp^2_\RR=0$.
	Since $Q$ is a surface group, $H^k(Q;\RR) = 0$ for any $k \geq 3$, so \eqref{naturalitycomp} further yields  $$\comp^k_\RR = 0 \qquad \forall k \geq 2.$$ 
	However, the situation changes if we choose a different coefficient module. Let $\ell^1(Q)$ be the completion of $\RR[Q]$ with respect to $\|\cdot \|_1$, i.e. the space of formal sums $\sum_{g \in Q} r_g \cdot g$, where $r_g \in \RR$ for each $g \in Q$ and $\sum_{g \in Q} |r_g| < +\infty$, seen as a Banach $Q$-module.	Recall that $\ell^1(Q)$ is isometrically isomorphic to $(c_0(Q))'$, see \cite[Section 2.2]{FHHMZ}, and one checks without difficulties that the isomorphism is $Q$-invariant with respect to the standard actions, so $\ell^1(Q)$ is a dual $Q$-module. 
	
	
	We observe that a $Q$-invariant element $\sum_{g \in Q} r_g g$ of $\ell^1(Q)$ would satisfy that $r_g=r_h$ for all $g,h \in Q$. Since $Q$ is infinite, this is only satisfied by the zero element, so $(\ell^1(Q))^Q=\{0\}$. On the other hand, the coinvariant module $(\ell^1(Q))_Q$ is non-trivial:

	Let $\epsilon\colon \ell^1(Q) \to \RR$ be the continuous extension of the augmentation. Evidently, $\|\epsilon\|_\op \leq 1$ and one easily checks that $\epsilon \in B_G(\ell^1(Q),\RR)$, so it induces $\overline{\epsilon} \in ((\ell^1(Q))_Q)'$. Consider the unit element $1 \in \ell^1(Q)$. Since $\epsilon(1)=1$, it follows that $\overline{\epsilon}([1])=1$, so $[1]\neq 0$ and thus $(\ell^1(Q))_Q\neq \{0\}$.
Since $Q$ is a Poincar\'e duality group of dimension two, there is an isomorphism 
$$ H^2(Q;\ell^1(Q)) \cong H_0(Q;\ell^1(Q)) = (\ell^1(Q))_Q, $$ 
hence $H^2(Q;\ell^1(Q)) \neq \{0\}$. Since $Q$ is hyperbolic, $\comp_{\ell^1(Q)}^2\colon H^2_b(Q;\ell^1(Q)) \to H^2(Q;\ell^1(Q))$ is surjective by \cite[Theorem 3]{MineyevHyper}. Let $u \in H^2(Q;\ell^1(Q))$ with $u \neq 0$ and let $v \in H^2_b(Q;\ell^1(Q))$ with $\comp^2_{\ell^1(Q)}(v) = u$. By the commutativity of \eqref{naturalitycomp}, $$ \comp^2_{q^*\ell^1(Q)}(q^*_b(v)) = q^* (\comp^2_{\ell^1(Q)}(v)) = q^*(u) . $$ 
Since $(\ell^1(Q))^Q=\{0\}$, it follows from \eqref{fiveterm} that $q^*\colon H^2(Q;\ell^1(Q))\to H^2(G;q^*(\ell^1(Q)))$ is injective, so $q^*(u)\neq 0$. Thus, $\comp^2_{q^*\ell^1(Q)}\neq 0$. Since the pullback by $q$ takes dual $\RR[Q]$-modules to dual $\RR[G]$-modules, $q^*\ell^1(Q)$ is a dual $\RR[G]$-module. Thus, it follows from Corollary \ref{cor:dualmodulescontrolJohnsonadmissibility} that $$\comp^2_{\II^2_G}(\gamma_2) \neq 0.$$
\end{example}

\begin{remark}
In \cite[Remark 3.16]{CLM} it was asked whether any oriented closed connected aspherical manifold $X$ has efficient $\Am$-category, i.e. if it satisfies 
\begin{equation}
\label{EqCLM1}
 \catAm(X) = \sup \{ k \in \NN_0 \mid \comp^k_{\RR}\neq 0\},
\end{equation}
where $\comp_{\RR}^k\colon H^k_b(X;\RR) \to H^k(X;\RR)$ is the comparison map. We can answer this question negatively, since the space $E$ from Example \ref{ExampleCircleBundle} provides a counterexample: Since $E$ is a circle bundle over an aspherical surface, it is itself aspherical. By the computation of amenable categories of closed $3$-manifolds from \cite{GLGAH13}, see also \cite{KotschMonot}, $E$ satisfies $\catAm(E)=2$ in our reduced convention. As shown in Example \ref{ExampleCircleBundle}. it further satisfies $\comp_\RR^2 = 0$, so $E$ violates \eqref{EqCLM1}. However, together with Theorem \ref{TheoUnivComp} and Corollary \ref{cor:johnsonlowerbound}, the previous computations show  that indeed 
\begin{equation}
\label{EqCLM2}
 \catAm(E) = \sup \{ k \in \NN_0 \mid \exists V' \text{  dual $G$-module with } \comp^k_{V'}\neq 0\},
\end{equation}
where $\comp_{V'}^k\colon H^k_b(E;V') \to H^k(E;V')$ is the respective comparison map. One might therefore replace the above question by asking if any closed aspherical manifold satisfies \eqref{EqCLM2}. Note that by Corollary \ref{cor:johnsonlowerbound}, the inequality \glqq$\geq$\grqq{} always holds in \eqref{EqCLM2}.
\end{remark}

\subsection{An application to degree-one maps between manifolds.}
A classic open problem in the setting of Lusternik-Schnirelmann category posed by Y. Rudyak concerns its monotonicity under degree-one maps between manifolds, see \cite[Open problem 2.48]{CLOT}. As a generalization of said problem, Capovilla, L\"oh and Moraschini raised the following question, which they called the generalized monotonicity problem. 

\begin{question}[Question 6.3 of \cite{CLM}]
Let  $f \colon M \to N$ a degree-one map between closed ceonnected oriented manifolds. Does it hold that $ \cat_\Am(M) \geq \cat_\Am(N) $?
\end{question}  

Capovilla, L\"oh and Moraschini have answered this question affirmatively for $3$-manifolds in \cite[Theorem 6.6]{CLM}. A proof of this result by simpler arguments was given by D. Kotschick in \cite{KotschMonot}.

We are going to use Theorem \ref{thm:CostaFarberCatF} to give an answer to the question under the premise of maximality of $\cat_\GG(N)$ for an arbitrary isq-class $\GG$. In order to do so, first we have to introduce a notion of compatibility of subgroup families with respect to a continuous map between spaces.

\begin{definition}
Let $X$ and $Y$ be connected CW-complexes, and let $\FF_X$ and $\FF_Y$ be subgroup families of $\pi_1(X)$ and $\pi_1(Y)$, respectively. We say that a continuous map $f \colon X \to Y$ is \emph{$\left(\FF_X , \FF_Y\right)$-compatible} if the induced group homomorphism $\pi_1(f) \colon \pi_1(X) \to \pi_1(Y)$ satisfies $\pi_1(f)(K) \in \FF_Y$ for any $K \in \FF_X$.
\end{definition}

For any such ($\FF_X , \FF_Y$)-compatible map $f \colon X \to Y$ we have an obvious induced map 
$$ F \colon \bigsqcup_{H \in \FF_X} \quot{\pi_1(X)}{H} \longrightarrow \bigsqcup_{K \in \FF_Y} \quot{\pi_1(Y)}{K} $$ by assigning $gH \mapsto \pi_1(f)(g) \cdot \pi_1(f)(H)$ for any $H \in \FF_X$. Considering $f^* \ZZ[\pi_1(Y)/\FF_Y]$ as a $\Z[\pi_1(X)]$-module by the action induced by $f$, we can linearly extend $F$ to a $\ZZ[\pi_1(X)]$-module homomorphism 
$$ \ZZ[F] \colon \ZZ[\pi_1(X)/\FF_X] \to f^*\ZZ[\pi_1(Y)/\FF_Y]. $$ 
The map $\ZZ[F]$ then restricts to a $\ZZ[\pi_1(X)]$-module homomorphism between the relative augmentation ideals. For any $n \in \NN$, applying $\ZZ[F]$ to the factors of the corresponding tensor power produces a $\ZZ[\pi_1(X)]$-homomorphism  
$$ F_n \colon I^n_{\FF_X} \longrightarrow f^*I^n_{\FF_Y}. $$ 
Since $F$ sends the coset of the trivial subgroup of $\pi_1(X)$ to the coset of the trivial subgroup of $\pi_1(Y)$, the identification provided by Corollary \ref{Corphikrepr} for the choice $\sigma_{\{1\},k}$ shows that $$ (F_n)_*(\omega^n_{\FF_X}) = f^*(\omega^n_{\FF_Y}) \in H^n\left(\pi_1(X);f^*I^n_{\FF_Y}\right). $$ Since pullbacks by maps and those of coefficient homomorphisms commute in cohomology, 
 \begin{equation} \label{degonecoeffchange}
f^*(\mathfrak{c}^*_Y(\omega^n_{\FF_Y})) = \mathfrak{c}^*_X((F_n)_*(\omega^n_{\FF_X})) = (F_n)_*(\mathfrak{c}^*_X(\omega^n_{\FF_X})) \in H^n\left(X;f^*I^n_{\FF_Y}\right).  
\end{equation} 
The very same reasoning applies to the real counterparts of $\omega_{\FF_X}$ and $\omega_{\FF_Y}$ from Section 2 when applied to the local coefficient system $J^n_\FF$, to obtain  \begin{equation} \label{degonecoeffchangeR}
f^*(\mathfrak{c}^*_Y(\omega^n_{\FF_Y,\RR})) = \mathfrak{c}^*_X((F_{n,\RR})_*(\omega^n_{\FF_X,\RR})) = (F_{n,\RR})_*(\mathfrak{c}^*_X(\omega^n_{\FF_X,\RR})) \in H^n\left(X;f^*J^n_{\FF_Y}\right),  
\end{equation} where $F_{n,\RR} \colon J^n_{\FF_X} \to f^* J^n_{\FF_Y}$ is defined in analogy with $F_n$.  

\medskip

Let $M$ and $N$ be closed connected oriented $n$-manifolds, where $n \in \NN$, and let $f \colon M \to N$ a continuous map of degree $d\in \ZZ$. For any local coefficient system $A$ on $N$, we have the Poincar\'e duality isomorphisms $$P_M \colon H^k(M;f^*A) \to H_{n-k}(M;f^*A) \qquad P_N \colon H^k(N;A) \to H_{n-k}(N;A) $$ given by cap products with fundamental classes. Consider the map $$ f^{!} \colon H^*(M;f^*A) \to H^*(N;A) \qquad f^!(x) := \left(P^{-1}_N \circ f_* \circ P_M \right) (x). $$ Since for any $z \in H^k(N;A)$ we have $$ f^!f^*(z) = P^{-1}_N f_*(f^*z \cap [M]) = P^{-1}_N(z \cap (\deg f)[N]) = (\deg f) z ,$$ see \cite[Proposition 5.1]{DranishRudyak}, it follows that $f^*$ is a split monomorphism with left inverse $(\deg f)f^!$ if $\deg f=\pm1$. More generally, if the local coefficient system $A$ is taken over $R[G]$ instead of $\ZG$, for $R$ a commutative ring with unit, one has that $f^*$ is a split monomorphism if $\deg f$ is an invertible element in $R$, with left-inverse $(\deg f)^{-1} \cdot f^!$. 

\medskip

Using these considerations, we will establish a result for the generalized monotonicity of maps between manifolds via powers of relative Berstein classes. In order to do so, first we observe that non-vanishing of such powers for the codomain manifold offers a bound for the category of the domain.  

\begin{theorem} \label{monotonicitydegoneF}
Let $n \in \NN$ and let $f \colon M \to N$ be a map between closed connected  oriented smooth $n$-manifolds of degree $d \in \ZZ$. Let $\FF_M$ and $\FF_N$ be subgroup families of $\pi_1(M)$ and $\pi_1(N)$, respectively. Suppose that $f$ is $\left(\FF_M, \FF_N \right)$-compatible. Then 
 \begin{enumerate}[a)]
 	\item $\cat_{\FF_M}(M) \geq \sup \{ k \in \NN_0 \mid \cc^*_N(\omega^k_{\FF_N}) \neq 0 \}$ if $d = \pm 1$.
 	\item $\cat_{\FF_M}(M) \geq \sup \{ k \in \NN_0 \mid \cc^*_N(\omega^k_{\FF_N, \RR}) \neq 0 \}$ if $d \neq 0$.
 \end{enumerate}  
\end{theorem}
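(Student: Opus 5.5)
We derive the statement from Proposition \ref{prop:lower-bound-catF}.a), applied to $X=M$ with the family $\FF_M$, together with the coefficient-change identities \eqref{degonecoeffchange} and \eqref{degonecoeffchangeR} and the splitting of $f^*$ via the umkehr map $f^!$. By Proposition \ref{prop:lower-bound-catF}.a) it suffices to show that, for each $k \in \NN_0$, the hypothesis $\cc^*_N(\omega^k_{\FF_N}) \neq 0$ (in case a)) or $\cc^*_N(\omega^k_{\FF_N,\RR}) \neq 0$ (in case b)) forces $\cc_M^*(\omega^k_{\FF_M}) \neq 0$. The case $k=0$ is trivial because $\cat_{\FF_M}(M)\geq 0$, so we fix $k \geq 1$.

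\emph{Case a).} Assume $d=\pm 1$ and $\cc^*_N(\omega^k_{\FF_N}) \neq 0$ in $H^k(N;I^k_{\FF_N})$. Apply the discussion preceding the theorem with the local system $A=I^k_{\FF_N}$. Since $d$ is a unit in $\ZZ$, the map $f^*\colon H^k(N;A)\to H^k(M;f^*A)$ is injective, with left inverse $d\cdot f^!$. Hence $f^*(\cc_N^*(\omega^k_{\FF_N})) \neq 0$. By \eqref{degonecoeffchange} this class equals $(F_k)_*(\cc_M^*(\omega^k_{\FF_M}))$, and a pushforward of the zero class is zero. Therefore $\cc_M^*(\omega^k_{\FF_M}) \neq 0$, and Proposition \ref{prop:lower-bound-catF}.a) gives $\cat_{\FF_M}(M)\geq k$.

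\emph{Case b).} Assume only $d\neq 0$. The same argument works over $R=\RR$, because $d$ is invertible in $\RR$. Take $A=J^k_{\FF_N}$, regarded as a local system over $\RR[\pi_1(N)]$. Poincaré duality and the identity $f^!f^*=d\cdot\id$ hold with these coefficients, so $f^*$ is injective with left inverse $d^{-1}f^!$. Then \eqref{degonecoeffchangeR} shows $(F_{k,\RR})_*(\cc_M^*(\omega^k_{\FF_M,\RR}))\neq 0$, and hence $\cc_M^*(\omega^k_{\FF_M,\RR})\neq 0$.

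Case b) needs one further step, because Proposition \ref{prop:lower-bound-catF} is phrased with integral coefficients. We pass back to $\omega_{\FF_M}$ using Lemma \ref{LemmaRealBerstein}, which gives $\omega^k_{\FF_M,\RR}=(\alpha^k_{\FF_M})_*(\omega^k_{\FF_M})$. Since $\cc_M^*$ commutes with coefficient pushforwards, we get $\cc_M^*(\omega^k_{\FF_M,\RR})=(\alpha^k_{\FF_M})_*(\cc_M^*(\omega^k_{\FF_M}))$. The left side is nonzero, so $\cc_M^*(\omega^k_{\FF_M})\neq 0$, and we conclude as in case a).

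The step I expect to be the main obstacle is justifying Poincaré duality and the identity $f^!f^*=(\deg f)\cdot\id$ for twisted coefficients, especially for the infinitely generated real modules $J^k_{\FF_N}$. I would handle this by citing \cite[Proposition 5.1]{DranishRudyak}: its proof uses only the cap product with fundamental classes and the projection formula $f_*(f^*z\cap[M])=z\cap f_*[M]$, and both hold for arbitrary local coefficient systems on closed oriented manifolds. The orientability hypothesis is what makes untwisted fundamental classes available, so no orientation character enters.
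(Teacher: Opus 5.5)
Your proposal is correct and follows the paper's proof essentially step for step. You get injectivity of $f^*$ from the left inverse $d\cdot f^!$ (resp.\ $d^{-1}f^!$ over $\RR$), then use the coefficient-change identities \eqref{degonecoeffchange} and \eqref{degonecoeffchangeR}, return to integral classes in case b) via Lemma \ref{LemmaRealBerstein}, and conclude with Proposition \ref{prop:lower-bound-catF}.

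The one detail the paper states explicitly and you omit is that Proposition \ref{prop:lower-bound-catF} is formulated for CW complexes. To apply it to $M$, you need that closed smooth manifolds are triangulable, and this is exactly where the smoothness hypothesis enters.
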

\begin{proof}
\begin{enumerate}[a)]
\item Suppose $\cc^*_N(\omega^k_{\FF_N}) \neq 0$. Since $\deg f = \pm 1$ the homomorphism $f^*$ is a split monomorphism, hence $$0 \neq f^*(\mathfrak{c}^*_N(\omega^k_{\FF_N})) \in H^k(M;f^*I^k_{\FF_N}).$$ As $f$ is assumed to be ($\FF_M , \FF_N$)-compatible, by \eqref{degonecoeffchange} we have the identification $$ 0 \neq f^*(\mathfrak{c}^*_N(\omega^k_{\FF_N})) = (F_k)_*(\mathfrak{c}^*_M(\omega^k_{\FF_M})). $$ Since every closed smooth manifold is triangulable, we can apply Proposition \ref{prop:lower-bound-catF} to derive from $\mathfrak{c}^*_M(\omega^k_{\FF_M}) \neq 0$ that $\cat_{\FF_M}(M) \geq k$. 
\item Assume $\cc^*_N(\omega^k_{\FF_N, \RR}) \neq 0$. Considering $J^k_{\FF_N}$ as a local coefficient system for $N$, and since $d = \deg f \neq 0$, the homomorphism $f^* \colon H^*(N;J^k_{\FF_N}) \to H^*(M;f^*J^k_{\FF_N})$ is a split monomorphism with left inverse $d^{-1} f^!$. By \eqref{degonecoeffchangeR} we have $$ 0 \neq f^*(\mathfrak{c}^*_N(\omega^k_{\FF_N, \RR})) = (F_{k,\RR})_*(\mathfrak{c}^*_M(\omega^k_{\FF_M, \RR})). $$ Thus $\mathfrak{c}^*_M(\omega^k_{\FF_M, \RR}) \neq 0$, which by Lemma \ref{LemmaRealBerstein} means $\mathfrak{c}^*_M(\omega^k_{\FF_M}) \neq 0$. Proposition \ref{prop:lower-bound-catF} then implies $\cat_{\FF_M}(M) \geq k$. 
\end{enumerate} \end{proof}

The following monotonicity result thus follows directly. 

\begin{corollary} \label{cor:monotonicitydegoneF}
Under the hypothesis of Theorem \ref{monotonicitydegoneF}, let $\cat_{\FF_N}(N) = k$. Then $\cat_{\FF_M}(M) \geq \cat_{\FF_N}(N) $ if one of the following conditions holds:
\begin{enumerate}[a)]
	\item $\cc^*_N(\omega^k_{\FF_N}) \neq 0$ and $d = \pm 1$.
	\item $\cc^*_N(\omega^k_{\FF_N, \RR}) \neq 0$ and $d \neq 0$.
\end{enumerate}   
\end{corollary}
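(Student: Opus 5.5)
This follows by combining Theorem \ref{monotonicitydegoneF} with the elementary fact that $\cat_{\FF_N}(N)=k$. We work under the hypotheses of Theorem \ref{monotonicitydegoneF}: $f\colon M\to N$ has degree $d$ and is $(\FF_M,\FF_N)$-compatible. Put $k:=\cat_{\FF_N}(N)$.

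In case a), we assume $\cc^*_N(\omega^k_{\FF_N})\neq 0$ and $d=\pm1$. Then $k$ belongs to the set
$$\{ j \in \NN_0 \mid \cc^*_N(\omega^j_{\FF_N}) \neq 0 \},$$
so the supremum of this set is at least $k$. Theorem \ref{monotonicitydegoneF}.a) therefore yields
$$\cat_{\FF_M}(M)\geq \sup\{ j \in \NN_0 \mid \cc^*_N(\omega^j_{\FF_N}) \neq 0 \}\geq k=\cat_{\FF_N}(N).$$

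Case b) is identical, using the real classes $\omega^j_{\FF_N,\RR}$ and Theorem \ref{monotonicitydegoneF}.b), which only requires $d\neq 0$.

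One detail needs checking. If $k=0$, the class $\omega^0_{\FF_N}$ is the unit class, and its pullback is nonzero. The inequality $\cat_{\FF_M}(M)\geq 0$ is trivial in any case, so nothing new is needed there.

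No real obstacle arises; the argument is just bookkeeping. Note that Proposition \ref{prop:lower-bound-catF} shows that for $j>k$ the pullback $\cc^*_N(\omega^j_{\FF_N})$ vanishes. Consequently, under the stated hypothesis the supremum in Theorem \ref{monotonicitydegoneF} is exactly $k$.
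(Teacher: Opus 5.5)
Your argument is correct and matches the paper, which simply states that the corollary follows directly from Theorem~\ref{monotonicitydegoneF}: the hypothesis places $k=\cat_{\FF_N}(N)$ in the set whose supremum bounds $\cat_{\FF_M}(M)$ from below. Your closing remark, that Proposition~\ref{prop:lower-bound-catF} forces this supremum to equal $k$, is accurate but not needed for the inequality.
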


We apply Theorem \ref{monotonicitydegoneF} above to isq-classes to derive the aforementioned partial answer to the general monotonicity problem under the premise of maximality of the $\GG$-category of the codomain.

\begin{corollary} \label{cor:monotonicitydegoneG}
Let $n \geq 3$, let $f \colon M \to N$ be a map	between closed connected oriented smooth $n$-manifolds with $\deg f=\pm 1$ and let $\GG$ be an isq-class of groups. If $\cat_\GG(N) = n$ then $\cat_\GG(M) = n$. 
\end{corollary}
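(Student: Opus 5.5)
The plan is to combine the maximality criterion of Theorem \ref{thm:CostaFarberCatF} with the monotonicity bound of Theorem \ref{monotonicitydegoneF}.a). Put $\FF_M:=\GG(M)$ and $\FF_N:=\GG(N)$. Closed smooth manifolds are triangulable, so $M$ and $N$ are connected finite-dimensional CW complexes of dimension $n\geq 3$. The hypotheses of Theorem \ref{thm:CostaFarberCatF} therefore hold for $N$ with the family $\GG(N)$. Since $\cat_{\GG}(N)=\cat_{\GG(N)}(N)=n=\dim N$, that theorem gives
$$\cc_N^*(\omega^n_{\GG(N)})\neq 0\in H^n(N;I^n_{\GG(N)}).$$

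Next I would check that $f$ is $(\GG(M),\GG(N))$-compatible. If $K\leq\pi_1(M)$ lies in $\GG$, then $\pi_1(f)(K)$ is a quotient of $K$. Since $\GG$ is an isq-class, it is closed under quotients and isomorphisms, so $\pi_1(f)(K)\in\GG$. As $\pi_1(f)(K)$ is a subgroup of $\pi_1(N)$, it belongs to $\GG(N)$. This is the only place where the isq-property is used, and it is exactly what allows us to pass from a general family to an isq-class.

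Now Theorem \ref{monotonicitydegoneF}.a) applies with $k=n$, because $\deg f=\pm1$ and $\cc^*_N(\omega^n_{\GG(N)})\neq0$. It yields $\cat_{\GG}(M)=\cat_{\GG(M)}(M)\geq n$. For the reverse inequality I use \eqref{EqcatFFupper} together with the dimension bound on LS category for CW complexes: $\cat_{\GG(M)}(M)\leq\cat(M)\leq\dim M=n$. Combining the two gives $\cat_\GG(M)=n$.

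I expect no serious obstacle. The step needing the most care is the application of Theorem \ref{thm:CostaFarberCatF} in the direction ``$\Rightarrow$''. That direction is the obstruction-theoretic part and is where the assumptions $\dim\geq3$ and finite-dimensionality enter. One should also confirm that the classifying map and the families used for $N$ match those in Theorem \ref{monotonicitydegoneF}, which holds because both results are stated with the same $\cc_N$ and the same family $\GG(N)$.
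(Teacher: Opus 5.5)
Your proof is correct and is essentially the paper's argument. You derive compatibility of $f$ from closure of $\GG$ under quotients, get $\cc_N^*(\omega^n_{\GG(N)})\neq 0$ from Theorem \ref{thm:CostaFarberCatF}, obtain $\cat_\GG(M)\geq n$ from Theorem \ref{monotonicitydegoneF}.a), and close with $\cat_\GG(M)\leq\cat(M)\leq n$. One minor overstatement: the isq-property is also what makes $\GG(M)$ and $\GG(N)$ subgroup families in the first place, so it is not used only in the compatibility check.
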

\begin{proof}
Since for any $H \in \GG(M)$ we have $\pi_1(f)(H) \cong \quot{H}{(H \cap \ker \pi_1(f))}$ and since $\GG$ is closed under taking subgroups and quotients,  $\pi_1(f)(H) \in \GG(N)$, so $f$ is ($\GG(M), \GG(N)$)-compatible. Since $\cat_{\GG}(N)=n$,  Theorem \ref{thm:CostaFarberCatF} implies $\cc^*_N(\omega^n_{\FF_N}) \neq 0$. The claim thus follows directly from Theorem \ref{monotonicitydegoneF} and the elementary chain of inequalities $\cat_\GG(M) \leq \cat(M) \leq n$.
\end{proof}

Corollary \ref{cor:monotonicitydegoneG} gives an affirmative answer to the maximal case of the generalized monotonicity problem without using the simplicial volume of the manifold.

\bibliography{GC}	
\bibliographystyle{amsalpha}

\end{document}